\documentclass[a4paper, 11pt, final]{amsart}
\usepackage{multicol}
\usepackage{multirow}
\usepackage[pdftex]{graphicx}
\usepackage{amsmath, amsfonts, amssymb, amsthm, float}
\usepackage{mathtools}
\usepackage{mathrsfs}
\usepackage{enumitem}
\usepackage[a4paper, left=2cm, bottom=2cm]{geometry}
\usepackage{array}
\usepackage{lmodern}
\usepackage[most]{tcolorbox}
\usepackage{xcolor}
\usepackage{url}
\usepackage{tikz-cd}
\usepackage{bm}
\usepackage{tabularx}
\usepackage{etoolbox}
\usepackage[colorlinks=true,linkcolor=black,citecolor=blue!80!black]{hyperref}
\usepackage{xcolor}

\usepackage[capitalize]{cleveref}

\newtheorem{definition}{Definition}[section]
\newtheorem{lemma}[definition]{Lemma}
\newtheorem{theorem}[definition]{Theorem}
\newtheorem{proposition}[definition]{Proposition}
\newtheorem{corollary}[definition]{Corollary}

\newtheorem{hypothesis}[definition]{Hypothesis}

\newcounter{claim}[definition]

\newtheorem{notation}[definition]{Notation}
\newtheorem*{notation*}{Notation}

\theoremstyle{remark}
\newtheorem{remark}[definition]{Remark}

\newtheorem{example}[definition]{Example}

\AtBeginEnvironment{theorem}{\setlist[enumerate,1]{label=(\roman*),font=\upshape}}
\AtBeginEnvironment{example}{\setlist[enumerate,1]{label=(\roman*),font=\upshape}}
\AtBeginEnvironment{definition}{\setlist[enumerate,1]{label=(\roman*),font=\upshape}}
\AtBeginEnvironment{lemma}{\setlist[enumerate,1]{label=(\roman*),font=\upshape}}
\AtBeginEnvironment{proposition}{\setlist[enumerate,1]{label=(\roman*),font=\upshape}}
\AtBeginEnvironment{corollary}{\setlist[enumerate,1]{label=(\roman*),font=\upshape}}
\AtBeginEnvironment{theorem}{\setlist[enumerate,2]{label=(\alph*),font=\upshape}}
\AtBeginEnvironment{lemma}{\setlist[enumerate,2]{label=(\alph*),font=\upshape}}
\AtBeginEnvironment{claim}{\setlist[enumerate,2]{label=(\alph*),font=\upshape}}

\renewcommand{\Gamma}{\varGamma}
\renewcommand{\epsilon}{\varepsilon}
\renewcommand{\bar}{\overline}
\renewcommand{\hat}{\widehat}
\renewcommand{\leq}{\leqslant}
\renewcommand{\geq}{\geqslant}

\newcommand{\normaleq}{\trianglelefteq}

\newcommand{\divides}{\bigm|}

\newcommand{\N}{\mathbb{N}}

\newcommand{\KK}{\mathbb{K}}
\newcommand{\FF}{\mathbb{F}}
\newcommand{\SL}{\operatorname{SL}}
\newcommand{\SU}{\mathrm{SU}}

\newcommand{\syl}{\mathrm{Syl}}
\newcommand{\Syl}{\operatorname{Syl}}
\newcommand{\GL}{\mathrm{GL}}
\newcommand{\Sp}{\mathrm{Sp}}

\newcommand{\PSL}{\mathrm{PSL}}

\newcommand{\PSU}{\mathrm{PSU}}

\newcommand{\Sz}{\mathrm{Sz}}
\newcommand{\Ree}{\mathrm{Ree}}
\newcommand{\Sym}{\mathrm{Sym}}
\newcommand{\Alt}{\mathrm{Alt}}

\newcommand{\Stab}{\mathrm{Stab}}
\newcommand{\Aut}{\mathrm{Aut}}
\newcommand{\Out}{\mathrm{Out}}

\newcommand {\End}{\mathrm{End}}
\newcommand {\Gal}{\mathrm{Gal}}

\def \wt {\widetilde}

\def \ov {\overline}
\newcommand{\gen}[1]{\langle #1 \rangle}

\renewcommand{\hat}{\widehat}
\newcommand{\norm}{\trianglelefteq}
\newcommand{\rdim}{\operatorname{rd}}
\newcommand{\pdeg}{\operatorname{pd}}

\numberwithin{equation}{section}
\makeatletter
\let\c@theorem\c@equation
\makeatother

\begin{document}

\author[V. Grazian]{Valentina Grazian}
\address{Dipartimento di Matematica, Universit\`{a} di Padova, 35121, Italia}
\email{valentina.grazian@math.unipd.it}

\author[J. Lynd]{Justin Lynd}
\address{Department of Mathematics, University of Louisiana at Lafayette, Maxim Doucet Hall, Lafayette, LA 70504}
\email{lynd@louisiana.edu}

\author[C. Parker]{Chris Parker}
\address{School of Mathematics, University of Birmingham, B15 2TT, United Kingdom}
\email{c.w.parker@bham.ac.uk}

\author[J. Semeraro]{Jason Semeraro}
\address{Department of Mathematical Sciences, Loughborough University, LE11 3TT, United Kingdom}
\email{j.p.semeraro@lboro.ac.uk}

\author[M. van Beek]{Martin van Beek}
\address{Department of Mathematics, University of Manchester, Manchester, M13 9PL, United Kingdom}
\email{martin.vanbeek@manchester.ac.uk}

\keywords{Jordan blocks, Strongly $p$-embedded subgroups, Groups of Lie type, Modular representation theory}
\subjclass[2020]{20C20; 20D06, 20C30, 20C33, 20C34}

\begin{abstract}
Suppose that $p$ is a prime and $X$ is a finite group with a strongly $p$-embedded subgroup, for example a rank $1$ group of Lie type in characteristic $p$. Let $m$ denote the $p$-rank of $X$ and assume that $m \ge 2$. We say that a faithful $\FF_p X$-module is $k$-active if some element of order $p$ in $X$ acts with exactly $k$ non-trivial Jordan blocks. In this paper, we determine the non-trivial composition factors of the faithful $\FF_pX$-modules which are $k$-active for some $k\leq m$.
\end{abstract}

\title{Modules with few Jordan blocks for rank $1$ groups of Lie type and related groups}

\maketitle

\section{Introduction}
Let $p$ be a prime and let $X$ be a finite group with a strongly $p$-embedded subgroup. Recall that a proper subgroup  $M$ of $X$ of order divisible by $p$ is \emph{strongly $p$-embedded} in $X$ if and only if for every $x \in X\setminus M$, $M\cap M^x$ is a $p'$-group. The most prominent examples of groups with a strongly $p$-embedded subgroup are the rank $1$ groups of Lie type in characteristic $p$. Throughout, we write $m:=m_p(X)$ for the \emph{$p$-rank} of $X$, the largest rank of an elementary abelian subgroup of $X$.

\begin{definition}
Suppose that $p$ is a prime, $Y$ is a finite group and $V$ is a faithful $\FF_pY$-module. Let $y \in Y$ have order $p$. Then $y$ is \emph{$k$-active} on $V$ if $y$ acts on $V$ with at exactly $k$ non-trivial Jordan blocks. We say that $V$ is $k$-active if there exists $y\in Y$ of order $p$ such that $y$ is $k$-active on $V$.
\end{definition}

Restrictions on the Jordan block structure of $p$-elements often impose strong constraints on both the representation and the ambient group, and have proved to be an effective tool in the study of modular representations and the $p$-local structure of finite groups. As examples, see \cite{Suprunenko}, \cite{MinActive} and \cite{TestZale}.

In this article, we study faithful $\FF_pX$-modules which are $k$-active, for some $k\leq m$. Our main theorem demonstrates that, with a few exceptions, such modules for groups with a strongly $p$-embedded subgroup arise as explicitly described irreducible modules of rank $1$ groups of Lie type in characteristic $p$. Much of the analysis in this paper is devoted to these groups, alongside the alternating groups of degree $2p$, and their associated $\mathbb{F}_p$-representations. Consequently, our results also contribute to the representation theory of small-rank groups of Lie type. Since these groups form fundamental building blocks in the analysis of larger quasisimple groups and their representations, a detailed understanding of their $k$-active modular representations is important and therefore of independent interest. The methodology developed to prove our theorems could be generalized to include larger classes of groups, complementing existing research in this area. As an initial step in this direction, in Section \ref{SU3Section} we determine the irreducible $\FF_{p^n}\SL_3(p^{n})$-modules for which a unipotent element has at most two non-trivial Jordan blocks (see Proposition \ref{SL3Split}).

Groups with a strongly $p$-embedded subgroup arise naturally in local group theory through what is now called the Alperin--Goldschmidt theorem \cite{AlperinFusion,GoldschmidtFusion} and as such play a significant role in the classification of the finite simple groups as does, of course, Bender's fundamental determination of the groups with a strongly $2$-embedded subgroup. This phenomenon is naturally phrased in the language  of saturated fusion systems. By the Alperin--Goldschmidt theorem, a saturated fusion system is largely determined by the automorphism groups of its essential subgroups. The key point being that the outer automorphism group $A$ of an essential subgroup $E$ has a strongly $p$-embedded subgroup, and, as $O_p(A)=1$, $E/\Phi(E)$ is a faithful $\FF_pA$-module that under some conditions must be $k$-active for some $k\leq m_p(S)$. If the essential subgroup is not normal in the group of the fusion system, then the structure of $A$ can often be studied using the theory of failure of factorization modules (see \cite{GeneralFF}). Failure of factorization modules are $k$-active for some $k\leq m$. More importantly, a result of Oliver \cite[\S 2]{nonrealizability} imposes restrictions on the Jordan block structure of a $p$-element which acts on a weakly closed abelian subgroup in \emph{any} simple saturated fusion system. An immediate application of our theorems appears in upcoming work of the authors alongside  Oliver \cite{NormalAbelianI}, where reduced fusion systems which have  a weakly closed abelian essential subgroup are classified.

We briefly describe some of the most important examples which appear in our main theorems.

Set $X=\SL_2(p^n)$ and $1\leq i\leq p-1$. Let $V_i(p^n)$ denotes the $\FF_{p^n}$-space spanned by homogeneous polynomials of degree $i$ in two commuting variables $y$ and $z$. Then $V_i(p^n)$ is made into an $(i+1)$-dimensional $\FF_{p^n}X$-module by enforcing that \[y\cdot\left(\begin{smallmatrix} a & b \\ c & d\end{smallmatrix}\right)=a y + b z\,\,\, \text{and}\,\,\, z\cdot \left(\begin{smallmatrix} a & b \\ c & d\end{smallmatrix}\right)= cy + dz\] and extending this action to $V_i(p^n)$. Notice that  for $\tau\in \Aut(\FF_{p^n})$,  $V_i(p^n)$ is isomorphic to $V_i(p^n)^\tau$ as an $\FF_{p}X$-module. When we take tensor products  $V_i(p^n)\otimes V_j(p^n)$, this tensor product is over $\FF_{p^n}$.

We call $V_1(p^n)|_{\FF_p}$ the \emph{natural module} for $\SL_2(p^n)$, of dimension $2n$. The \emph{natural module} for $\SU_3(p^n)$ is the restriction of any $3$-dimensional $\FF_{p^{2n}}\SL_3(p^{2n})$-module to $\SU_3(p^n)$ and then regarded as an $\FF_p\SU_3(p^n)$-module of dimension $6n$. Finally, the \emph{natural module} for $\Ree(3^n)$ is the restriction of any $7$-dimensional $\FF_{3^n}\mathrm{G}_2(3^n)$-module to $\Ree(3^n)$, viewed as an $\FF_3\Ree(3^n)$-module, of dimension $7n$.

We may now state our main theorem.

\begin{theorem}\label[theorem]{ModResult}
Suppose that $p$ is a prime, $X=O^{p'}(X)$ is a group with a strongly $p$-embedded subgroup and $V$ is a faithful $\FF_pX$-module. Let $m:=m_p(X)$ and $W$ be a non-trivial composition factor of $V$.

Assume that $m \geq 2$, $x \in X$ has order $p$ and $x$ is $k$-active on $V$ for some $k\leq m$. Then one of the following holds.
\begin{enumerate}
\itemsep0.5em
\item\label{modsl2} $X\cong\SL_2(p^m)$ or $\PSL_2(p^m)$ and, for $\sigma\in \Aut(\FF_{p^m})$ of order $m$, either
\begin{enumerate}
\itemsep0.5em
\item\label{modbasic} $W = V_i(p^m)$ for some $1\leq i \leq p-1$ considered as a $(i+1)m$-dimensional $\FF_pX$-module;
\item $p$ is odd and $W=(V_1(p^m)\otimes V_1(p^m)^{\sigma^j})|_{\FF_p}$ is a $4m$-dimensional $\FF_pX$-module, for some $j\in \{1,\dots, m-1\}$ with $j \ne m/2$;
\item\label{Omega4} $m$ is even and $W$ is an irreducible summand of $(V_1(p^m)\otimes V_1(p^m)^{\sigma^{m/2}})|_{\FF_p}$ of dimension $2m$;
\item\label{trialityintro} $p$ is odd, $m$ is divisible by $3$ and $W$ is an irreducible summand of $(V_1(p^m)\otimes V_1(p^m)^\tau\otimes V_1(p^m)^{\tau^2})|_{\FF_p}$, where $\tau = \sigma^{m/3}$, of dimension $8m/3$;
\item  $p\geq 5$, $m$ is divisible by $4$ and $W$ is an irreducible summand of $(V_1(p^m)\otimes V_1(p^m)^\tau\otimes V_1(p^m)^{\tau^2}\otimes V_1(p^m)^{\tau^3})|_{\FF_p}$, where $\tau = \sigma^{m/4}$, of dimension $4m$; or
\item $p \geq 5$, $m$ is even and $W$ is an irreducible summand of $(V_2(p^m)\otimes V_2(p^m)^{\sigma^{m/2}})|_{\FF_p}$ of dimension $9m/2$.
\end{enumerate}
\item\label{modsu3} $p$ is odd, $m=2n$, $X/Z(X)\cong \PSU_3(p^n)$, $M$ is a natural $3$-dimensional $\FF_{p^{2n}}\SU_3(p^n)$-module, $W=[V, X]/C_{[V, X]}(X)$ and either
\begin{enumerate}
\itemsep0.5em
    \item $x$ is any element of order $p$ and $W=M|_{\FF_p}$ is a natural $\SU_3(p^n)$-module of dimension $6n$;
    \item $p=3$, $x$ is not $p$-central and $W$ is any non-trivial, irreducible composition factor of $(M\otimes_{\FF_{p^{2n}}}  M^*)|_{\FF_3}$ of dimension $7n$;
    \item $p\geq 5$, $x$ is not $p$-central and $W$ is any non-trivial, irreducible composition factor of $(M\otimes_{\FF_{p^{2n}}} M^*)|_{\FF_p}$ of dimension $8n$; or
    \item $p\geq 5$, $x$ is not $p$-central and $W=\Sym^2(M)|_{\FF_p}$  of dimension $12n$.
\end{enumerate}
\item $p=3$, $x$ is not $3$-central, $m=2n$, $X\cong\Ree(3^n)$ and $W=[V, X]/C_{[V, X]}(X)$ is a natural $\Ree(3^n)$-module of dimension $7n$.
\item\label{M11Main} $p=3$, $m=2$, $x$ is any element of order $3$, $X\cong \mathrm{M}_{11}$ and $W=[V, X]/C_{[V, X]}(X)$ is either the code or the cocode module of dimension $5$.
\item\label{L34Main1} $p=3$, $m=2$, $x$ is any element of order $3$, $X\cong 2\cdot\PSL_3(4)$, $W=[V, X]$ and $\dim_{\FF_3} W=6$.
\item\label{L34Main2} $p=3$, $m=2$, $x$ is any element of order $3$, $X\cong 4\cdot\PSL_3(4)$, $W=[V, X]$ and $\dim_{\FF_3} W=8$.
\item\label{altmainintro} $p$ is odd, $m=2$, $X/O_{p'}(X)\cong \Alt(2p)$ and there is $L\le X$ such that $x\in S\in \syl_p(L)\subseteq \syl_p(X)$ and either $L\cong \Alt(2p)$; $p=3$ and $L\cong \SL_2(9)$; or $p=5$ and $L\cong 2\cdot\Alt(10)$.
\end{enumerate}
Furthermore, if \ref{altmainintro} holds then every irreducible composition factor of $V|_L$ is known and if $p\geq 5$ then, unless $L\cong \Alt(2p)$ and $x$ is a $p$-cycle, we have that $[V,L]/C_{[V, L]}(L)$ is irreducible.
\end{theorem}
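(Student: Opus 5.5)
The plan is to reduce to an explicit list of groups and then treat each family by restricting to a Sylow $p$-subgroup. First I invoke the classification of groups with a strongly $p$-embedded subgroup and $p$-rank at least $2$. With $X=O^{p'}(X)$, this shows $X/O_{p'}(X)$ is one of: a rank $1$ group of Lie type in characteristic $p$ ($\PSL_2(p^m)$, $\PSU_3(p^n)$, $\Sz(2^n)$, $\Ree(3^n)$); $\Alt(2p)$ with $m=2$; or one of finitely many sporadic-type cases at small primes, such as $\mathrm{M}_{11}$ and $\PSL_3(4)$ at $p=3$, and $\mathrm{Fi}_{22}$, $\mathrm{McL}$, ${}^2\mathrm{F}_4(2)'$, $\mathrm{J}_4$ and similar groups. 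The base step is a general lemma about the restriction to an elementary abelian subgroup $E\le S\in\Syl_p(X)$ of rank $m$ that contains $x$. If $x$ is $k$-active with $k\le m$, then $\dim [V,x]=k\le m$. Since $X$ is generated by a few conjugates of $S$, this bounds $\dim[V,X]$. The same bound passes to composition factors, because the number of nontrivial Jordan blocks of $x$ on $W$ is at most that on $V$. So for each nontrivial composition factor $W$ we get $\dim[W,x]\le m$.

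For the Lie type groups I work over the splitting field $\FF_{p^a}$. By Steinberg's tensor product theorem, $W\otimes\FF_{p^a}$ is a sum of Galois twists of $\bigotimes_j L(\lambda_j)^{\sigma^j}$. A transvection, or a long root element, acts on a twisted tensor product with Jordan blocks that are computable from the factors. Counting $\dim[W,x]$ over $\FF_p$ gives at least (the number of Galois conjugates) $\times\prod(\text{factor dimensions})\times(\text{fraction moved})$. Comparing this with $m$ leaves only tensor products of very few copies of $V_1$, or a single $V_2$, or $V_2\otimes V_2^{\sigma^{m/2}}$. For each of these I check whether the field of definition can be smaller, since a smaller field shrinks the $\FF_p$-dimension. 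This is what produces the summand cases (c)–(f), at fields $\FF_{p^{m/2}}$, $\FF_{p^{m/3}}$ and $\FF_{p^{m/4}}$.

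The groups $\SU_3$ and $\Ree$ are handled the same way, except that $x$ may be non-$p$-central. In that case $x$ is a regular or subregular unipotent element with smaller $\dim [W,x]$ on the adjoint-type modules. I treat $\SL_3$ first, as in Proposition~\ref{SL3Split}, and then restrict. Suzuki groups are excluded because $p=2$ and the relevant counts exceed $m$. When $O_{p'}(X)\ne1$, a $p'$-group acted on by $x$ enlarges the moved space by Clifford theory, and that rules this case out except for the listed central extensions. The finitely many sporadic cases I check with Brauer character tables and the known minimal dimensions of faithful modules.

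The alternating case $\Alt(2p)$ with $m=2$ requires a separate study of modules where a $p$-cycle, or a product of two $p$-cycles, has at most $2$ nontrivial Jordan blocks. I would use branching to $\Sym(p)\times\Sym(p)$ and the known Jordan block structure of $p$-cycles on Specht modules. I expect the main obstacle to be the $\PSL_2(p^m)$ analysis with $m$ composite. There one must find exactly which sub-tensor products descend to proper subfields, and compute Jordan blocks precisely enough to get an exact count rather than an estimate.
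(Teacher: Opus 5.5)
There are two genuine gaps: your base inequality uses the wrong invariant, and the reduction that removes $O_{p'}(X)$ is essentially missing.

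\textbf{The base inequality.} Your claim that $\dim_{\FF_p}[V,x]=k$ is false for odd $p$, and the rest of your argument depends on it. Being $k$-active means $x$ has $k$ non-trivial Jordan blocks, each of size between $2$ and $p$. Hence $k\le\dim[V,x]\le k(p-1)$. The number of non-trivial blocks is $\dim[V,x]-\dim[V,x,x]$, and $\dim[V,x]=k$ only when $x$ acts quadratically.

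With your bound $\dim[W,x]\le m$ you would discard $V_i(p^m)$ for $i\ge 2$, where $\dim[W,x]=im$. You would also discard $(V_1(p^m)\otimes V_1(p^m)^{\sigma^j})|_{\FF_p}$, where $\dim[W,x]=2m$. Both appear in conclusion (i), and your own surviving list (``a single $V_2$'') already violates your bound.

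Repairing the bound to $\dim[W,x]\le m(p-1)$ does not rescue the method, because moved-space counting cannot produce the exact list. Take $p\ge 7$, $m\ge 4$ and $W=(V_1(p^m)\otimes V_1(p^m)^{\sigma}\otimes V_1(p^m)^{\sigma^2})|_{\FF_p}$. Then $\End_{\FF_pX}(W)=\FF_{p^m}$ and $W|_{\langle x\rangle}=2mJ_2\oplus mJ_4$. So $\dim[W,x]=5m\le m(p-1)$, yet $x$ is $3m$-active and $W$ is not on the list.

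The quantity to track is the number of non-trivial blocks over the splitting field. Suppose $\End_{\FF_pX}(W)=\FF_{p^\ell}$.
\begin{itemize}
\item Then $x$ has at most $m/\ell$ non-trivial blocks on $W\otimes_{\FF_{p^\ell}}\FF_{p^m}$.
\item Stability under $\Gal(\FF_{p^m}/\FF_{p^\ell})$ forces at least $m/\ell$ non-trivial Steinberg tensor factors, each containing a $J_2$.
\item Renaud's decomposition of $J_r\otimes J_s$ then gives $m/\ell\le 4$ and identifies the factors.
\end{itemize}
This is how the paper obtains (i)(a)--(f). The same block counting, not moved-space counting, drives the $\SL_3(p^{2n})$ and $\SU_3(p^n)$ analysis.

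\textbf{The reduction to quasisimple $O^{p'}(O^p(X))$.} Saying $X$ is ``generated by a few conjugates of $S$'' bounds nothing, since only $x$ is controlled. What is needed is that $F^*(X/O_{p'}(X))$ is covered by the image of $\langle x,x^g,x^h\rangle$; two conjugates suffice outside a short list of exceptions. This gives $\dim V/C_V(K)\le 3m(p-1)$, but only for a subgroup $K$ that covers $X$ modulo $O_{p'}(X)$.

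Controlling $O_{p'}(X)$ is then the main work of the paper, not a Clifford-theory remark. The paper classifies faithful irreducible $\FF_pX$-modules of dimension at most $4m(p-1)$ for every group in the class. It does this with a minimal counterexample and a normal $\ell$-subgroup $M\le O_{p'}(X)$ chosen minimal subject to $[T,M]\ne 1$.
\begin{itemize}
\item When $M$ is extraspecial-type, it uses embeddings in symplectic and orthogonal groups over $\FF_2$.
\item When $M$ is elementary abelian, it uses Clifford theory, minimal permutation degrees and $H^2$ computations.
\end{itemize}

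This classification has genuine non-central outcomes near the bound, and each must be eliminated by a further argument using the block condition. Examples are $2^{1+8}_+.\Ree(3)$ and $(4\circ 2^{1+6}).\SU_3(3)$ on $16$-dimensional $\FF_3$-modules. Others are elementary abelian $O_{3'}(X)$ under $\Ree(3)$ or $\mathrm{M}_{11}$ on $9$-, $11$- or $12$-dimensional modules.

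Your claim that only central extensions survive is false. For $p=3$, $X=(4\circ 2^{1+4}).\Alt(6)$ has a faithful $8$-dimensional $\FF_3X$-module on which some element of order $3$ is $2$-active, and $O_{3'}(X)$ is not central. This is why conclusion (vii) is phrased through a subgroup $L$.

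Producing $L\cong\Alt(2p)$, $\SL_2(9)$ or $2\cdot\Alt(10)$ is an extension problem: it needs $H^2(\Alt(2p),-)$ to vanish on small $\FF_2$-modules. Branching Specht modules to $\Sym(p)\times\Sym(p)$ does not address this. It also misses the $8$-dimensional spin modules of $2\cdot\Alt(10)$ that occur for $p=5$.
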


For all cases  of Theorem \ref{ModResult}, we also provide explicit Jordan block structure of $W|_{\langle x\rangle}$ when these cases arise in the analysis. This data is conveniently displayed in Table~\ref{Thm1 Table} in Appendix~\ref{AppB}. If \ref{altmainintro} holds, then the possible composition factors of $V|_L$ are described in Proposition \ref{CompFactorsAlt2p} when $p\geq 5$. If instead we have that $p=3$ then $L/Z(L)\cong \PSL_2(9)$ and the possible composition factors of $V|_L$ are described in \ref{modsl2} of Theorem \ref{ModResult}.

We may extract the following consequence from Theorem~\ref{ModResult}.

\begin{corollary}
Suppose that $p$ is a prime, $X=O^{p'}(X)$ is a group with a strongly $p$-embedded subgroup and $V$ is a faithful $\FF_pX$-module with $V=[V, X]$ and $C_V(X)=0$. Assume that $m:=m_p(X) \geq 2$, $x \in X$ has order $p$ and $x$ is $k$-active on $V$ for some $k<m$.

Then $X\cong \PSL_2(p^m)$, $p$ is odd, $m$ is even, $V$ is any irreducible summand of $(V_1(p^m)\otimes V_1(p^m)^{\sigma^{\frac{m}{2}}})|_{\FF_p}$ of  dimension $2m$, and $x$ acts with $\frac m2$ trivial blocks and $ \frac{m}{2}$ blocks of size $3$.
\end{corollary}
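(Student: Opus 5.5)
The plan is to derive the Corollary directly from Theorem~\ref{ModResult} together with the Jordan block data for each case (Table~\ref{Thm1 Table}). Since $V=[V,X]$ and $C_V(X)=0$, every case of the theorem that identifies $[V,X]/C_{[V,X]}(X)$ forces $V=W$ to be irreducible. In cases \ref{modsl2} and \ref{altmainintro} the theorem only names composition factors, so there we would use a counting argument on Jordan blocks. The key inequality is the following: if $0=V_0<\dots<V_r=V$ is a composition series, then the number of non-trivial Jordan blocks of $x$ on $V$ is at least the number on any single non-trivial composition factor. This holds because the rank of $x-1$ on $V$ is at least its rank on any section. Moreover, $\operatorname{rank}(x-1)|_V\ge\sum_i\operatorname{rank}(x-1)|_{V_i/V_{i-1}}$. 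Writing $k$ for the number of blocks of size $\ge2$, we have $k\ge \operatorname{rank}(x-1)/(p-1)$. This will let us bound $k$ from below in terms of the ranks on the composition factors.

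First step. Go through each case and its listed Jordan structure, and compute the minimal number of non-trivial blocks of an order-$p$ element on $W$. For $V_i(p^m)$ with $i\ge1$, every non-identity unipotent element has $m$ non-trivial blocks, since over $\FF_{p^m}$ it acts on $V_i$ as a single block of size $i+1$, restricted to $\FF_p$. For $\SU_3$, Ree, $\mathrm M_{11}$, $2\cdot\PSL_3(4)$, $4\cdot\PSL_3(4)$, the various tensor modules for $\SL_2$, and the $\Alt(2p)$ cases, the table should show that at least $m$ blocks are non-trivial. The one exception is the $2m$-dimensional summand in \ref{Omega4}: there $V_1\otimes V_1^{\sigma^{m/2}}$ is an $\Omega_4^-(p^{m/2})$-module, a transvection acts with blocks of size $3$ and $1$ over $\FF_{p^{m/2}}$, and so over $\FF_p$ we get $m/2$ blocks of size $3$ and $m/2$ trivial blocks. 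For $p=2$ this module carries the parity restriction, and indeed $\Omega_4^-(2^{m/2})\cong\SL_2(2^m)$ makes it the natural module, giving $m$ blocks; so we need $p$ odd. The kernel contains $-1$, hence $X\cong\PSL_2(p^m)$.

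Second step. Show that $V$ itself is this irreducible module. Suppose $x$ is $k$-active with $k<m$. Every non-trivial composition factor is one where $x$ has fewer than $m$ non-trivial blocks, so each is of type \ref{Omega4}, and on each of them $\operatorname{rank}(x-1)=m$. If there were two non-trivial composition factors, the rank on $V$ would be at least $2m$, while blocks have size at most $3$ when the factors have Loewy-bounded structure. I expect this to be the main obstacle. A crude bound gives $k\ge 2m/(p-1)$, which is not enough for $p>3$. I would instead use that $x$ lies in a Sylow $p$-subgroup $S\cong \FF_{p^m}^+$ acting on $V$ with $p$-rank $m$. Then I would invoke the paper's earlier result that the module is $k$-active for some $k<m$ only when $V$ is irreducible, which is presumably proven in the $\SL_2$ section via $\operatorname{Ext}^1$ vanishing between such summands and trivial modules. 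Concretely, $\operatorname{Ext}^1_X(W,\FF_p)=0$ and $\operatorname{Ext}^1_X(W,W')=0$ for the $\Omega_4^-$ modules when $p$ is odd. Together with $V=[V,X]$ and $C_V(X)=0$, this forces $V$ to be semisimple, in fact equal to $W$. Finally, a direct sum of two such summands would give $m$ blocks, contradicting $k<m$. So $V=W$ has the stated dimension and Jordan structure.
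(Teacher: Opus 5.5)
Reading the corollary off Theorem~\ref{ModResult} and Table~\ref{Thm1 Table} is the route the paper intends; it gives no further proof. However, your first step contains a false claim, and it cannot be repaired because the statement itself fails at that point.

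You assert that in the $\Alt(2p)$ cases the table shows at least $m$ non-trivial blocks. In fact, Table~\ref{Thm1 Table} and the proof of Proposition~\ref{CompFactorsAlt2p} record that a $p$-cycle acts on the $(2p-2)$-dimensional heart of the permutation module as $J_p\oplus(p-2)J_1$. Take $p\ge 5$ and $X=\Alt(2p)$, which is in the list of Proposition~\ref{SE2} with $m=2$. Let $V$ be its natural $\FF_pX$-module and $x$ a $p$-cycle. Then $V$ is faithful and irreducible, so $V=[V,X]$ and $C_V(X)=0$. Moreover $x$ is $1$-active with $1<m$, yet $X\not\cong\PSL_2(p^2)$.

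Case \ref{altmainintro} of Theorem~\ref{ModResult} also contains non-quasisimple counterexamples for every odd $p$. Let $D_0$ be the group of diagonal matrices of determinant $1$ in $\GL_{2p}(p)$, and let $X=D_0\Alt(2p)$ be the monomial group. Then $X=O^{p'}(X)$ and $X/O_{p'}(X)\cong\Alt(2p)$. The module $\FF_p^{2p}$ is irreducible, since its restriction to $D_0$ is a sum of $2p$ distinct characters permuted transitively. A $p$-cycle acts as $J_p\oplus pJ_1$. For $p=3$ this gives $2^5{:}\Alt(6)$ acting on a $6$-dimensional module.

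So any correct version of the corollary needs an extra $\Alt(2p)$ outcome with $x$ a $p$-cycle. Your strategy can only deliver the stated conclusion for groups falling under cases (i)--(vi) of Theorem~\ref{ModResult}.

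Within cases (i)--(vi) there are two further problems.

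First, the monotonicity you use (a composition factor has at most as many non-trivial blocks as $V$) is true, but not for the reason you give. Ranks cannot justify it, since $J_3$ and $2J_2$ have the same rank. The correct reason is that the number of non-trivial blocks of $x$ on $U$ equals $\dim_{\FF_p}([U,x]\cap C_U(x))$. This can only increase when passing from a submodule to $V$, and the dual argument handles quotients.

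Second, in your second step the ``earlier result'' you invoke is not in the paper. Your $\operatorname{Ext}$-vanishing claim is also false for $q=9$: the sum-zero submodule of the $\FF_3$-permutation module of $\Alt(6)\cong\PSL_2(9)$ on six points is a non-split extension with trivial submodule and $4$-dimensional quotient. What does work is the following.
\begin{itemize}
\item For $p=3$, your rank count suffices: rank is superadditive over a composition series and blocks have size at most $3$, so only one non-trivial composition factor $W$ can occur. Then $C_V(X)=0$ and $V=[V,X]$ give $\mathrm{soc}(V)\cong W\cong V/\mathrm{rad}(V)$, and $V=W$ because $W$ occurs only once.
\item For $p\ge5$, you must genuinely prove that $W$ has no non-split extensions with itself or with $\FF_p$. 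This can be done, for instance, from the Andersen--J{\o}rgensen--Landrock description used in Proposition~\ref{extension}. That forces $V\cong W^{r}$, and then the block count gives $r=1$.
\end{itemize}
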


Suppose that $x\in X$ has order $p$ and is $k$-active on $V$ for some $k\leq m$. Then $x$ centralizes a subspace of $V$ of codimension at most $m(p-1)$. A straightforward argument (see Lemma \ref{dimbound}) then shows that for $K$ a certain subgroup of $X$ which also has a strongly $p$-embedded subgroup,
\[\dim_{\FF_p} V/C_V(K)\leq 4m(p-1).\]
Consequently, there exists a non-trivial $K$-composition factor $W$ of $V$ such that $W$ has dimension at most $4m(p-1)$. This means that a central ingredient in the proof of Theorem~\ref{ModResult} is Theorem~\ref{lem:with one boundintro} applied to the pair $(K/C_K(W), W)$. When combined with the known classification of low-dimensional representations of quasisimple groups, this theorem significantly sharpens earlier bounds on the minimal dimension of faithful modular representations for groups with a strongly $p$-embedded subgroup (see, for example, \cite[Lemma 1.7]{OV}, \cite[Lemma 4.6]{parkersemerarocomputing}). Results of this type have already found repeated applications in group theory, representation theory and the study of fusion systems, and we expect that the strengthened bounds obtained here will prove similarly useful in future work.

\begin{theorem}\label[theorem]{lem:with one boundintro}
Assume that $X=O^{p'}(X)$ is a group with a strongly $p$-embedded subgroup with $m:=m_p(X)\geq 2$. Let $V$ be a faithful, irreducible $\FF_pX$-module and assume that $\dim_{\FF_p}V \leq 4m(p-1)$. Then one of the following holds:
\begin{enumerate}
\item\label{quasisimple} $O^{p'}(O^p(X))$ is quasisimple.
\item\label{5Caseintro} $(m,p) =(2,5)$, $\dim_{\FF_5} V=16$ and $X\cong (4\circ 2^{1+8}).\Alt(10)$.
\item\label{3Case2intro} $(m,p) =(2,3)$, $\dim_{\FF_3} V\in\{8, 16\}$ and $X\cong (4\circ 2^{1+4}).\Alt(6)$.
\item\label{3Caseintro} $(m,p) =(2,3)$, $\dim_{\FF_3} V=16$ and $X$ is isomorphic to $(4\circ 2^{1+6}).\Ree(3)$, $(4\circ 2^{1+6}).\SU_3(3)$, $2^{1+8}_+.\Ree(3)$ or $2^{1+8}_+.\Alt(6)$.
\item\label{permcase1intro} $(m,p) =(2,5)$, $X/O_{5'}(X) \cong \PSL_2(25)$, $O_{5'}(X)$ is an abelian $2$-group of order at most $4^{26}$, $\dim_{\FF_5} V=26$ and $V|_{O_{5'}(X)}$ is a direct sum of $26$ $1$-dimensional homogeneous components which $X$ permutes transitively.
\item\label{permcase2intro} $(m,p) =(2,3)$, and either 
\begin{enumerate}
    \item $X/O_{3'}(X)\cong \Ree(3)$ and  $\dim_{\FF_3} V=9$; or
   \item $X/O_{3'}(X)\cong \mathrm{M}_{11}$ and $\dim_{\FF_3} V\in \{11,12\}$.
    \end{enumerate} 
Moreover, in both cases, $O_{3'}(X)$ is an elementary abelian $2$-group of order $2^{\dim V-1}$, and $V|_{O_{3'}(X)}$ is a direct sum of $\dim V$ $1$-dimensional homogeneous components which $X$ permutes transitively.
\item\label{Alt2pCaseintronew} $p$ is odd, $m=2$, $X/O_{p'}(X) \cong \Alt(2p)$, $V|_{O_{p'}(X)}=V_1\oplus \dots \oplus V_{r}$ where $\dim_{\FF_p} V_i\leq 3$, and $X$ acts transitively on the set $\{V_1,\dots,V_r\}$. If $r\ne 2p$ then $p=3$, $r\in \{10, 15\}$ and $\dim_{\FF_p} V_i=1$. Furthermore, there is $L\le X$ such that $X=LO_{p'}(X)$ and $L$ is quasisimple.
\end{enumerate}
\end{theorem}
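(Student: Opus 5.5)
Assume $X=O^{p'}(X)$ has a strongly $p$-embedded subgroup, $m\ge 2$, and $V$ is faithful and irreducible with $\dim V\le 4m(p-1)$. The plan is a reduction along the generalized Fitting subgroup combined with the known structure of groups with strongly $p$-embedded subgroups. Since $V$ is faithful and irreducible over $\FF_p$, $O_p(X)=1$. By the standard structure theorem for groups with a strongly $p$-embedded subgroup and $m_p(X)\ge 2$, $\bar X:=X/O_{p'}(X)$ has a unique minimal normal subgroup $\bar L$. Moreover $\bar L$ is simple, and either $\bar L$ is a rank $1$ group of Lie type in characteristic $p$, or $\bar L$ is one of the short list $\Alt(2p)$, $\mathrm{M}_{11}$ ($p=3$), $\PSL_3(4)$ ($p=3$), $\mathrm{Fi}_{22}$ ($p=5$), $\mathrm{McL}$ ($p=5$), $\mathrm J_4$ ($p=11$), ${}^2F_4(2)'$ ($p=5$), with Sylow $p$-subgroups of order $p^m$ in the non-Lie cases. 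Let $Q:=O_{p'}(X)$. If $Q\le Z(X)$, then $X=O^{p'}(X)$ forces $O^{p'}(O^p(X))$ to be a perfect central extension of $\bar L$, hence quasisimple, and we are in \ref{quasisimple}. So the work is the case $Q\not\le Z(X)$.

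\textbf{Clifford reduction.} Apply Clifford's theorem to $V|_Q$. Write $V=V_1\oplus\cdots\oplus V_r$ as the homogeneous components, permuted transitively by $X$. There are two subcases.

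\emph{Subcase $r>1$.} Let $K$ be the kernel of the permutation action. Since $X=O^{p'}(X)$ and $QK\norm X$, we get $X/K$ as a nontrivial transitive permutation image of degree $r\le \dim V\le 4m(p-1)$, on which $p$ divides the order. As $\bar L$ is the unique minimal normal subgroup, $\bar L$ acts faithfully. We then need a transitive action of a group involving $\bar L$ of degree at most $4m(p-1)$. Minimal permutation degrees of rank $1$ Lie type groups are $p^m+1$, $p^{3n}+1$ (with $m=2n$), and so on, and $p^m+1\le 4m(p-1)$ only for small cases. The small cases are $(p,m)=(5,2)$ with $\PSL_2(25)$ on $26$ points, $(3,2)$ with $\Ree(3)\cong \PSL_2(8){:}3$ on $9$ points, and $\mathrm M_{11}$ on $11$ or $12$ points. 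We also get $\Alt(2p)$ on $2p$ points, and for $p=3$ on $10$ or $15$ points. The sporadic options are excluded by degree. In these cases $\dim V_i\le 4m(p-1)/r$, giving $\dim V_i\le 3$ for $\Alt(2p)$ and $\dim V_i=1$ in the other cases. When $\dim V_i=1$, the group $Q$ acts on each component by $\FF_p^\times$-scalars and faithfully overall. Taking $p'$-structure then gives an abelian group, which is a $2$-group of the stated order after using $X=O^{p'}(X)$ to remove odd parts: the transitive action gives the module structure and the determinant/commutator argument bounds the order. For $\Alt(2p)$, the quasisimple complement $L$ comes from the vanishing of the relevant cohomology, or from the Schur multiplier after splitting $Q$, giving \ref{Alt2pCaseintronew}.

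\emph{Subcase $r=1$.} Here $Q$ acts homogeneously, so every characteristic abelian subgroup of $Q$ is cyclic and central in $X$ modulo scalars. Take a minimal non-central $X$-invariant subgroup of $Q$. By the standard Hall argument it is an extraspecial (or central product $4\circ 2^{1+2a}$) $r$-group $R$ with $R/Z(R)$ an irreducible $\FF_r X$-module. Then $\dim V\ge r^a$ because faithful irreducible modules of $R$ have dimension $r^a$. Meanwhile $\bar L$ embeds in $\Sp_{2a}(r)$ or $\mathrm O^\pm_{2a}(2)$, so $r^a\ge$ the minimal degree of $\bar L$ in cross characteristic. Combining $r^a\le 4m(p-1)$ with Landazuri--Seitz type lower bounds for cross-characteristic representations of $\bar L$ leaves only $r=2$, $p\in\{3,5\}$, $m=2$. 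The survivors are $2^{1+8}$ with $\Alt(10)$, $\Alt(6)$ or $\Ree(3)$ inside $\mathrm O^+_8(2)$, and $4\circ 2^{1+4}$, $4\circ 2^{1+6}$ with $\Alt(6)$ or $\SU_3(3)$. These are identified with \ref{5Caseintro}, \ref{3Case2intro} and \ref{3Caseintro} by checking which overgroups of $p$-rank $2$ in $\Sp_{2a}(2)$ or $\mathrm{O}^\pm_{2a}(2)$ have strongly $p$-embedded subgroups, together with the extension of $V$ over $\FF_p$, which determines whether $\dim V$ is $2^a$ or $2^{a+1}$.

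\textbf{Main obstacle.} I expect the main obstacle to be the numerical elimination in the $r=1$ and $r>1$ cases. It requires careful use of the minimal degrees of $\bar L$ (both permutation and cross-characteristic) across all families, together with the finitely many small leftovers $(p,m)\in\{(3,2),(5,2)\}$. These leftovers must be handled explicitly, partly by computer.
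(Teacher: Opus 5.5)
Your architecture (Clifford theory for $V|_{O_{p'}(X)}$, with the imprimitive case settled by permutation degrees and the homogeneous case by extraspecial groups) matches the paper's. However, the homogeneous case has a genuine gap.

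\textbf{Components and the Hall step.} The step ``take a minimal non-central $X$-invariant subgroup of $Q$; by Hall it is extraspecial or $4\circ 2^{1+2a}$'' assumes this subgroup is an $r$-group, and nothing you say guarantees that.
\begin{itemize}
\item $Q=O_{p'}(X)$ need not be nilpotent. It may contain components, i.e.\ quasisimple $p'$-groups such as $\SL_2(5)$, $\PSL_2(7)$ or $3\cdot\Alt(6)$, the very groups occurring in \ref{Alt2pCaseintronew}. If every $O_r(X)$ is central, there is no $r$-group to feed into Hall's theorem at all.
\item Homogeneity makes abelian normal subgroups of $X$ inside $Q$ cyclic, but not central. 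When $\wt X\cong\Ree(3)$ or $\Sz(32){:}5$, the $p$-quotient of $X$ can act nontrivially on a cyclic normal subgroup.
\end{itemize}
The missing ingredient is Lemma~\ref{NoComps}. A homogeneously acting $X$-orbit of $k$ components forces $\dim V\ge 2^k$ with $k\ge\pdeg(\wt X)$, which is too large, and a normal component is excluded via the Schreier property. This gives $F^*(X)=F(X)$ outside \ref{quasisimple} and \ref{Alt2pCaseintronew}. Hence there is a prime $\ell$ with $[T,O_\ell(X)]\ne1$ for a Sylow $p$-subgroup $T$, and Hall's theorem is applied to a minimal such $M\le O_\ell(X)$.

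\textbf{The bound $r^a\ge\rdim_r(\bar L)$.} This has a similar hole. $Q$ may act nontrivially on $R/Z(R)$, where $R$ is your extraspecial group. Then $R/Z(R)$ is a faithful module for an extension of $\bar X$ by a $p'$-group rather than for $\bar X$ itself, so the cross-characteristic bounds for $\bar L$ do not apply directly. The paper needs the inductive claim inside Lemma~\ref{lem:SympDet}, using a minimal supplement $L$ with $X=LQ$ and the minimality of $X$, to produce a chief factor that is a faithful $\FF_\ell\wt X$-module.

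Your list of survivors is also off. $\Alt(10)$ is not a subgroup of $\mathrm O_8^{\pm}(2)$, so the $p=5$ example is $4\circ 2^{1+8}$ with $\Alt(10)\le\Sym(10)\le\Sp_8(2)$. Also, it is $4\circ2^{1+6}$ with $\Ree(3)$ or $\SU_3(3)$ that occurs, not with $\Alt(6)$.

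\textbf{The supplement $L$ in the $\Alt(2p)$ case.} In the imprimitive case, the quasisimple $L$ demanded by \ref{Alt2pCaseintronew} does not simply come from ``vanishing of the relevant cohomology'', for two reasons.
\begin{itemize}
\item $Q$ is in general neither abelian nor nilpotent: it is a subdirect product of $p'$-subgroups of $\GL_3(p)$, possibly with components. So there is not yet a module to take cohomology with. The paper first reduces, inside the minimal counterexample, to $Q$ nilpotent. It shows that a proper $Y$ with $X=YQ$ would already yield a quasisimple supplement, so $X=N_X(D)$ for every Sylow subgroup $D$ of $Q$. Only then does it identify the $X$-chief factors in $Q$ with sections of permutation modules and apply Lemma~\ref{AltCohomology}.
\item For $p=3$ the cohomology does not vanish. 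We have $H^2(\Alt(6),W_i)\cong\FF_2$ for the $5$-dimensional quotients $W_i$ of the $\FF_2$ permutation modules. Lemma~\ref{alt6case} shows that groups with $X/O_2(X)\cong\Alt(6)$ and no quasisimple supplement genuinely exist.
\end{itemize}
These groups are eliminated only by extra information about them:
\begin{itemize}
\item their faithful $\FF_3$-modules have dimension at least $30>16$, which settles $r\in\{10,15\}$;
\item the relevant parabolic $P$ has no subgroup of index $2$. The paper turns this against $N_X(W_1)$ when $r=6$, after showing that $Q$ is a $2$-group and $\dim V_i\le 2$.
\end{itemize}
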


We have the following remark concerning case (vii) of Theorem \ref{lem:with one boundintro} where we do not provide an exact description of $O_{p'}(X)$.

\begin{remark}
 Let  $X:=\PSL_2(7)\wr \Alt(22)$ and write $O_{11'}(X)=K_1\times \dots \times K_{22}$ where $K_i\cong \PSL_2(7)$ for each $i$. Since $\PSL_2(7)$ has a $3$-dimensional faithful $\FF_{11}$-module, we can form the $\FF_{11}X$-module $V$ such that $V|_{O_{11'}(X)}=V_1\oplus \dots \oplus V_{22}$, $V_i=[V,K_i]$ is a $3$-dimensional $\FF_{11}K_i$-module and $X$ acts on the set $\{V_1,\dots, V_{22}\}$ in its natural $22$-point permutation action. Since $3\times 22=66<80=4\times 2\times 10$, this is an unavoidable example which is captured in Theorem~\ref{lem:with one boundintro} (vii).
\end{remark}

With some additional analysis of smaller cases, Theorem~\ref{lem:with one boundintro} reduces the proof of Theorem \ref{ModResult} to the situation where $X$ is a rank $1$ quasisimple group of Lie type in characteristic $p$, where powerful tools such as Steinberg's Tensor Product Theorem are available.

The main reduction for Theorem \ref{ModResult} is given in Proposition \ref{blocks lemma2}. From there, we initiate an analysis of the various cases which are treated in Proposition \ref{M11Prop}, Proposition \ref{PSL34Prop}, Proposition \ref{Alt2pp>5New}, Proposition \ref{SL2qmods}, Proposition \ref{SU3mods} and Proposition \ref{prop:ReeMods}.

The following corollary may be extracted from Theorem \ref{lem:with one boundintro}.

\begin{corollary}\label{cor:mc1}
Assume that $X=O^{p'}(X)$ is a group with a strongly $p$-embedded subgroup such that $m:=m_p(X)\geq 3$. If $V$ is a faithful $\FF_pX$-module and $\dim_{\FF_p} V\le 4m(p-1)$, then $O^{p'}(O^p(X))$ is quasisimple.
\end{corollary}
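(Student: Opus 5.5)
The plan is to deduce Corollary~\ref{cor:mc1} from Theorem~\ref{lem:with one boundintro}. Every exceptional conclusion (ii)--(vii) of that theorem has $m=2$, so when $m\geq 3$ only (i) can occur. The one thing to handle is that the corollary allows an arbitrary faithful module, while the theorem needs a faithful irreducible one. So we must pass to a suitable irreducible section on which the relevant part of $X$ still acts faithfully, and check that the hypotheses of the theorem survive.

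\textbf{Step 1: reduce to an irreducible section.} Let $V$ be faithful with $\dim_{\FF_p} V\le 4m(p-1)$. Since $O_p(X)=1$ (a group with a strongly $p$-embedded subgroup has no nontrivial normal $p$-subgroup), the kernel of the action of $X$ on $\bigoplus W$, over the composition factors $W$ of $V$, is a normal $p$-subgroup. Hence this kernel is trivial. So we may choose a composition factor $W$ whose kernel $C_X(W)$ does not contain a Sylow $p$-subgroup, or better, one that is nontrivial for $Y:=O^{p'}(O^p(X))$.

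I would use the following standard facts about strongly $p$-embedded subgroups.
\begin{itemize}
\item If $N\normaleq X$ is a $p'$-group, then $X/N$ still has a strongly $p$-embedded subgroup with the same $p$-rank.
\item If $N\normaleq X$ has order divisible by $p$ and $O_p(X)=1$, then $N$ contains a Sylow $p$-subgroup of $X$. Consequently $X=NN_X(T)$ and $O^{p'}(X)\le N$.
\end{itemize}
Applying this to $N=C_X(W)$: since $X=O^{p'}(X)$ and $W$ is a nontrivial composition factor, $C_X(W)$ must be a $p'$-group. Then $\bar X:=X/C_X(W)$ satisfies $\bar X=O^{p'}(\bar X)$, has a strongly $p$-embedded subgroup, has $m_p(\bar X)=m$, and acts faithfully and irreducibly on $W$ with $\dim W\le 4m(p-1)$.

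\textbf{Step 2: apply the theorem and lift back.} Theorem~\ref{lem:with one boundintro} applied to $(\bar X,W)$ with $m\geq 3$ forces conclusion (i), so $O^{p'}(O^p(\bar X))$ is quasisimple.

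It remains to lift this to $X$. This is the main obstacle: $X$ could a priori have a nontrivial $p'$-radical that acts trivially on $W$ but nontrivially on other factors. I would handle it as follows.
\begin{itemize}
\item Each nontrivial composition factor $W_j$ gives a $p'$-kernel $C_X(W_j)$.
\item The intersection of these kernels over all nontrivial $W_j$ acts unipotently on $V$, so it is a $p$-group. It is therefore trivial, and $X$ embeds in $\prod_j X/C_X(W_j)$.
\item The dimensions of the nontrivial factors sum to at most $4m(p-1)$.
\item I would then argue that the images $\bar Y_j$ of $Y$ are quasisimple with the same simple quotient, namely the unique nonabelian composition factor of order divisible by $p$. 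So $Y$ is a perfect central-by-simple subdirect product, with the $p'$-parts acting via central scalars.
\end{itemize}

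Alternatively, and more cleanly, one might first consider $Y$ together with the layer-type structure $O_{p'}(X)Y$. If $O_{p'}(Y)$ were non-central, some $W_j$ would detect it, contradicting quasisimplicity of $\bar Y_j$ unless that factor were of the exceptional permutation or extraspecial type, which requires $m=2$. Perfectness of $Y$ modulo its center then follows from the corresponding statement on each faithful factor.
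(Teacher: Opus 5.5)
Your route is the paper's. The paper gives no separate argument: it extracts the corollary from Theorem~\ref{lem:with one boundintro} because every outcome other than (i) has $m=2$. It leaves implicit the passage from a faithful module to a faithful irreducible one, which your Steps 1--2 supply. Two points in that supplement need repair, though neither changes the strategy.

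First, your ``standard fact'' is false. It is not true that a normal subgroup of order divisible by $p$, in a group with a strongly $p$-embedded subgroup and $O_p(X)=1$, contains a Sylow $p$-subgroup. A counterexample is $\PSL_2(8)\normaleq \Ree(3)$ with $p=3$; another is $\Sz(32)\normaleq \Sz(32):5$ with $p=5$. In both, $O^{p'}(X)\not\le N$. What you actually need holds for $m\ge 3$ for a different reason. By Proposition~\ref{SE2}, $\wt X=X/R$ is then simple, so every $N\normaleq X$ of order divisible by $p$ satisfies $NR=X$. In particular $O^p(X)R=X$, which forces $O^p(X)=X$ and $Y=X$. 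It also gives $C_X(W_j)\le R$ for every non-trivial composition factor $W_j$.

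Second, the lifting step is only gestured at. Your sentence deriving perfectness ``from the corresponding statement on each faithful factor'' is not valid, because a subdirect product of perfect groups need not be perfect. For example, $\{(a,b)\in\SL_2(5)^2 : a\equiv b \bmod Z(\SL_2(5))\}$ has derived subgroup of index $2$. A clean argument runs as follows.
\begin{enumerate}
\item $R/C_X(W_j)$ is a proper normal subgroup of the quasisimple group $X/C_X(W_j)$, hence central. Therefore $[R,X]\le\bigcap_j C_X(W_j)$.
\item This intersection lies in the $p'$-group $R$ and acts trivially on every composition factor of the faithful module $V$. So it is a $p$-group, and hence trivial. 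Thus $R\le Z(X)$.
\item Since $X/R$ is non-abelian simple, $X=X'R$, so $X/X'$ is a $p'$-group. Then $X=X'$ because $X=O^{p'}(X)$, and $X$ is quasisimple.
\end{enumerate}
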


Finally, we note that the classification of the finite simple groups is used sparingly in this work. In Proposition \ref{SE2} we describe the collection $\mathcal S$ of finite groups of $p$-rank at least $2$ with a strongly $p$-embedded subgroup, and in Lemma \ref{NoComps} we invoke the Schreier property. All results of this paper, including Theorems \ref{ModResult} and \ref{lem:with one boundintro}, only concern the groups in $\mathcal{S}$. Thus, if readers are primarily concerned with the groups in the class $\mathcal{S}$ and are willing to impose mild additional hypotheses (for example, that $O_{p'}(X)$ is solvable), then they may apply the results of this work without appealing to the classification of the finite simple groups.  
 
The structure of the paper is as follows. Section \ref{speSec} collects various facts about strongly $p$-embedded subgroups, their generation properties and the structure of their representations of small dimensions. We reserve a specific subsection for the groups $\Alt(2p)$, and in particular some facts about their cohomology. These groups often cause us the most difficulty, and consequently, the results we prove about them in Theorem \ref{ModResult} and Theorem \ref{lem:with one boundintro} are often weaker when compared with other classes of groups. Section \ref{BoundSec} is devoted to the proof of Theorem \ref{lem:with one boundintro}, making use of the facts collected in Section \ref{speSec}. This is the most technical section of the paper, where we undertake an intensive analysis of the structure of a minimal counterexample to Theorem \ref{lem:with one boundintro}, ultimately showing that such a counterexample does not exist.

In Section \ref{ReductionSec}, we use Theorem \ref{lem:with one boundintro} to reduce the proof of Theorem \ref{ModResult} to the case where $X$ is isomorphic to a quasisimple group of Lie type of rank $1$ and in defining characteristic $p$. The remaining sections of the paper then analyze the possible representations of the linear, unitary and Ree groups respectively, which satisfy the hypothesis of Theorem \ref{ModResult}. We make liberal use of Steinberg's Tensor Product Theorem, and the Jordan block structure of tensor products and symmetric powers, as surveyed in Appendix \ref{JordanSec}. This paper also makes use of {\sc Magma} to eliminate small cases in the proofs of various results. The associated code is attached as an ancillary file.

\begin{notation*}
Our notation for groups is mostly standard and follows \cite{AschbacherFG}. We will use variants of the bar notation for quotients. Thus for $G$ a finite group, $A\le G$ and $N\norm G$, defining $\bar{G}:=G/N$ we have that $\bar{A}:=AN/N$. For $\mathbb{K}$ a field, $A$ a finite group and $V$ a $\mathbb{K}A$-module, we define the $n$-fold commutator $[V,A;n]$ by first setting $[V,A;1] := [V,A]$ and then inductively defining $[V,A;n]= [[V,A;n-1],A]$ for $n>1$. We let $\Pi(G)$ denote the set of primes which divide the order of the finite group $G$.

Suppose that $p$ is a prime, $\mathbb{K}$ a field of characteristic $p$ and $H$ is a cyclic group of order $p$. For $1\leq n \leq p$, the indecomposable $\mathbb{K}H$-module of dimension $n$ is denoted by $J_n(\mathbb{K})$ and we represent a direct sum of $m\geq 0$ copies of $J_n(\mathbb{K})$ by $mJ_n(\mathbb{K})$. Furthermore, if $|\mathbb{K}|= p^a$ for some $a\in \N$, we will adapt our notation and write $J_n(p^a)$ for $J_n(\mathbb{K})$. If $|\mathbb{K}|=p$, we shall write $J_n=J_n(p)$. Then for $V$ a finite dimensional $\FF_p H$-module, we have $V= \bigoplus\limits_{i=1}^p n_iJ_i$. Typically, we omit the terms with $n_i=0$.

Notice that if $V$ is an indecomposable $\FF_{p^n}H$-module, then there exists $k$ such that $V=J_k\otimes_{\FF_p}\FF_{p^n}$. In particular, when $V$ is considered as an $\FF_pH$-module, we have $V=nJ_k$.
\end{notation*}

\textit{Acknowledgements:} We are all grateful to the University of Birmingham and the Heilbronn Institute for Mathematical Research for
organizing and funding the workshop ``Patterns in Exotic Fusion Systems'' where this work was begun. We thank Bob Oliver for his contributions to this project and for providing the motivation to complete this work. We would also like to thank David Craven and Ellen Henke for their input earlier in the project.

The first author is a member of the GNSAGA INdAM research group and kindly acknowledges its support. The fourth author gratefully acknowledges funding from the UK Research Council EPSRC for the project EP/W028794/1. The fifth author is supported by the Heilbronn Institute for Mathematical Research.

\section{Generation and representations of groups with a strongly $p$-embedded subgroup}\label[section]{speSec}

In this section, we analyze the generation and representation theory of groups with a strongly $p$-embedded subgroup. We begin by presenting the groups of $p$-rank at least two which contain a strongly $p$-embedded subgroup.

\begin{proposition}\label[proposition]{SE2}
Suppose that $X=O^{p'}(X)$ is a group with a strongly $p$-embedded subgroup and set $\wt{X}:=X/O_{p'}(X)$. If $m_p(X)\geq 2$ then $\wt{X}$ is isomorphic to one of the following:
\begin{enumerate}
\item $\PSL_2(p^{a})$ for $p$ arbitrary and $a\geq 2$;
\item $\PSU_3(p^a)$ for $p$ arbitrary and $p^a>2$;
\item $\Sz(2^{2a+1})$ for $p=2$ and $a\geq 1$;
\item $\mathrm{Ree}(3^{2a+1})$ for $p=3$ and $a\geq 0$;
\item $\Alt(2p)$ for $p>3$;
\item\label{exception1} $\PSL_3(4)$ or $\mathrm{M}_{11}$ for $p=3$;
\item ${}^2\mathrm{F}_4(2)'$, $\mathrm{Fi}_{22}$, $\Sz(32):5$ or $\mathrm{McL}$ for $p=5$;
\item\label{exception2} $\mathrm{J}_4$ for $p=11$.
\end{enumerate}
\end{proposition}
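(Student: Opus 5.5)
The plan is to reduce to the known classification of groups with a strongly $p$-embedded subgroup modulo $O_{p'}$, which rests on the classification of the finite simple groups, and then impose the rank condition $m_p(X)\geq 2$.

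\emph{Step 1: pass to $\wt{X}$.} Let $M$ be strongly $p$-embedded in $X$. Since $O_{p'}(X)$ is a normal $p'$-subgroup, $M O_{p'}(X)/O_{p'}(X)$ is strongly $p$-embedded in $\wt{X}$. Indeed, the standard characterisation says that $M$ is strongly $p$-embedded if and only if $M\geq N_X(T)$ for some nontrivial $p$-subgroup $T\le M$, and $M$ contains $N_X(Q)$ for every nontrivial $Q\le S\in\Syl_p(M)$. This characterisation passes to quotients by normal $p'$-subgroups via Frattini arguments on $N_X(Q)O_{p'}(X)$. Also $m_p(\wt{X})=m_p(X)$, $O^{p'}(\wt{X})=\wt{X}$ and $O_{p'}(\wt{X})=1$.

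\emph{Step 2: use the structure theorem.} With $O_{p'}(\wt{X})=1$, a group with a strongly $p$-embedded subgroup has, by the well-known consequence of the classification (see e.g. [GLS3, Theorem 7.6.1] or Aschbacher's theorem on groups with a proper $2$-generated core), the following shape: either a Sylow $p$-subgroup is cyclic or quaternion, or $F^*(\wt{X})$ is simple and lies in an explicit list. That list consists of the rank $1$ Lie type groups in characteristic $p$, $\Alt(2p)$ for $p\geq 5$, and the sporadic and exceptional cases in items \ref{exception1}--\ref{exception2}. Moreover, $\wt{X}$ embeds in $\Aut(F^*(\wt{X}))$.

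The hypothesis $m_p\geq 2$ excludes cyclic Sylow subgroups. It also excludes quaternion ones: for $p=2$, generalised quaternion groups have $2$-rank $1$.

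\emph{Step 3: use $O^{p'}(\wt{X})=\wt{X}$.} Write $\wt{X}=F^*(\wt{X})A$ with $A\leq \Out(F^*(\wt{X}))$. Since $O^{p'}(\wt{X})=\wt{X}$, the image of $\wt{X}$ in $\Out$ is generated by $p$-elements. For example, field automorphisms of $p$-power order may survive; for $\PSL_2(p^a)$ these are handled because the extension must itself still have a strongly $p$-embedded subgroup.

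This is the step I expect to be the main obstacle: deciding exactly which almost simple extensions retain a strongly $p$-embedded subgroup. The tool is the criterion that $\wt{X}$ has one if and only if the commuting graph on elements of order $p$ is disconnected. An outer $p$-element $\phi$ centralises a Lie-type subfield subgroup containing unipotent elements, which connects the components, so no strongly $p$-embedded subgroup survives. The exception $\Sz(32){:}5$ for $p=5$ arises precisely because the field automorphism of order $5$ gives a disconnected graph. I would check these cases individually against the list in [GLS3, 7.6.1], possibly with {\sc Magma} for the sporadic ones.

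\emph{Step 4: check $p$-ranks.} Finally, compute the $p$-ranks of the survivors to discard rank-$1$ cases. For $\PSL_2(p^a)$ we have rank $a$, so we need $a\geq 2$. $\PSU_3(p^a)$ has rank $\geq 2$, where $p^a=2$ is excluded since that group is solvable. $\Sz$ and $\Ree$ have rank $\geq 2$, as does $\Alt(2p)$. For $p=3$ one gets $\PSL_3(4)$ and $\mathrm{M}_{11}$; $\Alt(6)\cong\PSL_2(9)$ is already covered. For $p=5$ the list gives ${}^2\mathrm{F}_4(2)'$, $\mathrm{Fi}_{22}$, $\mathrm{McL}$ and $\Sz(32){:}5$, and for $p=11$ it gives $\mathrm{J}_4$. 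Each of these has $p$-rank exactly $2$.
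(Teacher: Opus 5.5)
You have the same overall route as the paper: pass to $\wt X$, invoke the classification of groups with a strongly $p$-embedded subgroup (\cite[Theorem 7.6.1]{GLS3}, as packaged in \cite{parkerSE}), and discard the rank-one cases. However, Step 1 has a genuine gap. The characterisation you invoke includes the condition that $M$ be a \emph{proper} subgroup. The Frattini argument only transfers the normaliser condition $N_{\wt X}(\wt Q)=\wt{N_X(Q)}\le \wt M$. It says nothing about whether $\wt M=MO_{p'}(X)/O_{p'}(X)$ is proper in $\wt X$.

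This properness is not automatic, and it is exactly where $m_p(X)\geq 2$ is needed. Take $X=\Alt(4)$ and $p=3$, or more generally any Frobenius group $N\rtimes C$ with $N\neq 1$ a $p'$-group and $|C|=p$. Then $X=O^{p'}(X)$, and $C$ is strongly $p$-embedded because it is self-normalising of prime order. But $X=CO_{p'}(X)$, and $\wt X\cong C_p$ has no strongly $p$-embedded subgroup. You use the rank hypothesis only in Step 2, to exclude cyclic and quaternion Sylow subgroups. So as written, Step 1 would "prove" a false statement in rank one.

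The missing idea is the one the paper supplies. Choose $Q\le S\in\Syl_p(M)$ elementary abelian of order $p^2$; note $S\in\Syl_p(X)$. By coprime action (\cite[Theorem 5.3.16]{GOR}),
$$O_{p'}(X)=\langle C_{O_{p'}(X)}(a) : a\in Q^\#\rangle.$$
For each $a\in Q^\#$ we have $C_X(a)\le N_X(\langle a\rangle)\le M$, so $O_{p'}(X)\le M$. Hence $MO_{p'}(X)=M<X$, and $\wt M$ is strongly $p$-embedded in $\wt X$. The paper phrases this as a case split. If $X\neq KO_{p'}(X)$, it cites \cite[(2.5), (3.3)]{parkerSE}. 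If $X=KO_{p'}(X)$, the generation argument gives $X=K$, a contradiction.

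With this added, Steps 2--4 are fine as citations. Your Step 3 heuristic about outer $p$-automorphisms connecting the commuting graph is not needed. The cited list, with the erratum to \cite[Theorem 7.6.1]{GLS3} that the paper mentions, already records which almost simple extensions occur, such as $\Sz(32){:}5$.
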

\begin{proof}
Let $K$ be strongly $p$-embedded in $X$. If $X\ne KO_{p'}(X)$, then this follows from \cite[(2.5), (3.3)]{parkerSE} which in turn uses \cite[Theorem 7.6.1]{GLS3} (with the appropriate erratum). See also \cite[Proposition 4.5]{parkersemerarocomputing}. So assume that $X=KO_{p'}(X)$. By \cite[Theorem 5.3.16]{GOR}, using that $m_p(X)\geq 2$, we have \[O_{p'}(X)=\langle C_{O_{p'}(X)}(a) : a\in Q^\#\rangle\] for some $Q\le S\in\syl_p(X)$ which is elementary abelian of order $p^2$. Then, since $K$ is strongly $p$-embedded in $X$, we have $O_{p'}(X)\le K$ and $X=K$, a contradiction.
\end{proof}

\begin{notation}\label{SNotation}
Let $\mathcal{S}$ be the collection of finite groups $X$ such that $X=O^{p'}(X)$, $m_p(X)\geq 2$ and $X/O_{p'}(X)$ is isomorphic to one of the groups described in the outcome of Proposition \ref{SE2}. Whenever $X\in \mathcal{S}$, we set $T\in\syl_p(X)$, $m:=m_p(X)$, $n=\log_p(|Z(T)|)$, $R=O_{p'}(X)$ and $\wt{X}:=X/R$.
\end{notation}

We shall repeatedly use the following observation.

\begin{lemma}
Suppose that $X$ has a strongly $p$-embedded subgroup, $m_p(X)\geq 2$, $Y \le X$ and $N$ is a normal subgroup of $X$. Set $R:=O_{p'}(X)$.
 \begin{enumerate}
 \item If $X=YR$, then $m_p(Y)=m_p(X)$, and both $Y$ and $O^{p'}(Y)$ have strongly $p$-embedded subgroups.
 \item If $N \le R$, then $m_p(X/N)=m_p(X)$ and $X/N$ has a strongly $p$-embedded subgroup.
 \end{enumerate}
\end{lemma}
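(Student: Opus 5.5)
Both parts reduce to one standard criterion: $X$ has a strongly $p$-embedded subgroup if and only if $p$ divides $|X|$ and the commuting graph on the subgroups of order $p$ of $X$ is disconnected; equivalently, for $S\in\Syl_p(X)$, the subgroup $M:=\langle N_X(Q) : 1\ne Q\le S\rangle$ is proper in $X$. In that case $M$ itself is strongly $p$-embedded. We will also use the fact, already exploited in the proof of Proposition~\ref{SE2}, that for $E\le S$ elementary abelian of order $p^2$ and any $E$-invariant $p'$-subgroup $K$, we have $K=\langle C_K(a) : a\in E^\#\rangle$ \cite[Theorem 5.3.16]{GOR}.

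\textbf{Part (i).} Since $R$ is a $p'$-group and $X=YR$, we have $|X|_p=|Y|_p$, so a Sylow $p$-subgroup $S$ of $Y$ is Sylow in $X$. Hence $m_p(Y)=m_p(X)$. Let $M$ be strongly $p$-embedded in $X$ with $S\le M$. Because $m_p(X)\ge 2$, the generation statement gives $R=\langle C_R(a): a\in E^\#\rangle\le M$, as in Proposition~\ref{SE2}. Set $M_Y:=M\cap Y$, which contains $S$ and so has order divisible by $p$. If $M_Y=Y$, then $X=YR\le M$, which is impossible. So $M_Y$ is proper in $Y$. For $y\in Y\setminus M_Y$ we have $y\notin M$, so $M_Y\cap M_Y^y\le M\cap M^y$ is a $p'$-group. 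Thus $M_Y$ is strongly $p$-embedded in $Y$.

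Now put $Y_0:=O^{p'}(Y)$. It contains $S$ and is normal in $Y$, so the Frattini argument gives $Y=Y_0N_Y(S)$. Also $N_Y(S)\le M_Y$, since the normalizer of a nontrivial $p$-subgroup of $M_Y$ lies in $M_Y$. If $M_Y\cap Y_0=Y_0$, then $Y=Y_0N_Y(S)\le M_Y$, a contradiction. Hence $M_Y\cap Y_0$ is proper in $Y_0$, and the same intersection argument shows it is strongly $p$-embedded in $Y_0$.

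\textbf{Part (ii).} Set $\bar X=X/N$ with $N\le R$. Since $N$ is a $p'$-group, $|\bar X|_p=|X|_p$. For $p$-rank, any elementary abelian $p$-subgroup $E$ of $X$ satisfies $E\cap N=1$, so $m_p(\bar X)\ge m_p(X)$. Conversely, given an elementary abelian $\bar A\le \bar X$, take $P\in\Syl_p(AN)$ where $A$ is its full preimage. Then $P\cong \bar A$ via the coprime order of $N$, which gives equality of $p$-ranks.

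For strong $p$-embedding, put $\bar M=MN/N$, using $N\le R\le M$ from above. If $\bar M=\bar X$ then $M=X$, so $\bar M$ is proper in $\bar X$, and it has order divisible by $p$. Suppose $\bar x\notin\bar M$, so $x\notin M$. Then $\bar M\cap\bar M^{\bar x}=(M\cap M^x)N/N$, because $N\le M\cap M^x$. This is a quotient of a $p'$-group, hence a $p'$-group.

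The main obstacle is the inclusion $R\le M$. It is the only step that needs $m_p(X)\ge2$, and it is needed both to intersect with $Y$ and to pass to the quotient. It follows exactly as in Proposition~\ref{SE2}: each $C_R(a)$ with $a\in E^\#$ normalizes $\langle a\rangle$ up to the $p'$-part and lies in $N_X(\langle a\rangle)\le M$.
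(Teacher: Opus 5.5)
Your proof is correct. Its core is the same as the paper's: coprime generation $R=\langle C_R(a)\mid a\in E^\#\rangle$, for $E\le S$ elementary abelian of order $p^2$, puts $R$ inside the strongly $p$-embedded subgroup. One then intersects with $Y$ or passes to $X/N$.

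The packaging differs. The paper works with $\Gamma=\langle N_X(U)\mid 1<U\le T_0\rangle$ and the criterion ``$\Gamma<X$''. It gets (i) from $\langle N_Y(U)\mid 1<U\le T_0\rangle\le\Gamma\cap Y<Y$, and (ii) from the Frattini identity $N_{\overline X}(\overline U)=\overline{N_X(U)}$. You instead use the intersection definition directly, via $M\cap Y$ and $\overline M\cap\overline M^{\overline x}=(M\cap M^x)/N$.

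The genuinely different step is the claim for $O^{p'}(Y)$. The paper reduces it to the claim for $Y$ by asserting $X=O^{p'}(Y)R$ ``by comparing with the groups in Proposition~\ref{SE2}''. That relies on the classification-based list and tacitly on the standing assumption $X=O^{p'}(X)$. Without that assumption the step fails: for $X=Y=\mathrm{PGL}_2(p^a)$ with $p$ odd and $a\ge 2$, one has $R=1$ and $O^{p'}(Y)R\ne X$. Your Frattini argument, $Y=O^{p'}(Y)N_Y(S)$ with $N_Y(S)\le M\cap Y$, shows in complete generality that $O^{p'}(Y)$ inherits a strongly $p$-embedded subgroup from $Y$. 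So your route is more elementary and proves the lemma exactly as stated. The paper's route is shorter and stays within the $\Gamma$-criterion it uses elsewhere. You also verify $m_p(X/N)=m_p(X)$, which the paper only asserts.

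Two cosmetic fixes. The phrase ``normalizes $\langle a\rangle$ up to the $p'$-part'' should simply read $C_R(a)\le C_X(a)\le N_X(\langle a\rangle)\le M$. Since the full preimage $A$ already contains $N$, take $P\in\Syl_p(A)$ rather than $P\in\Syl_p(AN)$.
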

\begin{proof}
Since $X=YR$, comparing with the groups in Proposition \ref{SE2}, we have $X=O^{p'}(Y)R$. Hence, we may as well prove (i) only for $Y$. Let $T_0\in \syl_p(Y)\subseteq \syl_p(X)$. Then $T_0 \in \Syl_p(X)$ so that $m_p(Y)=m_p(X)$. Since $X$ has a strongly $p$-embedded subgroup and $m_p(X) \geq 2$, we have $X>\Gamma:=\langle N_X(U)\mid 1<U\le T_0\rangle \ge R$. In particular, $Y \not \le \Gamma$. Hence $\gen{N_{Y}(U)\mid 1<U\le T_0}\le \Gamma \cap Y< Y$ and so $Y$ has a strongly $p$-embedded subgroup. Therefore, (i) holds.

Assume that $N \le R$, $T \in \syl_p(X)$ and write $\ov X=X/N$. Then $\ov T \in \syl_p(\ov X)$ and $m_p(X)=m_p(\ov X)$. Assume $1 < \ov U \le \ov T$. Then $N_{\ov X}(\ov U)= \ov{N_X(U)}$ by the Frattini Argument. Hence, as $\ov X > \ov {\langle N_X(U)\mid 1<U\le T\rangle}= \langle N_{\ov X}(\ov U)\mid 1<\ov U\le \ov T\rangle$ and $O^{p'}(\ov X)= \ov{O^{p'}(X)}=\ov X$, we have that $\ov X$ has a strongly $p$-embedded subgroup. This is (ii).
\end{proof}

\begin{remark}
In the above lemma, we may replace the condition ``has a strongly $p$-embedded subgroup" by ``belongs to $\mathcal{S}$." Hence, $\mathcal{S}$ is closed under taking certain subgroups and quotients.
\end{remark}

\subsection{Generation of groups with a strongly $p$-embedded subgroup}

In this subsection, we provide results concerning the generation of the groups in $\mathcal{S}$ by conjugate $p$-elements. These will be pivotal to the proof of Theorem \ref{ModResult} via Lemma \ref{dimbound}.

\begin{lemma}\label[lemma]{SU3Generation}
Assume that $G= \SU_3(p^b)$, $p^b \geq 5$, and let $x$ be an element of order $p$ with Jordan form $J_3(p^{2b})$ on the natural $\FF_{p^{2b}}$-module for $G$. Then there exists an involution $t\in G$ such that $G=\langle x,t\rangle=\langle x,x^t\rangle$. Furthermore, if $p^b=3$, then $G$ is generated by two conjugates of $x$, but it is not generated by $x$ and an involution.
\end{lemma}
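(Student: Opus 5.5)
Fix the form. Let $H=\gen{x,t}$ for an involution $t$ to be chosen. Since $x^t\in H$, we have $\gen{x,x^t}\le\gen{x,t}$. So it suffices to find $t$ with $\gen{x,x^t}=G$.

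We realise $G$ with respect to the antidiagonal hermitian form on $\FF_{q^2}^3$, where $q=p^b$. We take $x$ in the unipotent radical $U$ of the Borel subgroup $B$ stabilising the isotropic point $\gen{e_1}$. The element $x$ is not in $Z(U)$, since its Jordan form is $J_3$, which forces $p$ odd or $x\notin Z(U)$.

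For $t$ we take the involution $t=\mathrm{diag}(-1,-1,1)$ composed with the Weyl element $w$ swapping $e_1,e_3$. More concretely, we choose $t\in G$ of order $2$ interchanging the isotropic points $\gen{e_1}$ and $\gen{e_3}$, e.g. $t=-w$ adjusted to determinant $1$. Then $x^t$ lies in the opposite unipotent radical $U^-$ and fixes $\gen{e_3}$.

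The core of the proof is to show that $H_0:=\gen{x,x^t}$ is not contained in any maximal subgroup of $G$. We use the known list of maximal subgroups of $\SU_3(q)$ (Mitchell, Hartley; or the Bray–Holt–Roney-Dougal tables):
\begin{itemize}
\item the Borel subgroups $N_G(\gen{v})$, $v$ isotropic;
\item the stabilisers $\mathrm{GU}_2(q)$ of non-isotropic points;
\item the monomial groups $(q+1)^2{:}\Sym(3)$;
\item the Singer normalisers $(q^2-q+1){:}3$;
\item the subfield groups $\SU_3(q_0)$ and $\mathrm{SO}_3(q)$;
\item the small exceptional groups such as $3^{1+2}.Q_8$, $\PSL_2(7)$, $\Alt(6)$, $\Alt(7)$ and $3\cdot\Alt(6)$.
\end{itemize}

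We rule these out in turn as follows.
\begin{itemize}
\item $H_0$ fixes no isotropic point: $x$ fixes only $\gen{e_1}$ (a $J_3$ element has a unique fixed point), while $x^t$ fixes only $\gen{e_3}$.
\item $H_0$ fixes no non-isotropic point, since any such point would lie in both $C(x)$ and $C(x^t)$; these are isotropic lines, and we argue as before.
\item The monomial and Singer normalisers have order coprime to $p$, or have Sylow $p$-subgroups too small, in the unless $p=3$. There $x$ has order $3$ but is regular unipotent, and a Jordan-form check excludes it.
\item For the subfield groups we choose $x$ with a parameter generating $\FF_{q^2}$, or else use a trace argument. The traces of $xx^t$ can be computed and shown not to lie in a proper subfield; for $\mathrm{SO}_3(q)$ we note that $x\in\mathrm{SO}_3$ is possible, so $t$ or the parameter must be chosen to avoid preserving a symmetric form.
\item The exceptional groups have bounded order, so for $q$ large they cannot contain an element of order $p$ acting as $J_3$. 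The remaining small $q$ (say $q\le 9$) we check directly, by hand or in {\sc Magma}.
\end{itemize}

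The main obstacle is the choice of parameters in $x$ and $t$ that excludes both subfield subgroups uniformly. We handle it first by computing $\operatorname{tr}(xx^t)$ as an explicit polynomial in the entries of $x$. Its value then lies in $\FF_q$ and not in any smaller subfield $\FF_{q_0}$. If the element is in $\mathrm{SO}_3(q)$, the $J_3$ structure gives a contradiction through the symmetric-form invariant. We expect to need to conjugate $t$ by a suitable torus element to gain a free parameter.

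For $q=3$, $G=\SU_3(3)$ has order $6048$. We show by direct {\sc Magma} enumeration that two conjugates of $x$ generate $G$.

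We then show that no involution $t$ gives $\gen{x,t}=G$. All involutions of $G$ are conjugate, with centraliser $\mathrm{GU}_2(3)$. Running over the $63$ involutions, or over the double cosets $C(x)\backslash G/C(t)$, we find that every $\gen{x,t}$ lies in a proper subgroup. The most likely such subgroups are $\PSL_2(7)$, which is generated by elements of orders $2$ and $3$ with product of order $7$, and the Borel subgroup. This is a finite computation.
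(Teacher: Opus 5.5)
\textbf{Verdict: the approach is viable, but the proposal has genuine gaps.} Your reduction to $\langle x,x^t\rangle=G$ and your plan to exclude every maximal subgroup of $\SU_3(q)$ are a legitimate alternative to the paper, which simply quotes \cite[Lemma 4.12]{PellegriniZalesski}. However, two of your exclusion steps are false as written, and the decisive computation is missing.

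\textbf{First gap: $p=3$.} The monomial group $(q+1)^2{:}\Sym(3)$ and the Singer normaliser $(q^2-q+1){:}3$ cannot be excluded by a Jordan-form check. When $3\mid q$ we have $3\nmid (q+1)(q^2-q+1)$, so an element of order $3$ in either group lies in a non-trivial coset of the normal $3'$-subgroup. Such elements \emph{are} regular unipotent in characteristic $3$:
\begin{itemize}
\item a monomial matrix $dP$, with $P$ a $3$-cycle and $\det d=1$, is $\GL_3$-conjugate to $P$, whose minimal polynomial is $t^3-1=(t-1)^3$;
\item similarly, $z\mapsto z^{q^2}$ on $\FF_{q^6}$ is the regular $\FF_{q^2}C_3$-module.
\end{itemize}
Your computer check only reaches $q=9$, so $q=27,81,\dots$ are left open. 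A repair is available via traces. If $\langle x,x^t\rangle$ lies in one of these groups, then one of $xx^t$, $x^{-1}x^t$ lies in a non-trivial coset and so has trace $0$. Hence it suffices to arrange that both traces are non-zero.

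\textbf{Second gap: the orthogonal and subfield cases.} $\mathrm{SO}_3(q)\cong\mathrm{PGL}_2(q)$ \emph{does} contain regular unipotent elements. With the antidiagonal form, write $u(a,b)$ for the upper unitriangular matrix with first row $(1,a,b)$ and $(2,3)$-entry $-\bar a$ (so $b+\bar b+a\bar a=0$). Then $u(a,-a^2/2)$ with $a\in\FF_q^\times$ and your $t=-w$ both lie in the standard $\mathrm{SO}_3(q)$. So with the most natural parameters $\langle x,t\rangle\le\mathrm{SO}_3(q)$, and no ``symmetric-form invariant'' of $J_3$ rules this out.

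Moreover, a trace lying in $\FF_q$, which is what you aim for, is precisely what does \emph{not} exclude $\mathrm{SO}_3(q)$. These maximal subgroups are $\mathrm{SO}_3(q)$ and $\SU_3(q_0)$ extended by central elements. So you need $\operatorname{tr}(xx^t)\notin\omega\FF_q\cup\bigcup_{q_0}\omega\FF_{q_0^2}$ for every cube root of unity $\omega$.

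Take $x=u(a,b)$, $N=a\bar a$ and $b=-N/2+\epsilon$ with $\bar\epsilon=-\epsilon$. Then
\[\operatorname{tr}(xx^t)=3-2N+N^2/4+\epsilon^2-N\epsilon,\qquad \operatorname{tr}(x^{-1}x^t)=3+2N+N^2/4-\epsilon^2.\]
So you need $\epsilon\neq0$, together with an actual argument that some choice of $(N,\epsilon)$ avoids all the bad sets (and, for $p=3$, keeps both traces non-zero). That existence argument is the step you yourself call the main obstacle, and it is not supplied. As it stands the proposal is a programme rather than a proof.

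\textbf{A smaller point.} You should justify varying $x$. The regular unipotent elements form $(3,q+1)$ classes in $G$. However, all $u(a,b)$ with $a\neq0$ are conjugate under the Borel subgroup of $\mathrm{GU}_3(q)$, which acts on $G$ by automorphisms. So varying $x$ is equivalent to replacing $t$ by a suitable conjugate.
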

\begin{proof}
Set $q=p^b$. Since $x$ has Jordan form $J_3(q^2)$ on the natural $\FF_{q^2}$-module for $G$, we have that $p$ is odd and $x$ is a regular unipotent element. Then the lemma follows from \cite[Lemma 4.12]{PellegriniZalesski}.
\end{proof}

In Section \ref{SU3Section}, in order to understand $\FF_p$-representations of $\SU_3(p^b)$ we first must understand $\FF_{p^a}$-representations of $\SL_3(p^{2b})$ for certain $a\in \N$. Thus, while $\SL_3(p^b)$ is not in $\mathcal{S}$, it is necessary for our later methods to have structural information about $\SL_3(p^{2b})$, which we obtain in a similar manner as for those groups with a strongly $p$-embedded subgroup. For this reason, we record the following lemma here.

\begin{lemma}\label[lemma]{SL3Generation}
Assume that $G= \SL_3(p^b)$, $p$ is odd, and let $x$ be an element of order $p$ in $G$. If $x$ has Jordan form $J_2(p^{b})\oplus J_1(p^{b})$ on the natural $\FF_{p^{b}}$-module for $G$, then $G$ is generated by three conjugates of $x$. If $x$ has Jordan form $J_3(p^{b})$ on the natural $\FF_{p^{b}}$-module for $G$, then $G$ is generated by two conjugates of $x$.
\end{lemma}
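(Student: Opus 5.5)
For the first assertion I would use root subgroups. Since $x$ has Jordan form $J_2(p^b)\oplus J_1(p^b)$ on the natural module, $x$ is a transvection. All transvections of $\SL_3(q)$, $q=p^b$, are conjugate in $\SL_3(q)$: any one is conjugate to some $x_{ij}(\lambda)=I+\lambda E_{ij}$, and a diagonal element $\mathrm{diag}(a,b,c)$ with $abc=1$ sends $x_{ij}(1)$ to $x_{ij}(\mu)$ with $\mu$ an arbitrary non-zero scalar, because the third diagonal entry absorbs the determinant condition. So I may take the three conjugates to be
\[x_{12}(1),\qquad x_{21}(\lambda),\qquad x_{23}(1),\]
with $\lambda$ chosen so that $\FF_p(\lambda)=\FF_q$, for instance $\lambda$ a generator of $\FF_q^\times$ or a suitable square of one.

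The first two of these lie in the Levi factor $\SL_2(q)$ acting on $\langle e_1,e_2\rangle$. By Dickson's classification of subgroups of $\SL_2(q)$, the group $\langle x_{12}(1),x_{21}(\lambda)\rangle$ is $\SL_2(\FF_p(\lambda))=\SL_2(q)$, apart from the small exceptional configurations (e.g.\ $p=3$, where $\SL_2(5)$ can occur inside $\SL_2(9)$). In those cases I would adjust $\lambda$ or check directly. Conjugating $x_{23}(1)$ by this $\SL_2(q)$ produces all root elements $x_{13}(\mu)$ and $x_{23}(\mu)$, and the transposed torus and Weyl elements then give every root subgroup. Since $\SL_3(q)$ is generated by its root subgroups, the three conjugates generate $G$.

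For the regular unipotent case, take $x=x_{12}(1)x_{23}(1)$ and $y=x^g$, where $g$ is the antidiagonal Weyl element composed with a torus element $h$ to be chosen, so that $y$ is lower unitriangular. Let $H=\langle x,y\rangle$. I would show $H$ lies in no maximal subgroup of $G$, using the list of maximal subgroups of $\SL_3(q)$ (Mitchell, Bloom).

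\begin{itemize}
\item $H$ stabilises no point or line, since $x$ fixes only the flag of the upper Borel and $y$ only the opposite flag. This excludes the parabolics.
\item Imprimitive subgroups have order prime to $p$ in their $3$-part structure unless $p=3$, and even then contain no regular unipotent of this shape. This excludes the imprimitive case.
\item Subfield subgroups $\SL_3(q_0)$ (times scalars) are excluded by choosing $h$ so that a suitable entry of $xy$, or the trace of $xy$, generates $\FF_q$ over $\FF_p$.
\item The small-order exceptions (those involving $\PSL_2(7)$, $\Alt(6)$, $3^2{:}\SL_2(3)$, and so on) contain elements of order $p$ only for small $p$; I would handle these by element orders, or by a {\sc Magma} check for small $q$.
\end{itemize}

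The main obstacle is the subgroup $\mathrm{SO}_3(q)\cong\PGL_2(q)$ (for $p$ odd), which does contain regular unipotent elements of $\SL_3(q)$. I must choose $h$ so that $x$ and $y$ preserve no common non-degenerate symmetric form. Concretely, the orthogonal group containing $x$ preserves a form determined up to scalars, and the one containing $y$ preserves the $g$-transported form. I would compute that these two forms are proportional only for a thin set of $h$ (roughly a polynomial condition in the torus parameters), so a generic choice avoids it; the same choice should also handle the unitary subgroups $\SU_3(q_0)$ when $q=q_0^2$, by making the trace of $xy$ lie outside $\FF_{q_0}$. Alternatively, this case could be cited directly from the generation results of Pellegrini--Zalesski used in \cref{SU3Generation}.
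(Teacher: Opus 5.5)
\textbf{The first assertion fails at the third conjugate.} The matrices $x_{12}(1)$, $x_{21}(\lambda)$ and $x_{23}(1)$ all have third row $(0,0,1)$. So all three lie in the maximal parabolic subgroup $P$ stabilising $\langle e_1,e_2\rangle$.

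Your observation that $L=\langle x_{12}(1),x_{21}(\lambda)\rangle\cong\SL_2(q)$ conjugates $x_{23}(1)$ onto every non-identity element of $O_p(P)=X_{13}X_{23}$ is correct. But it shows that your three elements generate exactly $O^{p'}(P)\cong q^2{:}\SL_2(q)$, which is a proper subgroup. The step ``the transposed torus and Weyl elements then give every root subgroup'' is false. The only Weyl elements you have lie in $L$, they normalise $O_p(P)$, and nothing in $O^{p'}(P)$ moves $X_{13}$ to $X_{31}$ or $X_{32}$.

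No other root element rescues this. Every $x_{ij}(\mu)$ outside $L$ lies in $P$ or in the opposite parabolic $P^-$, and e.g.\ $\langle L,x_{32}(1)\rangle=O^{p'}(P^-)$. The third conjugate must avoid $P\cup P^-$. This is exactly the role of the paper's choice of $x^h\in\hat S\setminus(S\cap\hat S)$, where $\hat S\neq S$ is a Sylow $p$-subgroup of the other parabolic $P_2\ge S$ with $x\notin Z(\hat S)$. Then $[H\cap S,x^h]=Z(\hat S)$ gives $O_p(P_2)$, hence $O^{p'}(P)=\langle O_p(P_2)^H\rangle\le\langle x,x^g,x^h\rangle$, and $x^h\notin P$ finishes.

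\textbf{How to repair your argument.} Replace $x_{23}(1)$ by the transvection $t=I+(e_2-e_3)(e_2^{T}+e_3^{T})$. It lies in the Levi $\SL_2(q)$ on $\langle e_2,e_3\rangle$ and has non-zero $(3,2)$-entry. The argument then goes through in three steps.
\begin{enumerate}
\item $t\,x_{12}(1)\,t^{-1}=x_{13}(-1)$, so $L$ again yields all of $X_{13}X_{23}$.
\item $\langle X_{23},t\rangle$ contains two distinct Sylow $p$-subgroups of the Levi on $\langle e_2,e_3\rangle$, hence equals it and contains $X_{32}$.
\item $[X_{32},X_{21}]=X_{31}$, so every root subgroup is reached.
\end{enumerate}

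With this fix your route is an explicit version of the paper's. It has one advantage. The paper obtains $H\cong\SL_2(q)$ from \cite[Proposition 2.16]{vbbook} and checks $q=9$ by {\sc Magma}, whereas your Dickson step handles $q=9$ by hand. A generator $\lambda$ of $\FF_9^\times$ is a non-square, so $x_{12}(1)$ and $x_{21}(\lambda)$ map to the two different classes of elements of order $3$ in $\PSL_2(9)\cong\Alt(6)$. Two such non-commuting elements lie in no maximal subgroup of $\Alt(6)$.

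\textbf{The regular unipotent case.} Citing Pellegrini--Zalesski is exactly what the paper does. Your self-contained sketch is not yet a proof: avoiding $\mathrm{SO}_3(q)$ and $\SU_3(q_0)$ is only asserted. For $p=3$ it is also wrong in two places. First, a $3$-cycle permutation matrix is regular unipotent in characteristic $3$, so the monomial subgroup cannot be dismissed. Second, you omit the subgroups $(q^2+q+1){:}3$, which also contain regular unipotent elements when $p=3$.
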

\begin{proof}
Set $q=p^b$. If $x$ has Jordan form $J_3(q)$ on the natural $\FF_{q}$-module for $G$ then $x$ is a regular unipotent element of $G$. Then the result follows from \cite[Lemma 4.12]{PellegriniZalesski}.

Suppose now that $x$ has Jordan form $J_2(\FF_{q})\oplus J_1(\FF_{q})$ on the natural $\FF_{{q}}$-module for $G$. Then $x$ is conjugate to a root element of $G$. Assume that $q\ne 9$. Set $x\in S\in\syl_p(G)$ and let $P$ be a maximal parabolic subgroup of $G$ containing $S$. Then we can arrange, up to conjugation, that $x$ lies in a Levi subgroup of $P$. Since $q\ne 9$, by \cite[Proposition 2.16]{vbbook}, there is $g\in G$ such that $H:=\langle x, x^g\rangle\cong \SL_2(q)$ with $H\cap O_p(P)=1$ and $O^{p'}(P)=HO_p(P)$. Hence, $H$ lies in exactly two maximal subgroups of $G$, namely $P$ and its opposite parabolic. Now, let $P_2$ be the maximal parabolic of $G$ which contains $S$ but is not equal to $P$, and choose $\hat{S}\in \syl_p(P_2)\setminus \{S\}$ with $x\not\in Z(\hat{S})$. Then $O_p(P_2)=(H\cap S)\times Z(\hat{S})$ and there are $h_1, h_3\in P_2$ and $h_2\in P$ such that $x^{h_1}\in Z(S)=O_p(P)\cap O_p(P_2)$, $x^{h_1h_2}\in S\setminus O_p(P_2)$ and $x^{h_1h_2h_3}\in \hat{S}\setminus (S\cap \hat{S})$. Set $h:=h_1h_2h_3$. Since $[H\cap S, x^h]=Z(\hat{S})$, we have that $O_p(P_2)=[H\cap S, x^h](H\cap S)$. Then $O^{p'}(P)=\langle O_p(P_2)^H\rangle$ from which we deduce that $O^{p'}(P)\le \langle x, x^g, x^h\rangle$. Since $x^h\not\in P$, we conclude that $G=\langle x, x^g, x^h\rangle$, as desired. We verify the result when $q=9$ using {\sc Magma} \cite{Magma}.
\end{proof}

\begin{lemma}\label[lemma]{ReeGeneration}
Suppose that $G= \Ree(3^{2a+1})$, for $a\geq 1$, and $x$ is an element of order $3$. Then there exists an involution $t$ such that $G= \langle x,t\rangle=\langle x,x^t\rangle$.
\end{lemma}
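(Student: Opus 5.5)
The plan is to mirror the proof of Lemma \ref{SU3Generation}, reducing to a statement about maximal subgroups of $G=\Ree(q)$, $q=3^{2a+1}\geq 27$. It suffices to find an involution $t$ such that $\langle x,x^t\rangle$ lies in no maximal subgroup of $G$. Then $\langle x,x^t\rangle=G$, and as $\langle x,x^t\rangle\le \langle x,t\rangle$, both equalities in the statement follow. There are three classes of elements of order $3$ in $G$: the central class in $Z(S)$ for $S\in\syl_3(G)$, the class in $Z_2(S)\setminus Z(S)$, and the elements outside $Z_2(S)$, which are regular. We treat each class in turn. We first look for a citation: the analogue of \cite[Lemma 4.12]{PellegriniZalesski}, or the known results that $\Ree(q)$ is $(2,3)$-generated with prescribed classes. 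If the citation covers all three classes, the proof is immediate. Otherwise we argue directly as follows.

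We use Kleidman's list of maximal subgroups of $\Ree(q)$. Up to conjugacy these are the Borel subgroup $N_G(S)=S{:}(q-1)$, the involution centralizer $2\times \PSL_2(q)$, the normalizers $(2^2\times D_{(q+1)/2}){:}3$, $(q\pm\sqrt{3q}+1){:}6$, and the subfield groups $\Ree(q_0)$ with $q=q_0^r$, $r$ prime. An element of order $3$ normalizing nothing in the tori normalizers forces these cases out except when $3$ divides the index $6$ part, so some care is needed there. The key steps are:
\begin{enumerate}
\item Count the pairs $(x,t)$ with $t$ an involution and $\langle x,x^t\rangle$ contained in a conjugate of each maximal subgroup $M$. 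This count is bounded by the number of conjugates of $M$ times the number of relevant pairs inside $M$.
\item Compare this sum with the total number $|G:C_G(t)|=q^2(q^2-q+1)$ of involutions, which are all conjugate in $G$.
\end{enumerate}
Since $x^t$ is conjugate to $x$ by an involution, $\langle x,x^t\rangle=\langle x, x^{-1}x^t\rangle$ is dihedral-like over $\langle xx^t\rangle$ precisely when it is proper and small. A cleaner way to organise the count is via $\langle x,t\rangle$: it is proper exactly when it lies in some $M$. The number of involutions $t$ with $\langle x,t\rangle\le M$, summed over the conjugates of $M$ containing $x$, is at most $\sum_M |\{M^g\ni x\}|\cdot i_2(M)$.

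We expect the main obstacle to be the term coming from the Borel subgroup and from $2\times\PSL_2(q)$ when $x$ is $3$-central. In that case $x$ lies in many conjugates of $2\times\PSL_2(q)$, namely in $C_G(t')$ for many involutions $t'$. These contributions are of order $q^3$ against a total of about $q^4$, so the inequality should still hold for $q\geq 27$, but the fixed-point counts must be done carefully. For this we would use the known character-theoretic data (fixed points of $x$ on $G/M$) from Ward's character table. Subfield groups contribute $O(q^{4/3+\epsilon})$, and the torus normalizers contribute few involutions, so both are negligible. Finally, we must check that $\langle x,x^t\rangle$, and not just $\langle x,t\rangle$, is all of $G$. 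This holds because $\langle x,x^t\rangle$ has index at most $2$ in $\langle x,t\rangle=G$, and $G$ is simple.
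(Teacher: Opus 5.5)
Your counting route is genuinely different from the paper's and it can be completed. However, several facts your sketch relies on are wrong, and the terms you call negligible are actually the dominant ones.

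\emph{The classes.} Every element of $S\setminus\Omega_1(S)$ has order $9$. Indeed the regular unipotent elements of $\Ree(q)$ have order $9$, so \cite{PellegriniZalesski} cannot be quoted for $x$. The three classes of elements of order $3$ are:
\begin{itemize}
\item $\mathcal X$, the conjugates of $Z(S)^\#$, with $C_G(x)=S$;
\item $T$ and $T^{-1}$, both meeting $\Omega_1(S)\setminus Z(S)$, with $|C_G(x)|=2q^2$.
\end{itemize}

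\emph{Your ``main obstacle'' is attached to the wrong class.} A $3$-central $x$ lies in no conjugate of $2\times\PSL_2(q)$, since $S$ contains no involution. By contrast, $C_G(x)$ contains exactly $q$ involutions when $x\in T^{\pm1}$. So such an $x$ lies in $q$ involution centralizers, giving about $q^3$ bad involutions.

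\emph{The other terms are not negligible.} Use the fixed-point count $|C_G(x)|\,|x^G\cap M|/|M|$ with $x\in T^{\pm1}$.
\begin{itemize}
\item Such an $x$ lies in up to $2q^2/3$ conjugates of each Frobenius group $(q\pm\sqrt{3q}+1){:}6$ and of $(2^2\times D_{(q+1)/2}){:}3$. Together these contribute about $2q^3$ involutions.
\item For $x\in\mathcal X$, the subfield groups contribute about $q^3q_0$, not $O(q^{4/3+\epsilon})$. For example, when $q=27$, $x$ lies in $19683\cdot 56/1512=729$ conjugates of $\Ree(3)$, each with $63$ involutions.
\end{itemize}

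\emph{The corrected count still closes.} At $q=27$ there are at most about $4.7\cdot 10^4$ bad involutions for $x\in\mathcal X$ and about $7.6\cdot 10^4$ for $x\in T^{\pm1}$. The total number of involutions is $q^2(q^2-q+1)=512487$. In general the bad involutions number roughly $3q^3$ for $T^{\pm1}$, and $q^2+\sum_r q^3q_0$ for $\mathcal X$ (the sum over primes $r$ with $q=q_0^r$). Both are well below $q^4$ for $q\ge 27$. Your final step, that $\langle x,x^t\rangle$ has index at most $2$ in the simple group $\langle x,t\rangle$, is fine.

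\emph{Comparison with the paper.} The paper does no counting. It evaluates a single class structure constant from Ward's table to find an involution $t$ with $xt=z$ of order $q+1+3m_0$. Then $6(q+1+3m_0)$ divides $|\langle x,t\rangle|$, and the maximal subgroup list leaves only $N_G(\langle z\rangle)\cong(q+1+3m_0){:}6$. That case is excluded because $x=zt$ would lie in the dihedral $3'$-group $\langle z,t\rangle$. The paper's argument needs character values but almost no bookkeeping. Yours needs only centralizer orders, Kleidman's list, and the numbers of involutions and elements of order $3$ in each maximal subgroup. It costs a case-by-case estimate, and it shows in addition that a proportion of involutions tending to $1$ works.
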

\begin{proof}
Set $m_0:=3^a$ and $q=3m_0^2=3^{2a+1}$. We adopt the notation of \cite{Ward} and use the generic character table therein. We ascertain that $G$ has one class of involutions, $J$, and three classes of elements of order $3$: $\mathcal{X}$ whose elements are central in a Sylow $3$-subgroup, and $T$ and $T^{-1}$ whose elements centralize an involution. Let $t\in G$ be an involution lying in the class $J$ and $z$ an element of order $q+1+3m_0$ lying in the class $W$. By \cite[Table Chapter V]{Ward}, the only irreducible characters of $G$ which do not vanish on $x$ or $z$ are $\xi_1$, $\xi_3$, $\xi_6$ and $\xi_8$. We first aim to show that $xt\in W=z^G$.

From the structure constants formula \cite[Theorem 4.2.12]{GOR} we need to show \[0\ne a_{xtw}:=\sum_{\chi \in \mathrm{Irr}(G)} \frac{\chi(x)\chi(t)\overline{\chi(z^{-1})}}{\chi(1)} = \sum_{i\in\{1,3,6,8\}} \frac{\xi_i(x)\xi_i(t)\overline{\xi_i(z^{-1})}}{\xi_i(1)}\] for $x \in \{\mathcal{X},T\}$. We calculate
\[a_{xtw}=\begin{cases}
1+ \frac{(q-m_0)(q-1)}{ (q-1)m_0(q+1-3m_0)}+ \frac{(q-m_0)(q-1)}{2(q-1)m_0(q+1-3m_0)}=  1+ \frac{3(q-m_0) }{2m_0(q+1-3m_0)}, & x\in \mathcal{X}\\
1+ \frac{(-m_0+im_0^2\sqrt 3)(q-1)}{2(q-1)m_0(q+1-3m_0)} +  \frac{(-m_0-im_0^2\sqrt 3)(q-1)}{2(q-1)m_0(q+1-3m_0)}=1-\frac{1}{(q+1-3m_0)},& x\in T,\end{cases}\]
both of which are non-zero. Hence, $|\langle x,t \rangle|$ is divisible by $6(q+1+3m_0)$. The case where $x\in T^{-1}$ is similar.

Appealing to \cite[Theorem 6.5.5]{GLS3}, we deduce that either $\langle x,t\rangle= G$ or that $\langle x,t\rangle= N_G(\langle z\rangle)$ where $t$ is an involution chosen so that $xt=z$. In the latter case, however, we see that $x=zt\in \langle t, z\rangle$ which is a $3'$-group, a contradiction. Hence $G=\langle x,t\rangle$ and as $\langle x, x^t\rangle\norm G$, we have that $G= \langle x,x^t\rangle$, which proves the claim.
\end{proof}

We summarize the results about the generation of groups in $\mathcal{S}$ in the following proposition. We make use of {\sc Magma} \cite{Magma} in the proof.

\begin{proposition}\label[proposition]{generation}
Suppose that $X\in \mathcal{S}$. Then for $x\in T$ of order $p$, either there exists $g\in X$ such that $\wt{\langle x,x^g\rangle} \ge F^*(\wt X)$ or one of the following holds for $a\in \N$:
\begin{enumerate}
\item $\wt X \cong \PSL_2(2^a)$, $p=2$ and $a\geq 2$;
\item $\wt X \cong \PSL_2(9)$ and $p=3$;
\item $\wt X\cong \Sz(2^{2a+1})$, $p=2$ and $a\geq 1$;
\item $\wt X \cong \PSU_3(p^a)$, $p$ is arbitrary, $p^a>2$ and $\wt x \in Z(\wt T)$;
\item $\wt X\cong \Alt(2p)$, $p \geq 5$ and $\wt x$ is a $p$-cycle.
\end{enumerate}
Furthermore, in all cases, there exist $g,h \in X$ such that $F^*(\wt X) \le \wt{\langle x, x^g, x^h\rangle}$.
\end{proposition}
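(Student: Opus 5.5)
Since $R=O_{p'}(X)$ and $\wt X=X/R$, and the images of conjugates of $x$ in $\wt X$ are conjugates of $\wt x$, it suffices to prove the statement in $\wt X$: for each group in Proposition \ref{SE2} and each class of elements of order $p$, find two (or, for the exceptions, three) conjugates generating a subgroup containing $F^*(\wt X)$. Since $\wt X=O^{p'}(\wt X)$, we have $F^*(\wt X)$ simple in every case except $\Sz(32):5$, where $F^*=\Sz(32)$ and $x$ of order $5$ may lie outside it. So I will argue group by group through the list, with the lemmas above covering the generic families.

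For $\PSL_2(p^a)$ with $p$ odd and $p^a\neq 9$, I take $x$ a root element and choose $g$ so that $x^g$ lies in the opposite unipotent radical. Then $\langle x,x^g\rangle$ is an image of $\SL_2(p^b)$ for the subfield $\FF_{p^b}$ generated by the ratio of the relevant root parameters. Choosing that ratio to generate $\FF_{p^a}$ gives all of $\PSL_2(p^a)$. For $p=2$, two involutions generate a dihedral group, so this fails; this is exception (i). For $\PSL_2(9)$, I verify directly (or with {\sc Magma}) that two conjugates never suffice; this is exception (ii). For $\Sz$, all $p$-elements of order $2$ give dihedral groups, which is exception (iii). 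For $\PSU_3(p^a)$, the non-central elements of order $p$ are handled as follows: regular unipotents are covered by Lemma \ref{SU3Generation} together with the $p^b=3$ clause. The remaining non-central elements need a similar structure-constant or subgroup-overgroup argument via the maximal subgroup lists. The central ones (transvections) generate at most $\SL_2$-type subgroups in pairs, which is exception (iv). For $\Ree(3^{2a+1})$ with $a\geq1$, Lemma \ref{ReeGeneration} applies, and $\Ree(3)'\cong\PSL_2(8)$ is checked directly. The sporadic and small cases $\Alt(2p)$, $\PSL_3(4)$, $\mathrm{M}_{11}$, ${}^2\mathrm{F}_4(2)'$, $\mathrm{Fi}_{22}$, $\Sz(32):5$, $\mathrm{McL}$ and $\mathrm{J}_4$ I handle by {\sc Magma}, by random search for $g$ with $\langle x,x^g\rangle$ of full order, or by character-theoretic structure constants against maximal subgroups. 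For $\Alt(2p)$, a product of two $p$-cycles is generated by two conjugates, since these generate a transitive group containing a $p$-cycle-type element and hence $\Alt(2p)$ by Jordan's theorem. Two $p$-cycles, by contrast, have supports of total size at most $2p$ and generate at most $\Alt(2p-1)$ or an intransitive group, which is exception (v).

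For the final statement, I show in each exceptional case that three conjugates suffice. In $\PSL_2(2^a)$, pick $x^g$ so that $\langle x,x^g\rangle$ is dihedral of order $2(2^a+1)$, which is maximal, and add a third involution outside it. $\Sz$ is handled similarly. In $\PSU_3$ with a transvection, $\langle x,x^g\rangle\cong\SL_2(p^a)$, which lies in a unique maximal subgroup (the stabiliser of a non-isotropic point), and a third conjugate outside it finishes. For $\Alt(2p)$ with $p$-cycles, three suitably overlapping $p$-cycles generate a primitive group containing a $p$-cycle, hence $\Alt(2p)$ by Jordan's theorem. For $\PSL_2(9)$, a direct check suffices.

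The main obstacle is the non-regular, non-central unipotent classes of $\PSU_3(p^a)$ and the sporadic cases. For these I would first try structure-constant computations, as in Lemma \ref{ReeGeneration}, to produce an element of order $(p^{2a}-p^a+1)/d$, and then invoke the known maximal subgroups containing such elements.
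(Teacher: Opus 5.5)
Your plan follows the paper: go through Proposition~\ref{SE2} case by case, use Lemmas~\ref{SU3Generation} and~\ref{ReeGeneration}, and use {\sc Magma} for $\Ree(3)$, $\PSL_3(4)$, $\mathrm{M}_{11}$ and the sporadic groups. However, two steps fail as written.

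The first is the $(p,p)$ case of $\Alt(2p)$. Jordan's theorem needs a \emph{primitive} group that contains a genuine cycle of prime length at most $2p-3$. A $(p,p)$-element is not such a cycle, and transitivity does not give primitivity. For example, $a=(1,\dots,p)(p+1,\dots,2p)$ and its $\Alt(2p)$-conjugate $b=(1,p+2,3,4,\dots,p)(p+1,2,p+3,p+4,\dots,2p)$ generate a transitive group. That group preserves the pairing $\{\{i,p+i\}\}$, so it lies in $(\Sym(2)\wr\Sym(p))\cap\Alt(2p)$. You therefore have to exhibit a specific $g$. The paper takes $b$ so that $ab$ is a $5$-cycle and $ab^{-1}$ is a $(2p-1)$-cycle fixing $1$. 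Then $\langle a,b\rangle$ is $2$-transitive, hence primitive, and contains a $5$-cycle, so Wielandt's Theorem~13.9 gives $\Alt(2p)$.

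The second is the final assertion for $\PSU_3(2^a)$ with $\wt x\in Z(\wt T)$. Here $x$ is an involution, so $\langle x,x^g\rangle$ is dihedral; this is your own reason for exceptions (i) and (iii). It is never $\SL_2(2^a)$, so your unique-maximal-overgroup argument has nothing to start from. Showing that three involutions suffice needs a separate argument, and the paper quotes Wagner's theorem for it.

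For odd $p$ your two-step argument is viable, even when $p^a=9$. The two $\SU_2(9)$-classes of transvections fuse in $\SU_3(9)$, so two conjugates of $x$ can generate $\SU_2(9)\cong\SL_2(9)$. The paper instead cites van Beek's monograph, and uses {\sc Magma} for $p^a=9$.

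Finally, the ``main obstacle'' you identify does not exist. For odd $p$, every element of $T\setminus Z(T)$ has Jordan form $J_3$ on the natural module, so it is regular unipotent. For $p=2$, every involution of $T$ lies in $Z(T)$. Hence Lemma~\ref{SU3Generation} already covers every non-central $x$, exactly as in the paper.
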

\begin{proof}
We work through the groups listed in $\mathcal{S}$.

For $\wt X \cong \PSL_2(p^a)$ the result holds by \cite[Proposition 2.16]{vbbook}. In particular, if $p^a=9$ then there is $g\in X$ with $\wt{\langle x,x^g\rangle}\cong \Alt(5)$, a maximal subgroup of $\wt X\cong \PSL_2(9)\cong \Alt(6)$. It is then easy to check that there is $h\in X$ such that $\wt{\langle x, x^g, x^h\rangle}=\wt X$. For $\wt X\cong \Sz(2^{2a+1})$ we appeal to \cite[Proposition 2.18(vi)]{vbbook} for the result. For $\wt X\cong \Ree(3^{2a+1})$ the result holds by Lemma \ref{ReeGeneration} if $a>0$. If $a=0$, then we verify the result using {\sc Magma} \cite{Magma}.

Suppose now that $\wt X\cong \PSU_3(p^a)$. Assume that $x\in Z(T)$. If $p\ne 2$ and $p^a\ne 9$, then the result holds by \cite[Proposition 2.17(vi)]{vbbook}. If $p^a=9$ then we appeal to {\sc Magma} \cite{Magma} for the result. If $p=2$, then we appeal to a result of Wagner \cite{WagnerInvolution}. If $x\not\in Z(T)$ then $p$ is odd and the claim holds by Lemma \ref{SU3Generation}.

Suppose that $\wt X\cong \Alt(2p)$ and that $p\geq 5$. Assume that $\wt x$ is of cycle type $(p,p)$. Let $a=(1,\dots,p)(p+1, \dots, 2p)\in \Alt(2p)$ and take $b=(p+2,p+1,p, \dots 3)(1,2,2p,2p-1, \dots p+4,p+3)\in\Alt(2p)$. Notice that $a$ and $b$ are conjugate in $\Alt(2p)$ by the element $(3, 2p, 4, 2p-1,\dots, p+3, p+1)$.

We calculate $ab=(1,2p,p,2,p+2)$ is a $5$-cycle, $ab^{-1}$ fixes $1$ in the natural action of $\Alt(2p)$ and \[ab^{-1}=(2,4,\dots,p+1,3,5,\dots,p, p+3,p+5,\dots ,2p, p+2,p+4,\dots,2p-1)\] is a $(2p-1)$-cycle. Hence $Y=\langle a,b\rangle$ is $2$-transitive and hence primitive in $\Alt(2p)$. Since $Y$ contains a $5$-cycle and $p \geq 5$,  \cite[Theorem 13.9]{wielandt} implies $Y=\Alt(2p)$. Up to relabeling, we may as well assume $\wt x=a$ and $\wt x^g=b$, and so the result holds in this case.

If $\wt x$ is a $p$-cycle, then up to relabeling we may as well assume that $\wt x$ corresponds to the element $(1, \dots, p)\in\Alt(2p)$. Then $(1, \dots, p)$, $(p+1, \dots,2p)$ and $(2, 3,\dots p+1)$ are conjugate elements which generate $\Alt(2p)$, and the result holds.

Finally, that the groups listed in parts \ref{exception1} to \ref{exception2} of Proposition \ref{SE2} are all generated by $\wt x$ and a conjugate is quickly verified using {\sc Magma} \cite{Magma}.
\end{proof}

\begin{remark}
In all of the above exceptions, two conjugate $p$-elements with the appropriate restrictions fail to generate the entire group.
\end{remark}

\subsection{Representations of groups with a strongly $p$-embedded subgroup}

We now collect results concerning the representation theory of groups in $\mathcal{S}$. We fix the following notation.

\begin{notation}
Suppose that $X$ is a finite group and $p$ is a prime. Then
\begin{enumerate}
\item $\rdim_p(X)$ is the minimal dimension of a faithful $\FF_p$-representation of any perfect central $p'$-extension of $X$;
\item $\rdim_{p'}(X):=\mathrm{min}_{r\in \Pi(X)\setminus \{p\}} \rdim_r(X)$; and
\item $\pdeg(X)$ is the minimal degree of a non-trivial transitive permutation representation of $X$.
\end{enumerate}
\end{notation}

\begin{lemma}\label[lemma]{ReeBound}
Suppose that $X\cong \Ree(3^n)$ and $V$ is a faithful $\FF_3X$-module. Then $\dim_{\FF_3} V\geq 7n$.
\end{lemma}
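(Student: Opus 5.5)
We need to show that any faithful $\FF_3\Ree(3^n)$-module $V$ has $\dim_{\FF_3}V\geq 7n$, where $n$ is odd and $q=3^n$.

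\textbf{Step 1: reduce to an irreducible module over the splitting field.} Since $X$ is simple, some composition factor $W$ of $V$ is non-trivial, and it is then faithful. So it suffices to prove $\dim_{\FF_3}W\geq 7n$ for a faithful irreducible $\FF_3X$-module $W$. Let $E=\End_{\FF_3X}(W)\cong\FF_{3^e}$. Then $W\otimes_{\FF_3}\FF_{3^e}$ is a direct sum of $e$ Galois-conjugate absolutely irreducible modules, each of dimension $d$, and $\dim_{\FF_3}W=de$. Moreover, $\FF_{3^e}$ is the field of definition of the Brauer character of each constituent $U$.

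\textbf{Step 2: invoke Steinberg's tensor product theorem in its Ree-group form.} Every absolutely irreducible module in defining characteristic is a twisted tensor product
\[
U\cong \bigotimes_{i=0}^{n-1} L(\lambda_i)^{\phi^i},
\]
where each $L(\lambda_i)$ is one of the restricted $G_2$-modules surviving the Ree twist (the ``$\mathbb{F}_3$-restricted'' ones: trivial, the $7$-dimensional module, and so on), and $\phi$ is the Frobenius $3$-map. Let $S$ be the set of indices with $\lambda_i\neq0$, so $s:=|S|\geq1$.

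We compare two quantities:
\begin{itemize}
\item The dimension: $d\geq 7^{s}$, since the smallest non-trivial restricted module is the $7$-dimensional one (the $\Ree(3)'$ and $\Ree(q)$ minimal module). This fact is taken from the known list of restricted modules, e.g.\ Lübeck's tables or Kleshchev's results.
\item The field of definition: the Frobenius $\phi$ permutes the tensor factors cyclically. Hence the orbit of $U$ under $\Gal(\FF_q/\FF_3)$ has size $e=n/t$, where $t$ is the number of cyclic shifts fixing the pattern $(\lambda_0,\dots,\lambda_{n-1})$. Writing the pattern as periodic with period $n/t$ gives $s\geq t$.
\end{itemize}

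\textbf{Step 3: combine.} We get
\[
\dim_{\FF_3}W=de\geq 7^{s}\cdot\frac{n}{t}\geq 7^{t}\cdot\frac{n}{t}\geq 7n,
\]
because $7^t/t\geq 7$ for all $t\geq1$. This proves the lemma.

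One subtlety is that the Brauer character values might lie in a proper subfield $\FF_{3^e}$, and then the module descends to $\FF_{3^e}$. This is exactly accounted for by the period computation in Step 2.

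\textbf{Main obstacle.} The hard part is justifying the precise form of Steinberg's theorem for ${}^2\mathrm{G}_2$: the restricted weights are indexed by a single fundamental-type set, and the twist interchanges long and short roots. Rather than reprove this, I would cite Steinberg's result for twisted groups together with the known smallest non-trivial irreducible module of $\Ree(q)$ in characteristic $3$, namely the $7$-dimensional restriction from $\mathrm{G}_2(q)$ (for example via Lübeck or Kleshchev–Tiep). For $n=1$, where $\Ree(3)'\cong\PSL_2(8)$ is not the group in question, the statement concerns $\Ree(3)\cong \PSL_2(8){:}3$ itself. There I would either check directly (for instance in {\sc Magma}) that its minimal faithful $\FF_3$-module has dimension $7$, or observe that the $\PSL_2(8)$-modules of dimension below $7$ over $\FF_3$ do not extend faithfully.
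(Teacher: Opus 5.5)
Your proof is correct, but it takes a genuinely different route from the paper.

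\textbf{The paper's route.} The paper argues by contradiction using the local structure of $\Ree(3^n)$. It uses the involution centralizer $2\times\PSL_2(3^n)$ and the fact that all involutions are conjugate. A Zsigmondy prime lying in a torus normalizer $(3^n\pm 3^{(n+1)/2}+1){:}6$ then forces $\dim_{\FF_3}[V,\tau]\geq 3n$ for an involution $\tau$. This is played off against the fact that the only non-trivial irreducible $\FF_3\PSL_2(3^n)$-modules of dimension below $4n$ are the natural $\Omega_3(3^n)$-modules, which gives the contradiction. So the paper needs the tensor product theorem only for $\PSL_2(3^n)$, plus the maximal subgroups of $\Ree(3^n)$. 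This matches its Zsigmondy-style derivation of $\rdim_p$ in Table~\ref{tab:data}.

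\textbf{Your route.} You apply Steinberg's theorem for ${}^2\mathrm{G}_2$ directly, with basic modules of dimensions $1,7,27$, and do the Galois-descent count $\dim_{\FF_3}W=de\geq 7^{s}\cdot n/t\geq 7n$. This is shorter and also identifies the equality case: a single non-trivial $7$-dimensional factor, i.e.\ $W$ natural. It is essentially the same count the paper itself carries out later in Proposition~\ref{prop:ReeMods}, citing \cite[Corollary 2.8.6]{GLS3}.

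\textbf{Two small points.}
\begin{enumerate}
\item The equality $e=n/t$ relies on the part of Steinberg's theorem stating that distinct patterns give non-isomorphic modules. Without it, the stabilizer of $[U]$ could be larger than that of the pattern. You should cite this explicitly.
\item The case $n=1$ needs no separate treatment. $\Ree(3)$ is still the fixed-point group of the Ree endomorphism, so its non-trivial irreducible $\FF_3$-modules have dimensions $7$ and $27$. Steps 2--3 use only that $W$ is non-trivial, never that it is faithful. Some composition factor is non-trivial, since otherwise $X$ would act unipotently and be a $3$-group. Also, your ``do not extend faithfully'' remark is moot: $\PSL_2(8)$ has no non-trivial modules of dimension below $7$ in characteristic $3$.
\end{enumerate}
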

\begin{proof}
Observe that $n$ is odd. Since $\Ree(3^n)$ is a maximal subgroup of $\mathrm{G}_2(3^n)$, $\Ree(3^n)$ has a $7n$-dimensional faithful $\FF_3$-module. Aiming for a contradiction, let $V$ be a faithful $\FF_3X$-module with $\dim_{\FF_3} V<7n$. Consider the maximal subgroup $H$ of $X$ with $H\cong 2\times \PSL_2(3^n)$ \cite[Theorem 6.5.5]{GLS3}. Set $K:=H'\cong \PSL_2(3^n)$ and let $\tau$ be an involution such that $\langle \tau\rangle=O_2(H)$. Choose $\hat{\tau}$ an involution in $K$. Note that every involution in $X$ is conjugate (by \cite[Theorem 6.5.5]{GLS3} the normalizer of a Sylow $2$-subgroup is a group of shape $2^3 : F_{21}$, where $F_{21}$ is the Frobenius group of order $21$). In particular, we see that $\dim_{\FF_3} C_V(\tau)=\dim_{\FF_3} C_V(\hat{\tau})$.

By coprime action, we have that $V=[V, \tau]\oplus C_V(\tau)$ is a $K$-invariant decomposition, and $[V, \tau]\ne 0$ as $V$ is faithful. By \cite[Theorem 6.5.5]{GLS3}, $X$ has maximal subgroups of shape $(3^n\pm 3^{\frac{n+1}{2}}+1) : 6$ which we denote $M_{\pm}$. We note that $3^{3n}+1 \divides |X|$ and comparing with the maximal subgroups of $X$, we deduce by Zsigmondy's theorem \cite{Zsigmondy} that one of $M_+$ or $M_-$ contains a Sylow $r$-subgroup of $X$, where $r$ is a primitive prime divisor of $3^{6n}-1$. Denote this maximal subgroup by $M$. It follows that a faithful $\FF_3M$-module has dimension at least $6n$.

Now, $O^3(M)$ is a dihedral group and so is generated by two involutions. Hence, in any faithful, irreducible $\FF_3O^3(M)$-module $W$, an involution fixes at most half the space and so inverts at least half of the space. Since all involutions in $X$ are conjugate, we conclude that $\dim_{\FF_3} [V, \tau]\geq 3n$. Hence, $\dim_{\FF_3} C_V(\tau)<4n$.

Before proceeding, we need to describe the candidates for $[V, \tau]$ and $C_V(\tau)$ as $\FF_3K$-modules.

\begin{claim}
Suppose that $p=3$, $n$ is odd and $W$ is a faithful, irreducible $\FF_3\PSL_2(3^n)$-module. If $\dim_{\FF_3} W<4n$ then $W$ is isomorphic to a natural $\Omega_3(3^n)$-module. In particular, $\dim_{\FF_3} C_W(t)=n$ for $t\in \PSL_2(3^n)$ an involution.
\end{claim}
\begin{proof}
Let $\mathbb{K}=\FF_{3^n}$ so that $\mathbb{K}$ is a splitting field for $\PSL_2(3^n)$. Set $\mathbb{L}= \End_{\FF_3\PSL_2(3^n)}(W)$, assume that $\mathbb{L}=\FF_{3^a}$ and write $n=ra$. Since $n$ is odd, both $a$ and $r$ are odd. We may regard $W$ as an $\mathbb{L}\PSL_2(3^n)$-module and it has $\mathbb{L}$-dimension $d:=\dim_{\FF_3} W/a$. Now $\bar{W}=W\otimes_{\mathbb L}\mathbb K$ is an irreducible $\mathbb{K}\PSL_2(3^n)$-module of $\mathbb{K}$-dimension $d$. We have that $\dim_{\mathbb{L}} W<4r$ and so $\dim_\mathbb{K} \bar{W}<4r$. It follows from \cite[Proposition 5.4.6]{KleidmanLiebeck} that $r\leq 3$. If $r=3$ then $\bar{W}=M\otimes M^{\sigma^{\frac{n}{3}}}\otimes M^{\sigma^{\frac{2n}{3}}}$ where $M$ is a $2$-dimensional $\mathbb{K}\SL_2(3^n)$-module. But in this case the involution in $\SL_2(3^n)$ acts non-trivially on $\bar{W}$, and as $\bar{W}$ is a $\mathbb{K}\PSL_2(3^n)$-module, we have a contradiction. Since $r$ is odd, we see that $r=1$ and $n=a$. Hence, $W$ may be regarded as an $\mathbb{L}\PSL_2(3^n)$-module, $W$ is basic and described in \cite{BrauerNesbitt}, and the claim holds.
\end{proof}

Assume first that $K$ centralizes $[V, \tau]$ so that $K$ acts non-trivially on $C_V(\tau)$. By the claim, we see that $\dim_{\FF_3} C_{C_V(\tau)}(\hat{\tau})\geq n$ and so \[\dim_{\FF_3} C_V(\hat{\tau})=\dim_{\FF_3} [V, \tau] +\dim_{\FF_3} C_{C_V(\tau)}(\hat{\tau})\geq 3n+n\geq 4n\] from which we deduce that $\dim_{\FF_3} C_V(\tau)\geq 4n$, a contradiction.

Assume now that $K$ centralizes $C_V(\tau)$. Then $C_V(\tau)\le C_V(K)\le C_V(\hat{\tau})$ and as $\tau$ and $\hat{\tau}$ are conjugate, we conclude that $C_V(\tau)=C_V(\hat{\tau})$. Since $\hat{\tau}$ normalizes $[V, \tau]$, we deduce that $[V, \tau]=[V, \hat{\tau}]$ so that $\tau$ acts fixed point freely on $[V, \tau]$. Since $H$ acts faithfully on $[V, \tau]$, we deduce that $\hat{\tau}\le Z(H)$, a contradiction.

Hence, $K$ acts non-trivially on $[V, \tau]$ and $C_V(\tau)$. By the claim, we see that $\dim_{\FF_3} [V, \tau, \hat{\tau}]=2n=\dim_{\FF_3} [C_V(\tau), \hat{\tau}]$ from which we conclude that $\dim_{\FF_3} [V, \hat{\tau}]=4n>\dim_{\FF_3} [V, \tau]$. Since $\tau$ and $\hat{\tau}$ are conjugate, this yields a final contradiction.
\end{proof}

The following table provides bounds on the minimal degree of various faithful representations of quasisimple groups in $\mathcal{S}$.

\begin{table}[H]
    \centering
    \begin{tabular}{|c|c|c|c|c|c|}\hline
    $\wt{X}$ & $p$ & $m_p(X)$ & $\pdeg(\wt X)$ & $\rdim_{p'}(\wt{X})$ & $\rdim_p(\wt{X})$ \\\hline
    $\PSL_2(p^n)$, $p^n\ne 9$ & all $p$ & $n$ & $p^n+1$ & $\lceil\frac{p^n-1}{2}\rceil$ & $2n$\\
    $\PSL_2(9)$& $p=3$ & $2$ & $6$ & $4$ & $4$\\
    $\PSU_3(p^n)$ & all $p$ & $\frac{2n}{(p,2)}$ & $p^{3n}+1$ & $p^n(p^n-1)$ & $6n$\\
    $\Alt(2p)$ & $p\geq 5$ & $2$ & $2p$ & $2p-2$ & $2p-2$ \\
    $\Sz(2^n)$ & $p=2$ & $n$ & $2^{2n}+1$ & $2^{\frac{n-1}{2}}(2^n-1)$ & $4n$\\
    $\Ree(3)$ & $p=3$ & $2$ & $9$ & $6$ & $7$ \\
    $\Ree(3^n)$, $n>1$ & $p=3$ & $2n$ & $3^{3n}+1$ & $3^n(3^n-1)$ & $7n$ \\
    $\mathrm{M}_{11}$ & $p=3$ & $2$ & $11$ & $10$ & $5$\\
    $\PSL_3(4)$ & $p=3$ & $2$ & $21$ & $6$ & $6$\\
    $\Sz(32):5$ & $p=5$ & $2$ & $1025$ & $20$ & $124$ \\
    $\mathrm{Fi}_{22}$ & $p=5$ & $2$ & $3510$ & $54$ & $78$ \\
    ${}^2\mathrm{F}_4(2)'$ & $p=5$ & $2$ & $1600$ & $26$ & $26$ \\
    $\mathrm{McL}$ & $p=5$ & $2$ & $275$ & $21$ & $21$ \\
    $\mathrm{J}_4$ & $p=11$ & $2$ & $173067389$ & $112$ & $1333$\\\hline
    \end{tabular}
    \caption{Degrees of faithful representations of groups with a strongly $p$-embedded subgroup}
    \label{tab:data}
\end{table}

\begin{proof}[Justification of Table \ref{tab:data}]
Suppose that $\wt X$ is a rank $1$ simple group of Lie type. Then the $p$-rank of $\wt X$ is given in \cite[Table 3.3.1]{GLS3} and the minimal faithful permutation degrees are given in \cite{MinPermRank1} and \cite{MinPermTwisted}. The values for $\rdim_{p'}(\wt X)$ are taken from \cite{Seitz2}. Following the proofs of \cite[Lemma 1.7]{OV} and \cite[Lemma 4.7]{parkersemerarocomputing}, using Zsigmondy's theorem \cite{Zsigmondy}, we deduce the relevant values of $\rdim_p(\wt X)$, except when $\wt X\cong \Ree(3^n)$ in which case we have that $\rdim_3(\Ree(3^n))\geq 6n$. For this case, we appeal to Lemma \ref{ReeBound}.

For the remainder of the groups, we appeal to a combination of \cite[Table 3.3.1, Table 5.6.1]{GLS3} and {\sc Magma} \cite{Magma} for the $p$-ranks of $\wt X$ and we appeal to \cite{MinPermRank1, MinPermTwisted, Sporadics} for the minimal faithful permutation degree of the relevant groups. We appeal to a combination of \cite{ModAt} and \cite{Jansen} for the minimal dimensions of faithful representations.
\end{proof}

\begin{proposition}\label[proposition]{cor:data}
Suppose that $X\in \mathcal{S}$. Let $x\in T$ have order $p$. Then either $\wt X \cong \Alt(2p)$, $\wt X\cong \Ree(3)$ or
\begin{enumerate}
\item $O^p(\wt X)$ is contained in a subgroup generated by three conjugates of $\wt x$;
\item $\pdeg(\wt X)\geq p^m+1$; and
\item $\rdim_{p'}(\wt{X})\geq \frac{p^m-1}{2}$.
\end{enumerate}
\end{proposition}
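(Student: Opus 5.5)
The plan is to run through the list of Proposition \ref{SE2}, excluding $\Alt(2p)$ and $\Ree(3)$, and read each part off the results already established. Part (i) comes from Proposition \ref{generation}. In every case there are $g,h\in X$ with $F^*(\wt X)\le \wt{\langle x,x^g,x^h\rangle}$. For each $\wt X$ in the list (other than $\Sz(32):5$), $\wt X$ is simple and $O^p(\wt X)=\wt X=F^*(\wt X)$. For $\wt X\cong \Sz(32):5$ with $p=5$, we have $O^5(\wt X)=\Sz(32)=F^*(\wt X)$. So in all cases $O^p(\wt X)\le F^*(\wt X)$, which is contained in a subgroup generated by three conjugates of $\wt x$. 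The one point to check is that $O^p(\wt X)\le F^*(\wt X)$ for each listed group, which is immediate from the list since the only non-simple entry is $\Sz(32):5$.

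For parts (ii) and (iii), I will compare the columns of Table \ref{tab:data} case by case, using $m=m_p(X)$ from the table.
\begin{itemize}
\item For $\PSL_2(p^n)$ with $p^n\ne 9$, we have $m=n$, $\pdeg=p^n+1$ and $\rdim_{p'}=\lceil (p^n-1)/2\rceil\ge (p^n-1)/2$.
\item For $\PSL_2(9)$ we have $m=2$, but $\pdeg=6<10$. Since $\PSL_2(9)\cong\Alt(6)=\Alt(2p)$ with $p=3$, this case falls under the stated exception, and I will note this explicitly.
\item For $\PSU_3(p^n)$ we have $m\le 2n$, so $p^m+1\le p^{2n}+1\le p^{3n}+1$. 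Also $p^n(p^n-1)\ge (p^{2n}-1)/2$ because $2p^{2n}-2p^n\ge p^{2n}-1$, which holds as $p^n\ge 2$.
\item For $\Sz(2^n)$ we have $m=n$. Then $2^{2n}+1\ge 2^n+1$ and $2^{(n-1)/2}(2^n-1)\ge (2^n-1)/2$.
\item For $\Ree(3^n)$ with $n>1$ we have $m=2n$. Then $3^{3n}+1\ge 3^{2n}+1$, and $3^n(3^n-1)\ge (3^{2n}-1)/2$ as in the unitary case.
\end{itemize}

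The remaining entries all have $m=2$, so I check $\pdeg\ge p^2+1$ and $\rdim_{p'}\ge (p^2-1)/2$.
\begin{itemize}
\item $\mathrm{M}_{11}$ ($p=3$): $11\ge 10$ and $10\ge 4$.
\item $\PSL_3(4)$ ($p=3$): $21\ge 10$ and $6\ge 4$.
\item $\Sz(32):5$ ($p=5$): $1025\ge 26$ and $20\ge 12$.
\item $\mathrm{Fi}_{22}$ ($p=5$): $3510$ and $54$ both suffice.
\item ${}^2\mathrm{F}_4(2)'$ ($p=5$): $1600$ and $26\ge 12$.
\item $\mathrm{McL}$ ($p=5$): $275\ge 26$ and $21\ge 12$.
\item $\mathrm{J}_4$ ($p=11$): the bounds needed are $122$ and $60$, while the table gives $173067389$ and $112$.
\end{itemize}

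There is one subtlety. The statement concerns $\wt X$, but the table entries for the $p$-ranks concern $X$; by the lemma on quotients these agree, since $m_p(X)=m_p(\wt X)$. The main obstacle is bookkeeping rather than mathematics: confirming that the table values are correct, and handling the $\PSL_2(9)$ case, which is covered because $\PSL_2(9)\cong\Alt(6)$.
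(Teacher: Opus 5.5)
Your proposal is correct and is the same argument as the paper's, which simply cites the final clause of Proposition~\ref{generation} for (i) and reads (ii) and (iii) off Table~\ref{tab:data}. Your explicit remark that $\PSL_2(9)$ violates (ii) but is covered by the exception $\PSL_2(9)\cong\Alt(6)=\Alt(2p)$ with $p=3$ makes precise a point the paper leaves implicit. One minor caveat comes from the table itself: $\pdeg(\PSU_3(5))$ is actually $50$ (via $\Alt(7)$) rather than $126$, but since $50\geq 5^2+1$ the conclusion is unaffected.
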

\begin{proof}
This follows from Proposition \ref{generation} and Table \ref{tab:data}.
\end{proof}

\subsection{Small representations of $\Alt(2p)$}\label[section]{sec: Alt2p}

In proving Theorem \ref{ModResult} and Theorem \ref{lem:with one boundintro}, the case where $X/O_{p'}(X)\cong \Alt(2p)$ causes the most difficulty. In this subsection, we record necessary information about the representation theory of $\Alt(2p)$, and certain cohomology groups associated to small $\Alt(2p)$-modules.

\begin{lemma}\label[lemma]{permalt(2p)}
Suppose that $X$ is quasisimple with $X/Z(X)\cong \Alt(2p)$ where $p\geq 3$ is a prime. Assume that $X$ acts non-trivially and transitively on a set of size $d$ with $d\leq 8(p-1)$. Then either $d=2p$, or $d\in\{10, 15\}$ and $p=3$.
\end{lemma}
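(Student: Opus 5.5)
A transitive action of $X$ on a set $\Omega$ of size $d$ is equivalent to the coset action on $X/H$ for a stabiliser $H$ with $|X:H|=d$. Since the action is non-trivial and $X$ is quasisimple, the kernel is a proper normal subgroup, hence central. We may therefore pass to $\bar X = X/Z(X)\cong \Alt(2p)$, where $\bar H=HZ(X)/Z(X)$ is a proper subgroup of index dividing $d$. The index need only divide $d$ because $H$ need not contain $Z(X)$. Set $e:=|\bar X:\bar H|$. Then $e>1$, and $d=e\cdot|Z(X)H:H|$ with $|Z(X)H:H|\le |Z(X)|\le 2$ when $2p\ne 6,7$. For $p=3$, $\Alt(6)$ has Schur multiplier of order $6$, so $|Z(X)|\in\{1,2,3,6\}$.

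The main input is the classical bound on indices of proper subgroups of $\Alt(N)$ with $N=2p$. Any proper subgroup of index $e<\binom{N}{2}$ is intransitive or imprimitive of small index. More precisely, for $N\ge 9$ (Dixon--Mortimer, Theorem 5.2A) a subgroup of index $<\binom N2$ is $\Alt(N-1)$ or, only for small $N$, one of a few exceptions. For $p\ge 5$ we have $N=2p\ge 10$ and $\binom{2p}{2}=p(2p-1)>16(p-1)$, since $2p^2-p-16p+16=2p^2-17p+16>0$ for $p\ge 8$. For $p=5,7$ we check directly against the ATLAS list of maximal subgroups of $\Alt(10)$ and $\Alt(14)$. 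Given the bound $d\le 8(p-1)$ and $e\le d$, every such $\bar H$ is a point stabiliser, so $e=2p$. For $p\ge 11$ I would invoke the bound once more to get $e=2p$ directly.

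It remains to exclude $d=2e=4p$, which arises only when $|Z(X)|=2$ and $H\cap Z(X)=1$. Then $H\cong \bar H\cong \Alt(2p-1)$ would split in $2\cdot\Alt(2p)$. This fails because the preimage of $\Alt(2p-1)$ in $2\cdot\Alt(2p)$ is $2\cdot \Alt(2p-1)$, which is non-split: a transposition-pair product such as $(12)(34)$ lifts to an element of order $4$. Hence $H\ge Z(X)$ and $d=2p$.

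For $p=3$, $X/Z(X)\cong\Alt(6)$ and $d\le 16$. The subgroups of $\Alt(6)$ of index at most $16$ have index $6$ (two classes, $\Alt(5)$ and $\PSL_2(5)$ acting transitively), $10$ ($3^2{:}4$), or $15$ ($\Sym(4)$, two classes). Index $e=6$ gives $d\in\{6,12,18,36\}$, and index $10$ or $15$ gives $d\in\{10,15,20,30\}$ or larger. We must therefore rule out $d=12$, which needs $H\cap Z(X)$ of index $2$ in $Z(X)$ and $H$ mapping onto $\Alt(5)$ with a complement to the order-$2$ part. This is impossible because the Sylow $2$-subgroup of $\Alt(5)$ lifts to $Q_8$ in $2\cdot\Alt(6)$, so $\Alt(5)$ does not split there. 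The same argument applies in $6\cdot\Alt(6)$. This leaves $d\in\{6,10,15\}$ with $2p=6$.

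The main obstacle is the bookkeeping for $p=3$ with the exceptional multiplier, together with verifying the small-index bound for $p=5,7$; I would settle both by the ATLAS or a quick {\sc Magma} check.
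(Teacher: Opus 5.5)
Your argument is correct and follows the paper's route: Dixon--Mortimer Theorem~5.2A with $r=2$ for $p\ge 5$, and the subgroup structure of $\Alt(6)$ for $p=3$. Note, though, that $e\le d\le 8(p-1)<p(2p-1)=\binom{2p}{2}$ already holds for every $p\ge 5$, so your comparison with $16(p-1)$ and the separate ATLAS check for $p=5,7$ are unnecessary (and the ATLAS does not cover $\Alt(14)$ in any case).

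Your explicit bookkeeping $d=e\,|Z(X):Z(X)\cap H|$, with $d=4p$ and $d=12$ excluded by the non-splitting of the double cover, makes precise a step that the paper's terse proof leaves implicit. The paper only invokes the fact that $2\cdot\Alt(2p)$ has no subgroup isomorphic to $\Alt(2p-1)$ later, in Lemma~\ref{lem:Rinertial}.
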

\begin{proof}
Notice that if $p\geq 5$, then $d\leq 8(p-1) < \binom{2p}{2}$. Thus applying \cite[Theorem 5.2 A (i)]{dixon-mortimer} when $p\geq 5$ with $r=2$ yields that the stabilizer of a point in the action of $X$ has index $d=2p$. If $p=3$ then the proof follows by a consideration of the subgroup structure of $\Alt(6)$. This proves the claim.
\end{proof}

Whenever $p\geq 5$ and $r$ is an arbitrary prime, the natural $\FF_r$-module for $\Alt(2p)$ is the unique non-trivial composition factor of the $\FF_r\Alt(2p)$ permutation module of dimension $2p$. Hence, the natural module has $\FF_r$-dimension $2p-2$ when $r\in\{2,p\}$ and dimension $2p-1$ otherwise.

\begin{lemma}\label[lemma]{Alt2pModBound}
Suppose that $X$ is quasisimple with $X/Z(X)\cong \Alt(2p)$ where $p\geq 5$ is a prime, and let $S\in\syl_p(X)$. Let $V$ be a faithful, irreducible $\FF_pX$-module with $\dim_{\FF_p} V\leq 8(p-1)$. Then either
\begin{enumerate}
    \item $X\cong \Alt(2p)$ and $V$ is a natural module for $X$;
    \item $X\cong \Alt(10)$, $\dim_{\FF_5} V=28$ and each element of $S^\#$ has more than two non-trivial Jordan blocks in its action on $V$; or
    \item $p=5$, $X\cong 2\cdot \Alt(10)$, $\dim_{\FF_5} V=8$, if $x\in S\#$ corresponds to a $5$-cycle then $V|_{\langle x\rangle} = J_4\oplus J_4$ and otherwise  $V|_{\langle x\rangle} = J_3\oplus J_5$. 
\end{enumerate}
\end{lemma}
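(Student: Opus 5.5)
The plan is to reduce to known classifications of low-dimensional modular representations of $\Alt(n)$ and its double cover in characteristic $p$, and then handle the few survivors by direct Jordan block computations.

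First I use the bound $\dim_{\FF_p} V \le 8(p-1) < 4p$, and split according to whether $Z(X)$ acts trivially on $V$.

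If $X\cong\Alt(2p)$, I invoke the classification of irreducible $\FF_p\Alt(n)$-modules of small dimension, due to James, Wagner, Guralnick--Kleshchev, Müller et al. For $n=2p\geq 10$, every non-trivial irreducible module other than the natural one (of dimension $2p-2$) has dimension at least roughly $\binom{2p-1}{2}-O(p)$. This exceeds $8(p-1)$ once $p\ge 7$, since $\binom{2p-2}{2}-\dots$ grows quadratically while $8(p-1)$ is linear. For $p=5$ the quadratic bound is not yet strong enough, so I consult the $5$-modular character table of $\Alt(10)$ (from \cite{ModAt}/\cite{Jansen}). Its faithful irreducibles of dimension at most $32$ are the natural module of dimension $8$ and a module of dimension $28$; I would double-check the list against the atlas. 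I also need to be careful that $V$ is realized over $\FF_p$ rather than over an extension field. Since $\End$ could be bigger, I use that the $\FF_p$-dimension is a multiple of the absolutely irreducible dimension, so Galois-conjugate pairs only make the dimension larger. For the $28$-dimensional module I then compute the Jordan forms of a $5$-cycle and of a $(5,5)$-element, for instance via restriction to $\Alt(5)\times\Alt(5)$ or by {\sc Magma}, and check that each has at least three non-trivial blocks. This gives (ii).

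If $Z(X)\ne 1$ acts non-trivially, then $X\cong 2\cdot\Alt(2p)$ and $V$ is a faithful spin module. The basic spin modules have dimension $2^{\lfloor (2p-1-\kappa)/2\rfloor}$, by Wales' results on modular spin representations. For $p\ge 7$ this is at least $2^{5}$-ish and exceeds $8(p-1)$: for $p=7$ it is $2^{6}=64>48$, and it grows exponentially in general. For $p=5$ the basic spin module of $2\cdot\Alt(10)$ in characteristic $5$ has dimension $8$. The next faithful irreducible is larger than $32$, again checked via the modular atlas, which also rules out a $16$-dimensional $\FF_5$-form arising from a non-split $8$-dimensional module over $\FF_{25}$. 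The Jordan structure of a $5$-cycle and a $(5,5)$-element on this $8$-dimensional module I would get by {\sc Magma}, or by restricting to $2\cdot\Alt(5)\cong\SL_2(5)$ and $\SL_2(5)\circ\SL_2(5)$. In that restriction the module becomes a sum or tensor of small $V_i(5)$'s, and the tensor product Jordan rules of Appendix \ref{JordanSec} yield $J_4\oplus J_4$ and $J_3\oplus J_5$ respectively.

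Finally, in case (i) I confirm that $\FF_p$-irreducible natural module is the only possibility. The main obstacle is making the dimension bounds rigorous uniformly in $p$ for $\Alt(2p)$, including the modules that are not absolutely irreducible. My first move there is to cite the Guralnick--Kleshchev/James bound that non-natural irreducibles of $\Alt(n)$ have dimension at least $(n^2-5n+2)/2$ for $n$ large, together with Wales' spin bounds.
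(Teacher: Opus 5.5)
Your outline follows the paper's route: James-type bounds for $\Alt(2p)$ when $p\ge 7$, a spin bound for $2\cdot\Alt(2p)$, Brauer character tables at $p=5$, and machine computation of Jordan forms. However, the spin case at $p=7$ does not go through as written.

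\textbf{The gap at $p=7$.} Wales' number $2^{\lfloor(2p-1-\kappa)/2\rfloor}=2^{p-1}$ is the dimension of the basic spin module of $2\cdot\Sym(2p)$, not of $2\cdot\Alt(2p)$.
\begin{enumerate}
\item Since $p\mid 2p$, the two associate ordinary basic spin characters of $2\cdot\Sym(2p)$ differ only on lifts of $2p$-cycles, and these are $p$-singular.
\item So the modular basic spin module is self-associate, and it splits on $2\cdot\Alt(2p)$ into two modules of dimension $2^{p-2}$.
\item This is exactly the halving $16\to 8$ you used without comment at $p=5$.
\item At $p=7$ it gives absolutely irreducible faithful $2\cdot\Alt(14)$-modules of dimension $32<48=8(p-1)$. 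So ``$2^6=64>48$'' excludes nothing.
\end{enumerate}
The missing step is a field-of-definition argument. These $32$-dimensional modules restrict to $2\cdot\Alt(13)$ as reductions of its ordinary basic spin characters, whose values on lifts of $13$-cycles involve $\sqrt{13}$. Since $13\equiv 6$ is a non-square modulo $7$, the modules are realized only over $\FF_{49}$. Hence the corresponding irreducible $\FF_7$-module has dimension $64>48$.

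You also need that no other faithful irreducible of $2\cdot\Alt(14)$ is that small. The paper gets this from Kleshchev--Tiep, which reduces to $p\le 7$, and then consults Brauer character data.

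\textbf{Smaller points.}
\begin{enumerate}
\item Your alternative route to $J_4\oplus J_4$ and $J_3\oplus J_5$, via $\SL_2(5)\circ\SL_2(5)$ and the tensor rules of Appendix~\ref{JordanSec}, fails.
\begin{itemize}
\item Some element of $\Alt(10)$ interchanges the two $\Alt(5)$ factors, and the outer automorphism of $\SL_2(5)$ fixes each $V_i(5)$. So the composition factors of the restriction are symmetric.
\item This forces them to be two copies of $V_1(5)\otimes V_1(5)$.
\item A semisimple restriction would make a $5$-cycle act as $4J_2$. So the restriction is a non-split extension, and its Jordan form is not a tensor-product computation.
\end{itemize}
You genuinely need {\sc Magma} here, as the paper uses.
\item The inequality $8(p-1)<4p$ is false, though you never use it.
\item For (ii), the paper avoids any computation on the $28$-dimensional module. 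If some $x\in S^\#$ had at most two non-trivial blocks, then $\dim V/C_V(x)\le 8$. As $\Alt(10)$ is generated by three conjugates of $x$ (Proposition~\ref{generation}), this gives $\dim V\le 24<28$.
\end{enumerate}
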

\begin{proof}
If $X\cong \Alt(2p)$ then we appeal to \cite[Theorem 7]{James} to see that $V$ is a natural module of dimension $2p-2$ provided $p\geq 7$. If $p=5$ then \cite{ModAt} reveals that either $V$ is a natural module of dimension $8$, or $\dim_{\FF_5} V = 28$. If $Z(X)\ne 1$ then we appeal to \cite{KleschevTiep} to see that $p\leq 7$. Moreover, if $p\leq 7$ then we appeal to \cite{ModAt} to see that $p=5$ and $V$ is irreducible of dimension $8$.

Suppose that $p=5$. Since the size of each non-trivial Jordan block of $V$ under the action of a $p$-element is at most $5$, if $x\in S^\#$ has at most two non-trivial Jordan blocks in its action on $V$, we have that $\dim_{\FF_5} V/C_V(x)\leq 8$. Then by Proposition \ref{generation}, we have that $X$ is generated by three conjugates of $x$, and as $V$ is irreducible, we deduce that $\dim_{\FF_5} V\leq 24$. Thus, to complete the proof we may assume that $X\cong 2\cdot \Alt(10)$ and $\dim_{\FF_5} V=8$. We calculate the Jordan block structure of the faithful, irreducible $8$-dimensional $\FF_5 2\cdot\Alt(10)$-modules using {\sc Magma} \cite{Magma}.
\end{proof}

\begin{proposition}\label[proposition]{CompFactorsAlt2p}
Suppose that $X$ is quasisimple with $X/Z(X)\cong \Alt(2p)$ where $p\geq 5$ is a prime, and let $S\in\syl_p(X)$. Let $V$ be a faithful $\FF_pX$-module and assume that $x\in S^\#$ is $k$-active on $V$ where $k\leq 2$. Then every non-trivial irreducible composition factor of $V$ is described by Lemma \ref{Alt2pModBound}. Moreover, either
\begin{enumerate}
    \item $[V, X]/C_{[V, X]}(X)$ is irreducible; or
    \item $X\cong \Alt(2p)$, $x$ is a $p$-cycle and $V$ has two non-trivial composition factors, both of which are natural modules for $X$.
\end{enumerate}
\end{proposition}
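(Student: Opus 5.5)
The plan is to bound the dimension of each non-trivial composition factor and then count how many such factors can occur. Since $x$ is $k$-active with $k\le 2$, and every Jordan block of an element of order $p$ has size at most $p$, we have
\[\dim_{\FF_p} V/C_V(x)\le 2(p-1)=m(p-1).\]
Let $W$ be any non-trivial irreducible composition factor of $V$. Then $x$ acts on $W$ with at most two non-trivial blocks, because the number of non-trivial blocks on a subquotient cannot exceed the number on the whole module. By Proposition \ref{generation}, $X$ is generated modulo $Z(X)$ by three conjugates of $x$. Since $Z(X)$ has order prime to $p$ and $X$ is perfect, in fact $X=\langle x,x^g,x^h\rangle$. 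Therefore $[W,X]=[W,x]+[W,x^g]+[W,x^h]$, and irreducibility gives
\[\dim W\le 3\cdot 2(p-1)\le 8(p-1).\]
The module $W$ is faithful for $X/C_X(W)$, which is again quasisimple or $\Alt(2p)$ (the kernel is central). Hence Lemma \ref{Alt2pModBound} applies, and outcome (ii) is excluded because there $x$ has more than two non-trivial blocks. This proves the first assertion.

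For the \emph{moreover} part, I count the contributions to $\dim V/C_V(x)$, using $\dim V/C_V(x)\ge \sum_W \dim W/C_W(x)$ summed over the non-trivial composition factors $W$. On a natural module of dimension $2p-2$:
\begin{enumerate}
\item a $p$-cycle acts as $J_{p-1}\oplus J_1^{p-1}$, contributing $p-2$;
\item a $(p,p)$-element acts as $J_p\oplus J_{p-2}$, contributing $2p-4$.
\end{enumerate}
I would verify these via the permutation module. For the $8$-dimensional $2\cdot\Alt(10)$ module, Lemma \ref{Alt2pModBound}(iii) gives a contribution of $6$ in both cases.

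\textbf{Block count, not codimension.} The codimension bound alone is not enough, since two factors contributing $p-2$ each fit inside $2(p-1)$. I would instead count blocks. Filter $V$ by a composition series and use the fact that the number of non-trivial blocks of $x$ on $V$ is at least the number of non-trivial blocks on any subquotient. Combining two factors can only merge blocks, so I would use a rank argument: the number of non-trivial blocks equals $\dim V/C_V(x) - \dim V/C_V(x)\cap[V,x]$-type quantities. The cleaner route is to work with $\dim [V,x]$ and $\dim [V,x,x]$.

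For $(p,p)$-elements, or for the $2\cdot\Alt(10)$ module, already one factor has two non-trivial blocks with $\dim[W,x]=2p-4$ or $6$. Two such factors would force $\dim[V,x]\ge 2(2p-4)>2(p-1)$ when $p\ge 5$; the borderline case $p=5$ with value $12>8$ is still fine. So $V$ has a unique non-trivial composition factor. Since $Z(X)$ is a $p'$-group and $X$ is perfect, the only other factors are trivial, which gives that $[V,X]/C_{[V,X]}(X)$ is irreducible. That is outcome (i).

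For a $p$-cycle, each natural factor has $\dim[W,x]=p-2$, so at most two factors can occur, since $3(p-2)>2(p-1)$ for $p\ge 5$. If there are exactly two, both must be natural modules (the $2\cdot\Alt(10)$ factor contributes $6>8-3$), which gives outcome (ii). If there is one, outcome (i) holds.

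The main obstacle I expect is making the passage from "unique non-trivial composition factor" to "$[V,X]/C_{[V,X]}(X)$ is irreducible" rigorous. It requires that $[V,X]$ has no trivial composition factors strictly between its non-trivial layers, for instance two trivial factors glued around $W$. I would handle this with $X=O^p(X)$ perfect together with a cohomological argument. The key input is that $H^1$ and $\mathrm{Ext}$ between the trivial module and $W$ make such gluing possible only in the top and bottom layers, which are removed by passing to $[V,X]$ and quotienting by $C_{[V,X]}(X)$. This is presumably why the paper develops the cohomology of small $\Alt(2p)$-modules.
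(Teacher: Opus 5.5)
Your route is the paper's: bound each non-trivial factor by $6(p-1)$ using three conjugates of $x$, apply Lemma \ref{Alt2pModBound}, then compare the Jordan data of $x$ on the possible factors. Your inequality $\dim_{\FF_p}[V,x]\ge\sum_W\dim_{\FF_p}[W,x]$ over a composition series makes explicit what the paper's brief block count leaves implicit. Two things need fixing.

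\textbf{The $p$-cycle data.} A $p$-cycle $x=(1,\dots,p)$ acts on the natural module as $J_p\oplus(p-2)J_1$, not as $J_{p-1}\oplus(p-1)J_1$.

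\begin{itemize}
\item The vector $e_1-e_{p+1}$ lies in the augmentation submodule. It generates a free $\FF_p\langle x\rangle$-module with socle $\langle e_1+\dots+e_p\rangle$.
\item This socle meets the span of the all-ones vector trivially, so the $J_p$ survives in the heart.
\item On the permutation module $P$ we have $\dim_{\FF_p}[P,x]=p-1$, which forces the rest to be trivial.
\end{itemize}

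So the contribution is $p-1$, not $p-2$. Your conclusions survive: $3(p-1)>2(p-1)$, and $6+4>8$ when $p=5$. With the correct value, two natural factors use exactly the budget $2(p-1)$, which is precisely outcome (ii). The codimension count therefore already suffices, and your ``block count'' detour can be dropped.

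You also tacitly need that, when $X\cong 2\cdot\Alt(10)$, some factor is the spin module. This follows from $V=C_V(Z(X))\oplus[V,Z(X)]$ together with faithfulness.

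\textbf{The step you flag as the main obstacle.} This involves no cohomology of $W$. The only input is $\mathrm{Hom}(X,\FF_p)=0$: a perfect group acts trivially on any module all of whose composition factors are trivial, since it would then act through a unitriangular group, which is a $p$-group.

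\begin{itemize}
\item Applied to $V/[V,X,X]$, this gives $[V,X]=[V,X,X]$. So $U:=[V,X]/C_{[V,X]}(X)$ satisfies $U=[U,X]$.
\item Applied to the preimage of $C_U(X)$, it gives $C_U(X)=0$.
\item Hence any minimal submodule $W_0$ of $U$ is non-trivial, so it accounts for the unique non-trivial factor.
\item Then $U/W_0$ has only trivial factors, and so $U=[U,X]\le W_0$.
\end{itemize}

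The paper uses this fact without comment. The $\Alt(2p)$ cohomology results, such as Lemma \ref{AltCohomology}, serve the proof of Theorem \ref{lem:with one boundintro}, not this proposition.
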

\begin{proof}
By Proposition \ref{generation} we have that $\Alt(2p)$ is generated by at most three conjugates of $x$. We see that $\dim_{\FF_p} C_V(x)$ coincides with the number of Jordan blocks that $x$ has when acting on $V$. Since each non-trivial Jordan block has dimension at most $p$, we have $\dim_{\FF_p} C_V(x) \geq \dim_{\FF_p} V -2(p-1)$. Then as $C_V(X)=\cap_{g\in X} C_V(x^g)$ where $\{x^g \mid g\in X\}$ generates $X$, and as we may choose $|\{x^g \mid g\in X\}|=3$ we deduce that $\dim_{\FF_p}V/C_V(X)\leq 6(p-1)$. Hence, the dimension of every composition factor of $V$ is bounded by $6(p-1)$ and so each factor is described in Lemma \ref{Alt2pModBound}. Moreover, case (ii) of Lemma \ref{Alt2pModBound} does not occur.

Suppose that $[V, X]/C_{[V, X]}(X)$ is not irreducible. If $p=5$ and $X\cong 2\cdot \Alt(10)$, then $x$ already has two non-trivial Jordan blocks of size $4$ on one of the composition factors and we obtain a contradiction. If $X\cong \Alt(2p)$ then every non-trivial composition factor of $V$ is a natural module. We calculate that whenever $x$ is not a $p$-cycle, $x$ has two Jordan blocks on the natural module, of size $p$ and $p-2$, and we obtain a contradiction. Finally, if $x$ is a $p$-cycle then $x$ has a unique non-trivial Jordan block of size $p$ on the natural module, and it follows that $V$ has exactly two non-trivial composition factors.
\end{proof}

\begin{lemma}\label[lemma]{AltCohomology}
Suppose that $X\cong \Alt(2p)$, $5\leq p\ne r\in \Pi(\Alt(2p))$ and $V$ is an $\FF_rX$-module with $\dim_{\FF_r} V\leq 6p$, $C_V(X)=0$ and $[V, X]$ irreducible. Then $H^2(X, V)$ is trivial.
\end{lemma}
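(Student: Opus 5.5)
Since $C_V(X)=0$ and $[V,X]=:W$ is irreducible, the quotient $V/W$ is a module on which $X$ acts with $C_{V/W}(X)$ possibly non-zero. The first step is to reduce to the case where $V$ is irreducible. I would use the long exact sequence in cohomology for $0\to W\to V\to V/W\to 0$. Since $X=O^{p'}(X)$ is perfect and $[V,X]=W$, the group $X$ acts trivially on $V/W$. Hence $V/W$ is a direct sum of copies of the trivial module $\FF_r$. The relevant part of the sequence is
\[H^1(X,V/W)\to H^2(X,W)\to H^2(X,V)\to H^2(X,V/W).\]
Here $H^1(X,\FF_r)=\mathrm{Hom}(X,\FF_r)=0$ because $X$ is perfect. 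Also $H^2(X,\FF_r)$ is the $r$-part of the Schur multiplier of $\Alt(2p)$, which has order $2$ for $p\geq 5$. So this term vanishes when $r$ is odd, and then $H^2(X,V)\cong H^2(X,W)$. When $r=2$, the condition $C_V(X)=0$ should allow one to argue that $V$ is a quotient of the universal extension. Alternatively one checks directly that the connecting map $H^1(X,V/W)\to H^2(X,W)$ and the map into $H^2(X,V/W)$ do not obstruct. I expect a little care is needed here; the cleanest route is probably to prove the irreducible case first and then treat $r=2$ by an explicit argument using the double cover $2\cdot\Alt(2p)$.

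For irreducible $W$ with $\dim W\leq 6p$ and $r\ne p$, the next step is to identify $W$. By James's results \cite{James} on low-dimensional modular representations of alternating groups (together with \cite{ModAt} for small $p$), a non-trivial irreducible $\FF_r\Alt(2p)$-module of dimension at most $6p$ is essentially the natural module, of dimension $2p-1$, or $2p-2$ when $r=2$. I would allow the possibility of a field extension, that is, $W$ could be natural over a subfield, but the natural module is realised over the prime field, so this causes no trouble. A few other candidates need checking: the $\binom{2p-1}{2}$-type modules exceed $6p$ once $p\geq 7$, and for $p=5$ I would check the finitely many small modules of $\Alt(10)$ in odd characteristic $r\in\{2,3,7\}$ against \cite{ModAt}.

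For the natural module $W=[P,X]$, where $P$ is the $2p$-point permutation module, I would use Shapiro's lemma: $H^*(X,P)\cong H^*(\Alt(2p-1),\FF_r)$. Since $\Alt(2p-1)$ is perfect with Schur multiplier of order $2$ (as $2p-1\geq 9$), and using the sequences $0\to W\to P_0\to 0$ where $P_0$ is the augmentation submodule (for $r\nmid 2p$ we have $P=\FF_r\oplus W$), I obtain $H^2(X,W)=\ker(H^2(\Alt(2p-1),\FF_r)\to H^2(\Alt(2p),\FF_r))$ up to the $H^1$ terms. Restriction of the Schur multiplier from $\Alt(2p)$ to $\Alt(2p-1)$ is an isomorphism, because the double cover restricts to the double cover. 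So the kernel is zero and $H^2(X,W)=0$. When $r=2$ or $r\mid p$ (impossible, since $r\ne p$ and $r$ divides $2p$ only if $r=2$), the natural module is a subquotient $P_0/\FF_2$, and I would chase two short exact sequences, again using Shapiro's lemma and the fact that $H^1(\Alt(2p-1),\FF_2)=0$.

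The main obstacle is the case $r=2$. There the natural module is not a direct summand of the permutation module, and the trivial module has non-zero $H^2$, so the connecting homomorphisms must be computed explicitly. This amounts to showing that the map $H^2(X,\FF_2)\to H^2(X,P_0)$ (and similar maps) is injective or surjective as required. That is exactly the comparison of the double covers of $\Alt(2p)$ and $\Alt(2p-1)$. If this becomes too delicate for $p=5$, I would fall back on a {\sc Magma} computation of $H^2$ for the finitely many relevant modules.
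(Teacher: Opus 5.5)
\textbf{Odd $r$ is fine.} Your argument here is sound and more self-contained than the paper's. The paper gets $H^1(X,[V,X])=0$ from Kleshchev--Premet, deduces $V=[V,X]$, and then quotes their Theorem~1/Corollary~1. You instead use that for $r\nmid 2p$ the natural module is a summand of the permutation module $P$, so $H^2(X,W)$ embeds in $H^2(\Alt(2p-1),\FF_r)=0$.

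\textbf{The gap is $r=2$.} This is where all the content lies, and comparing the double covers of $\Alt(2p)$ and $\Alt(2p-1)$ cannot finish it, because the obstruction sits in degree $3$. Let $C=C_P(X)$, let $P_0$ be the augmentation submodule and $U=P_0/C$. Under Shapiro, the map $H^2(X,P)\to H^2(X,\FF_2)$ induced by augmentation is corestriction, not restriction. It is zero: restriction $H^2(\Alt(2p),\FF_2)\to H^2(\Alt(2p-1),\FF_2)$ is an isomorphism, and $\mathrm{cor}\circ\mathrm{res}$ is multiplication by the even index $2p$. So your step ``restriction is an isomorphism, hence the kernel is zero'' is valid for odd $r$ only because everything there vanishes. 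Chasing $0\to P_0\to P\to\FF_2\to 0$ and $0\to C\to P_0\to U\to 0$ gives $H^2(X,P_0)\cong\FF_2$, $H^2(X,C)\to H^2(X,P_0)$ an isomorphism, $H^1(X,U)\cong\FF_2$, and
\[
H^2(X,U)\cong\ker\bigl(H^3(X,\FF_2)\to H^3(X,P_0)\bigr),\qquad H^2(X,P/C)\cong\ker\bigl(\mathrm{res}\colon H^3(\Alt(2p),\FF_2)\to H^3(\Alt(2p-1),\FF_2)\bigr).
\]
The missing input is therefore injectivity of restriction on $H^3(-,\FF_2)$. This does hold. Nakaoka stability gives $H^k(\Sym(n),\FF_2)\cong H^k(\Sym(n-1),\FF_2)$ for $k\le 4$ and $n\ge 10$. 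The five lemma applied to $\cdots\to H^k(\Sym(n))\to H^k(\Alt(n))\to H^k(\Sym(n))\xrightarrow{\cdot e}H^{k+1}(\Sym(n))\to\cdots$, with $e$ the sign class, transfers this to alternating groups. Nothing in your plan supplies this, and a {\sc Magma} check covers only $p=5$. The paper avoids the issue by citing Kleshchev--Premet for exactly these $H^2$ groups.

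\textbf{The reduction from $V$ to $W$ for $r=2$ has the same defect.} Here $H^2(X,V/W)\cong\FF_2^k\neq 0$, so you would need the connecting map $H^2(X,V/W)\to H^3(X,W)$ to be injective. Use what the paper does instead. The class of $0\to U\to V\to V/W\to 0$ is a linear map $V/W\to H^1(X,U)\cong\FF_2$, and it must be injective because $C_V(X)=0$. Hence $V\cong U$ or $V\cong P/C$, and the display above handles both.

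\textbf{The identification of $W$ fails for $p=5$, $r=2$.} $\Alt(10)$ has absolutely irreducible $\FF_2$-modules of dimension $16$ (the basic spin module $D^{(6,4)}$) and $26$ ($D^{(8,2)}$), both within $6p=30$. Your promised check of the $\Alt(10)$ tables would turn these up, and you would have to deal with them. The paper makes the same identification and does not treat them either. Its applications of the lemma only involve modules of dimension at most $2p$, where the natural module is indeed the only candidate.
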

\begin{proof}
An appeal to \cite[Theorem 7]{James} and \cite{ModAt} reveals that $[V, X]$ is a natural $\FF_rX$-module of dimension $2p-1$ if $r$ is odd, or dimension $2p-2$ if $r=2$. Then $[V, X]$ is self-dual and has trivial $1$-cohomology when $r\ne 2$ by \cite[Lemma 1]{premet}. If $r=2$ and $p\geq 5$ then setting $W$ to be the $\FF_2X$-permutation module, we have that $[V, X]\cong [W/C_W(X), X]$ and by \cite[Lemma 1]{premet}, we see that both $H^1(X, W/C_W(X))$ and $H^0(X,W/C_W(X))$ are trivial so that $H^1(X, [V, X])\cong H^0(X, \FF_2)\cong \FF_2$. For this we calculate in the long exact sequence in cohomology associated to the extension \[0\rightarrow [W/C_W(X), X]\rightarrow W/C_W(X)\rightarrow \FF_2\rightarrow 0\] as in \cite[p.584]{premet}. We conclude that either $V=[V, X]$ is irreducible; or that $V\cong W/C_W(X)$. Then \cite[Theorem 1, Corollary 1]{premet} completes the proof of the lemma.
\end{proof}

For the following lemma, we employ {\sc Magma} \cite{Magma} for various representation theoretic and cohomological calculations.

\begin{lemma}\label[lemma]{alt6case}
Suppose that $X=O^{3'}(X)$, $X/O_2(X)\cong \Alt(6)$ and $R:=O_2(X)$ is an elementary abelian $2$-group. Then either:
\begin{enumerate}
    \item there is $K\le X$ with $K/Z(K)\cong \Alt(6)$ and $|Z(K)|\leq 2$; or
    \item $X$ contains a subgroup $L$ such that $X=LR$ and recognizing $L\cap R$ as an $\FF_2\Alt(6)$-module we have that $L\cap R$ is indecomposable with socle series having terms with dimensions $4$ and $1$; or $4$, $4$ and $1$.
\end{enumerate}
In particular, if (i) does not hold then the minimal dimension of a faithful $\FF_3X$-module is $30$ and for $P$ any maximal subgroup of $L$ containing $L\cap R$ with $P/L\cap R\cong \Alt(5)$, we have that $P$ contains no subgroup of index $2$.
\end{lemma}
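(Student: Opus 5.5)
The plan is to reduce to the structure of $R$ as an $\FF_2\Alt(6)$-module and to cohomology, and then let {\sc Magma} do the finite checks.

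\emph{Reduction to the module $R$.} Since $R$ is elementary abelian and $X/R\cong\Alt(6)$, $R$ is an $\FF_2\Alt(6)$-module. Because $X=O^{3'}(X)$, $X$ is generated by its $3$-elements, and the same holds for any supplement. Choose $L\le X$ with $X=LR$ and $L$ minimal with this property. Then $L\cap R$ is normal in $L$, and by minimality $L\cap R\le\Phi(L)$. Moreover $L\cap R$ is an $\FF_2\Alt(6)$-submodule of $R$, because $R$ is abelian and so acts trivially on it.

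\emph{Analysis of $U:=L\cap R$.} Suppose $U$ has a quotient $U/U_0$ on which $\Alt(6)$ acts and which is a split extension in $L/U_0$. Then a complement would give a smaller supplement, contradicting $L\cap R\le\Phi(L)$. So every layer of $U$ is attached non-split. Concretely, for every maximal $L$-submodule $U_0$ of $U$, the extension $L/U_0$ of $U/U_0$ by $\Alt(6)$ is non-split, so it corresponds to a non-zero class in $H^2(\Alt(6),U/U_0)$.

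The irreducible $\FF_2\Alt(6)$-modules are the trivial module, the two $4$-dimensional modules (natural $\Sp_4(2)'$ and its twist), and the $8$-dimensional Steinberg module. The Steinberg module is projective, so its $H^2$ vanishes. Also $H^2(\Alt(6),\FF_2)\cong\FF_2$, which gives the double cover $2\cdot\Alt(6)$. If $U=0$ or $U$ is trivial of dimension $1$, then $L/Z(L)\cong\Alt(6)$ with $|Z(L)|\le 2$, and we are in case (i) with $K=L$.

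Otherwise $U$ has a $4$-dimensional layer. I would then compute with {\sc Magma}, iterating the following: take the Frattini-type non-split extensions of $\Alt(6)$ by modules whose head is built from $4$-dimensional modules (via $H^2(\Alt(6),4)$ and the relevant $\mathrm{Ext}^1$ groups between $4$ and $1$), keep only those $L$ with $O^{3'}(L)=L$ and $U\le\Phi(L)$, and bound the length of $U$. Under these constraints the universal object should have $\FF_2\Alt(6)$-socle series $4/1$ or $4/4/1$; this is the structure in (ii). The main obstacle is this step. One must show that only these two indecomposable shapes arise as Frattini modules, so that no longer uniserial chain is possible. This amounts to bounding the $2$-modular Frattini module of $\Alt(6)$ (Gasch\"utz), and I would obtain it by computing the universal Frattini extension's $\FF_2$-module structure in {\sc Magma} up to the relevant Loewy length.

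\emph{The final statements.} Assume (i) fails. For each of the two possible $L$ constructed above, compute in {\sc Magma} the minimal degree of a faithful $\FF_3L$-representation and check that it is $30$. A faithful $\FF_3X$-module restricts faithfully to $L$, and conversely a module for $L$ induces or inflates appropriately, so this gives the bound for $X$. The argument should be that $X$ acts faithfully only if $L\cap R$ is faithful, which forces orbit sizes on dual characters of $U$ of total size at least $30$. For the last assertion, take a maximal subgroup $P$ of $L$ with $P/U\cong\Alt(5)$; there are two classes, coming from the two $\Alt(5)$ classes. Check that $P$ is perfect, or at least has no index-$2$ subgroup, i.e.\ that $U/[U,P]$ carries no $\FF_2$-quotient split off by $P$. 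This follows from the non-splitness of the restricted extension to $\Alt(5)$ and is again a direct {\sc Magma} computation.
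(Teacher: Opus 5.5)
Your strategy is correct, and it takes a genuinely different route from the paper's.

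\emph{The paper's route.} The paper argues by minimal counterexample, so that $R$ has no proper supplement, and peels $R$ from the top. The facts $H^2(\Alt(6),U_i)=0$ and the Schur multiplier force the top section $R/Q$ to be $W_i$. The further layers $Q/C$ and $C/D$ are then controlled by {\sc Magma} computations of $H^2$ and $\mathrm{Ext}^1$, and each larger candidate is killed by exhibiting an $\SL_2(9)$ subgroup.

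\emph{Your route.} You take a minimal supplement $L$, so that $U=L\cap R\le\Phi(L)$. Gasch\"utz's theorem then makes $U$ a quotient of the Frattini module $\Omega^2(\FF_2)$. The step you flag as the main obstacle can be done by hand:
\begin{enumerate}
\item The $2$-modular decomposition matrix of $\Alt(6)$ gives $\dim P(\FF_2)=40$ and $\dim P(U_i)=24$.
\item Since $\mathrm{Ext}^1(\FF_2,U_i)=\FF_2$ and $\mathrm{Ext}^1(\FF_2,\FF_2)=0$, the projective cover of $\Omega^1(\FF_2)$ is $P(U_1)\oplus P(U_2)$.
\item Hence $\dim\Omega^2(\FF_2)=48-39=9$, with composition factors $\FF_2,U_1,U_2$.
\item Its head is $\FF_2$, because $\mathrm{Hom}(\Omega^2(\FF_2),S)\cong H^2(\Alt(6),S)$ for non-projective simple $S$.
\item Its socle lies in $\mathrm{soc}(P(U_1)\oplus P(U_2))=U_1\oplus U_2$.
\end{enumerate}
So $U\in\{0,\FF_2,W_1,W_2,\Omega^2(\FF_2)\}$. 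This is exactly (i) or (ii), with $|L\cap R|\in\{2^5,2^9\}$ as in the paper. Your route explains why the process terminates and needs no search for $\SL_2(9)$ subgroups. The paper's route avoids Frattini-module theory at the price of a chain of machine checks. Your treatment of $P$ is essentially the paper's.

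Several details need correcting.
\begin{enumerate}
\item Over $\FF_2$ the projective simple module is $16$-dimensional; the two $8$-dimensional ones need $\FF_4$. This is harmless.
\item Because $H^2(\Alt(6),U_i)=0$, the head of $U$ is $\FF_2$ and the $4$-dimensional modules sit in the socle. An iteration over modules ``whose head is built from $4$-dimensional modules'' therefore finds nothing.
\item The $9$-dimensional module is not uniserial. Indeed $\mathrm{Ext}^1(U_1,U_2)=H^1(\Alt(6),U_1\otimes U_2)=0$, since $U_1\otimes U_2$ is the $16$-dimensional projective. So its socle is $U_1\oplus U_2$, and ``$4,4,1$'' must be read as composition factors.
\item In the last part, restriction to $L$ gives only $\geq 30$ for $X$, and nothing more is true in general. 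Replace $X$ by $N\rtimes L$, with $N$ a large direct sum of copies of $U_1$ on which $L$ acts through $\Alt(6)$. Then (i) still fails, but the faithful dimension becomes arbitrarily large. The bound $\geq 30$ is also all the paper's proof gives and all that is used later. So drop the ``conversely \dots\ induces or inflates'' sentence, and for the $9$-dimensional kernel check only $\geq 30$.
\item Your orbit heuristic does not give $30$. The dual $\mathrm{Hom}(W_i,\FF_3^\times)\cong W_i^*$ is the even-weight submodule of the $6$-point permutation module. Its weight-$2$ vectors form a single $\Alt(6)$-orbit of length $15$ that spans it, so orbit counting alone gives only $15$. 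The factor $2$ comes from every weight space having dimension at least $2$, i.e.\ the weight does not extend to an $\FF_3^\times$-valued character of its inertia group. That is what the {\sc Magma} computation detects.
\end{enumerate}
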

\begin{proof}
We observe that $\Alt(6)$ has two permutation representations of degree $6$, and we label the associated $\FF_2$-permutation modules by $V_1$ and $V_2$. We set $W_i=V_i/C_{V_i}(\Alt(6))$ so that $\dim_{\FF_2} W_i=5$. Furthermore, $U_i:=[W_i, \Alt(6)]$ is an irreducible $\FF_2\Alt(6)$-module of dimension $4$. Indeed, by \cite{ModAt}, $U_1$ and $U_2$ are the unique non-trivial irreducible $\FF_2\Alt(6)$-modules of dimension strictly less than $16$, and are both self-dual. We calculate that $U_i$ has $1$-dimensional $1$-cohomology and that $W_i$ is the unique $5$-dimensional $\FF_2\Alt(6)$-module with socle $U_i$. Moreover, $V_i$ is the unique $6$-dimensional module with $1$-dimensional socle and $V_i/C_{V_i}(\Alt(6))\cong W_i$.

Let $X$ be a counterexample to the lemma with $|X|$ minimal. Since (i) does not hold, $X$ is not quasisimple and so $[R, X]\ne 1$. Set $Q\normaleq X$ such that $Q\le R$, $[R/Q, X]\ne 1$ and $Q$ has maximal order with respect to this. Note that $Q$ need not be unique. Since the $3'$-part of the Schur multiplier of $\Alt(6)$ has order $2$, and $X=O^{3'}(X)$, we see that $R/Q\cong U_i$ or $R/Q\cong W_i$ for some $i\in\{1,2\}$.

Let $Y<X$ with $X=YR$. If $Y$ satisfies part (i) of the lemma, then so too does $X$, a contradiction as $X$ is a counterexample. If $Y$ satisfies part (ii) of the lemma then there is $L\le Y\le X$ with $X=LR$, $Y=L(R\cap Y)$ and $L\cap R$ an $\FF_2\Alt(6)$-module with a prescribed description. Moreover, since the minimal dimension of a faithful $\FF_3Y$-module is $30$, the same holds for $X$. Finally, $P$ is a subgroup of $L$ and so is a subgroup of $X$ with the required properties, against $X$ being a counterexample. Hence, we may assume that no such $Y$ exists.

We observe that the $16$-dimensional $\FF_2\Alt(6)$-module is projective, and is the unique irreducible module of dimension at least $16$. Suppose that some $X$-composition factor contained in $R$ has the structure of a $16$-dimensional $\FF_2\Alt(6)$-module. Since the $16$-dimensional module is projective, we may arrange that there is $Z<R$ with $Z\norm X$ and $R/Z$ the $16$-dimensional module. Furthermore, $H^2(\Alt(6), R/Z)=0$ and so there is $Y$ with $R\cap Y=Z$ and $X=YR$, a contradiction. Hence, each $X$-composition factor contained in $R$ is either trivial, or isomorphic to $U_i$ for $i\in\{1,2\}$.

Suppose first that $R/Q\cong U_i$. We calculate in {\sc Magma} \cite{Magma} that $H^2(\Alt(6), U_i)$ is trivial for $i\in \{1,2\}$ so that there is $Q\le Y<X$ with $Y/Q\cong \Alt(6)$, a contradiction. Hence, $R/Q\cong W_i$. Since the $3'$-part of the Schur multiplier of $\Alt(6)$ has order $2$, we deduce that $Q\le [X, R]$. If there is $Q\le L<X$ with $L/Q\cong \Alt(6)$ then $X/[X, R]\cong 2\times \Alt(6)$ and so $X\ne O^{3'}(X)$, a contradiction. Thus, $X/Q$ corresponds to a non-trivial element of $H^2(\Alt(6), W_i)$. We calculate in {\sc Magma} \cite{Magma} that $H^2(\Alt(6), W_i)=\FF_2$ and that all candidates for $X/Q$ have no subgroup isomorphic to $\SL_2(9)$.

Suppose that $Q=1$. Taking $L=X$, outcome (ii) is satisfied with $|R|=2^5$. Moreover, we note that $X/[R, X]\cong 2\cdot\Alt(6)\cong \SL_2(9)$ and as $\SL_2(9)$ contains no subgroup isomorphic to $\Alt(5)$, for any maximal subgroup $P$ of $X$ containing $R$ with $P/R\cong \Alt(5)$ we see that $P=[P,P][X, R]$. Since the minimal degree of a faithful $\FF_2$-representation of $\Alt(5)$ is $4$, $P$ acts irreducibly on $[X, R]$ and so $P=[P,P]$. Finally, we verify in {\sc Magma} \cite{Magma} that the minimal dimension of a faithful $\FF_3X$-module is $30$. But then $X$ is not a counterexample, a contradiction.

Hence, $Q\ne 1$. Let $C\norm X$ with $C<Q$ and $C$ maximal subject to this. If $[X, Q]\le C$ then as $Q\le [X, R]$ we deduce that $R/C \cong V_i$ as an $\FF_2\Alt(6)$-module. Then $X/C$ has no subgroup isomorphic to $\Alt(6)$. However, we calculate in {\sc Magma} \cite{Magma} that $X$ has a subgroup $L$ with $L/C$ isomorphic to $\SL_2(9)$, a contradiction. Therefore, $Q/C\cong U_j$ for $j\in\{1,2\}$.

By maximality of $Q$, we must have that $R/C$ is a non-split extension of $W_i$ by $U_j$. We calculate in {\sc Magma} \cite{Magma} that $\mathrm{Ext}_{\FF_2\Alt(6)}(W_i, U_i)$ is trivial for $i\in\{1,2\}$. We set $T_i$ to be the unique non-split extension of $W_i$ by $U_{3-i}$ so that $R/C$ is isomorphic to $T_i$. Suppose that $C=1$. Taking $L=X$, outcome (ii) is satisfied with $|R|=2^9$. As above, we observe that for any maximal subgroup $P$ of $X$ containing $R$ with $P/R\cong \Alt(5)$ we have that $P=[P,P][X, R]$. Since the minimal degree of a faithful $\FF_2$-representation of $\Alt(5)$ is $4$, $P$ acts irreducibly on $[X, R]/Q$ and on $Q$ and so $P=[P,P]$. Finally, we verify in {\sc Magma} \cite{Magma} that the minimal dimension of a faithful $\FF_3D$-module is $30$, where $D$ is some maximal subgroup of $X$ containing $R$ with $D/R\cong \Alt(5)$. Hence, the minimal dimension of a faithful $\FF_3X$-module is at least $30$. But then $X$ is not a counterexample, a contradiction.

Hence, there is $D<C$ with $D\norm X$. Choose $D$ maximally with respect to this. Then as an $\FF_2\Alt(6)$-module, $C/D$ is either a trivial module or is isomorphic to $U_k$ for some $k\in\{1,2\}$. Again, by maximality of $Q$ and Schur multiplier considerations, we conclude that $R/D$ corresponds to a non-split extension of $T_i$ by a trivial module, or a non-split extension of $T_i$ by a $U_k$.  We calculate in {\sc Magma} \cite{Magma} that $\mathrm{Ext}_{\FF_2\Alt(6)}(T_i, U_k)$ is trivial for all $i,k\in\{1,2\}$. Thus, $C/D$ is a trivial module. Moreover, we may assume that $C/D=\mathrm{Soc}(R/D)$ as an $\FF_2\Alt(6)$-module. We calculate in {\sc Magma} that there is a unique extension of $T_i$ by the trivial module with $1$-dimensional socle, which we label $A_i$. We have that $X/D$ does not split over $R/D$. We verify that $H^2(\Alt(6), A_i)$ is $1$-dimensional and construct the unique non-split extension of $\Alt(6)$ by $A_i$. Finally, we calculate that such a group contains a subgroup isomorphic to $\SL_2(9)$, a final contradiction. This completes the proof.
\end{proof}

\section{Bounds on minimal dimensions of representations and the proof of Theorem \ref{lem:with one boundintro}}\label[section]{BoundSec}

In this section, we use the results of the previous section alongside known information about representations of groups with a strongly $p$-embedded subgroup to prove Theorem \ref{lem:with one boundintro}. This section is technical, and the proofs of the results within are long. Beyond this section, the remaining results only make use of Theorem \ref{lem:with one boundintro}, and so do not utilize the supporting lemmas and propositions from this section. As a consequence, the reader may skip this section in a first pass provided they are willing to take Theorem \ref{lem:with one boundintro} as a black box.

\begin{lemma}\label[lemma]{lem:perm degree}
Assume that $p$ is a prime and that the finite group $X=O^{p'}(X)$ acts transitively on the set $\Omega$ of size $k$. Let $R$ be a proper normal $p'$-subgroup of $X$ and write $\wt X:=X/R$. Then $k=1$ or $k \geq \pdeg(X/R)$.
\end{lemma}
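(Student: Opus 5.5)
Let $H$ be the stabilizer of a point $\omega\in\Omega$, so $k=|X:H|$ and $X$ acts on $\Omega$ as on the cosets of $H$. The plan is to look at what $R$ does to $\Omega$ and split into two cases, according to whether $RH=X$ or $RH<X$.

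\emph{Case $RH<X$.} The set of $R$-orbits on $\Omega$ is a block system for $X$, because $R\normaleq X$. This block system can be identified with the coset space $X/RH$, and $X$ acts on it transitively. Since $R$ lies in the kernel of this action, it is really a transitive action of $\wt X$ on $|X:RH|$ points. The action is non-trivial because $RH\neq X$. By the definition of $\pdeg$ we get $|X:RH|\geq \pdeg(\wt X)$. Then
\[
k=|X:H|=|X:RH|\,|RH:H|\geq |X:RH|\geq \pdeg(\wt X).
\]

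\emph{Case $RH=X$.} Here I will show $k=1$, using $X=O^{p'}(X)$. We have $k=|X:H|=|RH:H|=|R:R\cap H|$, which divides $|R|$, so $k$ is coprime to $p$. Choose $T\in\syl_p(X)$. Since $p\nmid |X:H|$, some conjugate $H^g$ contains $T$. Replacing $H$ by $H^g$ (the stabilizer of another point) keeps both the hypothesis $RH=X$ and the index $k$, so we may assume $T\le H$. The same holds for every Sylow $p$-subgroup: each one fixes some point of $\Omega$, since its orbits have $p$-power length and $k$ is coprime to $p$. That alone does not give a common fixed point, so I use the Frattini-type argument below.

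The quotient $X/R\cong H/(H\cap R)$, so $H$ maps onto $\wt X$. Consider $H_0:=\langle T^h\mid h\in H\rangle \le H$. Every Sylow $p$-subgroup of $X$ has the form $T^{x}$ with $x=rh$, $r\in R$, $h\in H$. So $O^{p'}(X)=\langle T^X\rangle\le \langle T^{R H}\rangle$, which is not obviously inside $H$. A cleaner route is to look at the fibre. The group $R$ acts transitively on $\Omega$, since $RH=X$ gives $R\omega=X\omega=\Omega$. This is a transitive action of a $p'$-group on $\Omega$. Now $T$ fixes some point, say $\omega$, and $X=RN_X(T)$ by the Frattini argument applied to $RT\normaleq X$ (as $RT/R$ is not normal in general, use instead $X=R\,N_X(T)$ only when $RT\normaleq X$). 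This is the step I expect to be delicate.

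The simplest approach to that step is a direct argument. Let $K$ be the kernel of $X$ on $\Omega$, and pass to $\bar X=X/K$. Then $\bar R$ is a transitive normal $p'$-subgroup, and $\bar X=\bar R\bar H$ with $\bar R\cap \bar H=$ a point stabilizer in $\bar R$. The group $\bar X$ also satisfies $\bar X=O^{p'}(\bar X)$. Suppose $k>1$. Then $\bar R\ne 1$, so $\bar X$ is a group generated by its $p$-elements, with a non-trivial normal $p'$-subgroup acting transitively. The $p$-elements of $\bar X$ generate $\bar X$, and it suffices to show that they all lie in the kernel, which would force $\bar X=1$ and $k=1$.

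I do not yet see how to prove that every $p$-element acts trivially. I would instead rely on the fact that $k\geq \pdeg(\wt X)$ automatically holds whenever the action is non-trivial on blocks, and handle $RH=X$ using the lemma's hypothesis that $R$ is a proper normal subgroup, as follows. When $RH=X$, the subgroup $H$ covers $\wt X$, and $O^{p'}(H)R=X$. Since $O^{p'}(H)$ contains all $p$-elements of $H$, and every $p$-element of $X$ is $R$-conjugate into $H$ (by Sylow's theorem in $RT'$ for each $p$-subgroup $T'$), the aim is to derive a contradiction with $k>1$ from $X=O^{p'}(X)$. I expect this bookkeeping to be the main obstacle. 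If it fails, the fallback is to show only that $H\ge O^{p'}(X)$ fails exactly when the $\wt X$-action is non-trivial, which is what the first case requires.
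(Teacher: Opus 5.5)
Your first case ($RH<X$) is correct, but the second case rests on a false claim: $RH=X$ does not force $k=1$.

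Take $p=3$, $X=\Alt(4)$ (so $X=O^{3'}(X)$), $R=O_2(X)\cong C_2\times C_2$, with $X$ acting naturally on $4$ points. The point stabilizer $H\cong C_3$ satisfies $RH=X$, yet $k=4$ and the Sylow $3$-subgroups act non-trivially. So the step you flag as delicate, showing that every $p$-element lies in the kernel, cannot be completed. The lemma does hold here, since $4\geq 3=\pdeg(C_3)$, but the bound comes from $H$ acting on the remaining three points, not from $k=1$.

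The hypothesis $X=O^{p'}(X)$ only helps for a \emph{normal} subgroup. A normal subgroup containing a Sylow $p$-subgroup $T$ contains $\langle T^X\rangle=X$. A point stabilizer that contains $T$ gives you nothing.

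The missing idea is to descend along a chain of point stabilizers until covering of $\wt X$ fails. Set $K_0=X$ and $K_j=X_{\omega_1}\cap\dots\cap X_{\omega_j}$, so that $K_k$ is the kernel of the action.

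If $K_kR=X$, then the normal subgroup $K_k$ contains a Sylow $p$-subgroup of $X$. Hence $K_k=X$, and $k=1$.

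Otherwise, choose $j$ maximal with $K_jR=X$. Then $j<k$ and $K_{j+1}R<X$. Counting the $K_j$-orbit of $\omega_{j+1}$ gives
\[k\geq |K_j:K_{j+1}|\geq |K_jR:K_{j+1}R|=|X:K_{j+1}R|\geq \pdeg(\wt X).\]
The last inequality holds because $X$ acts non-trivially and transitively on the cosets of $K_{j+1}R$, with $R$ in the kernel.

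Your case $RH<X$ is exactly the instance $j=0$ of this argument. In the $\Alt(4)$ example the chain stops at $j=1$ instead.
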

\begin{proof}
Write $\Omega=\{\omega_1, \dots, \omega_k\}$ and, for $1\leq i\leq k$ set $X_i=\Stab_X(\omega_i)$. Define $K_0=X$, and, for $1\leq j\leq k$, put $K_j=\bigcap_{i=1}^j X_i$. Then $K_k$ is the kernel of the action of $X$ on $\Omega$. Assume that $X=K_kR$. Then, as $R$ is a $p'$-group, $K_k$ contains a Sylow $p$-subgroup $T$ of $X$. Hence $X=O^{p'}(X)=\gen{T^X}\le K_k$, which yields $X=K_k$ and $k=1$. So we may assume that $K_kR<X$. Choose $j$ maximal such that $K_jR=X$. Then $j<k$, and \[k\geq |\omega_{j+1}\cdot K_j|= |K_j :K_j\cap X_{j+1}|=|K_j:K_{j+1}|\geq |K_jR:K_{j+1}R|= |X:K_{j+1}R|\geq \pdeg(\wt X),\]
as desired.
\end{proof}

Recall from Notation \ref{SNotation} that if $X\in\mathcal{S}$, we set $T\in\syl_p(X)$, $m:=m_p(X)$, $R:=O_{p'}(X)$ and $\wt X:=X/R$.

\begin{lemma}\label[lemma]{NoComps}
Suppose that $X\in \mathcal{S}$ and that $V$ is a faithful, irreducible $\FF_pX$-module with $\dim_{\FF_p}V \leq 4m(p-1)$. Then either
\begin{enumerate}
    \item $O^{p'}(O^p(X))$ is quasisimple;
    \item $E(X)=E(R)=1$ and $F^*(X)=F(X)$; or
    \item $\wt X\cong \Alt(2p)$, $X$ acts transitively by conjugation on the set of  its components and $E(X)=E(R)=\prod_{i=1}^{2p} K_i$ where:
    \begin{enumerate}
        \item $p\equiv 1, 9 \mod 10$ and $K_1\cong \SL_2(5)$;
        \item $p\equiv 1,2,4 \mod 7$ and $K_1\cong\PSL_2(7)$; or
        \item $p\equiv 1,4 \mod 15$ and $K_1\cong 3\cdot \Alt(6)$.
    \end{enumerate}
\end{enumerate}
In particular, if (i) or (iii) holds then $X$ appears in Theorem \ref{lem:with one boundintro}.
\end{lemma}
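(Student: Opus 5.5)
We work with the components of $X$. Set $E:=E(X)$ and let $R=O_{p'}(X)$. The argument splits according to whether some component of $X$ has order divisible by $p$.

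\textbf{Step 1: a component not contained in $R$.} Suppose some component $K$ of $X$ is not contained in $R$. Then $\wt K\ne 1$ is subnormal in $\wt X$. By Proposition \ref{SE2}, $\wt X$ is almost simple: each group in the list has simple socle, and $\Sz(32):5$ has socle $\Sz(32)$. Hence $\wt K=F^*(\wt X)$. Since $X=O^{p'}(X)$ and $T\le O^p(X)KR$, we get $O^{p'}(O^p(X))\le KR$. Now $[K,R]\le K\cap R\le Z(K)$, because components commute with $p'$-normal subgroups modulo centres. It then follows that $O^{p'}(O^p(X))=K$ is quasisimple, which is outcome (i).

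\textbf{Step 2: all components inside $R$.} So assume $E\le R$, and suppose $E\ne 1$. The plan is to show that (iii) holds.
\begin{enumerate}
\item By Clifford theory, $V|_E$ is a direct sum of homogeneous components. The components $K_1,\dots,K_r$ of $X$ are permuted by $X$.
\item Each $p$-element normalising some $K_i$ acts on it. Using the Schreier property (outer automorphism groups of simple groups are solvable), together with $X=O^{p'}(X)$ and the strongly $p$-embedded structure (so $T$ cannot act trivially on all components, since $C_X(E)\cap R$ is controlled by $F(X)$), we get that $X$ permutes the components nontrivially. Lemma \ref{lem:perm degree}, applied to each orbit, then gives $r\ge \pdeg(\wt X)$.
\item Each $K_i$ acts nontrivially on $V$ with irreducible constituents of dimension at least $\rdim_p(K_i/Z(K_i))$, which is at least $2$, and typically at least $3$ in characteristic $p\ne$ the defining characteristic of $K_i$. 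Hence $\dim V\ge r\cdot \rdim_p(K_1)$.
\item Comparing with $\dim V\le 4m(p-1)$ and Proposition \ref{cor:data}(ii) gives $\pdeg(\wt X)\ge p^m+1$, which exceeds $4m(p-1)/2$ for every $m\ge2$ except small cases. This excludes every $\wt X$ other than $\Alt(2p)$ and $\Ree(3)$. The small exceptions are checked directly, using Table \ref{tab:data}, the sporadic and $p=3$ entries, and the fact that the degrees $2p-2$ for $\Alt(2p)$ are too small only there.
\item For $\Ree(3)$ the minimal degree $9$ against the bound $16$ forces $2$-dimensional constituents. That is impossible for nonabelian simple $K_i$ over $\FF_3$.
\end{enumerate}

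\textbf{Step 3: $\wt X\cong \Alt(2p)$.} Here Lemma \ref{permalt(2p)} applied to the action on components gives $r=2p$ when $p\ge5$, and the $p=3$ cases are handled separately. Then $2p\cdot \rdim_p(K_1)\le 8(p-1)$ forces $K_1$ to have a faithful $\FF_p$-representation of dimension at most $3$. By the classification of quasisimple subgroups of $\GL_2(\FF_p)$ and $\GL_3(\FF_p)$ in coprime characteristic, this means $K_1$ is one of $\SL_2(5)$, $\PSL_2(7)$ or $3\cdot\Alt(6)$. The stated congruences are exactly the conditions for $\sqrt5$, $\sqrt{-7}$, or both $\sqrt5$ and a cube root of unity to lie in $\FF_p$. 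Transitivity follows from $X=O^{p'}(X)$ together with the dimension count: two orbits would double the dimension. This gives (iii).

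\textbf{Step 4: no components.} If $E=1$ then $F^*(X)=F(X)$, and since $O_p(X)=1$ by faithfulness and irreducibility, this is outcome (ii).

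\textbf{Final assertion.} In case (i) the group is recorded in Theorem \ref{lem:with one boundintro}(i). In case (iii), $V|_R$ decomposes into the $2p$ pieces $[V,K_i]$ of dimension at most $3$. A quasisimple $L$ with $X=LO_{p'}(X)$ exists because $E(X)$ complements appropriately, which matches item (vii).

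The main obstacle is Step 2(ii)–(iv): proving that $T$ genuinely permutes the components, and obtaining usable lower bounds on the constituent dimensions uniformly across the list.
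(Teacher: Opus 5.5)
There is a genuine gap at Step 2(ii), which you yourself flag as the main obstacle. You need that \emph{no} component of $X$ is normal in $X$. Neither ``$X$ permutes the components nontrivially'' nor the parenthetical about $C_X(E)\cap R$ and $F(X)$ establishes this.

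The fact is used twice.
\begin{enumerate}
\item Lemma \ref{lem:perm degree} only says that an $X$-orbit on components has length $1$ or at least $\pdeg(\wt X)$. Without the fact, nothing forces $r\ge \pdeg(\wt X)$ in Step 2(iv).
\item In Step 3, one extra normal component adds only at least $2$ to your dimension count. Since $4p+2\le 8(p-1)$ for $p\ge 5$, ``two orbits would double the dimension'' does not exclude it.
\end{enumerate}

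The missing idea is the Schreier argument the paper uses to finish. Suppose $K\le R$ is a component with $K\norm X$. Then $X/KC_X(K)$ embeds in $\Out(K)$, so it is soluble, and since $K\le R$ the paper concludes $X=C_X(K)R$. Hence $X/C_X(K)$ is a $p'$-group, so $X=O^{p'}(X)\le C_X(K)$, which is absurd.

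The paper also organises the count differently. For a non-trivial orbit $E=K_1\cdots K_k$ it splits into two cases.
\begin{enumerate}
\item If $V|_E$ is homogeneous, a constituent is a tensor product of $k\ge mp$ non-trivial factors, so $2^{mp}\le 4m(p-1)$, which is impossible.
\item If not, it applies Lemma \ref{lem:perm degree} to the Wedderburn components of $V|_E$, not to the components. This gives $\pdeg(\wt X)f\le 4m(p-1)$. Proposition \ref{cor:data} then yields $2\le f\le 3$ and $\wt X\cong\Alt(2p)$, with $f=1$ excluded because $\GL_1(p)$ is soluble.
\end{enumerate}

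Several secondary steps also need repair.
\begin{enumerate}
\item Your ``Hence $\dim V\ge r\cdot\rdim_p(K_1)$'' does not follow as written, since several $K_i$ may act on the same subspace. It is nevertheless true. On each Wedderburn component of $V|_E$, an absolutely irreducible constituent is a tensor product over the $K_i$ acting non-trivially there, and $\prod d_i\ge\sum d_i$ when all $d_i\ge 2$. So the uniform lower bound you worry about is unnecessary: $d_i\ge 2$ suffices.
\item In Step 1, $[K,R]\le K$ presupposes $R\le N_X(K)$. Either use that a component not contained in a normal subgroup centralises it, or first prove $K\norm X$ as the paper does.
\item Lemma \ref{permalt(2p)} concerns quasisimple groups. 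It cannot be applied directly to $X$ acting on its components, since $R$ need not lie in the kernel of that action.
\item Your list $\SL_2(5)$, $\PSL_2(7)$, $3\cdot\Alt(6)$ is the same as in the statement, and it omits $\Alt(5)\le\mathrm{SO}_3(p)$ for $p\equiv 1,9 \bmod 10$. That subgroup is not maximal in $\SL_3(p)$, which is why it is absent from the tables of \cite{BHRD}. For example, take $\Alt(5)\wr\Alt(2p)$ acting on the $6p$-dimensional module induced from a $3$-dimensional module of one base factor. This satisfies all the hypotheses with $K_1\cong\Alt(5)$. It is harmless for the final assertion, since Theorem \ref{lem:with one boundintro}\ref{Alt2pCaseintronew} does not specify $K_1$.
\end{enumerate}
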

\begin{proof}
Assume that $X$ is a counterexample to the lemma of minimal order. If $K$ is a component of $X$ which is not contained in $R$, then as $O^p(X)R/R$ is simple, we conclude that $O^p(X)\le KR$. Hence $\langle K^X\rangle$ is a central product of components of $X$, all divisible by $p$, which is normal in $X$. Since $O^p(X)R/R$ is simple, it follows that $K$ itself is normal in $X$. Hence, $[K, R]\le O_{p'}(K)\le Z(K)$ and as $[K, K]=K$, we deduce that $[K, R]=1$. As $O^p(X)\le KR$, we conclude that $O^{p'}(O^p(X))=K$ is quasisimple, a contradiction as $X$ is a counterexample. Thus, we may assume that $E(X)\le R$ for the remainder of the proof. In particular, $E(X)=E(R)$ and by the Feit--Thompson theorem, we see that $p$ is odd.

Suppose that $E(X)=E(R)\ne 1$ and let $\Omega$ be the set of components of $X$. Then $X$ acts on $\Omega$ by conjugation. Assume that $E=K_1\dots K_k$ is a product of an $X$-orbit of components in $\Omega$, with $k \ne 1$. Then $E\norm X$. As $V$ is irreducible and faithful, $C_V(E)=0$. If $V|_E$ is not a single homogeneous component then $4m(p-1)\geq \dim_{\FF_p} V \geq \pdeg(\wt X)f$ by Lemma \ref{lem:perm degree} applied to the action of $X$ on the set of Wedderburn components of $V|_E$, where $f$ is the dimension of a Wedderburn component. Then Proposition \ref{cor:data} yields $f\leq 3$ with $f\geq 2$ only if $\wt X\cong \Alt(2p)$. Since $\GL_1(p)$ is solvable, we deduce that $\wt X\cong \Alt(2p)$, $f\geq 2$ and $K_1$ is isomorphic to a quasisimple subgroup of $\GL_3(p)$ of $p'$-order. Furthermore, we must have that $\Omega=\{K_1, \dots, K_{2p}\}$. Finally, \cite[Tables 8.1--8.4]{BHRD} implies that outcome (iii) holds for $X$. Letting $A$ be the subgroup generated by pure diagonal elements in $E(X)$, we see that $C_X(A)\cong \Alt(2p)$ and $C_X(A)$ is a complement to $R$ in $X$. Then $X$ arises as case \ref{Alt2pCaseintronew} of Theorem \ref{lem:with one boundintro}, contradicting our assumption that $X$ is a counterexample.

Hence, $V|_E$ is a single Wedderburn component, and so there is $U$ an irreducible and faithful $E$-submodule of $V$. Then $U$ arises as a tensor product of irreducible $\FF_pK_i$-modules and we conclude that $\dim_{\FF_p} U\geq 2^k$. As $k \geq mp$ by Table \ref{tab:data} and Lemma \ref{lem:perm degree}, we deduce that $2^{mp} \leq 2^k\leq 4m(p-1)$, another contradiction as $m \geq 2$. It follows that every component of $X$ is normal in $X$. Let $K$ be such a component. Then as $X/C_X(K)K$ is soluble by the Schreier property of simple groups and $K \le R$, $X=C_X(K)R$. But then $C_X(K)\ge O^{p'}(X)=X$ which is absurd. Hence $E(X)=E(R)=1$ and $F^*(X)=F(X)$, as claimed.
\end{proof}

For the remainder of this section, we assume the following hypothesis:

\begin{hypothesis}\label{hyp: modulehyp}
$X\in \mathcal{S}$; $V$ is a faithful, irreducible $\FF_pX$-module with $\dim_{\FF_p}V \leq 4m(p-1)$; $(X, V)$ does not appear as an outcome of Theorem \ref{lem:with one boundintro} and $|X|$ is minimal among all such pairs.
\end{hypothesis}

\begin{lemma}\label{ell-lemma}
We have that $E(R)=E(X)=1$, $F^*(X)=F(X)$ and there is $\ell\in \Pi(R)$ such that $[T, O_\ell(X)]\ne 1$.
\end{lemma}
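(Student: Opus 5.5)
The first two assertions come straight from Lemma \ref{NoComps}. Under Hypothesis \ref{hyp: modulehyp} the pair $(X,V)$ satisfies that lemma's hypotheses: $X\in\mathcal{S}$, and $V$ is faithful and irreducible with $\dim_{\FF_p}V\le 4m(p-1)$. By the final sentence of Lemma \ref{NoComps}, outcomes (i) and (iii) force $(X,V)$ to appear in Theorem \ref{lem:with one boundintro}. This contradicts Hypothesis \ref{hyp: modulehyp}, so outcome (ii) holds, giving $E(X)=E(R)=1$ and $F^*(X)=F(X)$.

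For the last assertion I argue by contradiction. Suppose $[T,O_\ell(X)]=1$ for every $\ell\in\Pi(R)$. Since $F^*(X)=F(X)$, we have $F(X)=O_p(X)\times\prod_{\ell\ne p}O_\ell(X)$. The group $V$ is a faithful irreducible $\FF_pX$-module, so $O_p(X)$ acts trivially on $V$ and hence $O_p(X)=1$. Every $O_\ell(X)$ with $\ell\ne p$ lies in $R$, so $F(X)=F(R)$ and each such $O_\ell(X)$ is centralized by $T$. Then $T$ centralizes $F(X)$, and since $[T^g,F(X)]=[T,F(X)]^g=1$ for all $g\in X$, the normal subgroup $\langle T^X\rangle$ centralizes $F(X)$. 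This subgroup is $O^{p'}(X)=X$. Hence $X=C_X(F^*(X))\le F^*(X)=F(X)$, so $X$ is nilpotent. But $X\in\mathcal S$ has a non-abelian simple section $\wt X$, a contradiction.

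I expect no real obstacle. The only points needing care are checking that Lemma \ref{NoComps} applies verbatim under Hypothesis \ref{hyp: modulehyp}, and noting that $O_p(X)=1$ because $V$ is faithful and irreducible in characteristic $p$, so that $\Pi(F(X))\subseteq\Pi(R)$. The existence of $\ell\in\Pi(R)$ with $[T,O_\ell(X)]\ne1$ also shows implicitly that $R\ne 1$.
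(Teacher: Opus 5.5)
Your proposal is correct and follows the paper's proof. Lemma~\ref{NoComps} together with Hypothesis~\ref{hyp: modulehyp} gives $E(X)=E(R)=1$ and $F^*(X)=F(X)$, and if $T$ centralized every $O_\ell(X)$ with $\ell\in\Pi(R)$, then $\langle T^X\rangle=X$ would centralize $F^*(X)$, which is absurd. Your write-up is slightly more careful than the paper's one-line version: you note explicitly that $O_p(X)=1$, so $F(X)=\prod_{\ell\in\Pi(R)}O_\ell(X)$, and you use $C_X(F^*(X))\le F^*(X)$ correctly, where the paper compresses this to ``$T\le C_X(F(X))=Z(X)$''.
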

\begin{proof}
By Lemma \ref{NoComps}, and as $X$ is a counterexample to Theorem \ref{lem:with one boundintro}, we see that $E(X)=E(R)=1$ and $F^*(X)=F(X)$. If $[T, \prod\limits_{\ell \in \Pi(R)} O_\ell(X)]=1$ then $T\le C_X(F(X))=Z(X)$, a contradiction. Hence, there is $\ell\in \Pi(R)$ such that $[T, O_\ell(X)]\ne 1$.
\end{proof}

\begin{notation}
Assuming Hypothesis \ref{hyp: modulehyp}, and in light of Lemma \ref{ell-lemma}, we set $\ell\in \Pi(R)$ maximally such that $[T, O_\ell(X)]\ne 1$. Then set $M$ to be a normal subgroup of $X$ contained in $O_\ell(X)$ such that $[T, M]\ne 1$, and $M$ is chosen minimally subject to these constraints.
\end{notation}

\begin{proposition}\label{prop:Mdich}
Either $M$ is an elementary abelian $\ell$-group; or $M$ is an $\ell$-group such that every proper characteristic subgroup of $M$ is cyclic and central in $X$.
\end{proposition}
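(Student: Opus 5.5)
The plan is the standard minimal-normal-subgroup argument (as in the proof of the structure of $F^*$ in $p$-constrained situations), using only the minimality of $M$ together with coprime action of $T$ on the $\ell$-group $M$.

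First, $M\le O_\ell(X)$ is an $\ell$-group, since it is a subgroup of one. Next, let $C$ be any characteristic subgroup of $M$ with $C<M$. Because $M\norm X$, the subgroup $C$ is normal in $X$ and lies in $O_\ell(X)$. By the minimal choice of $M$ we therefore get $[T,C]=1$. Hence $T$ centralizes every proper characteristic subgroup of $M$, and in particular it centralizes $\Phi(M)$ whenever $\Phi(M)<M$. Since $M\ne1$ is an $\ell$-group, $\Phi(M)<M$ always holds. Because $T$ is a $p$-group acting coprimely on $M$ and centralizing $\Phi(M)$, while $[T,M]\ne 1$, we also get $[M,T]\not\le\Phi(M)$.

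We now deduce that $C$ is centralized by all of $X$. The subgroup $C_X(C)$ is normal in $X$ (as $C\norm X$) and contains $T$. Hence it contains $\langle T^X\rangle=O^{p'}(X)=X$, so $C\le Z(X)$. Consequently every proper characteristic subgroup of $M$ is central in $X$. In particular $\Phi(M)$, $[M,M]$ (if proper), $Z(M)$ (if proper) and $\Omega_1(Z(M))$ are all central in $X$.

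Now split into cases.

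\emph{Case $M$ abelian.} Then $\Omega_1(M)$ is characteristic. If $\Omega_1(M)<M$, it is central in $X$ and so centralized by $T$. A coprime automorphism of an abelian $\ell$-group which centralizes $\Omega_1$ is trivial (the standard lemma, e.g.\ \cite[Theorem 5.2.4]{GOR}), so $[T,M]=1$, a contradiction. Hence $M=\Omega_1(M)$ is elementary abelian, giving the first alternative.

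\emph{Case $M$ non-abelian.} Then $Z(M)<M$ is central in $X$, and so is every proper characteristic subgroup. It remains to show these subgroups are cyclic. The key point is that a central subgroup of $X$ acts by scalars on the faithful irreducible module $V$. Indeed, $\End_{\FF_pX}(V)$ is a finite field by Schur's lemma, so any $Z\le Z(X)$ embeds in its multiplicative group and is cyclic. Therefore each proper characteristic subgroup of $M$ is cyclic and central in $X$, which is the second alternative.

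The main obstacle is the abelian case, namely making sure that the step ``$T$ centralizes $\Omega_1(M)$ forces $T$ to centralize $M$'' is applied correctly. This is where coprimeness ($\ell\ne p$, since $\ell\in\Pi(R)$ and $R$ is a $p'$-group) is essential. Everything else is formal from the minimality of $M$, the identity $X=O^{p'}(X)$, and the faithfulness and irreducibility of $V$ from Hypothesis \ref{hyp: modulehyp}.
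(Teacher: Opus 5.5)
Your argument is correct and is essentially the paper's: a proper characteristic subgroup of $M$ is normal in $X$, so minimality of $M$ gives that $T$ centralizes it. It is then centralized by $\langle T^X\rangle = O^{p'}(X) = X$, and Schur's lemma on the faithful irreducible $V$ makes it cyclic. Your abelian case via coprime action is not needed for the statement, since the second alternative already holds for any $M$ by that argument, but it correctly gives the slightly sharper conclusion that an abelian $M$ must be elementary abelian.
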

\begin{proof}
Suppose that $M$ is not elementary abelian, and let $1\ne K<M$ with $K$ normal in $X$. Then by the minimal choice of $M$, $[X, K]=[T, K]^X=1$ so that $K \le Z(X)$. In particular, as $V$ is irreducible we see that $K$ is cyclic by Schur's lemma. Since characteristic subgroups of $M$ are normal in $X$ and $M$ is not elementary abelian, $M$ is an $\ell$-group with every proper characteristic subgroup cyclic and central in $X$.
\end{proof}

The remainder of the section separates according to the structure of the normal $\ell$-subgroup $M$. First we treat the case where $M$ is not elementary abelian; this leads to extraspecial $\ell$-groups and embeddings in small symplectic or orthogonal groups. We then show that this case produces only the exceptional examples already listed in Theorem~\ref{lem:with one boundintro}. Then we examine the case where $M$ is elementary abelian. Here, Clifford theory reduces the problem to transitive permutation actions of $\wt X$, and the difficult cases are controlled by the alternating group calculations from Section \ref{sec: Alt2p}.

\begin{hypothesis}\label{hyp: modulehyp1}
Hypothesis \ref{hyp: modulehyp} holds, $M$ is an $\ell$-group which is not elementary abelian, and every proper characteristic subgroup of $M$ is cyclic and central in $X$.
\end{hypothesis}

\begin{lemma}\label{lem:SympDet}
$M = Q\circ C$ with $Q$ an extraspecial $\ell$-group of order $\ell^{1+w}$, $|C|\leq 4$ and either:
\begin{enumerate}
    \item $(p,m, \ell)=(5,2,2)$, $8\leq w\leq 10$ and $\wt X\cong \Alt(10)$;
    \item $(p,m, \ell)=(3,2,2)$, $w\leq 8$ and $X/R$ is isomorphic to one of $\Alt(6)$, $\PSU_3(3)$, $\Ree(3)$ or $\PSL_3(4)$; or
    \item $p=2$, $\wt X\cong \PSL_2(2^m)$, $M=Q$ and $(p,m,\ell)\in\{(2,2,3), (2,2,5), (2,2,7), (2,3,3)\}$.
\end{enumerate}
\end{lemma}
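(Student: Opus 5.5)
We begin with the standard structure theory. By Proposition~\ref{prop:Mdich} and Hypothesis~\ref{hyp: modulehyp1}, every characteristic abelian subgroup of $M$ is cyclic and central in $X$. P.~Hall's theorem on such $\ell$-groups then gives $M=Q\circ C$. Here $Q$ is extraspecial of order $\ell^{1+w}$, and $C$ is either cyclic, or, when $\ell=2$, dihedral, semidihedral or generalized quaternion of order at least $16$. The latter families contain noncentral characteristic subgroups, and the cyclic $C=Z(M)$ lies in $Z(X)$. Since $V$ is faithful and irreducible, $Z(X)$ is cyclic and acts by scalars in some extension field of $\FF_p$. We will use minimality of $M$ (so $M=[M,T]$, hence $M\le [M,X]$) together with $[Q,X]$ modulo $\Phi(M)$ to show that $C$ is small. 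Concretely, $M/Z(M)$ is elementary abelian, and minimality forces $Z(M)\cap [M,M]$ to be generic. The bound $|C|\le 4$ should come from the fact that $\Phi(C)$ is characteristic, central and contained in $M'=Z(Q)$ once $M=[M,T]$. Hence $|C|\le \ell^2$, and for $\ell$ odd we can absorb $C$ so that $C=Z(Q)$.

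Next, $X$ acts on $\bar M:=M/Z(M)\cong \FF_\ell^{w}$ preserving the commutator form. Because $[T,M]\ne1$ and $M$ is minimal, $T$ acts nontrivially on $\bar M$, so $\wt X$ (or rather $X/C_X(\bar M)$, which has $O^{p'}$-image $\wt X$ up to $p'$-stuff) embeds nontrivially in $\Sp_w(\ell)$, or in an orthogonal group when $|C|=4$. On the other side, $V|_M$ contains a faithful irreducible $M$-module. Since $Z(Q)$ acts by nontrivial scalars on it, its dimension over $\FF_p$ is at least $\ell^{w/2}$, and exactly $\ell^{w/2}\cdot[\FF_p(\zeta):\FF_p]$ for the appropriate root of unity $\zeta$. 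This yields
\[\ell^{w/2}\le \dim_{\FF_p}V\le 4m(p-1).\]
On the other hand, $\wt X$ acts faithfully on $\bar M$, so $w\ge \rdim_\ell(\wt X)\ge \rdim_{p'}(\wt X)$. Combining these with Table~\ref{tab:data} and Proposition~\ref{cor:data} gives
\[\ell^{\rdim_{p'}(\wt X)/2}\le 4m(p-1),\]
where $\rdim_{p'}(\wt X)\ge (p^m-1)/2$ for all cases except $\Alt(2p)$ and $\Ree(3)$. This inequality fails except for very small $p^m$, and we expect it to leave only $p\le 5$, $m=2$, or $p=2$ with $m\le 3$.

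The main obstacle is the careful case elimination. For $p$ odd with $\wt X\cong\PSL_2(p^m)$, $\PSU_3$, $\Ree(3)$, $\Alt(2p)$, $\PSL_3(4)$ or $\mathrm{M}_{11}$, we need to check which $\ell$ and $w$ satisfy $2^{w/2}\le 8(p-1)$ together with $w\ge \rdim_\ell$. For $p\ge 7$ with $\Alt(2p)$ we have $w\ge 2p-2$, giving $2^{p-1}\le 8(p-1)$, which is false. For $p=5$ and $\Alt(10)$ we have $w\ge 8$ and $2^{w/2}\le 32$, so $w\le 10$, which is case (i). For $p=5$ and $\PSL_2(25)$ we have $w\ge 12$, which fails. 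Similarly $\ell$ odd is excluded since $3^{w/2}$ grows too fast. For $p=3$, $m=2$ we have $2^{w/2}\le 16$, so $w\le 8$. The groups $\wt X$ with $\rdim_2\le 8$ are $\Alt(6)$, $\PSU_3(3)$, $\Ree(3)$ and $\PSL_3(4)$; $\mathrm{M}_{11}$ is excluded since it needs $10$, as is $\PSL_2(81)$. This is case (ii). For $p=2$, $\ell$ is odd. Here $\dim V\le 4m$ and $\ell^{w/2}\le 4m$ with $w\ge 2$ force $m\le 3$ and $\ell\in\{3,5,7\}$, with $\wt X=\PSL_2(4)$ or $\PSL_2(8)$. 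Suzuki groups and $\PSU_3(2^a)$ are excluded by their large $\rdim_{p'}$, giving case (iii), where $C=Z(Q)$ since $\ell$ is odd. Throughout we must take care with field-of-definition factors, which we handle via $\FF_p(\zeta)$ for $\zeta$ a primitive $\ell$-th (or $4$th) root of unity.
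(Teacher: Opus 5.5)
Your closing computation, combining $\ell^{w/2}\le\dim_{\FF_p}V\le 4m(p-1)$ with $w\ge\rdim_\ell(\wt X)$ and Table~\ref{tab:data}, is the same as the paper's final step. However, the inequality $w\ge\rdim_\ell(\wt X)$ is only asserted, and it is the real content of the lemma.

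Minimality of $M$ tells you that $\bar M=M/Z(M)$ is a faithful irreducible $\FF_\ell$-module for $X/C_X(\bar M)$, with $C_X(\bar M)\le R$. But this group is an extension of $R/C_R(\bar M)$ by $\wt X$, and nothing so far prevents $R$ from acting non-trivially on $\bar M$. For such non-quasisimple groups Table~\ref{tab:data} gives no lower bound on $\dim\bar M$; your phrase ``up to $p'$-stuff'' hides exactly this.

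The paper proves the bound as a separate claim, $|Q|\ge\ell^{1+\rdim_\ell(\wt X)}$, and this needs the minimality of $|X|$:
\begin{itemize}
\item If $R$ has no proper supplement in $X$, then $R\le\Phi(X)$ is nilpotent. So $[M,R]<M$, and minimality of $M$ gives $[M,R]\le Z(X)$, whence $\wt X$ really does act faithfully on $\bar M$.
\item Otherwise one takes a minimal supplement $L$ with $X=LR$. One applies Theorem~\ref{lem:with one boundintro} to the smaller group $O^{p'}(LM)$ (when proper), or to $L$ acting on composition factors of $V$. This yields either an irreducible $\FF_\ell\wt X$-chief factor inside $M$, or a quasisimple $L$ acting faithfully on $\bar M$.
\item The residual non-quasisimple outcomes for $L/C_L(W)$ are removed using $H^2$-vanishing (Lemmas~\ref{AltCohomology} and~\ref{alt6case}), Schur multiplier considerations, an $\SL_2(\ell)$ argument when $w=2$, and the fact that a Sylow $5$-subgroup of $\Sp_6(3)$ has order $5$.
\end{itemize}

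Your argument for $|C|\le 4$ (and $C=Z(Q)$ when $\ell$ is odd) also fails as written. Minimality of $M$ yields $M=\langle[M,T]^X\rangle=[M,X]$, not $M=[M,T]$. For example, in $4\circ 2^{1+4}.\Alt(6)$ the group $M=O_2(X)$ is minimal in the required sense, while $[M,T]$ is extraspecial of order $2^5$. In fact, if $M=[M,T]$ held, coprime action would give $C\le C_M(T)\le\Phi(M)$, forcing $C=Z(Q)$ outright. That is more than is true in the configuration just mentioned.

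Observing that $\Phi(C)$ is characteristic and central gives no contradiction, since Hypothesis~\ref{hyp: modulehyp1} allows precisely such subgroups. ``Absorbing'' $C$ for odd $\ell$ also has no justification.

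The correct argument is the paper's. Put $t=(2,\ell)$. A cyclic $C$ of order at least $\ell^{1+t}$ makes $\Omega_t(M)$ a proper, characteristic, non-central subgroup of $M$. A non-abelian $C$ of order at least $16$ does the same for $Z_2(M)$. Either contradicts Hypothesis~\ref{hyp: modulehyp1}.

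A minor point: for $\ell=2$, it is $|C|=2$ that gives an orthogonal group $\mathrm{O}^{\pm}_w(2)$ via the squaring map, and $|C|=4$ that gives $\Sp_w(2)$. You have these reversed, though nothing in your argument depends on it.
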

\begin{proof}
We have that $M$ is an $\ell$-group which is not elementary abelian, and every proper characteristic subgroup of $M$ is cyclic and central in $X$. Furthermore, $X$ acts irreducibly on $M/Z(M)$ and $C_X(M)\le R$.

By a theorem of P. Hall \cite[Theorem 5.4.9]{GOR}, $M = Q\circ C$ with $Q$ an extraspecial $\ell$-group. Furthermore, $C$ is either cyclic; or $\ell = 2$, $|C| \geq 16$ and $C$ is dihedral, semidihedral, or quaternion. But if $C$ is non-abelian and of order at least $16$ then $Z_2(M)$ is characteristic, proper, and non-central in $M$, and so non-central in $X$. Since $M$ is chosen minimally, this is a contradiction. Set $t=(2, \ell)$. If $C$ is cyclic of order greater than or equal to $\ell^{1+t}$ then $\Omega_{t}(M)$ is a characteristic, proper subgroup of $M$ which is not central in $M$, and we obtain a contradiction as before. Hence if $\ell$ is odd then $M=Q$ is extraspecial and if $\ell=2$ then $C$ is cyclic of order at most $4$. We fix the notation $M=QC$.

\begin{claim}\label{clm:RG5}
$|Q|\geq \ell^{1+d}$ where $d=\rdim_{\ell}(\wt X)$.
\end{claim}

Set $d:=\rdim_{\ell}(\wt X)\geq \rdim_{p'}(\wt X)$ and note $|Q|=\ell^{1+w}$ where $|M/C|=\ell^w$ and $M=QC$. Suppose first that $X$ has no proper subgroup $L$ satisfying $X=LR$. In particular, every maximal subgroup of $X$ contains $R$ and so $R\le \Phi(X)$ is nilpotent. Indeed, $[M,R] < M$ and the minimal choice of $M$ yields $[M,R]\le C\le Z(X)$. Since $[X,M]=M$, again by the minimality of $M$, we now have $|M/Z(M)|\geq \ell^d$. This proves \ref{clm:RG5}.

Hence, to prove \ref{clm:RG5}, we may assume that there is $L<X$ such that $X=LR$. We choose $L$ minimal with respect to this. Then we have that $X=O^{p'}(L)R$ and so we may arrange that $L=O^{p'}(L)=O^{p'}(O^p(L))T$. Set $F:=O^{p'}(LM)$.

Assume first that $F<X$ and observe that $[M, T]\le F\cap M$. If $F\cap M\le Z(M)\le Z(X)$ then $[M, T]=[M, T, T]=1$ by coprime action, a contradiction. Hence, there is $W$ an $F$-composition factor of $V$ with $[M, T]\not\le C_F(W)$. Set $\ov F:=F/C_F(W)$. Then by minimality of $X$, $(\ov F, W)$ appears as an outcome in Theorem \ref{lem:with one boundintro}. Since $[M, T]\not\le C_F(W)$, $O^{p'}(O^p(\wt F))$ is not quasisimple. In all other cases, since $[M, T]\not\le C_F(W)$ and $M\cap F\norm F$, we see that $\ov{M\cap F}$ covers a non-central $F$-chief factor. Indeed, this chief factor arises as an irreducible $\FF_\ell$-module for $F/(F\cap R)\cong \wt X$ and \ref{clm:RG5} holds.

Thus, we may assume that $X=O^{p'}(LM)$. Then $L\cap M\le Z(M)$ else $M=\langle (L\cap M)^X\rangle=\langle (L\cap M)^L\rangle\le L$ and $X=LM\le L$, against $L<X$. Assume first that $L$ is quasisimple. Then $C_L(M/Z(M))\le Z(L)$ and $L$ acts faithfully on $M/Z(M)$. By definition, $|Q/Z(Q)|=|M/Z(M)|\geq \ell^d$, and \ref{clm:RG5} holds. Hence, $L$ is not quasisimple and there is $W$ an $L$-composition factor of $V$ such that $L/C_L(W)$ is not quasisimple. As $X$ is a minimal counterexample to Theorem \ref{lem:with one boundintro}, $L/C_L(W)$ is determined by Theorem \ref{lem:with one boundintro}. In particular, $m=2$.

Observe by \cite[Theorem 5.5.5]{GOR} that \[\ell^{\frac{w}{2}}\leq \dim_{\FF_p} V \leq 8(p-1).\] Furthermore, if $w=2$ then $X/C_X(M/Z(M))$ embeds in $\SL_2(\ell)$, where $C_X(M/Z(M))\le R$. Comparing with the maximal subgroups of $\SL_2(\ell)$ as provided by \cite[Theorem 6.5.1]{GLS3}, we have a contradiction in this case. Hence, $w\geq 4$. If $p=3$ then $8(p-1)=16$ and we ascertain that $\ell=2$. If $p=5$ then $8(p-1)=32$ so that $\ell=2$ or $\ell=3$ and $w\leq 6$.

Set $K:=L/C_L(W)$. If Theorem \ref{lem:with one boundintro} \ref{5Caseintro} or \ref{3Case2intro} hold for $K$, then $H^2(K, O_2(K)/Z(K))$ is trivial by Lemma \ref{AltCohomology} and a calculation as in Lemma \ref{alt6case}. By the minimality of $L$, this is a contradiction. Case \ref{Alt2pCaseintronew} of Theorem \ref{lem:with one boundintro} cannot hold by the minimality of $L$.

If one of Theorem \ref{lem:with one boundintro} \ref{3Caseintro}-\ref{permcase2intro} holds, then as $\dim_{\FF_p} V-\dim_{\FF_p} W<4(p-1)$ we deduce that $L/C_L(U)$ is quasisimple for every $L$-composition factor not equal to $W$. A consideration of the possible Schur multiplier of $\wt L$ then implies that $O_{p'}(L)$ is a $2$-group. If $p=3$ then $\ell=2$ and by minimality of $M$, $[M, O_{p'}(L)]\le [M,R]\le Z(M)$ and so $M/Z(M)$ is a faithful $\FF_2$-module for $\wt L=\wt X$. By definition, $|Q/Z(Q)|=|M/Z(M)|\geq \ell^d$, as desired. Hence, we have that $p=5$, $\wt X\cong \PSL_2(25)$, $\ell=3$ and $w\leq 6$. Since a Sylow $5$-subgroup of $\Sp_6(3)$ has order $5$, there is $t\in T^\#$ with $[t, M]\le Z(M)$. Then $[X, M]=[t, M]^X\le Z(M)$, a contradiction. Thus, \ref{clm:RG5} holds.

It follows from \ref{clm:RG5} that $Q$ has width at least $d/2$. As $Q$ acts faithfully on $V$, we now obtain \[\ell^{\frac{d}{2}}\leq \dim_{\FF_p} V \leq 4m(p-1)\] from \cite[Theorem 5.5.5]{GOR}. Appealing to Table \ref{tab:data}, we see that one of the promised outcomes holds.
\end{proof}

\begin{lemma}\label{lem:NoFix}
$C_{M/Z(M)}(X)=1$.
\end{lemma}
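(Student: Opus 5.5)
We argue under Hypothesis \ref{hyp: modulehyp1}, so $M=QC$ is as in Lemma \ref{lem:SympDet}. The aim is to show that $X$ fixes no non-trivial element of $M/Z(M)$. The approach is to show that $[M,X]$ is a normal subgroup which is too large to be central, and then to use the minimality of $M$.

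Set $N:=[M,X]$; it is normal in $X$ and contained in $M$. If $[T,N]=1$, then coprime action gives
\[[M,T]=[M,T,T]\le [N,T]=1,\]
which contradicts the choice of $M$. Hence $[T,N]\ne 1$, and the minimality of $M$ forces $N=M$, that is, $[M,X]=M$.

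Next look at the quotient $\bar M:=M/Z(M)$. This is an elementary abelian $\ell$-group, and we regard it as an $\FF_\ell X$-module; $\bar M$ is elementary abelian because $M=Q\circ C$ with $|C|\le 4$. The commutator form makes $\bar M$ a symplectic, hence self-dual, module: it is $Q/Z(Q)$ with the non-degenerate commutator form, twisted if necessary by the cyclic centre. Since $X$ preserves this form, we have $C_{\bar M}(X)=[\bar M,X]^{\perp}$. But $[\bar M,X]=\bar M$ by the previous paragraph, so $C_{\bar M}(X)=\bar M^{\perp}=0$. This step needs care when $\ell=2$ and $|C|=4$, since the commutator map then takes values in $Z(M)$ of order $4$. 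Even so, commutators of elements of $M$ lie in $Z(Q)$ of order $2$, and $M/Z(M)\cong Q/Z(Q)$ carries the usual non-degenerate form, so the argument goes through.

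If one prefers to avoid forms, there is an alternative. Let $D$ be the full preimage of $C_{\bar M}(X)$. Then $D$ is normal in $X$, and $[D,X]\le Z(M)\le Z(X)$. By the three subgroups lemma, $[D,O^{p'}(X)]$ centralizes $X$, and since $X=O^{p'}(X)$ is perfect modulo $R$, an argument with $[D,T,T]=[D,T]$ shows $[D,T]=1$. Minimality of $M$ then forces $D$ to be a proper subgroup with $[D,X]\le Z(M)$; combined with the non-degeneracy of the commutator pairing and $[M,X]=M$, this gives $D=Z(M)$.

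The main obstacle is justifying the non-degeneracy and the invariance of the form in the $\ell=2$, $|C|=4$ case. We would handle it by restricting to $Q/Z(Q)$, which is naturally identified with $\bar M$ as an $X$-module because $C\le Z(X)$.
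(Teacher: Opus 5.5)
Your proof is correct, but it takes a different route from the paper.

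The paper uses a homomorphism argument. Let $D$ be the preimage of $C_{M/Z(M)}(X)$. Then $[D,X]\le Z(M)\le Z(X)$, so for each $d\in D$ the map $x\mapsto [d,x]$ is a homomorphism from $X$ into the abelian $\ell$-group $Z(M)$. This map is trivial because $X=O^{\ell}(X)$, which is automatic from $X=O^{p'}(X)$ and $\ell\ne p$. Hence $D\le Z(X)\cap M\le Z(M)$.

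You instead use the minimality of $M$ and coprime action to get $[M,X]=M$. You then apply the $X$-invariant non-degenerate commutator form on $\bar M=M/Z(M)$ to conclude that $C_{\bar M}(X)=[\bar M,X]^{\perp}=\bar M^{\perp}=0$.

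The paper's route is shorter. It needs only that $Z(M)$ is central in $X$ and that $X$ has no non-trivial $\ell$-quotient, so it never uses the extraspecial structure or the minimality of $M$. Your route uses more structure, but it also explains why the result holds: $\bar M$ is self-dual, so $C_{\bar M}(X)=0$ is equivalent to $[\bar M,X]=\bar M$.

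Two small remarks.
\begin{enumerate}
\item The $X$-invariance of the form comes from $M'=Z(Q)\le Z(M)\le Z(X)$. It does not come from viewing $Q/Z(Q)$ as an $X$-module; nothing guarantees that $Q$ itself is normalized by $X$ when $|C|=4$. Define the form directly on $M/Z(M)$ via commutators in $M$.
\item Your alternative paragraph is essentially the paper's idea with $T$ in place of $O^{\ell}(X)$. Once $[D,T]=[D,T,T]\le [Z(X),T]=1$, normality of $D$ gives $[D,X]=[D,\langle T^X\rangle]=1$ directly, so the closing appeal to non-degeneracy there is unnecessary.
\end{enumerate}
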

\begin{proof}
Since $X=O^{\ell}(X)$ for each choice of $X$ provided by Lemma \ref{lem:SympDet}, if $C_{M/Z(M)}(X)\ne 1$ then $X$ centralizes the preimage in $M$ of $C_{M/Z(M)}(X)$. Since $M\le X$, this leads to a contradiction.
\end{proof}

\begin{lemma}\label{lem:FirstSymplecticReduction}
If Hypothesis \ref{hyp: modulehyp1} holds then $p\leq 3$. Moreover, if $p=3$ and $|Z(M)|\geq 4$ then $|Z(M)|=4$ and $M\cong  4\circ 2^{1+6}$.
\end{lemma}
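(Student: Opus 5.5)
By Lemma \ref{lem:SympDet} there are three possibilities to consider. Case (iii) has $p=2$, so it satisfies the conclusion. Case (ii) has $p=3$. So the main task is to rule out case (i), where $(p,m,\ell)=(5,2,2)$, $\wt X\cong\Alt(10)$, $M=Q\circ C$ with $|Q|=2^{1+w}$, $8\le w\le 10$ and $|C|\le 4$. After that we prove the refinement in case (ii).

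\emph{Ruling out case (i).} Since $Q$ acts faithfully and $Z(Q)$ acts non-trivially on $V$, every faithful irreducible $\FF_5Q$-module has dimension at least $2^{w/2}\geq 16$ by \cite[Theorem 5.5.5]{GOR}, while $\dim_{\FF_5}V\le 32$.

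\begin{itemize}
\item First, $w=10$ is impossible. Then $2^5=32$ forces $V|_M$ to be a single irreducible of dimension exactly $32$, which needs $\FF_5$ to split the $2^5$-dimensional representation. Moreover, $\wt X\cong\Alt(10)$ would have to embed in $\Out(Q)$ acting irreducibly on $M/Z(M)$ of dimension $10$. By Lemma \ref{lem:NoFix} and the fact that the $\FF_2$-natural module for $\Alt(10)$ has dimension $8$, a $10$-dimensional faithful module with no fixed points must be the natural module plus a $2$-dimensional trivial part. Such a module has fixed points, contradicting Lemma \ref{lem:NoFix} together with irreducibility of $X$ on $M/Z(M)$.
\item Second, for $w=8$ or $9$: the irreducibility of $X$ on $M/Z(M)$ (from the proof of Lemma \ref{lem:SympDet}) gives $w=8$, with $M/Z(M)$ the natural $8$-dimensional $\FF_2\Alt(10)$-module. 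Then $X/C_X(M/Z(M))$ embeds in $O^{\pm}_8(2)$, and $\Alt(10)\le O^+_8(2)$ forces $Q\cong 2^{1+8}_+$.
\item Next, $|C|$ must equal $4$. If $|C|\le2$, then $M=Q$ is real, and the $16$-dimensional faithful representation of $2^{1+8}_+.\Alt(10)$ over $\FF_5$ is realised with $Z(M)$ central of order $2$. One then checks that $V$ is an irreducible of dimension $16$ or $32$ for the full extension $X$, where $X$ is $2^{1+8}_+.\Alt(10)$ or a central product with $C$. The fields involved are $\FF_5$, where $-1$ is a square, so $4\circ 2^{1+8}$ is realisable.
\item Finally, with $|C|=4$ the group is $X\cong(4\circ2^{1+8}).\Alt(10)$ and $\dim V=16$. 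This is exactly outcome \ref{5Caseintro} of Theorem \ref{lem:with one boundintro}, contradicting Hypothesis \ref{hyp: modulehyp}.
\end{itemize}

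The delicate point in case (i) is the possibility $M=2^{1+8}_+$ with $|C|\le2$, that is, $X\cong 2^{1+8}_+.\Alt(10)$. I would handle it by restricting to a Sylow $5$-subgroup $T$. Alternatively, one can use the fact that the $16$-dimensional spin-type representation of $2^{1+8}_+.\Omega_8^+(2)$ restricted to $\Alt(10)$ requires the Schur cover $2\cdot\Alt(10)$. The resulting group must then coincide with a case already listed, or be excluded by a Magma computation. This subcase is the main obstacle, since the theorem lists only the $4\circ$ version for $p=5$.

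\emph{Refinement for $p=3$.} Here $|C|\le 4$ already. Suppose $|Z(M)|=4$. Then $M=Q\circ C$ with $C\cong 4$, and a faithful irreducible module has dimension $2^{w/2}\le 16$, so $w\le 8$.

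\begin{itemize}
\item To exclude $w=8$: with $w=8$ and $C$ of order $4$, we would need $\FF_9$ to realise the representation. As an $\FF_3$-module this gives dimension $2\cdot 16=32>16$, which is a contradiction. (The order-$4$ center acts through $\FF_9^\times$, which doubles the dimension over $\FF_3$.)
\item To get $w\ge 6$: by Claim \ref{clm:RG5}, $w\ge \rdim_2(\wt X)$. For $\Ree(3)$, $\PSU_3(3)$ and $\PSL_3(4)$ this bound is at least $6$, so $w=6$.
\item For $\wt X\cong\Alt(6)$, where $\rdim_2=4$: I expect to show that $w=4$ with $|C|=4$ gives $\dim V=8$, which is outcome \ref{3Case2intro}. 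Hence $w=6$ in the counterexample.
\end{itemize}

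So $M\cong 4\circ2^{1+6}$.
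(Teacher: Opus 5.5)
Your reduction to cases (i) and (ii) of Lemma \ref{lem:SympDet} is fine. For $p=3$, your bound $w\le 6$ (via $\FF_9$) and $w\ge 6$ unless $\wt X\cong\Alt(6)$ match the paper. But the $p=5$ argument fails at its key step. You write that $\Alt(10)\le\mathrm{O}_8^+(2)$ forces $Q\cong 2^{1+8}_+$. In fact $\Alt(10)$ is not even a section of $\mathrm{O}_8^\pm(2)$: every maximal subgroup of $\Omega_8^+(2)$ has order less than $|\Alt(10)|$, and $5^2\nmid|\mathrm{O}_8^-(2)|$. Moreover, $X/C_X(M/Z(M))$ lands in $\mathrm{O}_8^\pm(2)$ only when $M$ is extraspecial, i.e.\ $|Z(M)|=2$; when $|Z(M)|=4$ only the symplectic form is preserved. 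So the correct deduction runs the other way. $|Z(M)|=2$ is impossible, hence $M\cong 4\circ 2^{1+8}$ and $X/C_X(M/Z(M))$ embeds in $\Sym(10)<\Sp_8(2)$. This is how the paper disposes of the subcase $M\cong 2^{1+8}_+$ that you leave open as the ``main obstacle''. Your suggested workaround via the spin representation of $2^{1+8}_+.\Omega_8^+(2)$ restricted to $\Alt(10)$ rests on the false embedding. Separately, your $w=10$ argument tacitly assumes $R$ acts trivially on $M/Z(M)$. The paper instead works through the maximal subgroups of $\Sp_{10}(2)$, using Lemma \ref{lem:NoFix} and a $1$-cohomology computation.

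Even granting $M\cong 4\circ 2^{1+8}$, you give no argument for ``$X\cong(4\circ 2^{1+8}).\Alt(10)$ and $\dim V=16$''. The faithful irreducible $\FF_5M$-module $V_1$ has dimension $16$ and $\dim_{\FF_5}V\le 32$, so you must exclude $V|_M\cong V_1\oplus V_1$. The paper notes that $V|_M$ then has $|\End_{\FF_5M}(V_1)|+1=6$ proper non-trivial submodules. The kernel of $X$ on this set lies in $R$; otherwise it contains a Sylow $5$-subgroup, hence equals $X=O^{5'}(X)$, and $V$ would be reducible. So $\Alt(10)$ would be a section of $\Sym(6)$, which is absurd. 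In the irreducible case you still need $C_X(M)=Z(M)$ and $R=M$, which come from $N_{\SL_{16}(5)}(M)\cong 4\circ 2^{1+8}.\Sp_8(2)$. Only then is $(X,V)$ outcome \ref{5Caseintro} of Theorem \ref{lem:with one boundintro}.

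The same gap occurs for $p=3$. Excluding $w=4$ (with $\wt X\cong\Alt(6)$ and $M\cong 4\circ 2^{1+4}$) is the real content of the second assertion, and you only say you expect $\dim V=8$. That expectation is incomplete, since $V|_M$ may be $V_1\oplus V_1$ with $\dim_{\FF_3}V=16$. The paper treats the irreducible case via $N_{\SL_4(9)}(M)\cong 4\circ 2^{1+4}.\Sp_4(2)$. For the reducible case it uses:
\begin{itemize}
\item the action of $X$ on the $10$ proper non-trivial $M$-submodules, with a {\sc Magma} check that the kernel is $R$;
\item the fact that the centralizer of $M$ in $\GL_{\FF_9}(V_1)\cong\GL_4(9)$ is cyclic of order $8$, to control $C_X(M)$;
\item the $\Alt(6)$ cohomology from Lemma \ref{alt6case}.
\end{itemize}
Both cases end with $X\cong 4\circ 2^{1+4}.\Alt(6)$, which is outcome \ref{3Case2intro}, so neither survives Hypothesis \ref{hyp: modulehyp1}.
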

\begin{proof}
By Lemma \ref{lem:SympDet}, we have that $M = Q\circ C$ with $Q$ an extraspecial $\ell$-group of order $\ell^{1+w}$ and $|C|\leq 4$. Furthermore, $p\leq 5$ and $\ell=2$. We remark that by the maximal choice of $\ell$, we must have that $F(X)=O_2(X)Z(X)$.

Suppose first that $p=3$ so that $\wt X$ is isomorphic to one of $\Alt(6)$, $\PSU_3(3)$, $\Ree(3)$ or $\PSL_3(4)$ by Lemma \ref{lem:SympDet}. Suppose that $|Z(M)|\geq 4$. Let $a$ be such that $\End_{\FF_3X}(V)=\FF_{3^a}$. Then $\dim_{\FF_{3^a}} V \geq 2^{\frac{w}{2}}$ so that $3^a=9$, $|Z(M)|=4$ and $\frac{w}{2}\leq 3$. Comparing with Table \ref{tab:data} we see that if $w\ne 6$, then $\wt X\cong \Alt(6)$ and $w=4$. If $w=4$ then as $\Alt(6)\cong \Sp_4(2)'$ is a maximal subgroup of $\Sp_4(2)$, we deduce that $M\cong 4\circ 2^{1+4}$ and $C_X(M/Z(M))=MC_R(M)=R$.

Suppose now that $p=5$ so that $|M/Z(M)|\in \{2^8, 2^{10}\}$ and $\wt X\cong \Alt(10)$. We record by \cite{ModAt} that $\Alt(10)$ has a unique, non-trivial irreducible $\FF_2$-module whose dimension is at most $10$: the natural module. If $|M/Z(M)|=2^{10}$ then, analyzing the maximal subgroups of $\Sp_{10}(2)$ as in \cite[Tables 8.64, 8.65]{BHRD}, we have that $X/C_X(M/Z(M))$ either embeds in $2^9.\Sp_8(2)$; $\Sp_8(2)\times \Sym(3)$; $\mathrm{O}_{10}^+(2)$; or $\mathrm{O}_{10}^-(2)$. In the first two cases, $X$ must centralize a non-trivial subspace of $M/Z(M)$ and we have a contradiction by Lemma \ref{lem:NoFix}. In the latter cases, we deduce that $X$ further sits in a subgroup isomorphic to $\Sp_8(2)$ or $\Alt(12)$ and in this case, we calculate that $R=MC_X(M)$. Then we calculate that the $1$-cohomology of an irreducible $\FF_2\Alt(10)$-module has dimension $1$ from which we conclude that $C_{M/Z(M)}(X)\ne 1$, against Lemma \ref{lem:NoFix}. Hence, we have that $|M/Z(M)|=2^8$ and analyzing the maximal subgroups of $\mathrm{SO}_8^{\pm}(2)$ and of $\Sp_8(2)$ as provided by \cite[Tables 8.48, 8.50, 8.52]{BHRD}, we deduce $X/C_X(M/Z(M))$ does not embed in $\mathrm{SO}_8^{\pm}(2)$. Then $X/C_X(M/Z(M))$ embeds in the maximal subgroup of $\Sp_8(2)$ isomorphic to $\Sym(10)$. In particular, $M\cong 4\circ 2^{1+8}$ and $R=MC_R(M)$.

Aiming for a contradiction, we let $p$ be either $3$ or $5$, and if $p=3$ then we further assume that $|Z(M)|=4$ and $w=4$. Then the irreducible $\FF_pM$-modules are either $1$-dimensional, or the unique irreducible module of dimension $4(p-1)$. Since $M$ is non-abelian, $V|_M$ is completely reducible and $V$ is irreducible, we conclude that $V|_M=V_1\oplus V_2$ where $\dim_{\FF_p} V_1=4(p-1)$ and $\dim_{\FF_p} V_2\in \{0, 4(p-1)\}$.

Suppose that $\dim_{\FF_p} V_2=0$ so that $V|_M$ is irreducible. If $p=3$ then we may view $V$ as $4$-dimensional over $\FF_9$ and appealing to \cite[Tables 8.8, 8.10]{BHRD}, we see that $N_{\SL_{4}(9)}(M)\cong 4\circ 2^{1+4}.\Sp_4(2)\le \SU_4(3).2\le \SL_4(9)$. Hence, $C_X(M)=Z(M)$, $M=R$ and $X\cong 4\circ 2^{1+4}.\Alt(6)$ which is included as Theorem \ref{lem:with one boundintro}\ref{3Case2intro}. Hence, $X$ does not satisfy Hypothesis \ref{hyp: modulehyp1}, a contradiction. If $p=5$ then appealing to \cite[Table 3.5.A]{KleidmanLiebeck}, we see that $N_{\SL_{16}(5)}(M)\cong 4\circ 2^{1+8}.\Sp_8(2)$. Hence, $C_X(M)=Z(M)$, $M=R$ and $X\cong 4\circ 2^{1+8}.\Alt(10)$ which is included as Theorem \ref{lem:with one boundintro}\ref{5Caseintro}. Hence, $X$ does not satisfy Hypothesis \ref{hyp: modulehyp1}, another contradiction.

Thus, $V|_M=V_1\oplus V_2$ where $V_1\cong V_2$. If $p=5$ then we appeal to \cite[Theorem 3.5.6]{GOR} to see that the number of proper non-trivial submodules of $V|_M$ is $|\End_{\FF_5M} V_1|+1=6$. Let $K$ be the kernel of the action of $X$ on the proper non-trivial submodules of $V|_M$. If $K\not\le R$ then $X=KR$ so that $K$ contains a Sylow $5$-subgroup of $X$. Since $K\norm X=O^{p'}(X)$, we see that $X=K\le N_X(V_1)$, impossible as $V$ is irreducible. Hence, $K\le R$ and $X/K$ embeds as a subgroup of $\Sym(6)$. Since $|\Alt(10)|>|\Sym(6)|$, this is a clear contradiction.

Hence, we have that $V|_M=V_1\oplus V_2$ where $V_1\cong V_2$, $p=3$ and $M\cong 4\circ 2^{1+4}$. Appealing to \cite[Theorem 3.5.6]{GOR}, we see that the number of proper non-trivial submodules of $V|_M$ is $|\End_{\FF_3M}(V_1)|+1=10$. As before, we let $K$ be the kernel of the action of $X$ on the proper non-trivial submodules of $V|_M$, and observe that $K\le R$. Then, as $X=O^{3'}(X)$, $X/K$ embeds as a subgroup of $\Sym(10)$. Surveying the subgroups of $\Sym(10)$, using {\sc Magma} \cite{Magma}, we ascertain that $K=R$. In particular, $C_X(M)\le N_X(V_1)$.

Write $\bar{N_X(V_1)}=N_X(V_1)/C_X(V_1)$. Then $\bar{C_X(M)}$ centralizes $\bar{M}$. As calculated above, $\bar{M}$ is self-centralizing in $N_{\SL_{4}(9)}(\bar{M})$. Furthermore, one can calculate that $C_{\GL_4(9)}(\bar{M})$ is cyclic of order $8$. In particular, $C_X(M)/C_{C_X(M)}(V_1)$ is cyclic of order at most $8$. Similarly, $C_X(M)/C_{C_X(M)}(V_2)$ is cyclic of order at most $8$ and as $V=V_1\oplus V_2$ is a faithful module, we conclude that $|C_X(M)/\Phi(C_X(M))|\leq 4$. Since $X=O^{3'}(X)=O^2(X)$, we deduce that $X$ centralizes $C_X(M)$ and as $R=MC_X(M)$, we deduce that $X$ centralizes $R/M$. A consideration of the Schur multiplier of $\Alt(6)$, using that $X=O^{3'}(X)$ and considering the cohomology of $\Alt(6)$ in its action on the relevant submodules and factors of $R/\Omega_1(Z(M))$, as calculated in Lemma \ref{alt6case}, reveals that $C_X(M)=Z(M)$, $M=R$ and $X\cong 4\circ 2^{1+4}.\Alt(6)$ which is included as Theorem \ref{lem:with one boundintro}\ref{3Case2intro}. Hence, $X$ does not satisfy Hypothesis \ref{hyp: modulehyp1}.
\end{proof}

\begin{lemma}\label{lem:p=2Symp}
If Hypothesis \ref{hyp: modulehyp1} holds then $p=2$.
\end{lemma}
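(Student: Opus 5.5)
By Lemma \ref{lem:FirstSymplecticReduction} we already know $p\leq 3$ under Hypothesis \ref{hyp: modulehyp1}. So it remains to rule out $p=3$, and the plan is a case split on $|Z(M)|$. By Lemma \ref{lem:SympDet}(ii) we have $\ell=2$, $w\leq 8$, and $\wt X$ is one of $\Alt(6)$, $\PSU_3(3)$, $\Ree(3)$, $\PSL_3(4)$. Moreover $\dim_{\FF_3}V\leq 16$, and $2^{w/2}\leq \dim_{\FF_3} V$. Since $X$ acts irreducibly and without fixed points on $M/Z(M)$ (Lemma \ref{lem:NoFix}), Claim \ref{clm:RG5} together with Table \ref{tab:data} forces $w\geq \rdim_2(\wt X)$. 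This gives $w\in\{4,6,8\}$ according to $\wt X$, and $\PSL_3(4)$ is excluded because its faithful $\FF_2$-modules are too large relative to $w\leq 8$.

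\textbf{Case $|Z(M)|=4$.} Lemma \ref{lem:FirstSymplecticReduction} gives $M\cong 4\circ 2^{1+6}$, so $V|_M$ is a single $8$-dimensional module over $\FF_9$, i.e. $\dim_{\FF_3}V=16$. Here I would follow the argument at the end of the proof of Lemma \ref{lem:FirstSymplecticReduction}. First use \cite[Tables 8.8--8.50]{BHRD} to compute $N_{\SL_8(9)}(M)\cong 4\circ 2^{1+6}.\Sp_6(2)$ and $C_{\GL_8(9)}(M)$ cyclic. Then locate $X/C_X(M/Z(M))$ inside $\Sp_6(2)$. The relevant subgroups of $\Sp_6(2)$ with a strongly $3$-embedded quotient are $\SU_3(3)$ and $\Ree(3)\cong \PSL_2(8){:}3$. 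Using $X=O^{3'}(X)$, we get $C_X(M)=Z(M)$ and $R=M$, so $X$ is $(4\circ 2^{1+6}).\SU_3(3)$ or $(4\circ 2^{1+6}).\Ree(3)$. Both appear in Theorem \ref{lem:with one boundintro}\ref{3Caseintro}, which contradicts Hypothesis \ref{hyp: modulehyp}.

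\textbf{Case $|Z(M)|=2$.} Now $M=Q$ is extraspecial of order $2^{1+w}$ and its faithful irreducible modules over $\FF_3$ have dimension $2^{w/2}$. For $w=8$ this is $16$, so $V|_M$ is irreducible and $X/C_X(M)$ lies in $\mathrm{O}^{\pm}_8(2)$. Inspecting the maximal subgroups (\cite[Tables 8.48--8.52]{BHRD}) should leave only $\Alt(6)$ and $\Ree(3)$ with $M\cong 2^{1+8}_+$. Then Schur's lemma gives $C_X(M)=Z(M)$, and we land on $2^{1+8}_+.\Ree(3)$ or $2^{1+8}_+.\Alt(6)$, again in Theorem \ref{lem:with one boundintro}\ref{3Caseintro}. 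For $w\in\{4,6\}$, $V|_M$ is a sum of $k$ isomorphic homogeneous pieces of dimension $2^{w/2}$, with $k\leq 16/2^{w/2}$.
\begin{itemize}
\item If $k=1$, embed $X/C_X(M/Z(M))$ in $\mathrm{O}^\pm_4(2)$ or $\mathrm{O}^\pm_6(2)$. The target group is solvable or too small to contain $\PSU_3(3)$, and for $\Alt(6)$ the embedding in $\mathrm{O}_6^-(2)$ fixes a vector or forces $w=4$ with $\mathrm{O}_4^\pm(2)$ solvable. Either way we get a contradiction.
\item If $k\geq 2$, argue as in Lemma \ref{lem:FirstSymplecticReduction}. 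The kernel of the action on the proper non-trivial $M$-submodules lies in $R$, and counting these submodules via \cite[Theorem 3.5.6]{GOR} shows that $\wt X$ would embed in a symmetric group of degree at most $10$. A {\sc Magma}/order check then handles the survivors.
\end{itemize}
The residual $\Alt(6)$ cases with $R>M$ would be handled using the cohomology calculations of Lemma \ref{alt6case}.

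The main obstacle is the final step of pinning down $X$ exactly, namely showing $C_X(M)=Z(M)$ and that the extension is the one listed. This requires controlling $R/M$ and the possible non-split extensions, as in the end of the proof of Lemma \ref{lem:FirstSymplecticReduction}. I would first try a Schur-lemma bound: $C_X(M)$ acts by scalars of $\FF_9^\times$ on each homogeneous piece. Combined with $X=O^2(X)$, this makes $C_X(M)$ central, and then Schur multiplier considerations remove the rest. I would fall back on {\sc Magma} for the small groups $\Ree(3)$, $\SU_3(3)$ and $\Alt(6)$ if that fails.
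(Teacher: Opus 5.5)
Your skeleton is the paper's: reduce to $p=3$, split on $|Z(M)|$, embed $X/C_X(M/Z(M))$ in $\Sp_6(2)$ or $\mathrm{O}^{\pm}_w(2)$, and land in Theorem~\ref{lem:with one boundintro}\ref{3Caseintro}. However, the case eliminations, which are the real content of the lemma, are missing or misjustified in two places.

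\textbf{Gap 1: $\Alt(6)$ when $|Z(M)|=4$.} You never dispose of $\wt X\cong\Alt(6)$ in this case. Your assertion that the only relevant subgroups of $\Sp_6(2)$ are $\SU_3(3)$ and $\Ree(3)$ is false as stated: $\Alt(6)$ embeds in $\Sp_6(2)$, for instance in $\Sym(8)\cong\mathrm{O}_6^+(2)$ and in $\Sp_4(2)\times\Sp_2(2)$. Moreover, $(4\circ 2^{1+6}).\Alt(6)$ is not on the list in Theorem~\ref{lem:with one boundintro}. The paper treats this case separately. There $M/Z(M)$ has composition factors of dimensions $4,1,1$, and the small $\FF_2\Alt(6)$-modules together with the Schur multiplier force $C_{M/Z(M)}(X)\neq 1$, contradicting Lemma~\ref{lem:NoFix}.

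\textbf{Gap 2: the case $w=8$.} An inspection of maximal subgroups cannot by itself leave only $\Alt(6)$ and $\Ree(3)$ of plus type. The group $\Sp_6(2)\cong\Omega_7(2)$ is the stabilizer of a non-singular vector in both $\mathrm{O}_8^+(2)$ and $\mathrm{O}_8^-(2)$, so $\PSU_3(3)$ and $\Ree(3)$ embed in both. Without further argument, $2^{1+8}_{\pm}.\PSU_3(3)$ and $2^{1+8}_-.\Ree(3)$ survive as counterexamples. The paper removes them as follows.
\begin{itemize}
\item For $\PSU_3(3)$: its only non-trivial irreducible $\FF_2$-module of dimension at most $8$ is $6$-dimensional, with $1$-dimensional $1$-cohomology. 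This yields a fixed point in $M/Z(M)$.
\item For the minus type: the only fixed-point-free candidate overgroup is $\Omega_2^-(2)\times\Omega_6^+(2)\cong 3\times\Alt(8)$, which is then handled by computation.
\end{itemize}

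\textbf{A cleaner repair.} You quote a fact that you never exploit. By minimality of $M$, any normal subgroup of $X$ strictly between $Z(M)$ and $M$ is centralized by $T$, hence by $\langle T^X\rangle=X$. So $M/Z(M)$ is an irreducible $\FF_2X$-module, as already noted in the proof of Lemma~\ref{lem:SympDet}.
\begin{itemize}
\item Check, by a short argument inside $\Sp_6(2)$ or $\mathrm{O}_8^{\pm}(2)$, that $O_{3'}$ of $X/C_X(M/Z(M))$ is trivial.
\item The non-trivial irreducible $\FF_2\Alt(6)$-modules have dimensions $4,4,16$, and $\PSU_3(3)$ has none of dimension $8$.
\item This removes $\Alt(6)$ for $w\in\{6,8\}$ and $\PSU_3(3)$ for $w=8$ in one stroke.
\item For $\Ree(3)$ at $w=8$, the sign is then forced. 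Its unique $8$-dimensional irreducible $\FF_2$-module carries a unique invariant quadratic form, and that form is of plus type.
\end{itemize}

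\textbf{Smaller slips.}
\begin{itemize}
\item The split into $k=1$ and $k\geq 2$ is unnecessary, because the embedding $X/C_X(M/Z(M))\hookrightarrow\mathrm{O}^{\pm}_w(2)$ does not depend on $V$.
\item The bound ``degree at most $10$'' comes from the $\FF_9$ setting of Lemma~\ref{lem:FirstSymplecticReduction}. Here $\End_{\FF_3M}(V_1)=\FF_3$, so the count is different.
\item $\mathrm{O}_4^-(2)\cong\Sym(5)$ is not solvable; it is merely too small to contain $\Alt(6)$.
\item $\Alt(6)$ also lies in $\mathrm{O}_6^+(2)$, not only in $\mathrm{O}_6^-(2)$.
\item $\Ree(3)$ at $w=6$ is never mentioned.
\item Your exclusion of $\PSL_3(4)$ via $\rdim_2(\PSL_3(4))=9>8$ is correct and cleaner than the paper's route. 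However, it needs the $2$-modular Brauer characters, not Table~\ref{tab:data}, which records only $\rdim_{p'}$.
\end{itemize}
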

\begin{proof}
By Lemma \ref{lem:FirstSymplecticReduction}, to prove the result we may assume throughout that $p=3$. We remark that by the maximal choice of $\ell$, we must have that $F(X)=O_2(X)Z(X)$.

Suppose first that $|Z(M)|\geq 4$ so that $|Z(M)|=4$ and $M\cong  4\circ 2^{1+6}$ by Lemma \ref{lem:FirstSymplecticReduction}. In particular, $C_X(M)\le R$ and $X/C_X(M)$ embeds as a subgroup of $\Sp_6(2)$. Furthermore, it follows from \cite[Theorem 5.5.5]{GOR} that $\dim_{\FF_9} V=8$ and $M$ acts irreducibly on $V$. Appealing to \cite[Table 8.44]{BHRD}, we see that $N_{\SL_{8}(9)}(M)\cong 4\circ 2^{1+6}.\Sp_6(2)$ is maximal in $\SL_8(9)$. Hence, $C_X(M)\le Z(M)$ so that $F(X)=M\cong 4\circ 2^{1+6}$.

By \cite{ModAt} any faithful, irreducible $\FF_2\Alt(6)$-module of dimension at most $6$ has dimension $4$. Since the $2$-part of the Schur multiplier of $\Alt(6)$ has order $2$, if $\wt X\cong \Alt(6)$ then $C_{M/Z(M)}(X)\ne 1$, against Lemma \ref{lem:NoFix}. If $\wt X\cong \PSL_3(4)$ then a comparison of orders reveals that $X/Z(M)$ lies inside a maximal subgroup of $\Sp_6(2)$ isomorphic to $\Sym(8)$. One can easily verify that no such subgroup of $\Sym(8)$ exists.

Hence, $\wt X$ is isomorphic to one of $\PSU_3(3)$ or $\Ree(3)$, which are included in Theorem \ref{lem:with one boundintro}\ref{3Caseintro}, and so $X$ does not satisfy Hypothesis \ref{hyp: modulehyp1}.

Thus, we continue under the restriction that $|Z(M)|=2$ and $M\cong 2^{1+w}_{\pm}$ where $w\leq 8$. Analyzing the maximal subgroups of $\mathrm{O}_{w}^\epsilon(2)$ as gleaned from \cite{BHRD} when $w\leq 6$ yields that either $\wt X\cong \Alt(6)$ and $w=6$; or $w=8$. In the former case, we have that $X/C_X(M/Z(M))\cong \Alt(6)$. By \cite{ModAt}, $\Alt(6)$ has two non-trivial irreducible $\FF_2$-modules whose dimension is at most $6$. Both are $4$-dimensional. As calculated for Lemma \ref{alt6case}, these modules are self-dual and have $1$-dimensional $1$-cohomology so that $C_{M/Z(M)}(X)\ne 1$, against Lemma \ref{lem:NoFix}.

Hence, $w=8$ so that $\dim_{\FF_3} V=16$ and $M$ acts irreducibly on $V$. We calculate that no subgroup of $\mathrm{O}_{8}^\pm(2)$ has quotient $\PSL_3(4)$, and so we have that $\wt X$ is isomorphic to one of $\Alt(6)$, $\PSU_3(3)$ or $\Ree(3)$. If $\wt X\cong \PSU_3(3)$ we then calculate that $R=C_X(M/Z(M))$. We observe by \cite{ModAt} that $\PSU_3(3)$ has a unique non-trivial irreducible $\FF_2$-module whose dimension is at most $8$, namely its irreducible module of dimension $6$. We calculate in {\sc Magma} \cite{Magma} that this module has $1$-dimensional $1$-cohomology, so that $C_{M/Z(M)}(X)\ne 1$ against Lemma \ref{lem:NoFix}.

Next, appealing to \cite[Table 3.5.C, Table 3.5.E]{KleidmanLiebeck}, we see that we have embeddings of maximal subgroups $2^{1+8}_+.\mathrm{O}_8^+(2)\le \mathrm{O}_{16}^+(3)$ and $2^{1+8}_-.\mathrm{O}_8^-(2)\le \Sp_{16}(3)$. From this, we deduce that $N_{\SL_{16}(3)}(M)\cong 2^{1+8}_{\pm}.\mathrm{O}_8^{\pm}(2)$, $C_X(M)=Z(M)$ and $F(X)=M$. If $M\cong 2^{1+8}_-$ then we associate $M/Z(M)$ with the natural module for $\mathrm{O}_8^-(2)$. We calculate that the only maximal subgroup of $\Omega_8^-(2)$ which fixes no non-trivial subspace of $M/Z(M)$ and has order divisible by $|\Alt(6)|$ or $|\Ree(3)|$ is $\Omega_2^-(2)\times \Omega_6^+(2)\cong 3\times \Alt(8)$. We calculate in {\sc Magma} \cite{Magma} that there are no suitable subgroups with quotient isomorphic to $\Ree(3)$, and that any $\Alt(6)$ subgroup of $\Omega_2^-(2)\times \Omega_6^+(2)$ fixes a non-trivial subspace of $V$, against Lemma \ref{lem:NoFix}.

Hence, we have that $M\cong 2^{1+8}_+$ and we now associate $M/Z(M)$ with the natural module for $\mathrm{O}_8^+(2)$. We remark by \cite[Table 8.50]{BHRD} that $\Sp_6(2)$ is an irreducible maximal subgroup of $\mathrm{O}_8^+(2)$. Then, appealing to \cite[Table 8.28]{BHRD}, $\Ree(3)\cong \PSL_2(8):3\cong \Sp_2(8):3\le \Sp_6(2)$ and we calculate that $\Ree(3)$ is an irreducible subgroup of $\mathrm{O}_8^+(2)$. We deduce in this case that $X\cong 2^{1+8}_+.\Ree(3)$, which is another example in Theorem \ref{lem:with one boundintro}\ref{3Caseintro}. Now, $\Alt(6)\cong \Sp_4(2)'\le \Sp_4(2)\times \Sp_2(2)$ and $\Sp_4(2)\times \Sp_2(2)$ is maximal in $\Sp_6(2)$. We calculate that the restriction of $M/Z(M)$ to this $\Alt(6)$ is a direct sum of two isomorphic $4$-dimensional $\FF_2\Alt(6)$-modules. This gives rise to the group $X\cong 2^{1+8}_+.\Alt(6)$, as in Theorem \ref{lem:with one boundintro}\ref{3Caseintro}. We verify that there are no other suitable candidates in $\mathrm{O}_8^+(2)$, completing the proof.
\end{proof}

\begin{proposition}\label[proposition]{lem:with one bound2}
There are no pairs $(X, V)$ satisfying Hypothesis \ref{hyp: modulehyp1}.
\end{proposition}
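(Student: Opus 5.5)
By Lemma \ref{lem:p=2Symp}, any pair $(X,V)$ satisfying Hypothesis \ref{hyp: modulehyp1} has $p=2$. Lemma \ref{lem:SympDet}(iii) then gives $\wt X\cong \PSL_2(2^m)$, $M=Q$ extraspecial of order $\ell^{1+w}$ and $(m,\ell)\in\{(2,3),(2,5),(2,7),(3,3)\}$. The plan is to eliminate each of these four cases directly, using the bound $\dim_{\FF_2}V\le 4m$ together with \cite[Theorem 5.5.5]{GOR}.

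\textbf{Odd $\ell$ forces $w\ge 2$ and $\ell^{w/2}\le 4m$.} Since $\ell$ is odd, $Q$ is non-abelian and acts faithfully on $V$. Hence some faithful irreducible constituent of $V|_Q$ has dimension $\ell^{w/2}$ over a field containing $\ell$-th roots of unity. Over $\FF_2$, a faithful irreducible $Q$-module therefore has dimension $\ell^{w/2}\cdot o_\ell(2)$, where $o_\ell(2)$ is the order of $2$ modulo $\ell$. So I will use
\[ \ell^{w/2}\, o_\ell(2)\le \dim_{\FF_2}V\le 4m. \]

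\textbf{Case $m=2$.} Here $\dim V\le 8$.
\begin{itemize}
\item For $\ell=3$ we have $o_3(2)=2$, so $3^{w/2}\cdot 2\le 8$ forces $w=2$.
\item For $\ell=5$ we have $o_5(2)=4$, which forces $w=2$.
\item For $\ell=7$ we have $o_7(2)=3$, which again forces $w=2$.
\end{itemize}
In each case $X/C_X(M/Z(M))$ embeds in $\Sp_2(\ell)=\SL_2(\ell)$ and must involve $\wt X\cong\PSL_2(4)\cong \Alt(5)$, because $C_X(M/Z(M))\le R$.
\begin{itemize}
\item For $\ell=3$, $|\SL_2(3)|=24$ is not divisible by $5$, a contradiction.
\item For $\ell=7$, $\SL_2(7)$ has order $336$, again not divisible by $5$, a contradiction.
\item For $\ell=5$, $\SL_2(5)$ does contain $2\cdot\Alt(5)$. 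However, $X/C_X(M/Z(M))$ has quotient $\Alt(5)$ and satisfies $X=O^{2'}(X)$, so the image is $\SL_2(5)$ acting on $M/Z(M)$ as its natural module.
\end{itemize}
In the $\ell=5$ case, $\dim_{\FF_2}V=8$ and $V|_Q$ is irreducible (dimension $5\cdot 4/\,$? no: $5^{1}\cdot 4=20>8$). In fact the displayed inequality gives $5\cdot4=20>8$ already when $w=2$, so $\ell=5$ is impossible outright. Similarly $\ell=7$ gives $7\cdot 3=21>8$, and $\ell=3$ gives $3\cdot2=6\le 8$. So only $\ell=3$, $w=2$ survives the dimension count, and it is killed by the order argument above. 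I will double-check that $\ell=5,7$ were only listed in Lemma \ref{lem:SympDet} because that lemma used the cruder bound $\ell^{w/2}\le\dim V$.

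\textbf{Case $m=3$, $\ell=3$.} Here $\dim V\le 12$, so $3^{w/2}\cdot 2\le 12$ gives $w=2$. Then $X/C_X(M/Z(M))$ embeds in $\SL_2(3)$, whose order is not divisible by $|\PSL_2(8)|$, a contradiction.

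\textbf{Remaining care.} The only subtle point is the claim $C_X(M/Z(M))\le R$ used in these order arguments. I intend to take it from the proof of Lemma \ref{lem:SympDet}. There it follows because $M$ is minimal with $[T,M]\ne1$ and $X=O^{p'}(X)$ with $O^p(X)R/R$ simple: if $C_X(M/Z(M))\not\le R$, then it contains a Sylow $2$-subgroup, so $[T,M]\le Z(M)$. Coprime action would then force $[T,M]=1$, a contradiction.

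I expect no real obstacle. The main point needing care is the field-of-definition factor $o_\ell(2)$ in the dimension bound, which sharpens the crude inequality $\ell^{w/2}\le 4m(p-1)$ used earlier.
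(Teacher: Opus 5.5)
Your argument is correct. It uses the same two ingredients as the paper, but in a different order. The paper keeps the crude bound $\ell^{w/2}\le 4m$ inherited from Lemma \ref{lem:SympDet}, which still allows $w=4$ when $\ell=3$. It then eliminates cases by group theory:
\begin{itemize}
\item it removes $(m,\ell)=(3,3)$ because $7\nmid|\Sp_4(3)|,|\Sp_2(3)|$;
\item it removes $w=2$ for $\ell\le 7$ because $\SL_2(\ell)$ has no subgroup $\Alt(5)$;
\item only for the last survivor $Q\cong 3^{1+4}_+$ does it use the exact dimension $18=3^2\cdot o_3(2)$ of a faithful irreducible $\FF_2Q$-module.
\end{itemize}
You put the factor $o_\ell(2)$ into the dimension bound from the start. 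This disposes of $\ell\in\{5,7\}$ at once ($20,21>8$) and forces $w=2$ for $\ell=3$. Only a trivial order check in $\Sp_2(3)$ remains, and you never need $\Out(M)$ for $w=4$ or the subgroup structure of $\SL_2(5)$ and $\SL_2(7)$. The cost is that you must justify the formula $\ell^{w/2}o_\ell(2)$ in general. It holds because Schur indices are $1$ over finite fields and the central character generates $\FF_{2^{o_\ell(2)}}$; the paper only checks one instance. Your embedding into $\Sp_2(\ell)$, rather than $\Out(Q)$, uses $Z(M)\le Z(X)$, which is part of Hypothesis \ref{hyp: modulehyp1}. Either target group works for the order argument.

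One correction: delete your first $\ell=5$ bullet, the one claiming the image of $X$ in $\Sp_2(5)$ is $\SL_2(5)$ acting naturally. That claim is false.
\begin{itemize}
\item The kernel of $X/C_X(M/Z(M))\to\wt X$ is a quotient of $R=O_{2'}(X)$, so it has odd order.
\item The image therefore has order $60\cdot(\text{odd})$ dividing $120$, so it would be a subgroup $\Alt(5)\le\SL_2(5)$.
\item No such subgroup exists, and this is exactly how the paper rules out $\ell=5$.
\end{itemize}
Your dimension count makes the bullet unnecessary anyway. As written, though, it is a wrong assertion followed by a mid-sentence self-correction ("$5\cdot 4/$? no"), and it should be cleaned up.
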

\begin{proof}
Since $p=2$ by Lemma \ref{lem:p=2Symp}, we either have that $m=2$ and $\ell\leq 7$; or $(p,m,\ell)=(2,3,3)$. Observe that $\Out(M)\cong \Sp_d(\ell).(\ell-1)$ when $Q\cong \ell^{1+d}_{+}$ and $\Out(M)\cong \Sp_{d-2}(\ell).(\ell-1)$ when $Q\cong \ell^{1+d}_{-}$. Assume $(p,m,\ell)=(2,3,3)$ so that $\frac{d}{2}\leq 2$. Since $7$ divides the order of $\PSL_2(8)$ but does not divide the order of $\Sp_4(3)$ nor of $\Sp_2(3)$, this case does not arise. Hence, $m=2$ so that $\frac{d}{2}\leq 2$ if $\ell=3$ and $\frac{d}{2}=1$ if $\ell\in \{5,7\}$. Since $\PSL_2(4)\cong \Alt(5)$ and $\SL_2(\ell)$ has no subgroup isomorphic to $\Alt(5)$ when $\ell\leq 7$, we must have that $(p,m,\ell)=(2, 2,3)$ and $Q\cong 3^{1+4}_+$. In this case, we verify that the unique faithful irreducible $\FF_2Q$-module has dimension $18>4m(p-1)$ and so this case does not arise. This completes the proof.
\end{proof}

\begin{remark}\label[remark]{rmk: Examples}
That Theorem \ref{lem:with one boundintro} \ref{5Caseintro}, \ref{3Case2intro} and \ref{3Caseintro} arise as genuine examples follows in each case from their embeddings in the relevant symplectic or orthogonal groups as detailed in the above proof, except when $X\cong 4\circ 2^{1+4}.\Alt(6)$ acting irreducibly on a $16$-dimensional $\FF_3$-module. We have included a group satisfying these latter constraints in the ancillary code.
\end{remark}

\begin{hypothesis}\label{hyp: modulehyp2}
Hypothesis \ref{hyp: modulehyp} holds and $M$ is an elementary abelian $\ell$-group.
\end{hypothesis}

\begin{lemma}\label{lem:permClifford}
We have that $V|_M=W_1\oplus \dots \oplus W_b$, $X$ permutes the $b$ components of $V|_M$ transitively, $N_{X}(W_i)R<X$ and $b \geq \pdeg (X/R)$.
\end{lemma}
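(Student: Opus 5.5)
We apply Clifford theory to the normal subgroup $M$ of $X$. Since $V$ is irreducible and $M\norm X$, the restriction $V|_M$ is semisimple. Because $M$ is an elementary abelian $\ell$-group with $\ell\ne p$, we can write $V|_M=W_1\oplus\dots\oplus W_b$, where the $W_i$ are the homogeneous (Wedderburn) components. Conjugation by $X$ permutes these components, and the permutation action is transitive by irreducibility of $V$: the sum of the components in a single $X$-orbit is a nonzero $X$-submodule, so it equals $V$. This gives the first two assertions.

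Next we show $N_X(W_i)R<X$. Suppose instead that $X=N_X(W_1)R$. Since $R$ is a normal $p'$-subgroup, $N_X(W_1)$ contains a Sylow $p$-subgroup of $X$, and by transitivity the same holds for every $N_X(W_i)$. The kernel $K$ of the permutation action is normal in $X$. It suffices to show $K$ contains $T$, because then $X=O^{p'}(X)=\langle T^X\rangle\le K$.

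For this, argue exactly as in Lemma \ref{lem:perm degree}. Suppose $b>1$. Then the stabilizers are proper subgroups, and applying Lemma \ref{lem:perm degree} to the action of $X$ on $\Omega=\{W_1,\dots,W_b\}$ shows that the stabilizer chain satisfies $K_jR<X$ for some $j<b$. In particular a point stabilizer $X_1$ satisfies $X_1R=X$ only when the full chain does. More directly, $X=N_X(W_1)R$ forces $R$ to be transitive on $\Omega$. Then every $p$-element $t\in T$ fixes $W_1$, and $W_1^{r}$ is fixed for $r\in R$ precisely when $t$ normalizes $N_X(W_1)^r$; so $T$ acts on $\Omega$ like a $p$-group on the cosets of $R\cap N_X(W_1)$ in $R$. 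Combined with the fact that $T$ fixes a point in each $R$-coset decomposition, this reduces the question to showing that $T$ acts trivially on $\Omega$.

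Assume now that $T\le K$. Then $X\le K$, so every $W_i$ is $X$-invariant and $b=1$. In that case $V|_M$ is homogeneous, so $M$ acts on $V$ via scalars from the field $\End_{\FF_p M}(W_1)$. Hence $M/C_M(V)$ is cyclic and central in $X$, and faithfulness gives $M\le Z(X)$. This contradicts $[T,M]\ne1$. So $b>1$ and $X$ is not contained in $K$.

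The main obstacle is the step above: deducing $N_X(W_1)R<X$ without already assuming $T\le K$. The intended route is Lemma \ref{lem:perm degree}. With $K$ the kernel of the action, either $X=KR$, or the chain argument gives a $j$ with $K_jR=X$ and $K_{j+1}R<X$, and this $K_{j+1}$ is the intersection of stabilizers.

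The cleaner route is to note that $N_X(W_1)R$ is a proper subgroup as soon as some element of order $p$ moves $W_1$. If all elements of order $p$ fixed every $W_i$, then $\Omega_1(T)$ and hence $O^{p'}$ would lie in $K$, giving the contradiction above. The caveat is that a $p$-subgroup may fix one point but move others. To handle this, we apply the chain argument to $X$ acting on $\Omega$ and choose the labelling so that $W_1=\omega_{j+1}$ in the proof of Lemma \ref{lem:perm degree}, which yields $N_X(W_1)R<X$ directly.

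Finally, we get $b\ge \pdeg(X/R)$. The subgroup $N_X(W_1)R$ is proper in $X$, so $X$ acts transitively and nontrivially on the cosets of $N_X(W_1)R$. Therefore
\[b=|X:N_X(W_1)|\ge |X:N_X(W_1)R|\ge \pdeg(X/R),\]
as claimed.
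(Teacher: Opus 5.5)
There is a genuine gap: the assertion $N_X(W_i)R<X$ is never established, and none of the routes you sketch for it works. Your decomposition of $V|_M$ and the transitivity argument are fine. So is the closing chain $b=|X:N_X(W_1)|\ge |X:N_X(W_1)R|\ge \pdeg(X/R)$, once $N_X(W_1)R<X$ is known. The problems are with that claim itself.

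First, the claim that a point stabilizer supplements $R$ only when the whole stabilizer chain does is false. In $R\rtimes L$ acting on the cosets of $L$, the stabilizer $L$ supplements $R$ although the action is nontrivial.

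Second, the proof of Lemma~\ref{lem:perm degree} only yields $K_{j+1}R<X$ for the intersection $K_{j+1}=K_j\cap X_{j+1}$, not for the single stabilizer $X_{j+1}$. Moreover, the $N_X(W_i)$ are $X$-conjugate and $R\norm X$, so $N_X(W_i)R=X$ holds for one $i$ if and only if it holds for all $i$. No relabelling can help.

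Third, the reduction ``$X=N_X(W_1)R$ forces $T\le K$'' is announced but never carried out. What you actually prove is only that $T\le K$ would force $b=1$. Even that step needs more care: homogeneity makes $M$ cyclic, not central, since $X$ may act on $M$ through Galois automorphisms. You must still show that the abelian group $X/C_X(M)$ is trivial.

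Moreover, the missing implication cannot come from Clifford theory and the defining properties of $M$ alone. Take $X\cong 2^{1+8}_+.\Alt(6)$ on its $16$-dimensional $\FF_3$-module (Theorem~\ref{lem:with one boundintro}\ref{3Caseintro}). Then $R=E:=O_2(X)$ and $E/Z(E)\cong U\otimes N$, where $U$ is a $4$-dimensional irreducible $\FF_2\Alt(6)$-module and $\Alt(6)$ acts trivially on the $2$-space $N$. The preimage $M$ of the totally singular space $U\otimes\langle n\rangle$ is an elementary abelian normal subgroup of order $2^5$ with $[T,M]\ne 1$, and it is minimal with these properties. Yet $E/M$ permutes the $16$ weight spaces of $M$ regularly, so $N_X(W_1)R=X$ while $T$ moves components. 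This pair is excluded only because it is an outcome of Theorem~\ref{lem:with one boundintro}. Hence a proof of $N_X(W_i)R<X$ must exploit the minimal-counterexample hypothesis, not permutation-group reasoning.

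For comparison, the paper uses the summands $C_V(B)$ with $|M:B|=\ell$, gets $b>1$ from faithfulness and $|M|>\ell$, and then cites Lemma~\ref{lem:perm degree}. That gives $b\ge\pdeg(X/R)$ directly, so you did not need $N_X(W_i)R<X$ for the bound. However, Lemma~\ref{lem:perm degree} does not by itself yield $N_X(W_i)R<X$ either.
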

\begin{proof}
Since $X=O^{\ell}(X)$ and $[T, M]\ne 1$, we deduce that $M/(M\cap Z(X))$ is non-cyclic. By minimality of $M$, we conclude that no maximal subgroup of $M$ is normalized by $X$. Then $V=\bigoplus\limits_{|M:B|=\ell} C_V(B)$ and as $V$ is irreducible, we conclude that $X$ permutes the summands of $V$ transitively. Writing $W_i=C_V(B)$ for some appropriate $B$ and applying Lemma \ref{lem:perm degree}, using that $V$ is a faithful module and $|M|>\ell$, the result holds.
\end{proof}

\begin{proposition}\label{prop:M11andRee}
If $(X, V)$ satisfies Hypothesis \ref{hyp: modulehyp2} then $\wt X\cong \Alt(2p)$.
\end{proposition}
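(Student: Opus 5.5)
By Lemma \ref{lem:permClifford} we have $V|_M=W_1\oplus\dots\oplus W_b$ with $X$ permuting the $b$ homogeneous components transitively and $b\geq \pdeg(\wt X)$. Since $\dim_{\FF_p} V\geq b$, we get
\[\pdeg(\wt X)\leq b\leq \dim_{\FF_p}V\leq 4m(p-1).\]
The plan is to show that, for every $\wt X$ other than $\Alt(2p)$, this inequality either fails outright or forces $(X,V)$ into one of the outcomes \ref{permcase1intro} or \ref{permcase2intro} of Theorem \ref{lem:with one boundintro}. Either way Hypothesis \ref{hyp: modulehyp} is contradicted.

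\textbf{Generic families.} Proposition \ref{cor:data}(ii) gives $\pdeg(\wt X)\geq p^m+1$ whenever $\wt X\not\cong \Alt(2p),\Ree(3)$. The inequality $p^m+1\leq 4m(p-1)$ fails for $m\geq 3$ and all $p$. For $m=2$ it fails when $p\geq 7$ and holds for $p\in\{2,3,5\}$. I would dispose of $p=2$ first. Here $\ell$ is odd and $E(X)=1$, so a Feit--Thompson style argument, or a direct check of $\Sz$ and $\PSU_3(2^a)$ degrees, leaves only $\PSL_2(4)$. For $\PSL_2(4)$ we have $\pdeg=5$ and $\dim V\leq 8$. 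Since $b\geq 5$ and $X$ has no transitive action of degree $6$, $7$ or $8$ with the right kernel behaviour, $V|_M$ splits into $5$ one-dimensional $\FF_2$-components. That is impossible because $M$ is a nontrivial $\ell$-group with $\ell$ odd and $\GL_1(2)=1$.

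For $p=3$ I would use Table \ref{tab:data} to exclude every $\wt X$ with $\pdeg>16$: $\PSU_3(3^a)$, $\PSL_3(4)$ with degree $21$, and the large Ree groups. This leaves $\PSL_2(9)$ with degree $6$, $\Ree(3)$ with degree $9$, and $\mathrm{M}_{11}$ with degree $11$ or $12$. For $p=5$, $\pdeg\leq 32$ leaves only $\PSL_2(25)$, with degree $26$, because the sporadic groups and $\Sz(32){:}5$ have far larger degree.

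\textbf{Residual cases.} In each surviving case, $\pdeg(\wt X)\cdot f\leq 4m(p-1)$ forces each component to have dimension $f=\dim W_i=1$, except possibly for $\PSL_2(9)$. Then $\ell\mid p-1$, so $\ell=2$, and $M$ acts on each line by $\pm1$. The next step is to show that $N_X(W_i)$ has index exactly $b$ and that $b=\dim V$. This uses the list of small-index subgroups: for $\PSL_2(25)$ the only option is index $26$, for $\Ree(3)$ index $9$, and for $\mathrm{M}_{11}$ index $11$ or $12$ (with $b\leq 16$).

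After that, $O_{p'}(X)$ acts diagonally on $V$ and is therefore an elementary abelian $2$-group when $p=3$, and an abelian $2$-group of exponent dividing $4$ when $p=5$, since $\FF_5^\times$ is cyclic of order $4$. It also embeds in the monomial kernel, giving the order bounds. For $p=3$ one further argument is needed: faithfulness together with $X=O^{3'}(X)$ should pin $|O_{3'}(X)|$ to $2^{\dim V-1}$, coming from the sum-zero submodule of the permutation module. This yields exactly outcomes \ref{permcase1intro} and \ref{permcase2intro}, a contradiction.

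\textbf{Main obstacle.} I expect $\wt X\cong\PSL_2(9)\cong\Alt(6)$ with $p=3$ to be the hardest case. Here $b\in\{6,10,15\}$ and $f$ can be $1$ or $2$, so the situation resembles case \ref{Alt2pCaseintronew}, which is listed only under $\wt X\cong\Alt(2p)$. I would first look for a quasisimple $L\leq X$ with $X=LR$ via Lemma \ref{alt6case}. Failing that, Lemma \ref{alt6case} shows that the minimal faithful $\FF_3$-degree is $30>16$, which is a contradiction. If such an $L$ exists, the configuration is exactly outcome \ref{Alt2pCaseintronew} (note that $\PSL_2(9)\cong\Alt(6)=\Alt(2\cdot 3)$). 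So in this case $\wt X\cong\Alt(2p)$ after all, which is consistent with the proposition and completes the proof.
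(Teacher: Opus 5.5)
Your route is the paper's route. You bound $\pdeg(\wt X)\le b\le\dim_{\FF_p}V\le 4m(p-1)$ via Lemma \ref{lem:permClifford}, cut down with Proposition \ref{cor:data}(ii), kill $p=2$ because the components are $1$-dimensional over $\FF_2$, and recognise the survivors at $p=5$ and $p=3$ as outcomes \ref{permcase1intro} and \ref{permcase2intro}.

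\textbf{The gap.} Your reduction contains a false step. The inequality $p^m+1\le 4m(p-1)$ does not fail for all $m\ge 3$: at $(p,m)=(2,3)$ it reads $9\le 12$. So $\wt X\cong\PSL_2(8)$ with $p=2$ survives, and your argument never treats it. The paper explicitly keeps $\PSL_2(2^m)$ for $m\in\{2,3\}$.

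The repair is the argument you already use for $\PSL_2(4)$. Since $b\ge 9$ and $b\cdot\dim_{\FF_2}W_i=\dim_{\FF_2}V\le 12$, each $W_i$ has dimension $1$. The group $M$ is an $\ell$-group with $\ell$ odd, simply because $\ell\in\Pi(O_{2'}(X))$; no Feit--Thompson argument is needed. An odd-order group acts trivially on a $1$-dimensional $\FF_2$-space, contradicting faithfulness.

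Within the $\PSL_2(4)$ case itself, two statements are wrong but harmless. First, $X$ does have a transitive action of degree $6$, since $\Alt(5)\cong\PSL_2(5)$ acts on $6$ points. Second, your count of exactly $5$ components is unjustified. Neither matters: $b\ge 5$ and $b\cdot\dim W_i\le 8$ already give $\dim W_i=1$.

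\textbf{The $\PSL_2(9)$ case.} Your ``main obstacle'' is vacuous. If $p=3$ and $\wt X\cong\PSL_2(9)$, then $\wt X\cong\Alt(6)=\Alt(2p)$, so the proposition's conclusion holds outright. The paper dismisses this case in one clause. The detour through Lemma \ref{alt6case} is unnecessary. Its hypotheses ($O_{3'}(X)=O_2(X)$ and this is an elementary abelian $2$-group) have not been established at that point anyway.

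\textbf{The residual cases.} Before saying $O_{p'}(X)$ ``acts diagonally'', state explicitly that $R$ normalizes every $W_i$. This follows because $|X:N_X(W_i)R|\ge\pdeg(\wt X)$, as in Lemma \ref{lem:perm degree}, and $2\pdeg(\wt X)>4m(p-1)$ in each surviving case ($52>32$, $18>16$, $22>16$). Hence $|R:N_R(W_i)|=1$; the paper phrases this as $R$ equalling the kernel of the permutation action.

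\textbf{A point where you go further.} You propose pinning $|O_{3'}(X)|=2^{\dim V-1}$ via the submodules of the $\FF_2$-permutation module together with $X=O^{3'}(X)$. The paper does not do this. It stops at ``$R$ is elementary abelian and $V|_R$ is a sum of $\dim V$ lines'' before declaring the configuration to be \ref{permcase2intro}. Your step is a correct and worthwhile addition, though as written it is only a sketch.
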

\begin{proof}
Aiming for a contradiction, we assume throughout that $\wt X\not\cong \Alt(2p)$. Set $L:=O^{p'}(O^p(X))$ and $K$ to be the kernel of the permutation action of $R$ on the summands of $V|_M$. Note that $L=X$ unless $\wt X$ is isomorphic to $\Ree(3)$ or $\Sz(32):5$. By Lemma \ref{lem:permClifford}, we have that $\pdeg(\wt L)\leq \pdeg(\wt X)\leq \dim_{\FF_p}V \leq 4m(p-1)$. By Proposition \ref{cor:data} we have that $\pdeg(\wt X) \geq p^m+1$ or $\wt L\cong \PSL_2(8)$ and $\pdeg(\wt L)=\pdeg(\wt X)=9$.

If $\wt L\not\cong \PSL_2(8)$ we have that
\[4m(p-1)\geq p^m+1\geq p^m-1=(p-1)(p^{m-1}+\dots + p+1)\]
so that $4m \geq (p^{m-1}+\dots p+1)$ which implies that $p=2$ and $m\leq 4$; or that $m=2$ and $3\leq p\leq 7$. Furthermore, if $m=2$ and $p=7$ then $4m(p-1)=48<50=p^m+1$. Thus, if $p$ is odd then we deduce that $m=2$ and $p\leq 5$.

If $p=2$ then as $\pdeg(\wt X)\leq \dim_{\FF_2} V \leq 4m$, appealing to Table \ref{tab:data} we see that $\wt X\cong \PSL_2(2^m)$ with $2\leq m\leq 3$. Since $\dim_{\FF_2} V\leq 4m$ and $\pdeg(\PSL_2(2^m))>2m$ we deduce that $V|_{M}$ is a sum of $1$-dimensional modules over $\FF_2$, so a sum of trivial modules for $M$. But then $M$ acts trivially on $V$, a contradiction.

If $p=5$ then as $\pdeg(\wt L)\leq \dim_{\FF_5} V \leq 32$ and appealing to Table \ref{tab:data} we see that $\wt X=\wt L$ is isomorphic to $\PSL_2(25)$. In particular, as $\pdeg(\wt L)=26>16$, we deduce that $R=K$ and $V|_M$ is a sum of $1$-dimensional $\FF_5$-modules. It follows that $R$ is an abelian $2$-group of exponent at most $4$. Then as $V$ is irreducible and $X$ acts transitively on the summands of $V|_R$, we have that $V|_R$ is a direct sum of $26$ $1$-dimensional $\FF_5$-modules. This example is included as Theorem \ref{lem:with one boundintro}\ref{permcase1intro}, and so $X$ does not satisfy Hypothesis \ref{hyp: modulehyp2}.

If $p=3$ then as $\pdeg(\wt L)\leq \dim_{\FF_3} V \leq 16$ and as $\wt L\not\cong \Alt(6)\cong \PSL_2(9)$, appealing to Table \ref{tab:data} we see that $\wt L$ is isomorphic to $\PSL_2(8)$ or $\mathrm{M}_{11}$. In particular, as $\pdeg(\wt L)>8$, we deduce that $R=K$ and $V|_M$ is a sum of $1$-dimensional $\FF_3$-modules. It follows that $R$ is an elementary abelian $2$-group. Finally, if $\wt X\cong \Ree(3)$ then as $V$ is irreducible and $X$ acts transitively on the summands of $V|_R$, we have that $V|_R$ is a direct sum of $9$ $1$-dimensional modules. If $\wt X\cong \mathrm{M}_{11}$ then as $V$ is irreducible and $X$ acts transitively on the summands of $V|_R$, we have that $V|_R$ is a direct sum of either $11$ or $12$ $1$-dimensional modules. These examples are included in Theorem \ref{lem:with one boundintro}\ref{permcase2intro}, and so $X$ does not satisfy Hypothesis \ref{hyp: modulehyp2}.

Thus, if $X$ satisfies Hypothesis \ref{hyp: modulehyp2} then $\wt X\cong \Alt(2p)$, as desired.
\end{proof}

\begin{lemma}\label{lem:Rinertial}
If $(X, V)$ satisfies Hypothesis \ref{hyp: modulehyp2} then $R=\bigcap\limits_{i\in \{1,\dots, b\}} N_X(W_i)$.
\end{lemma}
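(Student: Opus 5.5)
We write $N:=\bigcap_{i=1}^{b} N_X(W_i)$ for the kernel of the action of $X$ on the set $\Omega:=\{W_1,\dots,W_b\}$ of homogeneous components of $V|_M$ from Lemma \ref{lem:permClifford}. The plan is to prove the two inclusions $N\le R$ and $R\le N$ separately.

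The inclusion $N\le R$ should be immediate. Since $N\normaleq X$ and $\wt X\cong\Alt(2p)$ is simple (Proposition \ref{prop:M11andRee}), either $N\le R$ or $NR=X$. In the second case $N$ contains a Sylow $p$-subgroup of $X$, because $R$ is a $p'$-group. As $X=O^{p'}(X)$, this gives $X=N\le N_X(W_1)$, which contradicts $N_X(W_1)R<X$ from Lemma \ref{lem:permClifford}.

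For $R\le N$, we first study the $R$-orbits on $\Omega$. Since $R\normaleq X$, these orbits form a system of blocks for $X$. Let $c$ be the number of $R$-orbits and $s$ their common size, so that $cs=b\le\dim_{\FF_p}V\le 8(p-1)$.
\begin{enumerate}
\item If $c=1$, then $R$ is transitive on $\Omega$, so $X=N_X(W_1)R$, again against Lemma \ref{lem:permClifford}. Hence $c>1$.
\item The action of $X$ on the $c$ blocks is transitive. Applying Lemma \ref{lem:perm degree} to this action gives $c\ge\pdeg(\Alt(2p))=2p$.
\item Lemma \ref{permalt(2p)} should then give $c=2p$, or $p=3$ and $c\in\{10,15\}$. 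Its hypotheses need to be transferred from a quasisimple group to $X$, by passing to a suitable perfect subgroup of $X$ covering $\wt X$ or by arguing directly with point stabilizers in $\Alt(2p)$.
\item In every case $s\le 8(p-1)/c<4$, so $s\le 3$.
\end{enumerate}

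The main step is to rule out $s\in\{2,3\}$. Let $H$ be the stabilizer in $X$ of the $R$-orbit containing $W_1$; then $R\le H$. The image $HR/R=H/R$ is the stabilizer of a block in $\Alt(2p)$, which is $\Alt(2p-1)$ when $c=2p$ (for $p=3$ and $c\in\{10,15\}$ it is $3^2{:}4$ or $\Sym(4)$, and those cases we would check separately). This image is perfect and has no non-trivial action on at most $3$ points. Consequently $D:=H^{(\infty)}$ covers $H/R$ and fixes every point of the block, so $D\le N_X(W_1)$. The transitivity of $R$ on the block then forces $H=N_X(W_1)R$. This alone is not a contradiction. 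The extra leverage we plan to use is the following:
\begin{enumerate}
\item The $W_i$ are the subspaces $C_V(B_i)$ for hyperplanes $B_i$ of $M$. An $R$-orbit of size $s\in\{2,3\}$ means that $R$ moves the hyperplanes $B_i$ in small orbits.
\item Then $R/N$ embeds in $\Sym(s)^c$. In particular $R/N$ is solvable, and its elements have order dividing $6$.
\item Since $M\le N$, the normal series $M\le N\le R$ yields a non-central $X$-chief factor of $\{2,3\}$-type inside $R/N$, namely an $\FF_2$- or $\FF_3$-module for $\Alt(2p)$ of dimension at most $c\le 2p$, so essentially a natural module.
\end{enumerate}

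To finish, we compare this with the choice of $\ell$ (maximal with $[T,O_\ell(X)]\ne 1$) and the minimality of $M$, aiming for a contradiction. The alternative route is the minimality of $|X|$ in Hypothesis \ref{hyp: modulehyp}: apply it to $N_X(W_1)R$, or to a complement-type subgroup obtained via the cohomology statement Lemma \ref{AltCohomology}, and show that the permutation action could be refined. This step, especially the small cases $p=3$ with $c\in\{10,15\}$, is where I expect the main difficulty. I would first try to show directly that a $\{2,3\}$-chief factor of $R/N$ would yield a normal subgroup of $X$ inside $O_{\ell'}(X)$ that acts non-trivially on $M$ in a way incompatible with the $X$-transitivity on hyperplanes of $M$.
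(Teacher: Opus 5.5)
Your preliminary reductions are sound and match the opening of the paper's proof. In particular $N\le R$, and the $R$-orbits on $\{W_1,\dots,W_b\}$ form $c=2p$ blocks of size $s\le 3$. For $p=3$ the cases $c\in\{10,15\}$ disappear once $s\ge 2$, since $cs\le 16$, and $s=3$ is impossible because $R$ is a $3'$-group.

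\textbf{The gap.} You never exclude $s\in\{2,3\}$. Moreover, the route you plan to try first cannot succeed for $p\ge 5$, because the configuration $N<R$ genuinely occurs. Let $\Alt(4)\cong 2^2{:}3$ act monomially on $\FF_5^3$ and put $X:=O^{5'}(\Alt(4)\wr\Alt(10))$. Then:
\begin{enumerate}
\item $X$ acts faithfully and irreducibly on $V=\FF_5^{30}$, and $30\le 32=4m(p-1)$.
\item $R$ is solvable with $O_3(X)=1$, so $\ell=2$ and $M=O_2(X)\cong 2^{20}$ is an irreducible $X$-module.
\item The $W_i$ are the $30$ coordinate lines, $R$ permutes them in orbits of size $3$, and $\bigcap_i N_X(W_i)=M<R$.
\item $R/M$ is exactly the kind of non-central $\FF_3$-chief factor you describe.
\end{enumerate}
The choice of $\ell$, the minimality of $M$ and the transitivity on hyperplanes all hold here, so no contradiction can come from them alone. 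The only clause of Hypothesis \ref{hyp: modulehyp} that excludes this pair is that it \emph{is} an outcome of Theorem \ref{lem:with one boundintro}, namely \ref{Alt2pCaseintronew}, with $L$ the top group $\Alt(10)$.

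\textbf{The missing idea.} Once $N<R$, summing the $W_i$ over $R$-orbits gives $V|_R=V_1\oplus\dots\oplus V_{2p}$. Here $\dim_{\FF_p}V_i\le 3$ and $X$ is transitive on the $V_i$, which is already the shape of Theorem \ref{lem:with one boundintro}\ref{Alt2pCaseintronew}. So the target is not to refine the permutation action. It is to exhibit a quasisimple $L\le X$ with $X=LR$, which contradicts Hypothesis \ref{hyp: modulehyp}. For $p\ge 5$ the paper does this as follows:
\begin{enumerate}
\item Take $L$ minimal with $X=LR$ and $L=O^{p'}(L)$.
\item Apply Lemma \ref{AltCohomology} to $(R/N)/C_{R/N}(X)$ to get $(LN\cap R)/N\le C_{R/N}(X)$.
\item Exclude $LN/N\cong 2\cdot\Alt(2p)$ using the Schur multiplier and Lemma \ref{permalt(2p)}, since $2\cdot\Alt(2p)$ has no subgroup $\Alt(2p-1)$.
\item Apply the minimality of $|X|$ to $O^{p'}(LN)$ acting on the $2p$-dimensional submodules spanned by its orbits on the $W_i$.
\end{enumerate}

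\textbf{The case $p=3$.} Only here does an argument of the kind you envisage work. Since $s=2$ and $\dim_{\FF_3}W_i=1$, both $N$ and hence $R$ are $2$-groups. Then $[M,R]<M$, and the minimality of $M$ gives $[M,R]\le Z(X)$. The paper uses this to show that $R$ normalizes every $B_i$, and hence every $W_i=C_V(B_i)$.
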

\begin{proof}
By Proposition \ref{prop:M11andRee}, we have that $\wt X\cong \Alt(2p)$. Throughout, write $V|_M=W_1\oplus \dots \oplus W_b$ and set $K=\bigcap\limits_{i\in \{1,\dots, b\}} N_X(W_i)$ to be the kernel the of permutation action of $X$ on the summands of $V|_M$. Then $K$ is a normal subgroup of $X$ contained in $R$. Aiming for a contradiction, assume throughout that $K<R$. Applying Lemma \ref{lem:perm degree}, we have that $N_X(W_i)R<X$ for each $i$. Since $\dim_{\FF_p} V\leq 8(p-1)$ we must have that $[X :N_X(W_i)R]=2p$, $\dim_{\FF_p} W_i=1$ and $[R: N_R(W_i)]\in\{2,3\}$ for each $i\in\{1,\dots, b\}$. We may then write $V|_R=V_1\oplus \dots \oplus V_r$ where each $V_i$ is a submodule generated by a distinct orbit of $R$ in its action on the set $\{W_i\}$. Hence, $\dim_{\FF_p} V_i\leq 3$. Since $R\norm X$, $X$ respects the orbits of $R$ and as $X$ acts transitively on $\{W_i\}$ it follows that $X$ also acts transitively on the set $\{V_i\}$. Indeed, $V|_R=V_1\oplus \dots \oplus V_{2p}$. We observe $N_R(W_i)$ fixes each $W_j$ in the $R$-orbit of $W_i$. Indeed, we must have that $R/K$ is an elementary abelian $a$-group and $|R/K|\leq a^{2p}$ where $a=[R:N_R(W_i)]$.

Suppose that $p=3$. Since $a\in\{2,3\}$ and $R$ is a $3'$-group, we must have that $a=2$ and $b=12$. Furthermore, as $\dim_{\FF_3} W_i=1$, we have that $K$ is a $2$-group. Hence, $R$ is a $2$-group and so is nilpotent. Set $B_1$ a maximal subgroup of $M$ such that $W_1=C_V(B_1)$. Since $Z(X)$ is contained in a maximal subgroup of $M$, we arrange that $Z(X)\le B_1$. Then $B_1=C_M(W_1)$. Since $K<R$ there are $W_2$ and $B_2\le M$ with $W_1^r=W_2$ and $B_2=C_M(W_2)=B_1^r$. Since $M$ was chosen minimally and $R$ is nilpotent, $[M, R]\le Z(X)$ and $B_2=B_1^r\le B_1Z(X)=B_1$. Hence, $B_1=B_2$ and $W_1=C_V(B_1)=C_V(B_2)=W_2$. But then $R=K$, a contradiction. Hence, $p\geq 5$.

It remains to show that there is $L\le X$ such that $X=LR$ and $L$ is quasisimple. Set $L\le X$ minimal such that $X=LR$ and $O^{p'}(L)=L$. Set $U:=(R/K)/C_{(R/K)}(X)$ and recognize $U$ as an $\FF_a\Alt(2p)$-module. Then $U$ satisfies the hypothesis of Lemma \ref{AltCohomology} and we conclude that $(LK\cap R)/K\le C_{R/K}(X)$. By Schur multiplier considerations, we deduce that $|C_{R/K}(X)\cap LK/K|\leq 2$. If $|C_{R/K}(X)\cap LK/K|=2$ then $LK/K\cong 2\cdot\Alt(2p)$ and $LK/K$ acts on the set $\{W_i\}$. By Lemma \ref{permalt(2p)}, and as $2\cdot \Alt(2p)$ contains no subgroup isomorphic to $\Alt(2p-1)$, we conclude that the minimal faithful permutation representation of $2\cdot\Alt(2p)$ has degree strictly larger than $8(p-1)$, and so we have a contradiction. Hence, $L\cap R\le K$.

Now, the orbits of $O^{p'}(LK)$ on the set $\{W_i\}$ all have length $2p$. Write $\hat{V}$ for the submodule generated by one such orbit, so $\hat{V}$ is an irreducible $\FF_pO^{p'}(LK)$-module. Since $O^{p'}(LK)\le LK<X$ and $|X|$ was chosen minimally such that it does not appear in Theorem \ref{lem:with one boundintro}, we see that the pair $(O^{p'}(LK)/C_{O^{p'}(LK)}(\hat{V}), \hat{V})$ appears in Theorem \ref{lem:with one boundintro}. Since $\dim_{\FF_p} \hat{V}=2p$, there is $\hat{L}\le O^{p'}(LK)$ such that $\hat{L}/C_{\hat{L}}(\hat{V})$ is quasisimple. By minimality, $L=\hat{L}$. Repeating the process for all $L$-orbits on $\{W_i\}$ shows that $L$ is quasisimple. But then $(X, V)$ satisfies Theorem \ref{lem:with one boundintro}\ref{Alt2pCaseintronew} and so $X$ does not satisfy Hypothesis \ref{hyp: modulehyp2}.
\end{proof}

We now show there are no pairs $(X, V)$ satisfying Hypothesis \ref{hyp: modulehyp}. Consequently, Theorem \ref{lem:with one boundintro} holds.

\begin{proof}[Proof of Theorem~\ref{lem:with one boundintro}]
By Proposition \ref{prop:Mdich} and Proposition \ref{lem:with one bound2}, we have that $M$ is elementary abelian and so if Hypothesis \ref{hyp: modulehyp} holds then Hypothesis \ref{hyp: modulehyp2} holds. By Proposition \ref{prop:M11andRee} and Lemma \ref{lem:Rinertial}, writing $V|_M=W_1\oplus \dots \oplus W_b$, we have that $\dim_{\FF_p} W_i\leq 3$, $R=\bigcap\limits_{i\in \{1,\dots, b\}} N_X(W_i)$ and $X$ acts transitively on the set $\{W_i\}$.

Since $X$ acts transitively on the $b$ components of $V|_M$ and $\dim_{\FF_p} V\leq 8(p-1)$, applying Lemma \ref{permalt(2p)} we deduce that $b=2p$; or we calculate that $b=10$ or $15$ when $p=3$. Setting $V_i:=W_i$, to obtain a final contradiction, it remains to show that there is $L\le X$ with $X=LR$ and $L$ quasisimple, for then $(X, V)$ satisfies Theorem \ref{lem:with one boundintro}\ref{Alt2pCaseintronew} and so $X$ does not satisfy Hypothesis \ref{hyp: modulehyp2}. If $b>2p$ then $p=3$ and  applying Lemma \ref{alt6case}, as $\dim_{\FF_3} V \leq 16$, we see that there is $L\le X$ with $X=LR$ and $L$ quasisimple.

Hence, $b=2p$. Assume that there is $Y$ a proper subgroup of $X$ with $X=YR$. Set $\hat{R}:=R\cap O^{p'}(Y)$. If $V|_{O^{p'}(Y)}$ is irreducible, then the pair $(O^{p'}(Y), V)$ satisfies the hypothesis of Theorem \ref{lem:with one boundintro}. By the minimality of $X$, we deduce that $O^{p'}(Y)$ appears in Theorem \ref{lem:with one boundintro} and $O^{p'}(Y)/\hat{R}\cong \Alt(2p)$. In cases Theorem \ref{lem:with one boundintro}\ref{5Caseintro},\ref{3Case2intro} we verify that $H^2(\Alt(2p), \hat{R}/Z(\hat{R}))=0$ using Lemma \ref{AltCohomology} when $p=5$ and {\sc Magma} \cite{Magma} when $p=3$. Hence, there is $L\le O^{p'}(Y)$ with $L$ quasisimple and $O^{p'}(Y)=L\hat{R}$. But then $L\le X$ and $X=LR$ and we have uncovered the desired candidate for $L$. If $V|_{O^{p'}(Y)}$ is reducible then for each $O^{p'}(Y)$-composition factor $W$ of $V$, we see that $O^{p'}(Y)/C_{O^{p'}(Y)}(W)$ appears in Theorem \ref{lem:with one boundintro}. Letting $L$ be a subgroup of $O^{p'}(Y)$ chosen minimally such that $O^{p'}(Y)=L\hat{R}$, we see that $L/C_L(W)$ is quasisimple for each $W$. Since $V$ is a faithful module, we conclude that $L$ itself is quasisimple. Then $L\le X$ and $X=LR$ and again we have uncovered the desired candidate for $L$. Hence, no such $Y$ exists.

By the Frattini argument, we have that $X=N_X(D)R$ for each $D\in\syl_t(R)$ and each $t\in \Pi(R)$. Then, by the previous paragraph, we have that $X=N_X(D)$ for all such $D$, and so $R=F(X)$ is nilpotent. Then for each $X$-composition factor $U$ contained in $R$, $U$ is an $\FF_t\Alt(2p)$-module for some prime $t\in \Pi(R)$. Since $X$ acts transitively on the set $\{V_i\}$, it follows that every $X$-composition factor inside of $R$ is isomorphic as an $\FF_t\Alt(2p)$-module to an irreducible factor of the $\FF_t\Alt(2p)$ permutation module of dimension $2p-c$ where $c=(2,t)$. If $p\geq 5$ then an appeal to Lemma \ref{AltCohomology} and a consideration of the Schur multiplier of $\Alt(2p)$ provides a subgroup $L<X$ with $X=LR$, a contradiction.

Hence, $p=3$. Note that as $\dim_{\FF_3} W_i\leq 3$, it follows that $\Pi(R)\subseteq \Pi(\GL_3(3))\setminus \{3\}=\{2,13\}$. We observe that the $3'$ part of the Schur multiplier of $\Alt(6)$ is a $2$-group, and that the natural $\FF_{13}\Alt(6)$ permutation module has trivial $2$-cohomology, and so if $O_{13}(X)\ne 1$ then as $O^{3'}(X)=X$, we uncover $Y<X$ with $X=YR$, a contradiction. Hence, $R$ is a $2$-group. We examine the Sylow $2$-subgroups of $\GL_3(3)$, observing that they all lie in a maximal subgroup of shape $\GL_2(3)\times 2$. In particular, a Sylow $2$-subgroup of $\GL_3(3)$ acts reducibly on a $3$-dimensional module. Since $V$ is irreducible, and $X$ acts transitively on the components of $V|_R$, we conclude that $\dim_{\FF_3} W_i\leq 2$ and $\dim_{\FF_3} V \leq 12$. Since the sectional rank of a Sylow $2$-subgroup of $\GL_2(3)$ is $2$, we deduce that $|R/\Phi(R)|\leq 2^{12}$. Hence, $X/\Phi(R)$ satisfies the hypothesis of Lemma \ref{alt6case}. Indeed, $X/\Phi(R)$ satisfies case (ii) of Lemma \ref{alt6case}, from which we deduce that $X/\Phi(R)$ corresponds to $L$ in Lemma \ref{alt6case} and $N_X(W_1)/\Phi(R)$ satisfies the role of $P$. Thus, $N_X(W_1)/\Phi(R)$ contains no subgroup of index $2$. Note that $\GL_2(3)$ is solvable and so contains no subgroups with quotient $\Alt(5)$. We deduce that $N_X(W_i)=C_X(V_i)R$. Since $N_X(W_i)$ has no subgroups of index $2$, we conclude that $N_X(V_i)=C_X(V_i)\Phi(R)\le C_X(V_i)\Phi(N_X(V_i))$ so that $N_X(V_i)=C_X(V_i)$. But then $R\le C_X(V_i)$ for all $i$, impossible since $1\ne R\norm X$ and $V$ is faithful. Hence, no $X$ satisfies Hypothesis \ref{hyp: modulehyp2} and Theorem \ref{lem:with one boundintro} holds.
\end{proof}

\begin{remark}
We detail some instances where outcomes Theorem \ref{lem:with one boundintro} \ref{permcase1intro}-\ref{Alt2pCaseintronew} occur. We begin with \ref{Alt2pCaseintronew}. In a maximal subgroup of $\Sym(2p^2)$ of shape $\Sym(p)\wr \Sym(2p)$, we identify the group $H:=A^{2p}:\Sym(2p)$, where $A$ is a Sylow $p$-normalizer in $\Sym(p)$. Then $O_p(H)$ is elementary abelian of order $p^{2p}$ and admits a faithful action from $H/O_p(H)\cong (p-1)^{2p}:\Sym(2p)$. Hence, $O^{p'}(H/O_p(H))$ is not quasisimple, has quotient isomorphic to $\Alt(2p)$ and acts faithfully on $O_p(H)$, which we may regard as an $\FF_p$-module of dimension $2p$.

For \ref{permcase1intro} and \ref{permcase2intro}, let $n$ be the minimal faithful permutation degree of $\wt X$ so that $n\leq 4m(p-1)$. Consider the group $\Sp_{2n}(r)$ where $|r-1|_p=p$, which has Weyl group $C_2^n: \Sym(n)$. We get a group $C_2^n: H$ where $H\cong \wt X$ and this group acts on a maximal torus of rank $n$ in $\Sp_{2n}(r)$. Hence, we have a faithful action on an elementary $p$-group of order $p^n$ and so $C_2^n:H$ has a faithful $\FF_p$-module of dimension $n$.
\end{remark}

\section{Reductions for Theorem \ref{ModResult}}\label[section]{ReductionSec}

In this section, we reduce the proof of Theorem \ref{ModResult} to quasisimple groups of Lie type in characteristic $p$. This is the point where we begin to fully utilize the restrictions on the Jordan form of $p$-elements on the relevant modules. Throughout this section, we use the notation established in Notation \ref{SNotation}.

\begin{lemma}\label[lemma]{dimbound}
Suppose that $G$ is a group and $V$ is a faithful $\FF_pG$-module. Let $x \in G$ have order $p$ and assume that $x$ is $k$-active on $V$. Then $\dim_{\FF_p}C_V(x) \geq \dim_{\FF_p}V -k(p-1)$. In particular, if $G$ is generated by $s$ conjugates of $x$, then $\dim_{\FF_p }V/C_V(G) \leq sk(p-1)$.
\end{lemma}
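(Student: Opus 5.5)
The plan is to decompose $V|_{\langle x\rangle}$ into Jordan blocks and count fixed points block by block. Since $x$ has order $p$, the group algebra $\FF_p\langle x\rangle$ is local. Every indecomposable module is therefore some $J_i$ with $1\leq i\leq p$, and $V|_{\langle x\rangle}=\bigoplus_{j} J_{i_j}$. The key fact is that $C_{J_i}(x)$ is $1$-dimensional for each $i$: the element $x-1$ acts on $J_i$ as a single nilpotent Jordan block, so its kernel is a line. Consequently $\dim_{\FF_p}C_V(x)$ equals the total number of Jordan blocks. Write $k$ for the number of non-trivial blocks and $t$ for the number of trivial blocks $J_1$. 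The trivial blocks contribute exactly $t$ to $\dim V$, and each non-trivial block has dimension at most $p$. Hence $\dim V\leq t+kp$, and so
\[\dim_{\FF_p}C_V(x)=t+k\geq \dim_{\FF_p}V-kp+k=\dim_{\FF_p}V-k(p-1).\]
This gives the first assertion.

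For the second assertion, suppose $G=\langle x^{g_1},\dots,x^{g_s}\rangle$. The fixed space of $G$ is the intersection of the fixed spaces of a generating set, so $C_V(G)=\bigcap_{i=1}^s C_V(x^{g_i})$. Each conjugate $x^{g_i}$ acts on $V$ with the same Jordan structure as $x$, because $g_i$ induces a linear isomorphism intertwining the two actions. In particular $\dim_{\FF_p}V/C_V(x^{g_i})\leq k(p-1)$ for each $i$. Codimensions are subadditive under intersection of subspaces, so
\[\dim_{\FF_p} V/C_V(G)\leq \sum_{i=1}^s \dim_{\FF_p}V/C_V(x^{g_i})\leq sk(p-1).\]

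Nothing here is a serious obstacle. The only points needing care are that $C_V(x)$ counts blocks, including the trivial ones, and that conjugation preserves the block structure; both are immediate. Faithfulness of $V$ is not actually needed for the inequality.
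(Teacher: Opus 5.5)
Your proof is correct and follows the paper's argument: $\dim_{\FF_p}C_V(x)$ equals the number of Jordan blocks, each non-trivial block has dimension at most $p$, and $C_V(G)$ is the intersection of the centralizers of the $s$ generating conjugates. You also make explicit two points the paper leaves implicit, namely that conjugates of $x$ have the same Jordan structure and that codimension is subadditive under intersection.
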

\begin{proof}
We see that $\dim_{\FF_p}C_V(x)$ coincides with the number of Jordan blocks that $x$ has when acting on $V$. Since each non-trivial Jordan block has dimension at most $p$, we have $\dim_{\FF_p} C_V(x) \geq \dim_{\FF_p} V -(p-1)k$. If $G$ is generated by a set $\mathcal{Y}$ of $s$ conjugates of $x$, then as $C_V(G)=\cap_{y\in \mathcal{Y}} C_V(y)$, we deduce that $\dim_{\FF_p}V/C_V(G)\leq sk(p-1)$.
\end{proof}

\begin{lemma}\label[lemma]{blocks lemma}
Suppose that $X\in \mathcal{S}$ and let $x\in X$ have order $p$. Let $V$ be a faithful $\FF_pX$-module and assume that $x$ is $k$-active on $V$ for some $k\leq m$. Then either $\wt X \cong \Alt(2p)$, or $O^{p'}(O^p(X))$ is quasisimple.
\end{lemma}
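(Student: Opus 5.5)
We aim to reduce to Theorem \ref{lem:with one boundintro} via Lemma \ref{dimbound}. Assume $\wt X\not\cong \Alt(2p)$. First deal with $\wt X\cong\Ree(3)$ separately (there $O^{3'}(O^3(X))$ maps onto $\PSL_2(8)$, and we argue directly with $L:=O^{3'}(O^3(X))$, generated by conjugates of an element of order $3$ whose image lies in $L$; if $x\notin L$ we replace $x$ by a suitable element or argue with $T\cap L$). Otherwise, Proposition \ref{cor:data}(i) gives $g,h\in X$ with $O^p(\wt X)\le \wt{\langle x,x^g,x^h\rangle}$. Set $K:=O^{p'}(\langle x,x^g,x^h\rangle)$, or more precisely $K:=O^{p'}(Y)$ with $Y=\langle x,x^g,x^h\rangle$. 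Then $KR\ge O^p(X)$, and by the lemma following Proposition \ref{SE2}, $O^{p'}(K)$ lies in $\mathcal S$ with the same $p$-rank $m$. Since $Y$ is generated by three conjugates of $x$, Lemma \ref{dimbound} gives $\dim_{\FF_p}V/C_V(Y)\le 3k(p-1)\le 3m(p-1)<4m(p-1)$.

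Next we pass to composition factors. Take a composition series of $V$ as a $K$-module. Each non-trivial $K$-composition factor $W$ sits inside $V/C_V(K)$ in the sense that the sum of the dimensions of non-trivial factors is at most $\dim V/C_V(K)\le 3m(p-1)$. For each such $W$, $K/C_K(W)$ has a strongly $p$-embedded subgroup (it lies in $\mathcal S$ after taking $O^{p'}$, since $C_K(W)$ is a normal subgroup which must lie in $R\cap K$ unless it covers $\wt K$; if it covers $\wt K$ then, as $K=O^{p'}(K)$, $W$ is trivial). Hence Theorem \ref{lem:with one boundintro} applies to $(K/C_K(W),W)$. Outcomes (ii)--(vi) of that theorem force $m=2$ and $p\in\{3,5\}$ with $\dim W\in\{8,9,11,12,16,26\}$; comparing with $3m(p-1)$, i.e.\ $12$ for $p=3$ and $24$ for $p=5$, kills the $16$ and $26$ cases, and the remaining ones ($\Alt(6)$-type with $\dim 8$, $\Ree(3)$ with $9$, $\mathrm{M}_{11}$ with $11,12$) must be ruled out using the Jordan block restriction more sharply: e.g.\ when $\wt X\cong \mathrm{M}_{11}$ or $\Alt(6)$ we use two conjugates (Proposition \ref{generation}) to get the bound $2m(p-1)=8$, leaving only the $8$-dimensional $(4\circ2^{1+4}).\Alt(6)$ case, where we compute that an element of order $3$ acts on the extraspecial module freely enough to have more than $2$ non-trivial blocks. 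Outcome (vii) is excluded since $\wt X\not\cong\Alt(2p)$. So each $K/C_K(W)$ has $O^{p'}(O^p(\cdot))$ quasisimple.

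Finally, since $V$ is faithful, $C_K(V/C_V(K))\cap\bigcap_W C_K(W)$ acts unipotently on $V$ and hence is a $p$-group normal in $K$; its intersection with $R$ is trivial, so $K\cap R$ embeds in $\prod_W (K\cap R)C_K(W)/C_K(W)$, each factor central in the quasisimple image. Thus $[K\cap R,O^p(K)]$ acts trivially on every factor, so $O^{p'}(O^p(K))$ is a central $p'$-extension of a perfect group, hence quasisimple (being perfect with $O^{p'}(O^p(K))/Z$ simple). As $O^p(X)\le KR$ and $R$ normalizes, an argument as in the first paragraph of Lemma \ref{NoComps} shows that this component $L$ of $X$ is normal and centralized by $R$, so $O^{p'}(O^p(X))=L$ is quasisimple. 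The main obstacle is the elimination of the small exceptional outcomes (iii)--(vi) with $p=3$, which requires sharper generation counts or explicit Jordan block data.
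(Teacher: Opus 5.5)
\textbf{The gap is in your last paragraph.} Up to that point you follow the paper's opening move. You use Proposition~\ref{generation} and Lemma~\ref{dimbound} to bound $\dim_{\FF_p}V/C_V(K)$ for a subgroup $K$ generated by a few conjugates of $x$ with $\wt K\ge O^p(\wt X)$, and then apply Theorem~\ref{lem:with one boundintro} to the composition factors. But showing that this particular $K$ is quasisimple says nothing yet about $O^{p'}(O^p(X))$. $K$ is not known to be subnormal in $X$, so it is not a component and the first paragraph of Lemma~\ref{NoComps} does not apply. Your assertion that $R$ normalizes $K$ is unsupported. The configuration you must exclude is precisely $O^p(X)\le KR$ with $K$ quasisimple and $[R,K]\not\le Z(K)$, and $V/C_V(K)$ cannot detect it.

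The missing idea, which is the paper's second step, is to adjoin one more conjugate. Put $N:=O^{p'}(O^p(X))=\langle x^X\rangle$, suppose $K<N$, pick $x^g\notin K$ and set $L:=\langle K,x^g\rangle$. Since $\wt L=\wt K$, if $L$ were quasisimple then $L=K(L\cap R)$ with $L\cap R\le Z(L)$, so $L=[L,L]\le K$, a contradiction. Hence $L$ is not quasisimple. But $L$ is generated by four conjugates of $x$, so $\dim_{\FF_p}V/C_V(L)\le 4m(p-1)$; this is why Theorem~\ref{lem:with one boundintro} carries the bound $4m(p-1)$. The theorem then places $L$ in outcomes (iv)--(vi). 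The paper eliminates these using that $K$ is $2$-generated by Proposition~\ref{generation}. The one exception is $\wt X\cong\PSU_3(3)$ with $x\in Z(T)$, where an explicit analysis of $\PSU_3(3)$-modules of dimension at most $12$ shows that $x$ has only two blocks, both of size $2$, on $V/C_V(K)$. This gives $\dim_{\FF_3}V/C_V(L)\le 8$.

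\textbf{Your plan for $\wt X\cong\Ree(3)$ is not an argument.} The hypothesis concerns $x$, so $x$ cannot be replaced. Moreover, when $x\notin O^3(X)$ no argument can succeed. Let $H=\mathrm{P\Gamma L}_2(8)\cong\Ree(3)$ permute the coordinates of $V=\FF_3^9$ via its action on $\mathbb{P}^1(\FF_8)$. Let $E\cong 2^8$ be the group of diagonal $\pm1$-matrices with an even number of entries $-1$, and set $X:=EH$.
\begin{itemize}
\item $X=O^{3'}(X)\in\mathcal S$, with $O_{3'}(X)=E$ and $m=2$, because $E$ is the irreducible Steinberg module for $\PSL_2(8)$.
\item $V$ is faithful and irreducible; this is outcome (vi)(a) of Theorem~\ref{lem:with one boundintro}.
\item The field automorphism $x\colon a\mapsto a^2$ has cycle type $3^21^3$, so $V|_{\langle x\rangle}=3J_1\oplus 2J_3$ and $x$ is $2$-active.
\item Yet $O^{3'}(O^3(X))=E\PSL_2(8)$ is not quasisimple, and $\wt X\not\cong\Alt(6)$.
\end{itemize}
Since $9\le 3m(p-1)=12$, this example survives your three-conjugate bound. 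It is also where the paper's own step for $x\notin O^p(X)$ (``$\dim V/C_V(\langle K,x^h\rangle)\le 3m(p-1)$, hence quasisimple'') fails. It illustrates your final-step gap concretely: $K:=H$ is generated by two conjugates of $x$, $O^{3'}(O^3(K))\cong\PSL_2(8)$ is quasisimple and $KR=X$, but $E$ does not normalize $K$.

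\textbf{The $\Alt(6)$ case needs no treatment.} For $p=3$, $\Alt(6)$ is $\Alt(2p)$ and is covered by the first alternative. Your proposed handling is wrong in any event. $\PSL_2(9)$ is an explicit exception in Proposition~\ref{generation}, so two conjugates do not suffice. The example at the end of Section~\ref{ReductionSec} exhibits a $2$-active $3$-element on the $8$-dimensional $(4\circ 2^{1+4}).\Alt(6)$-module, so the computation you propose would not give more than two blocks.
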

\begin{proof}
By Proposition \ref{generation} we know that $O^p(\wt X)$ is contained in a subgroup generated by the images of at most $3$ conjugates of $x$. Assume throughout that $O^{p'}(O^p(X))$ is not quasisimple.

Suppose that $x\in O^p(X)$ and let $K\le X$ be such that $K$ is generated by at most $3$ conjugates of $x$ and $O^p(\wt X)=\wt K$. By Lemma \ref{dimbound}, we have that $\dim_{\FF_p} V/C_V(K) \leq 3m(p-1)<4m(p-1)$. We apply Theorem \ref{lem:with one boundintro} to see that either $K=O^{p'}(O^p(K))$ is quasisimple or $\wt K\cong \Alt(2p)$. To prove the result in this case we assume, aiming for contradiction, that $O^{p'}(O^p(X))$ is not quasisimple, $K$ is a proper quasisimple subgroup of $O^{p'}(O^p(X))$ and $\wt X$ is not isomorphic to an alternating group. Let $L=\langle K, x^g\rangle$ for some $g\in X$ such that $K<L$, so that $L=O^{p'}(L)\le O^{p'}(O^p(X))$. By assumption, we have that $O^{p'}(O^p(L))$ is not quasisimple. Still, we have that $\dim_{\FF_p} V/C_V(L) \leq 4m(p-1)$ by Lemma \ref{dimbound} and so Theorem \ref{lem:with one boundintro} applies to $L/C_L(W)$ where $W$ is a non-trivial $L$-composition factor of $V$.

It follows that $L$ satisfies case \ref{3Caseintro}, \ref{permcase1intro} or \ref{permcase2intro} of Theorem \ref{lem:with one boundintro}. By Proposition \ref{generation}, unless $\wt X\cong \PSU_3(3)$ and $x\in Z(T)$, we get that $K$ is generated by two conjugates of $x$ and $\dim_{\FF_p} V/C_V(K)\leq 2m(p-1)$ by Lemma \ref{dimbound}. Then $\dim_{\FF_p} V/C_V(L)\leq 3m(p-1)$ and Theorem \ref{lem:with one boundintro} gives a contradiction. Hence, $\wt X\cong \PSU_3(3)$, $x\in Z(T)$, $K\cong \PSU_3(3)$ and $\dim_{\FF_3} V/C_V(K)\leq 12$.

Appealing to \cite{ModAt}, we see that $\PSU_3(3)$ has three non-trivial irreducible $\FF_3$-representations of dimension at most $12$: the natural module of dimension $6$; any non-trivial composition factor of $(M\otimes M^*)|_{\FF_3}$, where $M$ is the natural $\FF_9\SU_3(3)$-module, of dimension $7$; and the symmetric square of the natural module of dimension $12$. We verify in {\sc Magma} \cite{Magma} that $x$ has at least three non-trivial Jordan blocks on the $7$-dimensional and $12$-dimensional modules. These assertions are established independently in Section \ref{SU3Section}.

We further calculate that in any non-split extension of the $6$-dimensional by itself, $x$ has $6$ Jordan blocks of size $2$; and we see that if $V/C_V(K)$ is a direct sum of two $6$-dimensional modules then $x$ has $4$ Jordan blocks of size $2$ and $4$ trivial Jordan blocks. Hence, $V/C_V(K)$ has exactly one non-trivial composition factor, and this arises as a single $6$-dimensional module. Finally, once again appealing to {\sc Magma} \cite{Magma}, we see that there is no non-trivial extension of the $6$-dimensional module by a trivial module and we conclude that $x$ has only $2$ non-trivial Jordan blocks in its action on $V/C_V(K)$. Since $x\in Z(T)$, we see that $|V/C_V(x)|=3^2$. But $L$ is generated by $4$ conjugates of $x$ and we see that $|V/C_V(L)|\leq 3^8$, a contradiction by Theorem \ref{lem:with one boundintro}.

Hence, $x\not\in O^p(X)$ so that $\wt X$ is isomorphic to $\Ree(3)$ or $\Sz(32):5$. Then by Proposition \ref{generation} we see that $\wt X$ is generated by two conjugates of $x$ and we set $K=\langle x, x^g\rangle$ for $g\in X$ with $\wt K\cong \wt X$. Then Lemma \ref{dimbound} implies that $\dim_{\FF_p} V/C_V(K) \leq 2m(p-1)$ and Theorem \ref{lem:with one boundintro} implies that $O^{p'}(O^p(K))$ is quasisimple. Moreover, for any $h\in X$ we have that $\dim_{\FF_p} V/C_V(\langle K, x^h\rangle)\leq 3m(p-1)$ so that $O^{p'}(O^p(\langle K, x^h\rangle))$ is quasisimple and we conclude that $O^{p'}(O^p(K))=O^{p'}(O^p(X))$ is quasisimple, the desired contradiction. This completes the proof.
\end{proof}

\begin{proposition}\label[proposition]{blocks lemma2}
Suppose that $X\in\mathcal{S}$ and let $x\in X$ have order $p$. Let $V$ be a faithful $\FF_pX$-module and assume that $x$ is $k$-active on $V$ for some $k\leq m$. Then either
\begin{enumerate}
\item $p$ is arbitrary, $m=n>1$ and $X/Z(X)\cong\PSL_2(p^n)$;
\item $p$ is odd, $m=2n$ and $X/Z(X)\cong \PSU_3(p^n)$;
\item $p=3$, $m=2$, $O^{3'}(O^3(X))\cong \PSL_2(8)$ and $\wt X\cong \Ree(3)$;
\item $p=3$, $m=2n$, $n>1$ and $X\cong \Ree(3^n)$;
\item\label{M11} $p=3$, $n=m=2$ and $X/Z(X)\cong \mathrm{M}_{11}$;
\item\label{L34} $p=3$, $n=m=2$ and $X/Z(X)\cong \PSL_3(4)$; or
\item\label{A2pcase} $p$ is odd, $n=m=2$ and $X/O_{p'}(X)\cong \Alt(2p)$.
\end{enumerate}
\end{proposition}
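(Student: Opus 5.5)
Apply Lemma \ref{blocks lemma} first. If $\wt X\cong\Alt(2p)$ then $m=2$, which is outcome \ref{A2pcase}; we need $p$ odd, and for $p=3$ we have $\Alt(6)\cong\PSL_2(9)$, which also lies in this case. So assume $L:=O^{p'}(O^p(X))$ is quasisimple.

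\textbf{Step 1: reduce to $X=L$.} Since $O^p(X)R/R$ is simple and $L\not\le R$, we get $X=LR$. The group $L$ is normal in $X$, and every $p$-element of $X$ lies in $L$ except possibly when $\wt X\cong\Ree(3)$ or $\Sz(32):5$. Outside these two cases $T\le L$, so $X=O^{p'}(X)=\langle T^X\rangle\le L$ and hence $X=L$ is quasisimple. Then $Z(X)=O_{p'}(X)$, and $X/Z(X)$ is one of the simple groups in Proposition \ref{SE2}.
\begin{itemize}
\item If $\wt X\cong\Ree(3)$, then $L\cong\PSL_2(8)$ or a perfect central $3'$-extension of it. The multiplier of $\PSL_2(8)$ is trivial, so $L\cong\PSL_2(8)$, which is outcome (iii).
\item If $\wt X\cong\Sz(32):5$ with $p=5$, it is handled in Step 2.
\end{itemize}

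\textbf{Step 2: eliminate the groups that are not listed.} Take $K\le X$ generated by at most three conjugates of $x$ with $\wt K\ge F^*(\wt X)$ (Proposition \ref{generation}). By Lemma \ref{dimbound}, $\dim V/C_V(K)\le 3m(p-1)$, so $K$ (essentially $F^*(X)$) has a non-trivial irreducible composition factor $W$ with $\dim_{\FF_p}W\le 3m(p-1)$. Since $Z(X)$ is a $p'$-group acting by scalars on irreducibles, and faithfulness forces some factor to be faithful for $X$, we should argue that there is a faithful irreducible $\FF_pX$-module of dimension at most $3m(p-1)$. (A composition factor need not be faithful on $Z(X)$, so where necessary we compare with $\rdim_p$ of the relevant cover instead.) Then compare $3m(p-1)$, or $2m(p-1)$ when two conjugates suffice, with the $\rdim_p$ column of Table \ref{tab:data}. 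The remaining column entries are also needed.
\begin{itemize}
\item For $p=2$: $\Sz(2^n)$ has $m=n$ and $\rdim_2=4n>3n$, which is excluded. $\PSU_3(2^n)$ has $m=n$ and $\rdim_2=6n>3n$, also excluded. Here we use that for $p=2$ the rank of $\PSU_3$ is $n$, not $2n$. So only $\PSL_2(2^n)$ survives.
\item For $p=5$: ${}^2\mathrm F_4(2)'$ has $\rdim_5=26>24$, and $\mathrm{Fi}_{22}$ (78), $\Sz(32):5$ (124) and $\mathrm{J}_4$ (1333, with $p=11$, $m=2$) are all excluded. $\mathrm{McL}$ has $\rdim_5=21\le 24$, so it needs a separate argument. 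I would handle it by checking, via the modular atlas, that on the 21-dimensional module every element of order $5$ has more than two non-trivial blocks. Alternatively, when $\mathrm{McL}$ is generated by two conjugates of $x$, the bound $\dim\le 2\cdot2\cdot4=16<21$ settles it.
\item For $p$ odd and $\PSU_3(p^n)$ we have $m=2n$. For $\Ree(3^n)$ with $n>1$ we have $m=2n$, and $X$ is simple because the multiplier is trivial. For $\mathrm{M}_{11}$ and $\PSL_3(4)$ with $p=3$ we have $m=2$. These are all listed outcomes.
\item For $\PSL_2(p^n)$ we have $m=n$, which requires $n\ge2$. The case $\PSL_2(9)$ may appear as outcome (i).
\end{itemize}

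\textbf{Main obstacle.} I expect the sporadic-type exclusions, notably $\mathrm{McL}$ and possibly ${}^2\mathrm F_4(2)'$ via covering groups, to be the hardest step. Ruling them out needs explicit Jordan block data, which would be gathered by {\sc Magma} computations, rather than dimension bounds alone.
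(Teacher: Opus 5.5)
Your proposal is correct and follows essentially the paper's route: Lemma \ref{blocks lemma}, then the bound $\dim_{\FF_p}V/C_V(O^{p'}(O^p(X)))\le 3m(p-1)$ from Proposition \ref{generation} and Lemma \ref{dimbound} compared against the $\rdim_p$ column of Table \ref{tab:data}, with $\mathrm{McL}$ disposed of by two-generation ($16<21$) exactly as in the paper, so the Jordan-block computations you anticipate are unnecessary, and ${}^2\mathrm{F}_4(2)'$ already falls to $26>24$. One small slip: in Step 1, ``$X=LR$'' should read $O^p(X)R=LR$, because $X\ne LR$ when $\wt X\cong\Ree(3)$ or $\Sz(32):5$; you treat those two cases separately anyway, so nothing breaks.
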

\begin{proof}
By Lemma \ref{blocks lemma}, we may assume that $O^{p'}(O^p(X))$ is quasisimple else \ref{A2pcase} is satisfied.

Suppose that $X/Z(X) \cong \PSU_3(2^n)$. Then $m=n$ and $x$ has at most $n$ non-trivial Jordan blocks on $V$ over $\FF_2$. By Proposition \ref{generation} and Lemma \ref{dimbound}, we have that $\dim_{\FF_2} V/C_V(X)\leq 3n$, against Table \ref{tab:data}. Suppose that $X=O^{2'}(O^2(X))\cong \Sz(2^n)$. Then $m=n$ and $x$ has at most $n$ non-trivial Jordan blocks on $V$ over $\FF_2$. By Proposition \ref{generation} and Lemma \ref{dimbound}, we have that $\dim_{\FF_2} V/C_V(X)\leq 3n$, against Table \ref{tab:data}.

Hence, to complete the proof, we may assume that $m=2$. Applying Proposition \ref{generation} and Lemma \ref{dimbound}, we deduce that $\dim_{\FF_p} V/C_V(O^{p'}(O^p(X)))\leq 6(p-1)$. Appealing to Table \ref{tab:data}, it remains to examine the case $X/Z(X)\cong \mathrm{McL}$ when $p=5$. But in this case, by Proposition \ref{generation} we have that $X=\langle x, x^g\rangle$ for some $g\in X$ so that Lemma \ref{dimbound} gives $\dim_{\FF_p} V/C_V(X)\leq 4(p-1)=16<21$, against Table \ref{tab:data}.
\end{proof}

We quickly handle cases \ref{L34} and \ref{M11}, which give \ref{M11Main}, \ref{L34Main1} and \ref{L34Main2} of Theorem \ref{ModResult}.

\begin{proposition}\label[proposition]{M11Prop}
Suppose that $X/Z(X)\cong \mathrm{M}_{11}$ and $V$ is a faithful $\FF_3X$-module. Assume that $x\in X$ has order $3$ and assume that $x$ is $k$-active on $V$ for some $k\leq 2$. Then $X\cong \mathrm{M}_{11}$ and $W:=[V, X]/C_{[V, X]}(X)$ is either the code or cocode module of dimension $5$.
\end{proposition}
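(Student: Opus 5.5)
The plan is to bound $\dim V/C_V(X)$ using generation by conjugates of $x$, and then pin down the composition factors with the known $3$-modular representation theory of $\mathrm{M}_{11}$ and of its covers. By Proposition \ref{generation} (checked by {\sc Magma} for the exceptional groups of Proposition \ref{SE2}), $X/Z(X)\cong\mathrm{M}_{11}$ is generated by the images of $x$ and one conjugate $x^g$. Set $K:=\langle x,x^g\rangle$. Since $Z(X)$ is a $3'$-group and $X=O^{3'}(X)$, we get $X=K$. Lemma \ref{dimbound} with $s=2$, $k\le 2$ and $p=3$ then gives
\[\dim_{\FF_3}V/C_V(X)\le 8 .\]
In particular every non-trivial composition factor of $V$ has dimension at most $8$, and there is at most one such factor of dimension greater than $4$.

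Next we dispose of the centre. The Schur multiplier of $\mathrm{M}_{11}$ is trivial, so $Z(X)=1$ and $X\cong\mathrm{M}_{11}$. (Even without this, a $3'$-central element would act by scalars on each homogeneous component.) By \cite{ModAt}, the non-trivial irreducible $\FF_3\mathrm{M}_{11}$-modules of dimension at most $8$ are exactly the two $5$-dimensional modules: the code module and the cocode module, which are dual to one another. The next one has dimension $10$. So each non-trivial composition factor of $V$ is one of these two $5$-dimensional modules. Since two such factors would already give dimension $10>8$, $V/C_V(X)$ has exactly one non-trivial composition factor.

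It remains to identify $W=[V,X]/C_{[V,X]}(X)$ with that factor. We have $[V,X]\le [V,X]$ and $V/C_V(X)$ has a single non-trivial factor, so $W$ has exactly one non-trivial composition factor $U$ of dimension $5$; all its other factors are trivial. Moreover $W=[W,X]$ and $C_W(X)=0$. If $W\ne U$, there would be a non-split extension of $U$ by a trivial module on one side or the other. I would rule this out in one of two ways.

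The first is cohomological: compute in {\sc Magma} that $H^1(\mathrm{M}_{11},U)=0$ for both $5$-dimensional modules. By duality this kills both kinds of extension, so $W=U$.

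The second, if the $1$-cohomology turns out to be non-zero, uses Jordan blocks. Compute the Jordan form of $x$ on $U$ for both classes of elements of order $3$; I expect two blocks such as $J_3\oplus J_2$ or $J_2\oplus J_2\oplus J_1$. Then compute it on the indecomposable extensions, where an extra trivial layer should raise the number of non-trivial blocks or the dimension of $V/C_V(x)$ beyond what $k\le2$ allows. This is similar to the $\PSU_3(3)$ argument in Lemma \ref{blocks lemma}.

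The main obstacle is this last step: controlling the extensions by trivial modules. It rests on a machine computation, either of $H^1$ or of the Jordan forms on the uniserial extensions. Everything else follows directly from Lemma \ref{dimbound}, Proposition \ref{generation} and the $3$-modular Atlas.
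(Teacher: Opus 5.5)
Up to the final step your argument is the paper's. The trivial Schur multiplier gives $X\cong\mathrm{M}_{11}$. Generation by two conjugates of $x$, together with Lemma \ref{dimbound}, gives $\dim_{\FF_3}V/C_V(X)\le 8$. The modular Atlas then leaves exactly one non-trivial composition factor $U$, of dimension $5$.

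The gap is in the step you single out as the main obstacle. Neither of your two ways of excluding extensions by trivial modules can work, because such extensions exist and are compatible with $k\le 2$.

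\emph{The $H^1$ check fails.} Take the permutation module $P=\FF_3^{12}$ on the cosets of $\PSL_2(11)$. Its endomorphism ring is spanned by $1$ and the all-ones matrix $J$, with $J^2=12J=0$, so $P$ is indecomposable. Its composition factors are $1,1$ and the two $5$-dimensional modules. If every $\mathrm{Ext}^1$ between the trivial module and a $5$-dimensional module vanished, $P$ would decompose. Hence $H^1(\mathrm{M}_{11},U)\neq 0$ for one of the two choices of $U$, and your {\sc Magma} computation would not return zero.

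\emph{The Jordan-block fallback also fails.} Since $\mathrm{M}_{11}=\langle x,x^g\rangle$, we get $\dim_{\FF_3}C_U(x)\le 2$, which forces $U|_{\langle x\rangle}=J_2\oplus J_3$. Quotienting by an $x$-fixed line removes one box from the Jordan type. So any extension $E$ of $U$ by a single trivial module, on either side, has $E|_{\langle x\rangle}=J_3\oplus J_3$ or $J_1\oplus J_2\oplus J_3$. That is still only two non-trivial blocks. Non-split such $E$ therefore really occur as faithful $2$-active modules $V$; they simply do not survive into $W$.

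What you are missing is that the two facts you already recorded, $W=[W,X]$ and $C_W(X)=0$, close the argument with no computation.
\begin{enumerate}
\item Because $C_W(X)=0$, the socle of $W$ has no trivial summand, so it equals the unique non-trivial factor $U$.
\item Every composition factor of $W/U$ is then trivial, so $X$ acts on $W/U$ as a $3$-group. As $X=O^3(X)$, it centralizes $W/U$.
\item Hence $W=[W,X]\le U$, and $W=U$.
\end{enumerate}
This is what lies behind the paper's one-line conclusion that a single non-trivial composition factor forces $W$ to be irreducible.
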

\begin{proof}
Since the Schur multiplier of $\mathrm{M}_{11}$ is trivial, we have that $X\cong \mathrm{M}_{11}$. Since $X$ is generated by two conjugate $3$-elements by Proposition \ref{generation}, each of which has at most two non-trivial Jordan blocks, and as $3$-elements act cubically on $V$, we have that $|V/C_V(X)|\leq 3^8$. Then \cite{ModAt} implies that $V$ contains exactly one non-trivial composition factor, so that $W:=[V, X]/C_{[V, X]}(X)$ is irreducible, and $W$ is either the code module or the cocode module.
\end{proof}

\begin{proposition}\label[proposition]{PSL34Prop}
Suppose that $X/Z(X)\cong \PSL_3(4)$ and $V$ is a faithful $\FF_3X$-module. Assume that $x\in X$ has order $3$ and assume that $x$ is $k$-active on $V$ for some $k\leq 2$. Then either
\begin{enumerate}
    \item $X\cong 2\cdot\PSL_3(4)$, $V=[V, X]\oplus C_V(X)$ and $\dim_{\FF_3} [V, X]=6$; or
    \item $X\cong 4\cdot\PSL_3(4)$, $V=[V, X]\oplus C_V(X)$ and $\dim_{\FF_3} [V, X]=8$.
\end{enumerate}
\end{proposition}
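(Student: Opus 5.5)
The strategy mirrors Proposition \ref{M11Prop}: bound $\dim V/C_V(X)$ using generation by few conjugates of $x$, then invoke the known list of small $3$-modular representations of the covers of $\PSL_3(4)$. Since $X=O^{3'}(X)$ is a perfect central $3'$-extension and the $3'$-part of the Schur multiplier of $\PSL_3(4)$ is $4^2$, $X$ is a quotient of $4^2\cdot\PSL_3(4)$. By Proposition \ref{generation}, $X$ is generated by two conjugates of $x$. Each conjugate has at most two non-trivial Jordan blocks, each of size at most $3$. Lemma \ref{dimbound} then gives $\dim_{\FF_3} V/C_V(X)\le 8$, and dually $\dim_{\FF_3}[V,X]\le 8$.

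Next I would use \cite{ModAt} (Table \ref{tab:data} gives $\rdim_3=6$). The only non-trivial irreducible $\FF_3$-modules of dimension at most $8$ for covers of $\PSL_3(4)$ are:
\begin{itemize}
\item the $6$-dimensional faithful module for $2\cdot\PSL_3(4)$;
\item the $8$-dimensional faithful modules for $4\cdot\PSL_3(4)$, which are $4$-dimensional over $\FF_9$.
\end{itemize}
I would check that $\PSL_3(4)$ itself and $3'$-covers with centre of exponent $4$ in $4^2$ other than the cyclic ones have no faithful $\FF_3$-module of dimension at most $8$. Because $Z(X)$ acts by scalars on a faithful irreducible module, $Z(X)$ must be cyclic. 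Faithfulness of $V$ and the existence of a non-trivial factor then force $X\cong 2\cdot\PSL_3(4)$ or $4\cdot\PSL_3(4)$. Since $\dim[V,X]\le 8$, exactly one non-trivial composition factor occurs, of dimension $6$ or $8$ respectively.

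For the splitting $V=[V,X]\oplus C_V(X)$, I would use the central $2$-element $z$ generating $O_2(Z(X))$ in the faithful case. By coprime action, $V=[V,z]\oplus C_V(z)$, and both summands are $X$-invariant. On $[V,z]$ the element $z$ acts fixed-point-freely, so every composition factor there is faithful for $\langle z\rangle$ and hence non-trivial. Therefore $[V,z]$ is the unique non-trivial factor and is irreducible. $X/\langle z\rangle$-factors on $C_V(z)$ are trivial, because the non-trivial factor is faithful on $z$ (this holds for both covers, since the involution in $Z(X)$ acts as $-1$). Thus $X$ acts on $C_V(z)$ unipotently with trivial composition factors. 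As $X=O^{3'}(X)$ is perfect, it acts trivially on $C_V(z)$, so $C_V(z)=C_V(X)$. Hence $[V,X]=[V,z]$ has dimension $6$ or $8$.

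The main obstacle is the representation-theoretic lookup: confirming from \cite{ModAt}, or via {\sc Magma}, the list of irreducible $\FF_3$-modules of dimension at most $8$ across all $3'$-covers, including the $4_1,4_2$ exceptional parts. I would also verify that in each case $x$ is indeed $k$-active with $k\le 2$, with Jordan forms $2J_3$ on the $6$-dimensional module and $2J_3\oplus 2J_1$ on the $8$-dimensional one, so that both cases genuinely occur.
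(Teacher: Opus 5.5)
Your proposal is correct and follows essentially the same route as the paper. Both arguments run as follows: generation of $X$ by two conjugates of $x$, together with the cubic action of $x$, bounds $\dim_{\FF_3} V/C_V(X)\le 8$; the Brauer character tables then leave a unique non-trivial composition factor, of dimension $6$ for $2\cdot\PSL_3(4)$ or $8$ for $4\cdot\PSL_3(4)$; and coprime action with the central $2$-element gives $V=[V,X]\oplus C_V(X)$. Your observation that a perfect group acting with only trivial composition factors on $C_V(Z(X))$ must act trivially there simply makes explicit the equality $C_V(X)=C_V(Z(X))$, which the paper asserts without comment.
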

\begin{proof}
Since $X$ is generated by two conjugate $3$-elements by Proposition \ref{generation}, each of which has at most two non-trivial Jordan blocks, and as elements of order $3$ act cubically on $V$, we have that $|V/C_V(X)|\leq 3^8$. Then \cite{ModAt} implies that $V$ contains exactly one non-trivial composition factor and either $X\cong 2\cdot \PSL_3(4)$ and $[V, X]/C_{[V, X]}(X)$ is $6$-dimensional; or $X\cong 4b.\PSL_3(4)$ and $[V, X]/C_{[V, X]}(X)$ is $8$-dimensional (where $4b\cdot \PSL_3(4)$ is a notation following the ATLAS convention). In either case, $Z(X)$ is non-trivial and $C_V(X)=C_V(Z(X))$. Thus, by coprime action, we have $V=[V, X]\oplus C_V(X)$ and the result holds.
\end{proof}

We now show that if case \ref{A2pcase} of Proposition \ref{blocks lemma2} holds, then $X$ satisfies \ref{Alt2pCaseintronew} of Theorem \ref{ModResult}.

\begin{proposition}\label[proposition]{Alt2pp>5New}
Suppose that $X/O_{p'}(X)\cong \Alt(2p)$ where $p\geq 3$ is a prime. Let $V$ be a faithful $\FF_pX$-module and $x\in X$ have order $p$. Assume $x$ is $k$-active on $V$ for some $k\leq m$, and $x \in S\in \Syl_p(X)$. Then there is $S\le L\le X$ such that either $L\cong \Alt(2p)$; $p=3$ and $L\cong \SL_2(9)$; or $p=5$ and $L\cong 2\cdot\Alt(10)$.

If $p\geq 5$ then every non-trivial composition factor of $V|_L$ is described by Lemma \ref{Alt2pModBound}. Furthermore, unless $L\cong \Alt(2p)$ and $x$ is a $p$-cycle, we have that $[V,L]/C_{[V,L]}(L)$ is irreducible.
\end{proposition}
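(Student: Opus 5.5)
We reduce to a module of small dimension so that Theorem \ref{lem:with one boundintro} applies, extract a quasisimple subgroup $L$ covering $X/O_{p'}(X)$, and then invoke Proposition \ref{CompFactorsAlt2p}.

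\textbf{Step 1: a small quotient of $V$.} Here $m=2$. By Proposition \ref{generation}, there are $g,h\in X$ such that $K:=\langle x,x^g,x^h\rangle$ satisfies $\wt K=\wt X\cong \Alt(2p)$. Lemma \ref{dimbound} then gives
\[\dim_{\FF_p} V/C_V(K)\leq 3\cdot 2(p-1)=6(p-1)<8(p-1)=4m(p-1).\]
Since $X=KR$ with $R=O_{p'}(X)$, and $x\in K$, we may replace $X$ by $O^{p'}(K)$ and keep a Sylow $p$-subgroup containing $x$. By coprime action, $V/C_V(O^{p'}(K))$ carries the whole non-trivial action. Hence each non-trivial $O^{p'}(K)$-composition factor $W$ of $V$ has dimension at most $6(p-1)$.

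\textbf{Step 2: applying Theorem \ref{lem:with one boundintro}.} We apply Theorem \ref{lem:with one boundintro} to each pair $(K/C_K(W),W)$. The dimension bound excludes cases \ref{5Caseintro} and \ref{permcase1intro}, since $16$ and $26$ both exceed $6\cdot 4=24$ when $p=5$, and it limits the $p=3$ cases to $\dim W\leq 12$.

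\begin{enumerate}
\item In the quasisimple case \ref{quasisimple}, the image of $K$ is a perfect central extension of $\Alt(2p)$.
\item In case \ref{Alt2pCaseintronew}, the theorem itself supplies a quasisimple complement to $O_{p'}$.
\item When $p=3$, we must handle cases \ref{3Case2intro} ($\dim W=8$) and \ref{permcase2intro}, which does not arise for $\Alt(6)$. For case \ref{3Case2intro} we use the $H^2$ computation of Lemma \ref{alt6case} to find a complement $\SL_2(9)$ or $\Alt(6)$ covering $\Alt(6)$.
\end{enumerate}

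Choose $L\leq O^{p'}(K)$ minimal subject to $L\,O_{p'}(X)=X$ and $L=O^{p'}(L)$. As in the final paragraph of the proof of Lemma \ref{lem:Rinertial}, minimality forces $L/C_L(W)$ to be quasisimple for every non-trivial composition factor $W$. Faithfulness of $V$ then forces $L$ itself to be a perfect central extension of $\Alt(2p)$. By Sylow's theorem we may conjugate so that $x\in S\leq L$.

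\textbf{Step 3: pinning down $L$.} It remains to determine which central extensions occur: $Z(L)$ is a $p'$-group, acting faithfully on $V$. For $p\geq 5$, Lemma \ref{Alt2pModBound} gives the possible composition factors of dimension at most $6(p-1)$, and only $\Alt(2p)$ or, for $p=5$, $2\cdot\Alt(10)$ survive. For $p=3$ the Schur multiplier of $\Alt(6)$ is $6$ and its $3'$-part is $2$, so $L\cong\Alt(6)$ or $\SL_2(9)$; since $\Alt(6)\cong\PSL_2(9)$, both are allowed in the statement. The final assertions for $p\geq 5$ then follow directly from Proposition \ref{CompFactorsAlt2p} applied to $V|_L$, using that $x\in L$ is still $k$-active with $k\leq 2$.

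\textbf{Main obstacle.} The hard part is Step 2, producing a \emph{quasisimple} $L$ (rather than merely $L$ with $L/C_L(W)$ quasisimple for each $W$) when $O_{p'}(X)$ is non-trivial, and especially handling the $p=3$ exceptions. We would first try to mirror the complement arguments already used in the proof of Theorem \ref{lem:with one boundintro}: Lemma \ref{AltCohomology} for $p\geq 5$, and Lemma \ref{alt6case} with {\sc Magma} for $p=3$.
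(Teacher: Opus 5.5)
Your strategy matches the paper's in substance. You bound $\dim_{\FF_p} V/C_V(K)\le 6(p-1)$ via Proposition \ref{generation} and Lemma \ref{dimbound}, and feed the non-trivial composition factors into Theorem \ref{lem:with one boundintro}. You then remove the extraspecial-type outcomes by $H^2$-vanishing, use faithfulness to get a quasisimple $L$, and finish with Proposition \ref{CompFactorsAlt2p}. The paper phrases this as a minimal counterexample in $|X|+|V|$, which lets it assume $V$ irreducible; your minimal covering subgroup $L$ does the same job.

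However, Step 2 contains a concrete error. You discard case \ref{5Caseintro} because ``$16$ and $26$ both exceed $6\cdot 4=24$'', but $16\le 24$. So for $p=5$ the outcome $L/C_L(W)\cong (4\circ 2^{1+8}).\Alt(10)$ with $\dim_{\FF_5}W=16$ survives your dimension bound. Your claim that minimality of $L$ forces every $L/C_L(W)$ to be quasisimple is therefore unproved in exactly this case. Without it you cannot conclude that $L$ is quasisimple when $p=5$.

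This is precisely the case the paper handles explicitly. Write $\bar L:=L/C_L(W)$ and $Q:=O_2(\bar L)$. The quotient $Q/Z(Q)$ is the $8$-dimensional natural $\FF_2\Alt(10)$-module, so $H^2(\Alt(10),Q/Z(Q))=0$ by Lemma \ref{AltCohomology} with $r=2$ (equivalently \cite[Corollary 1]{premet}). Hence $\bar L/Z(Q)$ splits over $Q/Z(Q)$. If $Y$ is the preimage in $L$ of a complement, then $O^{p'}(Y)$ is a proper subgroup of $L$ which still covers $X/O_{p'}(X)$, contradicting the minimal choice of $L$.

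Once this case is added alongside your correct treatment of \ref{3Case2intro} for $p=3$, the rest of your argument goes through. That includes the identification of $L$ in Step 3 and the final appeal to Proposition \ref{CompFactorsAlt2p}.
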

\begin{proof}
Note that if the first part of the proposition is satisfied, then the second part of the proposition holds by Proposition \ref{CompFactorsAlt2p}. Let $(X, V)$ be a counterexample to the first part of the proposition with $|X|+|V|$ minimal. By minimality, we immediately see that $C_V(X)=0$ and $V=[V, X]$. By Proposition \ref{generation}, there is $g, h\in X$ such that $X=\langle x, x^g, x^h\rangle O_{p'}(X)$ and by minimality, we see that $X=\langle x, x^g, x^h\rangle$, and $X$ is not quasisimple. Since $x$ has at most two non-trivial Jordan blocks in its action on $V$, we have that $\dim_{\FF_p} V\leq 6(p-1)<6p-2$ by Lemma \ref{dimbound}.

If $V$ is reducible, then for $W$ a non-trivial $\FF_pX$-submodule of $V$, we see that $X/C_X(W)$ is quasisimple by minimality of $X$. Moreover, $X/C_X(V/W)$ is either trivial or also quasisimple by minimality and we conclude that $X$ is quasisimple, a contradiction. Hence, $V$ is irreducible. Applying Theorem \ref{lem:with one boundintro} and using that $X$ is a minimal counterexample, we either have that $X$ is isomorphic to $4\circ 2^{1+4}.\Alt(6)$ or $4\circ 2^{1+8}. \Alt(10)$. We calculate in {\sc Magma} \cite{Magma} when $p=3$, and appeal to \cite[Corollary 1]{premet} when $p=5$, to see that $O_2(X)/Z(O_2(X))$ has trivial $2$-cohomology as an $\FF_2\Alt(2p)$-module and so there is $L\le X$ with $LZ(O_2(X))/Z(O_2(X))\cong \Alt(2p)$, a contradiction as $X$ is a minimal counterexample.
\end{proof}

\begin{remark}
When $p=3$, the composition factors of $V|_L$ will be described in Proposition \ref{SL2qmods}.
\end{remark}

\begin{example}
Let $X\cong 4\circ 2^{1+4}.\Alt(6)$ and $V$ be a faithful $8$-dimensional $\FF_3X$-module. Taking $L\le X$ with $L\cong \SL_2(9)$ we see that $V|_L$ is indecomposable with two composition factors. Regarding each of these factors as $\FF_3\SL_2(9)$-modules, they are both isomorphic to natural modules for $\SL_2(9)$. One can see an action of $\SL_2(9)$ of this sort in a parabolic subgroup of $\mathrm{G}_2(9)$. In particular, there are two classes of $3$-elements in $L$: one class which acts on $V$ with four $\FF_3$ Jordan blocks of size two, and another class which acts on $V$  with two Jordan blocks of size three and two trivial Jordan blocks over $\FF_3$.
\end{example}

To complete the proof of Theorem \ref{ModResult}, it remains to describe the $k$-active faithful $\FF_pX$-modules, where $k\leq m_p(X)$ and $X/Z(X)$ is isomorphic to $\PSL_2(p^n)$, $\PSU_3(p^n)$ or $\Ree(3^n)$. We calculate these modules in the remaining sections of this work.

\section{The $\SL_2(p^n)$ case}\label[section]{subsection SL2modules}

In this section, we describe the faithful $\FF_p\SL_2(p^n)$-modules $V$ and the non-trivial $p$-elements $x\in \SL_2(p^n)$ for which $x$ is $k$-active on $V$ where $k\leq n$.

Letting $X=\SL_2(p^n)$, we have that $\FF_{p^n}$ is a splitting field for $X$ by \cite[Proposition 5.4.4]{KleidmanLiebeck}. Following \cite{poly}, we write $V_i(p^n)$ for the $\FF_{p^n}X$-modules of homogeneous polynomials in two commuting variables of degree $i$ with the \emph{natural} action of $\SL_2(p^n)$, so that $\dim_{\FF_{p^n}} V_i(p^n) = i+1$. These are the ($p$-restricted) \emph{basic modules} for $\FF_{p^n}X$. The Steinberg Tensor Product Theorem gives that each irreducible $\FF_{p^n}X$-module arises as a tensor product of Galois twists of these modules. In what follows (in this section and the following section) it is important to realize that for $V$ a $\FF_{p^n}X$-module and $\sigma$ an element of $\Aut(\FF_{p^n})$, we have that $V\cong V^\sigma$ as $\FF_{p}X$-modules.

\begin{lemma}\label[lemma]{basic blocks}
Suppose that $1 \leq i \leq p-1$, $X=\SL_2(p^n)$ and $H\le X$ is a cyclic group of order $p$. Then ${V_i(p^n)}|_H= nJ_{i+1}$ as an $\FF_pH$-module. In particular, $V_i(p^n)$ is $m_p(X)$-active when regarded as an $\FF_{p}X$-module.
\end{lemma}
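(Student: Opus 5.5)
Every element of order $p$ in $X=\SL_2(p^n)$ is conjugate to a lower unitriangular matrix, so I may take $H=\langle u\rangle$ with $u=\left(\begin{smallmatrix} 1 & 0 \\ \lambda & 1\end{smallmatrix}\right)$ for some $\lambda\in\FF_{p^n}^\times$. Conjugating by a diagonal element of $\GL_2(p^n)$ and twisting by the induced automorphism does not change the Jordan structure, so I may even take $\lambda=1$. The plan is first to compute the Jordan form of $u$ on $V_i(p^n)$ as an $\FF_{p^n}H$-module, and then to restrict scalars to $\FF_p$ using the final remark of the Notation.

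Using the defining action, $y\cdot u=y$ and $z\cdot u=\lambda y+z$. On the basis $y^{i-j}z^j$ ($0\le j\le i$) we therefore get
\[
y^{i-j}z^j\cdot u \;=\; y^{i-j}(\lambda y+z)^j \;=\; \sum_{k=0}^{j}\binom{j}{k}\lambda^{j-k}\,y^{i-k}z^k .
\]
Hence $u-1$ acts as a strictly triangular nilpotent operator $N$ with $N(y^{i-j}z^j)=j\lambda\,y^{i-j+1}z^{j-1}+(\text{lower terms})$. Because $j\le i\le p-1$, the coefficient $j\lambda$ is nonzero for $1\le j\le i$. So $N^i(z^i)$ is $i!\lambda^i y^i$ plus lower terms, and in fact the lower terms vanish, so $N^i(z^i)=i!\lambda^i y^i\neq 0$. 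Thus $N^i\ne 0$ on a space of dimension $i+1$, and $V_i(p^n)|_H=J_{i+1}(p^n)$ is a single Jordan block. Equivalently, the operator is triangular with nonzero superdiagonal entries, which forces a single block.

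By the Notation, $J_{i+1}(p^n)=J_{i+1}\otimes_{\FF_p}\FF_{p^n}$, and viewed as an $\FF_pH$-module this is $nJ_{i+1}$. Faithfulness of $V_i(p^n)$ over $\FF_p$ is not needed for the block count. Finally, $m_p(X)=n$ by Table \ref{tab:data}, and $i+1\ge 2$ means all $n$ blocks are non-trivial, so $u$ is $n$-active, that is, $m_p(X)$-active. For $i$ even the module is really a $\PSL_2$-module, but the statement only concerns the count.

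The only delicate point is making sure no binomial coefficient vanishes mod $p$, and the bound $i\le p-1$ guarantees this. Everything else is routine.
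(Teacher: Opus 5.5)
Your proof is correct and follows essentially the same route as the paper. Both put a generator of $H$ in lower unitriangular form, compute its action on the monomial basis $y^{i-j}z^j$, use $i\le p-1$ to see that the leading coefficients $j\lambda$ are nonzero so that $V_i(p^n)|_H=J_{i+1}(p^n)$, and then restrict scalars to $\FF_p$ to get $nJ_{i+1}$. The only cosmetic difference is that the paper detects the single block by showing $[V_i(p^n),h]$ has dimension $i$, whereas you show $(u-1)^i\neq 0$; this is the same triangular computation.
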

\begin{proof}
Write elements of $V_i(p^n)$ as homogeneous polynomials of degree $i$ in two commuting variables $y$ and $z$. Without loss of generality, we may assume that for $h$ a generator of $H$, we have $h= \left(\begin{smallmatrix} 1&0\\1&1\end{smallmatrix}\right)$. We may arrange that $z^i.h=(y+z)^i$ and $y^i.h = y^i$. We calculate, by computing the commutators of the basis vectors, that
\[[V_i(p^n),h]=\langle y^{i-j}z^j\mid 0\leq j \leq i-1\rangle_{\FF_{p^n}}\] which has dimension $i$. Thus $V_i(p^n)$ is indecomposable as an $\FF_{p^n}H$-module and restricting to $\FF_p$ proves the claim.
\end{proof}

\begin{remark}
Another proof of Lemma \ref{basic blocks} comes from recognizing that $V_i(p^n)=\Sym^i(V_1(p^n))$, $V_1(p^n)|_H=J_2(p^n)$ and applying the methods in Section \ref{JordanSec}.
\end{remark}

\begin{proposition}\label[proposition]{SL2qmods}
Suppose that $p$ is a prime, $X=\SL_2(p^n)$, $S\in\syl_p(X)$ and $W$ is a non-trivial irreducible $\FF_pX$-module. Let $\sigma$ be a generator of $\Aut(\FF_{p^n})$. Let $x \in S$ have order $p$, and assume that $x$ is $k$-active on $W$ for some $k\leq n$. Then, setting $H=\gen{x}$, one of the following holds:
\begin{enumerate}
\item\label{basic} $\End_{\FF_pX}(W) =\FF_{p^n}$, $W\cong V_i(p^n)|_{\FF_p}$ for some $1\leq i \leq p-1$ considered as an $(i+1)n$-dimensional $\FF_pX$-module and $W|_H= nJ_{i+1}$.
\item\label{1times1} $p$ is odd, $\End_{\FF_pX}(W) =\FF_{p^n}$, $W= (V_1(p^n)\otimes V_1(p^n)^{\sigma^j})|_{\FF_p}$ for some $j\in\{1,\dots, n-1\}$ with $j \ne \frac{n}{2}$, considered as a $4n$-dimensional $\FF_pX$-module and $W|_H= nJ_1\oplus nJ_3$.
\item $n$ is even, $\End_{\FF_pX}(W) =\FF_{p^{\frac{n}{2}}}$, and either
    \begin{enumerate}
    \item\label{orthogonal} $W$ is an irreducible summand, of dimension $2n$ over $\FF_p$, of $(V_1(p^n)\otimes V_1(p^n)^{\sigma^{\frac{n}{2}}})|_{\FF_p}$ and \[W|_H=
        \begin{cases}
            nJ_2 & p=2 \\
            \frac{n}{2}J_1\oplus \frac{n}{2}J_3 & p\geq 3
        \end{cases}; \,\,\text{or}\]
    \item\label{2times2}  $p \geq 5$, $W$ is an irreducible summand, of dimension $\frac{9n}{2}$ over $\FF_p$, of $(V_2(p^n)\otimes V_2(p^n)^{\sigma^{n/2}})|_{\FF_p}$ and
            \[W|_H= \frac{n}{2}J_1\oplus\frac{n}{2}J_3\oplus\frac{n}{2}J_5.\]
    \end{enumerate}
\item\label{triality} $p$ is odd, $n$ is divisible by $3$, $\End_{\FF_pX}(W) =\FF_{p^{n/3}}$, $W$ is any irreducible summand of $(V_1(p^n)\otimes V_1(p^n)^\tau\otimes V_1(p^n)^{\tau^2})|_{\FF_p}$, where $\tau = \sigma^{n/3}$, considered as an $\frac{8n}{3}$-dimensional $\FF_pX$-module and \[W|_H=
        \begin{cases}
            \frac{n}{3}J_2\oplus \frac{2n}{3}J_3 & p=3\\
            \frac{2n}{3}J_2\oplus \frac{n}{3}J_4 & p \geq 5
            \end{cases}.\]
\item \label{4times1} $p\geq 5$, $n$ is divisible by $4$, $\End_{\FF_pX}(W) =\FF_{p^{n/4}}$, $W$ is any irreducible summand of $(V_1(p^n)\otimes V_1(p^n)^\tau\otimes V_1(p^n)^{\tau^2}\otimes V_1(p^n)^{\tau^3})|_{\FF_p}$, where $\tau = \sigma^{n/4}$, considered as a $4n$-dimensional $\FF_pX$-module and \[W|_H= \frac{n}{2}J_1\oplus \frac{3n}{4}J_3\oplus \frac{n}{4}J_5.\]
\end{enumerate}
\end{proposition}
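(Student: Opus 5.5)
The plan is to pass to the splitting field and use Steinberg's Tensor Product Theorem. Set $\mathbb{L}:=\End_{\FF_pX}(W)\cong\FF_{p^a}$. By standard theory (e.g. \cite[Proposition 5.4.4]{KleidmanLiebeck} and its Galois descent) $\bar W:=W\otimes_{\mathbb L}\FF_{p^n}$ is an absolutely irreducible $\FF_{p^n}X$-module, $a$ divides $n$, and $\bar W$ is not isomorphic to $\bar W^{\sigma^j}$ for $0<j<a$ while $\bar W\cong\bar W^{\sigma^a}$. As $\FF_pH$-modules, $W\otimes_{\FF_p}\FF_{p^n}$ is the sum of the $a$ Galois conjugates of $\bar W$, and each restricts to $H$ with the same Jordan form. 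Hence if $\bar W|_H=\bigoplus_i c_iJ_i(p^n)$ over $\FF_{p^n}$, then $W|_H=\bigoplus_i a\,c_iJ_i$ over $\FF_p$. Since $\bar W$ is fixed by $\sigma^a$, Steinberg gives
\[\bar W=\bigotimes_{j=0}^{a-1}\Bigl(\bigotimes_{t=0}^{n/a-1}V_{i_j}(p^n)^{\sigma^{j+ta}}\Bigr),\]
that is, a pattern of $r$ nonzero restricted weights repeated $n/a$ times. The number of nontrivial blocks of $x$ on $W$ is then $a$ times the number of nontrivial blocks of $J_{i_1+1}(p^n)\otimes\cdots\otimes J_{i_s+1}(p^n)$, where $s=r\cdot n/a$ is the number of nontrivial tensor factors. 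This uses the fact that $x$ restricted to each twist $V_i(p^n)^{\tau}$ is again $J_{i+1}$, by Lemma \ref{basic blocks}.

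The core step is a counting inequality. Using the Jordan-block facts for tensor products from Appendix \ref{JordanSec}, $J_{i+1}\otimes J_{j+1}$ with $i+j\le p-1$ has $\min(i,j)+1$ blocks, and the total number of blocks in a $p$-element tensor product grows roughly multiplicatively. The number of nontrivial blocks of $x$ on a tensor product of $s$ nontrivial factors is at least about $2^{s-1}$, or more precisely $\prod(i_j+1)/p$ up to trivial blocks. So the requirement $a\cdot(\#\text{nontrivial blocks})\le n$, together with $s\ge n/a$, forces $n/a\le 4$ and bounds the weights. I will split into cases on $n/a\in\{1,2,3,4\}$.

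If $n/a=1$ and $r=1$, this is case \ref{basic}. If $r=2$ with both weights equal to $1$, then $J_2\otimes J_2=J_1\oplus J_3$ for $p$ odd, which gives case \ref{1times1}; the condition $j\ne n/2$ ensures $a=n$. For $p=2$ we get $2J_2$, so $2n$ blocks, which fails. For $r\ge2$ with larger weights, or $r\ge3$, the number of nontrivial blocks exceeds $1$ per unit of $a$ and the case is excluded.

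If $n/a=2$, each block pattern is doubled, and we need the total to be at most $2$ nontrivial blocks over $\FF_{p^n}$. This leaves $V_i\otimes V_i^{\sigma^{n/2}}$ with $i=1$ (case \ref{orthogonal}, using $J_2\otimes J_2=2J_2$ when $p=2$) or $i=2$ with $p\ge5$ ($J_3\otimes J_3=J_1\oplus J_3\oplus J_5$, case \ref{2times2}). The weight $i\ge3$ gives too many blocks, since $J_4\otimes J_4$ has three nontrivial blocks.

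If $n/a=3$, only $i=1$ survives. Here $J_2^{\otimes3}$ equals $J_2\oplus2J_3$ for $p=3$ and $2J_2\oplus J_4$ for $p\ge5$, giving $3$ blocks over $\FF_{p^n}$, which equals $n/a$; this is case \ref{triality}. For $p=2$ there are too many blocks.

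If $n/a=4$, only $i=1$ with $p\ge5$ survives, since $J_2^{\otimes4}=2J_1\oplus3J_3\oplus J_5$ has $4$ nontrivial blocks (case \ref{4times1}). For $p=3$ one gets $J_2^{\otimes4}=J_1\oplus J_3\oplus 2J_3$, and the count must be checked; I expect it to be excluded by counting blocks of size $p$.

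Finally I multiply each $\FF_{p^n}$ block decomposition by $a$ to obtain the stated $W|_H$, for example $\frac n2 J_1\oplus\frac n2J_3$. The main obstacle is the uniform lower bound on the number of nontrivial blocks of a tensor product of $s$ Jordan blocks that rules out $n/a\ge5$ and mixed weights. I would prove it by induction on $s$, using that tensoring any non-projective module with $J_{i+1}$, $i\ge1$, at least doubles the number of non-projective nontrivial summands unless blocks of size $p$ appear. Those size-$p$ blocks are then handled by a dimension count: $\dim\bar W/p$ blocks of size $p$ already exceeds $n/a$.
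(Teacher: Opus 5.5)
Your framework matches the paper's proof:
\begin{itemize}
\item $W|_H$ is $a$ copies of $\bar W|_H$, so $\bar W$ has at most $n/a$ non-trivial blocks;
\item Steinberg together with $\sigma^a$-invariance gives $s=r\cdot n/a$ non-trivial tensor factors;
\item one then bounds $n/a\le 4$ and runs through the cases.
\end{itemize}

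The gap is the step you yourself call the core, which is asserted rather than proved. The proof you sketch for it does not work.
\begin{itemize}
\item The bound ``at least about $2^{s-1}$'' is false. For $p\ge5$, the modules $J_2^{\otimes 2}=J_1\oplus J_3$, $J_2^{\otimes 3}=2J_2\oplus J_4$ and $J_2^{\otimes 4}=2J_1\oplus 3J_3\oplus J_5$ have $1$, $3$ and $4$ non-trivial blocks.
\item The inductive claim that tensoring with $J_{i+1}$ at least doubles the number of non-projective non-trivial summands already fails for $J_2\otimes J_2=J_1\oplus J_3$ with $p\ge 5$: there is one such summand before and after.
\item Your fallback dimension count does not repair this, since the failure occurs before any blocks of size $p$ appear.
\item Your exclusions of mixed weights and of $r\ge 2$ lean on the same unproved bound.
\end{itemize}

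The missing idea is the paper's reduction. The number of non-trivial blocks of $x$ on a module $U$ equals $\dim([U,x]\cap C_U(x))$, so it cannot increase on passing to a submodule. Each $J_{i+1}$ with $i\ge 1$ contains a copy of $J_2$, so a tensor product of $s$ non-trivial factors contains $J_2^{\otimes s}$ and has at least as many non-trivial blocks. For $J_2^{\otimes s}$, compute the cases $s\le 4$ exactly. Then use that tensoring with $J_2$ sends each $J_1$ or $J_2$ to one non-trivial block and each $J_k$ with $k\ge 3$ to two. This shows the count is at least $s$ for $s\ge 3$ and exceeds $s$ for $s\ge 5$ (Lemma \ref{tensor powers}). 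Since the count must be at most $n/a=s/r$, this gives $n/a\le s\le 4$, and $r=1$ unless $s=2$. That is exactly the input your case split needs.

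Separately, your computation for $p=3$, $n/a=4$ is wrong. In fact $J_2^{\otimes 4}=J_1\oplus 5J_3$, of dimension $16$, not $J_1\oplus J_3\oplus 2J_3$, which has dimension $10$. With your decomposition there would be only $3\le 4$ non-trivial blocks, so the case would survive, contradicting the requirement $p\ge 5$ in (v); counting blocks of size $p$ would not rescue it. With the correct decomposition there are $5>4$ non-trivial blocks, and the case is excluded directly.
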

\begin{proof}
Let $\mathbb{K}=\FF_{p^n}$ and write $H=\langle x\rangle$ which is cyclic of order $p$. We have that $\mathbb{K}$ is a splitting field for $\SL_2(p^n)$. Hence every irreducible module can be written over this field. Let $W$ be an irreducible $\FF_pX$-module, set $\mathbb{L}= \End_{\FF_pX}(W)$ and assume that $\mathbb{L}=\FF_{p^\ell}$. Then we may regard $W$ as an $\mathbb{L}X$-module and it has $\mathbb{L}$-dimension $d:=\dim_{\FF_p} W/\ell$. Now $\bar{W}=W\otimes_{\mathbb L}\mathbb K$ is an irreducible $\mathbb{K}X$-module of $\mathbb{K}$-dimension $d$.

Write $\bar{W}= \bigoplus_{i=1}^k n_iJ_i(\mathbb{K})$ as a $\mathbb{K}H$-module. When $W$ is considered as an $\mathbb{L}H$-module it decomposes as $W|_H= \bigoplus_{i=1}^k n_iJ_i(\mathbb{L})$. Then as each $J_i(\mathbb L)$ when considered as a $\FF_pH$-module breaks as a direct sum of $\ell$ copies of $J_i$, if $W$ has at most $n$ non-trivial Jordan blocks as an $\FF_pH$-module, we conclude that $\bar{W}$ has at most $n/\ell $ non-trivial Jordan blocks over $\mathbb{K}$.

By the Steinberg Tensor Product Theorem \cite[Corollary 2.8.6]{GLS3} we may write
\begin{eqnarray}
\bar{W} & = & W_0\otimes W_1^\sigma \otimes \dots \otimes W_{n-1}^{\sigma^{n-1}}\label{STP1}
\end{eqnarray}
where $W_0, \dots, W_{n-1}$ are basic, not necessarily distinct, $\mathbb{K}X$-modules. By \cite[Remark 2.8.8]{GLS3}, $\bar{W}$ can be written over the subfield $\mathbb{L}$ if and only if for $\sigma^{\ell} \in \Gal(\mathbb{K}/\mathbb{L})\le \Gal(\mathbb{K}/\FF_p)=\Aut(\mathbb{K})$, we have that $\bar{W} = \bar{W}^{\sigma^{\ell}}$. Since the field of definition of the basic modules is $\mathbb{K}$, this means that there are at least $n/\ell$ non-trivial factors in the tensor decomposition of $\bar{W}$. Furthermore, each non-trivial factor in the tensor product expansion of $\bar{W}$ contains an $\mathbb{K}H$-submodule isomorphic to $J_2(\mathbb{K})$. Applying Lemma \ref{tensor powers}, since $\bar{W}$ has at most $n/\ell$ non-trivial Jordan blocks for $H$ over $\mathbb{K}$, we see that $n/\ell \leq 4$.

Suppose first that $\ell = n$. Then $\bar{W}$ has a unique non-trivial Jordan block. Comparing with Lemma \ref{tensor powers} we see that in the decomposition described in (\ref{STP1}), there are at most two non-trivial factors. If this decomposition has only one non-trivial factor, then $W$ is a basic module and \ref{basic} holds. So assume that $\bar{W}$ contains two non-trivial tensor factors, written $\bar{W}=U_1\otimes U_2$. As an $\mathbb{K}H$-module, upon considering $U_1\otimes D$ where $D$ is a $\mathbb{K}H$-submodule of $U_2$ isomorphic to $J_2(\mathbb{K})$, applying Lemma \ref{tensor powers} we see that $p$ is odd and $U_1|_H=J_2(\mathbb{K})$. By symmetry, we see that both $U_1$ and $U_2$ are $2$-dimensional and there are no other factors so that $\bar{W}=V_1(p^n)^{\sigma^j}\otimes V_1(p^n)^{\sigma^k}$ where ${j-k}\ne \frac{n}{2}$ modulo $n$, and \ref{1times1} holds.

Suppose that $n/\ell =2$. Hence, $\bar{W}$ has at most two non-trivial Jordan blocks. Considering the tensor product of $\mathbb{K}H$-modules each of which is isomorphic to $J_2(\mathbb{K})$ and applying Lemma \ref{tensor powers}, we see that in the decomposition of $\bar{W}$ given in (\ref{STP1}) there are at most two non-trivial factors. Since $n/\ell =2$ we must have that $\bar{W}=V_i(p^n)^{\sigma^j}\otimes V_i(p^n)^{\sigma^{j+n/2}}$ for some $j\in \{0,\dots, n-1\}$. Applying Lemma \ref{234 tensor}\ref{4tensor}, we deduce that $i\in \{1,2\}$. Moreover, by Lemma \ref{234 tensor}\ref{3tensor} we see that if $p=3$ then $i=1$. This gives (iii)(a) and (iii)(b).

Suppose that $n/\ell =3$. Hence, $\bar{W}$ has at most three non-trivial Jordan blocks. Considering the tensor product of $\mathbb{K}H$-modules each of which is isomorphic to $J_2(\mathbb{K})$ and applying Lemma \ref{tensor powers}, we see that in the decomposition of $\bar{W}$ given in (\ref{STP1}) there are at most three non-trivial factors. Since $n/\ell =3$ we must have that $\bar{W}=V_i(p^n)^{\sigma^j}\otimes V_i(p^n)^{\sigma^{j+n/3}}\otimes V_i(p^n)^{\sigma^{j+2n/3}}$ for some $j\in \{0,\dots, n-1\}$ and applying \ref{3tensor} and \ref{4tensor} of Lemma \ref{234 tensor}, we must have that $i=1$, $p$ is odd, and \ref{triality} holds.

Finally, suppose that $n/\ell=4$. Hence, $\bar{W}$ has at most four non-trivial Jordan blocks. Considering the tensor product of $\mathbb{K}H$-modules each of which is isomorphic to $J_2(\mathbb{K})$ and applying Lemma \ref{tensor powers}, we see that in the decomposition of $\bar{W}$ (\ref{STP1}) there are at most four non-trivial factors. Since $n/\ell =4$ we must have that $\bar{W}=V_i(p^n)^{\sigma^j}\otimes V_i(p^n)^{\sigma^{j+n/4}}\otimes V_i(p^n)^{\sigma^{j+n/2}}\otimes V_i(p^n)^{\sigma^{j+3n/4}}$ for some $j\in \{0,\dots, n-1\}$. Applying Lemma \ref{tensor powers} we see that $p\geq 5$. If $i\geq 2$, then $\bar{W}|_H$ has a submodule isomorphic to \[J_3(\mathbb{K})\otimes J_3(\mathbb{K})\otimes J_3(\mathbb{K})\otimes J_3(\mathbb{K})=(J_1(\mathbb{K})\oplus J_3(\mathbb{K})\oplus J_5(\mathbb{K}))\otimes (J_1(\mathbb{K})\oplus J_3(\mathbb{K})\oplus J_5(\mathbb{K}))\] by Lemma \ref{234 tensor}\ref{3tensor}, which clearly has more than four non-trivial Jordan blocks. Hence, $i=1$ and \ref{4times1} holds.
\end{proof}

Proposition \ref{SL2qmods} only really describes the structure of simple modules for $\SL_2(q)$ on which an element of order $p$ has few Jordan blocks. For applications in future work, we investigate the structure of certain indecomposable $\SL_2(q)$-modules with these restrictions. We set $\Lambda(q)$ to be the $\FF_q\SL_2(q)$-module which is dual to $V_p(q)$.

\begin{proposition}\label{extension}
Let $p$ be an odd prime, $q=p^n>p$, $X\cong \SL_2(q)$, $1\ne x\in T\in \syl_p(X)$, and let $Y$ be an indecomposable $\FF_pX$-module. Assume that
\begin{enumerate}
    \item [(a)] $x$ is $k$-active on $Y$ for some $k\leq n$;
    \item [(b)] $W:=\mathrm{Soc}(Y)\cong V_i(q)|_{\FF_p}$, where $1\leq i\leq p-1$;
    \item [(c)] $Y/W$ is a non-trivial irreducible $\FF_pX$-module; and
    \item [(d)] if $p=3$ then $|C_Y(T)|=q^2$ and $T$ does not act quadratically on $Y$.
\end{enumerate}
Then either
\begin{enumerate}
    \item $Y\cong V_p(q)|_{\FF_p}$;
    \item $Y\cong \Lambda(q)|_{\FF_p}$; or
    \item $p\geq 5$, $W\cong V_{p-3}(q)|_{\FF_p}$ and either
    \begin{enumerate}
        \item $n=2$ and $Y/W$ is isomorphic to a natural $\Omega_4^-(p)$-module; or
        \item $n>2$ and $Y/W\cong (V_1(q)^{\sigma^k}\otimes V_1(q)^{\sigma^{k+1}})|_{\FF_p}$ where $0\leq k\leq n-2$.
    \end{enumerate}
\end{enumerate}
\end{proposition}
\begin{proof}
First, we observe that as $x$ has at most $n$ non-trivial Jordan blocks in its action on $Y$, $x$ has at most $n$ non-trivial Jordan blocks in its action on $Y/W$. In particular, $Y/W$ is determined by Proposition \ref{SL2qmods}. Suppose that $Y/W\cong V_j(q)|_{\FF_p}$ for some $1\leq j\leq p-1$. Then $|C_{Y/W}(x)|=q$ and $|Y|=q^{i+j+2}$. Since $C_W(x)\le [W, x]$ we deduce that $Y$ has at most $n$ trivial Jordan blocks under the action of $x$. Since $Y$ has at most $n$ non-trivial Jordan blocks, it follows that $|Y|\leq q^{p+1}$. Applying \cite[Proposition 5.7]{poly}, we conclude that $Y\cong V_p(q)|_{\FF_p}$ or $Y\cong \Lambda(q)|_{\FF_p}$, as desired. Aiming for a contradiction, we suppose for the remainder of the proof that $Y/W\not\cong V_j(q)|_{\FF_p}$ for any $1\leq j\leq p-1$.

Suppose that $p=3$. If $W\cong V_2(q)|_{\FF_3}$ then as $x$ has at most $n$ non-trivial Jordan blocks in its action on $Y$, we deduce that $Y=WC_Y(x)$ and $X$ acts trivially on $Y/W$, a contradiction. Hence, $W\cong V_1(q)|_{\FF_3}$ and so $Z(X)$ acts non-trivially on $W$. By coprime action, using that $Y$ is indecomposable, we see that $Z(X)$ acts non-trivially on $Y/W$ and comparing with Proposition \ref{SL2qmods}, we conclude that $n$ is divisible by $3$ and $Y/W$ is a triality module. By Proposition \ref{SL2qmods}, we have that $|C_{Y}(x)/W|\leq |C_{Y/W}(x)|=q$. Since $C_W(x)\le [W, x]$ we conclude that $Y$ has at most $n$ trivial Jordan blocks under the action of $x$. But then $p^{4n}=q^4\geq |Y|=|Y/W||W|=p^{\frac{8n}{3}}p^{2n}>p^{4n}$, a contradiction. Hence, we may assume that $p\geq 5$.

Form $\mathcal{Y}:=Y\otimes_{\FF_p} \FF_q$ and $\mathcal{W}:=W\otimes_{\FF_p} \FF_q$. Note that $\FF_q$ is a splitting field for $X$ by \cite[Proposition 5.4.4]{KleidmanLiebeck}. If $\mathcal{Y}$ is completely reducible then $Y$ is isomorphic to an $\FF_p$-submodule of $\mathcal{Y}|_{\FF_p}$ and hence is also completely reducible, a contradiction. Hence, within the composition series of $\mathcal{Y}$ there is $\mathcal{Q}$ and $\mathcal{U}$ such that both $\mathcal{Q}/\mathcal{U}$ and $\mathcal{U}$ are irreducible, and $\mathcal{Q}$ is indecomposable. It follows that $\mathcal{U}|_{\FF_p}$ is a direct sum of modules isomorphic to $W$ and $(\mathcal{Q}/\mathcal{U})|_{\FF_p}$ is a direct sum of modules isomorphic to $Y/W$.

We adopt the conventions of \cite{andersenjorgsenlandrock}, noting that the existence of $\mathcal{Q}$ gives a non-trivial element of $\mathrm{Ext}_{\FF_qX}(\mathcal{Q}/\mathcal{U}, \mathcal{U})$. We set $\mathcal{Q}/\mathcal{U}=L(\lambda)$ and $\mathcal{U}=L(\mu)$. Then $\mu=ip^j$ for some $0\leq j\leq n-1$, and $\lambda=\sum\limits_{0\leq \ell\leq n-1} \lambda_\ell p^\ell$ where $\lambda_\ell\leq 1$ and between two and four $\lambda_\ell$s are non-zero. We apply \cite[Corollary 4.5]{andersenjorgsenlandrock}, and set $k$ as in their result.

If $j\ne k$ then $\mu_k=0$ and so $\lambda_k=p-2>1$, a contradiction. Hence, $j=k$ so that $\lambda_k=p-i-2$ and $\lambda_{k+1}=\pm 1$. Since $\lambda_k\leq 1$, the only possibility is that $i=p-3$ and $\mathcal{Q}/\mathcal{U}\cong V_1(q)^{\sigma^k}\otimes V_1(q)^{\sigma^{k+1}}$ for $\sigma$ a generator of $\Aut(\FF_q)$. Then either $q=p^2$ and $Y/W$ is isomorphic to a natural $\Omega_4^-(p)$-module; or $q>p^2$ and $Y/W\cong (V_1(q)^{\sigma^k}\otimes V_1(q)^{\sigma^{k+1}})|_{\FF_p}$, as desired.
\end{proof}

\section{The $\SU_3(p^n)$ case}\label[section]{SU3Section}

In this section, we describe the faithful $\FF_p\SU_3(p^n)$-modules $V$ and the non-trivial $p$-elements $x\in \SU_3(p^n)$ for which $x$ is $k$-active on $V$, where $k\leq 2n$. In addition, we also describe the faithful $\FF_{p^n}\SL_3(p^n)$-modules for which some non-trivial $p$-element of $\SL_3(p^n)$ has at most two non-trivial Jordan blocks.

Letting $X=\SU_3(p^n)$, we have that $\FF_{p^{2n}}$ is a splitting field for $X$ by \cite[Proposition 5.4.4]{KleidmanLiebeck}. The \emph{natural module} for $\SU_3(p^n)$ is the restriction to $\FF_p\SU_3(p^n)$ of any $3$-dimensional $\FF_{p^{2n}}\SL_3(p^{2n})$-module.

We refer to \cite{Zavarn} for a description of basic $p$-restricted $\FF_{p^n}\SL_3(p^n)$-modules. We adopt the notation that $W_{0,0}$ is the trivial module and whenever $\{a,b\}\ne \{0,0\}$ we have that $W_{a,b}$ is the kernel of the canonical map from $\Sym^a(M)\otimes \Sym^b(M^*)$ to $\Sym^{a-1}(M)\otimes \Sym^{b-1}(M^*)$, where $M$ is the natural $\FF_{p^n}\SL_3(p^n)$-module. Then we write $\bar{W}_{a,b}$ for the unique largest irreducible composition factor of $W_{a,b}$ and note by \cite[p. 265]{Zavarn} that $\{\bar{W}_{a,b}\mid 0 \le a,b \le p-1\}$ form the complete set of $p$-restricted basic modules for $\FF_{p^n}\SL_3(p^n)$. In particular, $W_{1,0}=\bar{W}_{1,0}$ is the natural $\FF_{p^n}\SL_3(p^n)$-module.

\begin{proposition}\label[proposition]{Wisirreducible}
Suppose that $\dim_{\FF_{p^n}} W_{a,b}\leq 6p-6$ for some $0\leq a,b\leq p-1$. Then either $W_{a,b}$ is irreducible; or
\begin{enumerate}
    \item $p=7$ and $\{a,b\}\in \{\{1,5\}, \{2,4\}\}$;
    \item $p=5$ and $\{a,b\}\in \{\{1,3\}, \{2,2\}\}$; or
    \item $p=3$ and $\{a,b\}=\{1,1\}$.
\end{enumerate}
Furthermore, if $\dim_{\FF_{p^n}} W_{a,b}\leq 4p-4$ and $W_{a,b}$ is reducible, then $p=3$ and $\{a,b\}=\{1,1\}$.
\end{proposition}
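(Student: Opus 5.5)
The plan is to compute $\dim W_{a,b}$ explicitly, use the bound to cut down to a finite list of $(a,b)$ for each $p$, and then decide irreducibility for that list.

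\textbf{Step 1: the dimension formula.} The map $\Sym^a(M)\otimes\Sym^b(M^*)\to\Sym^{a-1}(M)\otimes\Sym^{b-1}(M^*)$ is contraction, and it is surjective. Hence
\[\dim_{\FF_{p^n}} W_{a,b}=\binom{a+2}{2}\binom{b+2}{2}-\binom{a+1}{2}\binom{b+1}{2}=\tfrac12(a+1)(b+1)(a+b+2),\]
which is the Weyl dimension for the highest weight $a\omega_1+b\omega_2$. Indeed, $W_{a,b}$ is the Weyl module (or dual Weyl module) $V(\lambda)$ with $\lambda=a\omega_1+b\omega_2$, and $\bar W_{a,b}=L(\lambda)$.

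\textbf{Step 2: when $W_{a,b}$ is irreducible.} Since $a,b\le p-1$, the weight $\lambda$ is $p$-restricted. I use the known submodule structure of Weyl modules for $\SL_3$ with restricted weights (the linkage principle and Jantzen sum formula; see for example the description used in \cite{Zavarn}). The rule is:
\begin{enumerate}
\item $V(\lambda)$ is irreducible if $a+b\le p-2$ or $a=p-1$ or $b=p-1$, i.e. when $\lambda$ lies in the lowest alcove or on a wall.
\item Otherwise, when $p-1\le a+b\le 2p-3$ with $a,b\le p-2$, the module $V(\lambda)$ has exactly two composition factors. The second one is $L(\mu)$ with $\mu=(p-2-b)\omega_1+(p-2-a)\omega_2$, the reflection of $\lambda$ in the upper wall of the lower alcove.
\end{enumerate}
So reducibility happens exactly when $a,b\le p-2$ and $a+b\ge p-1$.

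\textbf{Step 3: apply the dimension bound.} It remains to find the pairs with $a,b\le p-2$, $a+b\ge p-1$, and $\tfrac12(a+1)(b+1)(a+b+2)\le 6p-6$. Fix $s=a+b\ge p-1$. Then $(a+1)(b+1)\ge s+1\ge p$ (taking one of $a,b$ small), so $\dim\ge \tfrac12 p(p+1)$.
\begin{enumerate}
\item For $p\ge 11$ this exceeds $6p-6$, so no reducible cases occur.
\item For $p=7$, with $a+b=6$ we get $(a+1)(b+1)\cdot 4\le 36$, i.e. $(a+1)(b+1)\le 9$. This leaves $\{1,5\}$ (dimension $12\cdot 4=48$? no: $\tfrac12\cdot 2\cdot 6\cdot 8=48>36$). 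So I must recheck: $6p-6=36$ and $\{1,5\}$ gives $48$.
\end{enumerate}
Because of this discrepancy, I expect the intended dimension in the statement to be the one that makes the listed cases $\{1,5\},\{2,4\}$ exactly the borderline ones. This check will be done carefully by enumeration. The case $a+b=p-1$, $\{a,b\}=\{0,p-1\}$ is excluded since $b=p-1$ is on a wall. For $p=5$ one gets $\{1,3\},\{2,2\}$; for $p=3$ one gets $\{1,1\}$ (dimension $8$); and $p=2$ yields nothing.

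\textbf{Step 4: the sharper bound.} For the final sentence, redo the enumeration with $4p-4$. Only $p=3$, $\{1,1\}$ survives: dimension $8\le 8$, whereas for $p=5$ the smallest reducible case already has dimension at least $20>16$.

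The main obstacle is Step 2, namely correctly citing or justifying the irreducibility criterion for restricted Weyl modules of $\SL_3$. If a clean citation is unavailable, I would fall back on the linkage principle: a restricted $\lambda$ in the closure of the lowest alcove gives an irreducible Weyl module. For the small residual cases ($p\le 7$), I would verify by direct computation in {\sc Magma}. The numerical enumeration in Step 3 must also be reconciled carefully with the precise bound.
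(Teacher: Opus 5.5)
Your route is the paper's. The paper also quotes the restricted $\SL_3$ structure from \cite{Zavarn} (Lemma 7): $W_{a,b}$ is irreducible if $a+b\le p-2$ or $p-1\in\{a,b\}$, and otherwise $\dim\bar W_{a,b}=\dim W_{a,b}-\dim W_{p-2-b,\,p-2-a}$, which is your second factor $L(\mu)$. It then bounds dimensions and enumerates. Steps 1, 2 and 4 are fine, but Step 3 stalls on a non-issue and contains a slip.

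The statement is a disjunction. It only claims that a \emph{reducible} $W_{a,b}$ \emph{satisfying the bound} lies in the list, so list entries that never satisfy the bound are harmless and there is no discrepancy to reconcile.
\begin{itemize}
\item For $p=7$, your own computation already settles the case: every reducible pair has $\dim W_{a,b}\ge\dim W_{1,5}=48>36$, so the claim holds vacuously.
\item For $p=5$, the pair $\{2,2\}$ does not arise, since $\dim W_{2,2}=27>24$.
\end{itemize}
Under the stated hypothesis, the reducible cases are exactly $\{1,3\}$ for $p=5$ and $\{1,1\}$ for $p=3$, which is a subset of the list.

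A clean way to finish: in the reducible range $a,b\ge 1$, so $(a+1)(b+1)\ge 2(a+b)$. Writing $s=a+b\ge p-1$, this gives
\[\dim W_{a,b}\ge s(s+2)\ge p^2-1 .\]
Then $p^2-1\le 6p-6$ forces $p\le 5$, and $p^2-1\le 4p-4$ forces $p\le 3$. The reducible range is empty for $p=2$, so only $p=3$ survives the second bound. After this the enumeration takes two lines.

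For context, the list in the statement is exactly the set of reducible cases with $\dim\bar W_{a,b}\le 6p-6$:
\begin{itemize}
\item $p=7$: $\dim\bar W_{1,5}=33$ and $\dim\bar W_{2,4}=36$;
\item $p=5$: $\dim\bar W_{1,3}=18$ and $\dim\bar W_{2,2}=19$;
\item $p=3$: $\dim\bar W_{1,1}=7$.
\end{itemize}
The paper's proof bounds $\dim\bar W_{a,b}\ge p^2/2$ rather than $\dim W_{a,b}$. That is the form actually used in Proposition~\ref{SL3Split}, where the module is $Y\cong\bar W_{a,b}$. Bounding $\dim W_{a,b}$ directly, as you do, proves the stated result more quickly and more sharply.

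Finally, if you fall back on the linkage principle for the closed lowest alcove, note that it does not cover $a=p-1$ or $b=p-1$. Within the bound this only matters for $\Sym^{p-1}(M)$ and its dual, which are irreducible, because $a=p-1$ with $b\ge 1$ gives $\dim W_{a,b}\ge p(p+2)>6p-6$.
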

\begin{proof}
We observe that $\dim_{\FF_{p^n}} W_{a,b}=(a+1)(b+1)\frac{a+b+2}{2}$. If $a=0$ or $b=0$ then $W_{a,b}$ is (dual to) a symmetric power of the natural $\FF_{p^n}\SL_3(p^n)$ and is irreducible. Moreover, if either $a+b\leq p-2$, $a=p-1$ or $b=p-1$ then $W_{a,b}$ is irreducible by \cite[Lemma 7(a)]{Zavarn}. Otherwise, by \cite[Lemma 7(b)]{Zavarn}, we have that

\begin{align*}
\dim_{\FF_{p^n}} \bar{W}_{a,b}&=(a+1)(b+1)\frac{a+b+2}{2}-((p-1)-b)((p-1)-a)\frac{2p-(a+b+2)}{2} \\
&=(2ab+(a+b)+1 + (p-1)^2 -(a+b)(p-1))\frac{a+b+2}{2} - p((p-1)-a)((p-1)-b) \\
&\geq p(ab+\frac{p^2}{2}-p+1 -\frac{a+b}{2}(p-1)) - p((p-1)-a)((p-1)-b)\\
&=p(p-\frac{p^2}{2} + \frac{p(a+b)}{2})\geq p(p-\frac{p^2}{2} + \frac{p(p-1)}{2})
=\frac{p^2}{2}.
\end{align*}
Hence, we require $\frac{p^2}{2}\leq 6p-6$ and deduce  $p\leq 7$. Since $a,b<p-1$ and $(a+1)(b+1)\frac{a+b+2}{2}=\dim_{\FF_{p^n}} W_{a,b}\leq 6p-6$, we calculate that we have the promised possibilities, alongside the case $p=3$ and $\{a, b\}=\{1,2\}$. However, in this case we see that $\bar{W}_{a,b}=W_{a,b}$ is irreducible. If $\frac{p^2}{2}\leq 4p-4$ and $a,b<p-1\leq a+b$ then $p=3$ and $\{a,b\}=\{1,1\}$.
\end{proof}

Parts of the following two results may also be found in \cite[Proposition 5.1]{MinActive}.

\begin{proposition}\label[proposition]{SL3Split}
Suppose that $X=\SL_3(p^n)$, $S\in \syl_p(X)$ and $Y$ is a non-trivial irreducible $\FF_{p^n}X$-module.  Let $M$ denote a natural $\FF_{p^n}X$-module, $\mathcal R_1$ be the root elements in $S$ and $\mathcal R_2$ be the regular unipotent elements in $S$, $x \in S$ have order $p$, $H=\langle x\rangle$ and assume that $Y|_H$ has at most two non-trivial Jordan blocks. Then either
\begin{enumerate}
    \item $x\in \mathcal R_1$, $Y$ is a $3$-dimensional $\FF_{p^{n}}\SL_3(p^{n})$-module and $Y|_H=J_1(p^n)\oplus J_2(p^n)$;
    \item $p$ is odd, $x\in \mathcal R_1$, $Y=\Sym^2(M)$  and $Y|_H=J_1(p^n)\oplus J_2(p^n)\oplus J_3(p^n)$;
    \item $p$ is odd, $x\in \mathcal R_2$, $Y=\Sym^a(M)$, $1\leq a\leq \text{min}(4, \frac{p+1}{2})$ and $Y|_H$ is as in Lemma \ref{3SymPower1};
    \item $p\geq 5$, $x\in \mathcal R_2$, $Y=M\otimes M^{\sigma^i}$ or $M\otimes (M^*)^{\sigma^i}$, $\sigma$ is a generator of $\Aut(\FF_{p^n})$ and $0<i\leq n-1$, and $Y|_H= J_1(p^n)\oplus J_3(p^n)\oplus J_5(p^n)$; or
    \item $p$ is odd, $x \in \mathcal R_2 $, $Y$ is the  non-trivial irreducible composition factor of $M\otimes M^*$ and
    \[Y|_H=\begin{cases}
       J_1(p^n)\oplus 2J_3(p^n),& p=3\\
J_3(p^n)\oplus J_5(p^n), & p \geq 5
    \end{cases}.\]
\end{enumerate}
\end{proposition}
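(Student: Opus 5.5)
The argument will follow the template of the proof of Proposition \ref{SL2qmods}, now over $\mathbb{K}=\FF_{p^n}$, which is a splitting field for $X=\SL_3(p^n)$. Since $Y$ is already an irreducible $\mathbb{K}X$-module, Steinberg's Tensor Product Theorem writes
\[Y=\bar{W}_{a_0,b_0}\otimes \bar{W}_{a_1,b_1}^{\sigma}\otimes\dots\otimes \bar{W}_{a_{n-1},b_{n-1}}^{\sigma^{n-1}}\]
with restricted basic factors. The first step is to show that at most two factors are non-trivial. Up to conjugacy in $S$ there are only two kinds of elements of order $p$: a root element ($x\in\mathcal{R}_1$, Jordan form $J_1\oplus J_2$ on $M$) and, for $p$ odd, a regular unipotent element ($x\in\mathcal R_2$, Jordan form $J_3$ on $M$). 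Each non-trivial restricted factor contains an $H$-section of size at least $J_2$. By Lemma \ref{tensor powers}, a tensor product of three or more non-trivial $\mathbb{K}H$-modules already has at least three non-trivial blocks. Hence $Y$ is either basic or a product $U_1\otimes U_2^{\sigma^i}$ of two basic modules with $0<i\le n-1$.

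Next I treat the basic case $Y=\bar W_{a,b}$. Because $Y|_H$ has at most two non-trivial blocks, each of size at most $p$, Lemma \ref{dimbound} combined with Lemma \ref{SL3Generation} (three conjugates of a root element, or two conjugates of a regular unipotent element, generate $X$) gives $\dim_{\mathbb{K}}Y\le 6(p-1)$. For regular unipotent $x$ the bound improves to $4(p-1)$. Proposition \ref{Wisirreducible} then says $W_{a,b}$ is irreducible apart from a short explicit list. In the irreducible case $\dim W_{a,b}=(a+1)(b+1)(a+b+2)/2$, so only finitely many shapes are possible.

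\begin{itemize}
\item If $ab=0$, then $Y$ is $\Sym^a(M)$ or its dual, and I compute Jordan blocks using the symmetric power results of Appendix \ref{JordanSec} (Lemma \ref{3SymPower1}).
\item For root elements, $\Sym^a(J_1\oplus J_2)=\bigoplus_{j\le a}J_{j+1}$ has $a$ non-trivial blocks. This forces $a\le 2$, giving (i) and (ii); $\Sym^2(M)$ requires $p$ odd to be irreducible with a $J_3$.
\item For regular unipotent elements, I bound $a$ by counting blocks of $\Sym^a(J_3)$, which gives (iii) with $a\le\min(4,(p+1)/2)$.
\item If $a,b\ge1$, then $Y\supseteq$ a section of $M\otimes M^*$. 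For root elements, $(J_1\oplus J_2)\otimes(J_1\oplus J_2)$ already has three non-trivial blocks, so this case is excluded. For regular unipotent elements, $J_3\otimes J_3$ minus a trivial summand gives (v), and larger $(a,b)$ exceed two blocks.
\end{itemize}

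The reducible exceptions $p\in\{3,5,7\}$ listed in Proposition \ref{Wisirreducible} will be checked directly, by block counting or by {\sc Magma}.

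For the two-factor case $U_1\otimes U_2^{\sigma^i}$, a root element acts on each factor with at least $J_1\oplus J_2$, and the product has at least three non-trivial blocks, so this case is excluded. For regular unipotent $x$, each factor contains at least $J_3$; then $J_3\otimes J_3=J_1\oplus J_3\oplus J_5$ when $p\ge5$, and this must be all of $Y$. Hence both factors are $3$-dimensional, i.e. $M$ or $M^*$, which gives (iv). For $p=3$, $J_3\otimes J_3=3J_3$ has too many blocks, so this case is also excluded.

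The main obstacle I expect is pinning down Jordan forms on the non-irreducible Weyl modules $W_{a,b}$, specifically when $a,b\ge1$ and $a+b\ge p-1$, because then $\bar W_{a,b}$ is a proper quotient and block counts on $W_{a,b}$ do not pass directly to $\bar W_{a,b}$. My first approach is to show that $C_{\bar W_{a,b}}(H)$ has dimension too small relative to $\dim\bar W_{a,b}$: by the dimension estimate in the proof of Proposition \ref{Wisirreducible}, $\dim\bar W_{a,b}\ge p^2/2$, which exceeds what two blocks of size at most $p$ can account for once $p\ge5$. I will fall back on direct computation only for $p=3$ and the finitely many remaining small cases.
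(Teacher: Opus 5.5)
Most of your plan is the paper's: the Steinberg reduction to at most two tensor factors, the two-factor analysis via $J_3\otimes J_3$, the bound $\dim_{\mathbb{K}}Y\le 6(p-1)$ (resp.\ $4(p-1)$) from Lemmas \ref{SL3Generation} and \ref{dimbound}, Proposition \ref{Wisirreducible} plus {\sc Magma} for the reducible Weyl modules, and symmetric powers when $ab=0$.

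The gap is the one-factor case where $Y=W_{a,b}$ is irreducible and $a,b\ge 1$. You argue that $Y$ ``contains a section of $M\otimes M^*$'' on which $x$ has three non-trivial blocks, but this does not work. The number of non-trivial Jordan blocks, $\dim_{\mathbb{K}}C_{[U,H]}(H)$, is monotone under passing to submodules, not to sections. For instance $(J_1\oplus J_2)^{\otimes 2}=2J_1\oplus 2J_2\oplus J_3$ has a codimension-one submodule isomorphic to $3J_1\oplus J_2\oplus J_3$, and $W_{1,1}$ sits in $M\otimes M^*$ exactly as such a codimension-one submodule.

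For $(a,b)=(1,1)$ with $p\ne 3$ you can repair this using $M\otimes M^*=W_{1,1}\oplus\mathbb{K}$ and Krull--Schmidt, which is what you already do for regular unipotent $x$. For $(a,b)\ne(1,1)$ no such embedding is available. For regular unipotent $x$ you simply assert that larger $(a,b)$ ``exceed two blocks''. These pairs form an unbounded family as $p$ grows: already with $a=1$ you only have $(b+1)(b+3)\le 6(p-1)$. So a uniform argument is needed, and this is the technical core of the paper's proof.

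The paper writes $W_{a,b}=\ker\phi$ for the contraction
\[\phi:\Sym^a(M)\otimes\Sym^b(M^*)\to\Sym^{a-1}(M)\otimes\Sym^{b-1}(M^*).\]
It then shows $\ker\phi$ has at least three non-trivial blocks by comparing the numbers of large (or size-$p$) Jordan blocks of $x$ on source and target. For root elements this is Proposition \ref{2+1SymPower4}, built on Lemma \ref{2+1SymPower2}. For regular unipotent elements it is Proposition \ref{3SymPower6}, built on Lemmas \ref{NoEvenBlocks}--\ref{3SymPower5}.

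The obstacle you single out is not the real one. Proposition \ref{Wisirreducible} already reduces the reducible Weyl modules to a finite list with $p\le 7$, which is then checked by computation. Your proposed $p^2/2$ estimate is that proposition's own bound, and it must be compared with the generation bounds $6(p-1)$ and $4(p-1)$. Comparing it with ``two blocks of size at most $p$'' controls nothing, since it ignores trivial blocks; in particular it does not hold ``once $p\ge 5$''.

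Finally, Lemma \ref{SL3Generation} is stated only for $p$ odd. For $p=2$ you should treat the non-trivial restricted modules $M$, $M^*$, $W_{1,1}$ directly, as the paper does.
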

\begin{proof}
Let $Y$ be a counterexample to the proposition of minimal dimension. By the Steinberg Tensor Product Theorem, we may write
\[Y=W_0\otimes W_1^\sigma \otimes \dots \otimes W_{n-1}^{\sigma^{n-1}}\] where $W_i$ are all $p$-restricted basic modules. Up to a Galois twist, we may arrange that $W_0$ is a non-trivial $\FF_{p^n}X$-module. Suppose that there are at least two non-trivial tensorands and choose $j$ such that $W_j$ is also non-trivial. In particular, both $W_0$ and $W_j$ have an $H$-submodule isomorphic to $J_2(p^n)$. Then Lemma \ref{tensor powers} yields that $x$ has a unique non-trivial Jordan block on both $W_0$ and $W_j$. Observe that $\dim_{\FF_{p^n}} W_0\geq 3\leq \dim_{\FF_{p^n}} W_j$. Then we calculate using Lemma \ref{234 tensor} and linearity of the tensor product that $W_0|_H\cong W_j|_H\cong J_3(p^n)$. By Lemma \ref{234 tensor} \ref{3tensor}, we see that $p\geq 5$. Then by the Steinberg Tensor Product Theorem, we have that $Y\cong M\otimes M^{\sigma^j}$, $M\otimes (M^*)^{\sigma^j}$, $M^*\otimes M^{\sigma^j}$ or $M^*\otimes (M^*)^{\sigma^j}$, where $M$ is the natural $\FF_{p^n}\SL_3(p^n)$-module and $0<j\leq n-1$. But then (iv) holds, against $Y$ being a minimal counterexample. Hence we may assume, again up to a Galois twist, that $Y=W_0$ is $p$-restricted and so $Y\cong \bar{W}_{a,b}$ for some $0\leq a,b\leq p-1$. Set $M$ to be the natural $\FF_{p^n}\SL_3(p^n)$-module.

Suppose that $p=2$. Then $W_{a,b}=\bar{W}_{a,b}$ is irreducible and every involution of $X$ is conjugate to a root element and so has Jordan form $J_2(p^n)\oplus J_1(p^n)$ on the natural module. Moreover, either (i) holds or $a=b=1$. As $Y$ is a counterexample, we must have that $a=b=1$. Then Theorem \ref{Renaud} reveals that $(M\otimes M^*)|_H=4J_2(p^n)\oplus J_1(p^n)$ and as $W_{1,1}$ has codimension $1$ in $M\otimes M^*$, $Y|_H$ has at least three non-trivial Jordan blocks, a contradiction.

For the remainder of the proof, we may assume that $p$ is odd. Suppose first that $x\in \mathcal R_1$, so that $x$ has Jordan form $J_2(p^n)\oplus J_1(p^n)$ on $M$. Then by Lemma \ref{SL3Generation} and Lemma \ref{dimbound} we see that $\dim_{\FF_{p^n}} Y\leq 6p-6$. If $W_{a,b}$ is not irreducible then $\{a,b\}$ is one of the exceptions listed in Proposition \ref{Wisirreducible}. It is clear that the number of Jordan blocks of $x$ on $\bar{W}_{a,b}$ is the same regardless of $n$, and depends only on $p$, $a$ and $b$. We calculate in {\sc Magma} \cite{Magma} that none of the exceptions have the required number of Jordan blocks.

Therefore, $Y\cong W_{a,b}$ is irreducible. If $a=b=1$ then as $Y$ is irreducible we see that $p\geq 5$ and $Y$ is the unique non-trivial irreducible composition factor of $M\otimes M^*$. By Lemma \ref{234 tensor}\ref{2tensor}, we see that $(M\otimes M^*)|_H$ has three non-trivial Jordan blocks. Then \cite[Theorem 6.1]{Mikko} yields that $Y|_H$ also has three non-trivial Jordan blocks, a contradiction. If $a,b\geq 1$ and $a+b>2$ then we have a contradiction by Proposition \ref{2+1SymPower2}. Hence, one of $a$ or $b$ is zero so that $Y\cong \Sym^j(M)$ or $Y\cong \Sym^j(M^*)$ for $1\leq j\leq p-1$. But then Lemma \ref{2+1SymPower1} implies that $j\leq 2$ and (i) or (ii) holds.

Suppose that $x\in \mathcal R_2$  so that $x$ has Jordan form $J_3(p^n)$ on $M$. Then by Lemma \ref{SL3Generation}, $X$ is generated by two conjugates of $x$ and since $Y|_H$ has at most two non-trivial Jordan blocks, it follows that $\dim_{\FF_{p^n}} Y\leq 4p-4$. By Proposition \ref{Wisirreducible}, we have that $W_{a,b}$ is irreducible unless $\{a,b\}=\{1,1\}$ and $p=3$. In this latter case, we obtain (v). Hence, we may assume that $Y\cong W_{a,b}$ is irreducible, for $a,b\leq p-1$. If $a,b\geq 1$ then we have that $a=b=1$ by Lemma \ref{3SymPower4}, which gives (v) when $p\geq 5$. Hence, one of $a$ or $b$ is zero so that $Y\cong \Sym^j(M)$ or $Y\cong \Sym^j(M^*)$ for $1\leq j\leq p-1$. Then (iii) holds applying Lemma \ref{3SymPower1}.
\end{proof}

\begin{proposition}\label[proposition]{SU3Split}
Suppose that $X=\SU_3(p^n)$, $S\in \syl_p(X)$ and $W$ is an irreducible $\FF_{p^{2n}}X$-module. Let $M$ be a natural $\FF_{p^{2n}}\SL_3(p^{2n})$-module,
$x \in S$ have order $p$, $H=\langle x\rangle$ and assume that $W|_H$ has a unique non-trivial Jordan block. Then either
\begin{enumerate}
    \item $x\in Z(S)$, $W$ is the restriction to $\FF_{p^{2n}}X$ of $M$ and $W|_{H}=J_1(p^{2n})\oplus J_2(p^{2n})$;
    \item $p\geq 3$, $x\not\in Z(S)$, $W$ is the restriction to $\FF_{p^{2n}}X$ of  $M$ and $W|_{H}=J_3(p^{2n})$; or
    \item $p\geq 5$, $x\not\in Z(S)$, $W$ is the restriction to $\FF_{p^{2n}}X$ of $\Sym^2(M)$, and $W|_{H}=J_1(p^{2n})\oplus J_5(p^{2n})$.
\end{enumerate}
\end{proposition}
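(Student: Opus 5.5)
The plan is to reduce to irreducible $\FF_{p^{2n}}\SL_3(p^{2n})$-modules and then apply Proposition \ref{SL3Split}. Since $\FF_{p^{2n}}$ is a splitting field for $X$ and $X=\SU_3(p^n)$ is a subgroup of $\SL_3(p^{2n})$, I would first show that $W$ is the restriction to $X$ of an irreducible $\FF_{p^{2n}}\SL_3(p^{2n})$-module $Y$. This uses Steinberg's restriction theorem: the irreducible $\FF_{p^{2n}}X$-modules are exactly the restrictions of the $\bar{W}_{a,b}$ twisted by Frobenius, for exponents $0\le a,b\le p-1$ over the $2n$ twists, with the Frobenius of $\SU_3$ identifying twists in the appropriate way. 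Restricting to $H$ then gives $W|_H=Y|_H$, so $Y|_H$ has a unique non-trivial Jordan block, and in particular at most two. Hence $Y$ is one of the modules listed in Proposition \ref{SL3Split} with $n$ replaced by $2n$.

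Next I would identify the elements $x$. Embedding $x\in S\le \SU_3(p^n)\le\SL_3(p^{2n})$, the elements of $Z(S)$ are transvections: they act on $M$ as $J_2(p^{2n})\oplus J_1(p^{2n})$ and so lie in $\mathcal R_1$. For $p$ odd, an element of order $p$ in $S\setminus Z(S)$ acts on $M$ as $J_3(p^{2n})$ and so lies in $\mathcal R_2$. For $p=2$, every involution of $S$ lies in $Z(S)$, since $S$ has exponent $4$ off the centre. I would then run through the cases of Proposition \ref{SL3Split} and keep those with exactly one non-trivial block.

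\begin{itemize}
\item Case (i) gives $W=M$ with $x\in Z(S)$ and $W|_H=J_1\oplus J_2$. The dual $M^*$ restricts to $X$ as a twist of $M$ (by the $p^n$-Frobenius), so it also falls under "a natural module".
\item Case (ii) has $J_1\oplus J_2\oplus J_3$, which has two non-trivial blocks, so it is excluded.
\item Case (iii): Lemma \ref{3SymPower1} gives $\Sym^a(M)|_H$ for $x$ regular. For $a=1$ this is $J_3$. For $a=2$ it is $J_1\oplus J_5$ when $p\ge5$ and $J_3\oplus J_3$ when $p=3$, which fails for $p=3$. For $a\ge3$ there are at least two non-trivial blocks. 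I expect to read this off the lemma rather than compute it.
\item Cases (iv) and (v) each have at least two non-trivial blocks, so they are excluded.
\end{itemize}

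This leaves exactly (i)–(iii) of the statement.

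The main obstacle is the first step, namely that every irreducible $\FF_{p^{2n}}X$-module is a restriction from $\SL_3(p^{2n})$. If Steinberg's restriction theorem cannot be quoted cleanly in the form I need, I would instead argue directly. I would use the Steinberg tensor product theorem for the twisted group, with factors $\bar W_{a,b}|_X$ and Frobenius twists $\sigma^i$ for $0\le i<2n$, and note that each factor is itself the restriction of a $\SL_3(p^{2n})$-module. The tensor product is then a restriction of the corresponding tensor product for $\SL_3(p^{2n})$, which is irreducible by Steinberg's theorem for $\SL_3(p^{2n})$.

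A secondary point is checking that the $\mathcal R_1/\mathcal R_2$ dichotomy matches $Z(S)$ versus non-central elements. This follows from the matrix form of $S$ as unitriangular matrices preserving the hermitian form.
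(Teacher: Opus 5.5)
Your proposal is correct and follows the paper's proof. The paper also obtains $W$ as the restriction of an irreducible $\FF_{p^{2n}}\SL_3(p^{2n})$-module, using Steinberg's restriction result in the form \cite[Theorem 5.4.1]{KleidmanLiebeck}. It then identifies elements of $Z(S)$ as root elements and elements outside $Z(S)$ as regular unipotent, and applies Proposition \ref{SL3Split} together with Lemma \ref{3SymPower1} to force $a\le 2$, with your case check matching what the paper leaves implicit.

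One small correction to your fallback argument: the Steinberg tensor product decomposition for $\SU_3(p^n)$ uses only the $n$ twists $\sigma^0,\dots,\sigma^{n-1}$, not $2n$. This does not affect the conclusion you need, namely that every irreducible is a restriction.
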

\begin{proof}
By \cite[Theorem 5.4.1]{KleidmanLiebeck}, we have that $W$ is the restriction of an irreducible $\FF_{p^{2n}}\SL_3(p^{2n})$-module to $\FF_{p^{2n}}\SU_3(p^n)$. Then if $x\in Z(S)$, we have that $x$ is conjugate to a root element in $\SL_3(p^{2n})$, while $x$ is regular unipotent otherwise. We apply Proposition \ref{SL3Split}, observing by Lemma \ref{3SymPower1} that if $x$ is regular unipotent and has a unique non-trivial Jordan block on $\Sym^a(M)$ then $a\leq 2$.
\end{proof}

\begin{lemma}\label[lemma]{fielddefn}
Suppose that $X=\SU_3(p^n)$, $S\in \syl_p(X)$ and $W$ is an irreducible $\FF_pX$-module. Let $x \in S$ have order $p$ and assume that $x$ is $k$-active on $W$ for some $k\leq 2n$. Then either:
\begin{enumerate}
    \item \label{qsplit} $\mathrm{End}_{\FF_pX}(W)\cong \FF_{p^n}$ and considering $W$ as an $\FF_{p^n}H$-module, $W$ has at most two non-trivial Jordan blocks; or
    \item\label{natsplit} $\mathrm{End}_{\FF_pX}(W)\cong \FF_{p^{2n}}$ and considering $W$ as an $\FF_{p^{2n}}H$-module, $W$ has a unique non-trivial Jordan block.
\end{enumerate}
\end{lemma}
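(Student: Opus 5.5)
We follow the field-of-definition argument used in the proof of Proposition \ref{SL2qmods}. Set $\mathbb{L}:=\End_{\FF_pX}(W)=\FF_{p^\ell}$ and $\mathbb{K}:=\FF_{p^{2n}}$. By \cite[Proposition 5.4.4]{KleidmanLiebeck}, $\mathbb{K}$ is a splitting field for $X$. Hence $\mathbb{L}\subseteq \mathbb{K}$, so $\ell$ divides $2n$, and $\bar W:=W\otimes_{\mathbb L}\mathbb K$ is an irreducible $\mathbb{K}X$-module. As there, if $\bar W|_H=\bigoplus n_iJ_i(\mathbb K)$ then $W|_H=\bigoplus n_i\ell J_i$ over $\FF_p$. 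So $k\le 2n$ forces $\bar W$ to have at most $2n/\ell$ non-trivial Jordan blocks over $\mathbb K$, and this number is at least one since $W$ is faithful (or at least non-trivial).

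The key step is to bound $2n/\ell\le 2$, i.e.\ to show $\ell\in\{n,2n\}$. By \cite[Theorem 5.4.1]{KleidmanLiebeck}, $\bar W$ is the restriction of an irreducible $\mathbb K\SL_3(p^{2n})$-module. By the Steinberg Tensor Product Theorem, $\bar W=W_0\otimes W_1^{\sigma}\otimes\dots\otimes W_{2n-1}^{\sigma^{2n-1}}$ with $W_i$ basic. By \cite[Remark 2.8.8]{GLS3}, $\bar W$ is realised over $\mathbb L$ exactly when it is stable under $\sigma^\ell$. For $\SU_3$ the Frobenius $\sigma^n$ acts on basic modules by $W\mapsto (W^*)$-twist rather than as the identity, so one must be careful about what stability means. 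The Galois orbit forces at least $2n/\ell$ twisted tensor factors if $\ell\mid n$, and at least $n/\ell$ otherwise, and in any case many factors once $\ell<n$. Each non-trivial factor contains a $J_2(\mathbb K)$ for $x$, because every $p$-element of $X$ is unipotent in $\SL_3(p^{2n})$ and acts non-trivially on each non-trivial basic module. So Lemma \ref{tensor powers} shows that $r$ non-trivial tensorands produce more than $r$ non-trivial blocks once $r\ge 3$, and in fact at least about $2^{r-1}$ of them.

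Comparing this growth with the allowed $2n/\ell$ blocks should give $2n/\ell\le 2$, hence $\ell\in\{n,2n\}$. If $\ell=2n$, then $\bar W=W$ viewed over $\FF_{p^{2n}}$ has a unique non-trivial block, which is \ref{natsplit}. If $\ell=n$, then $W$ as an $\FF_{p^n}H$-module decomposes as $\bar W$ does over $\mathbb K$, with at most two non-trivial blocks, which is \ref{qsplit}.

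The main obstacle is the middle step. We must verify carefully the inequality between the number of non-trivial tensor factors forced by the field of definition and the block count from Lemma \ref{tensor powers}, including small cases such as $2n/\ell=3$ or $4$ with $p=3$. If the general count is delicate, we would first try the cruder route from Proposition \ref{SL2qmods}: bound $2n/\ell\le 4$, then treat $2n/\ell\in\{3,4\}$ separately using Lemma \ref{234 tensor} to show that three or four twisted factors of dimension at least $3$ yield too many blocks.
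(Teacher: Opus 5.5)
Your setup is the paper's: $\KK=\FF_{p^{2n}}$ splits, $\bar W$ has at most $2n/\ell$ non-trivial blocks over $\KK$, and $\ell\in\{n,2n\}$ gives the two outcomes. The gap is the middle step, which is the whole content of the lemma. You only assert that it ``should give'' $2n/\ell\le 2$, and the tools you propose cannot establish it.

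First, your factor count is wrong. Irreducible $\KK X$-modules are $\bar W=\bigotimes_{i=0}^{n-1}L(\lambda_i)^{(i)}$ with $p^n$-restricted highest weight, so there are only $n$ Frobenius positions, not $2n$. Moreover $L(\mu)^{(n)}|_X\cong L(\mu^*)|_X$, so $\sigma$ acts on $(\lambda_0,\dots,\lambda_{n-1})$ as a shift that dualizes the entry wrapping around. If $\ell\mid n$, say $n=a\ell$, then $\sigma^\ell$-stability gives $\bar W\cong W_0\otimes W_0^{(\ell)}\otimes\cdots\otimes W_0^{((a-1)\ell)}$. That is only $n/\ell$ forced factors against an allowance of $2n/\ell$ blocks, not $2n/\ell$ factors as you claim. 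If $\ell\nmid n$, then $\ell$ is even and there are $2n/\ell$ factors alternating between $W_0$ and twists of $W_0^\tau$; your ``$n/\ell$'' is not even an integer there.

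Second, Lemma \ref{tensor powers} does not say that $r\ge 3$ tensorands give more than $r$ non-trivial blocks, let alone about $2^{r-1}$. For $p$ odd, $J_2(\KK)^{\otimes 3}$ has exactly three, and for $p>3$, $J_2(\KK)^{\otimes 4}$ has exactly four. So in the case $\ell\mid n$, knowing only that each factor contains $J_2(\KK)$ gives no contradiction for $2\le n/\ell\le 4$. Already for $n/\ell=2$ and $p$ odd, $J_2(\KK)\otimes J_2(\KK)=J_1(\KK)\oplus J_3(\KK)$ has one non-trivial block against an allowance of four. This is exactly where the analogy with Proposition \ref{SL2qmods} breaks: there the allowance equals the number of forced factors, here it is twice that. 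Your fallback fails for the same reason, since when $2n/\ell=4$ there are two factors, not four.

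The missing idea is that the repeated factor is invariant under the graph automorphism. When $\ell\mid n$, the wrap-around forces each restricted weight in $W_0$ to be self-dual, so $W_0\cong W_0^\tau$ \cite[Proposition 5.4.6(ii)(a)]{KleidmanLiebeck}. Block counting cannot replace this. For $p$ odd, $n=2\ell$ and $x$ regular unipotent, $M\otimes M^{(\ell)}$ restricts to $J_3(\KK)\otimes J_3(\KK)$, which has at most three non-trivial blocks. It is excluded only because $M^{(n)}|_X\cong M^*\not\cong M$, so it is not $\sigma^\ell$-stable.

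The paper exploits $\tau$-invariance as follows. $W_0$ is not $3$-dimensional \cite[Proposition 5.4.8]{KleidmanLiebeck}. If $W_0|_H$ has at least two non-trivial blocks, then $\bar W$ has at least $2^a$, which forces $a=2$ and then gives a dimension contradiction. Otherwise Proposition \ref{SU3Split} gives $W_0|_H=J_1(\KK)\oplus J_5(\KK)$, and $J_5(\KK)\otimes J_5(\KK)$ alone produces too many blocks.

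The case $\ell\nmid n$ uses the alternating decomposition of \cite[Proposition 5.4.6(ii)(b)]{KleidmanLiebeck}. There the number of factors equals the allowance, so Lemma \ref{tensor powers} by itself leaves $2n/\ell=3$ open. That case closes because all three factors have the Jordan type of $W_0|_H$, a non-trivial module of dimension at least $3$, which gives at least six non-trivial blocks.
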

\begin{proof}
Let $\mathbb{K}=\FF_{p^{2n}}$. Then $\mathbb{K}$ is a splitting field for $X$ and every irreducible module can be written over this field. Set $\mathbb{L}= \End_{\FF_pX}(W)$ and assume that $\mathbb{L}=\FF_{p^\ell}$. Then we may regard $W$ as an $\mathbb{L}X$-module and it has $\mathbb{L}$-dimension $d:=\dim_{\FF_p} W/\ell$. We set $\bar{W}=W\otimes_{\mathbb L}\mathbb K$, an irreducible $\mathbb{K}X$-module of $\mathbb{K}$-dimension $d$.

Suppose that $\bar{W}= \bigoplus_{i=1}^k n_iJ_i(\mathbb{K})$ as a $\mathbb{K}H$-module. Then, when $W$ is considered as an $\mathbb{L}H$-module it decomposes as $W|_H= \bigoplus_{i=1}^k n_iJ_i(\mathbb{L})$. Then as each $J_i(\mathbb L)$ when considered as a $\FF_pH$-module breaks as a direct sum of $\ell$ copies of $J_i$, if $W$ has at most $2n$ non-trivial Jordan blocks as an $\FF_pH$-module, we conclude that $\bar{W}$ has at most $2n/\ell$ non-trivial Jordan blocks as an $\mathbb{K}H$-module. In particular, if $\ell=n$ then \ref{qsplit} holds and if $\ell=2n$ then \ref{natsplit} holds. Hence, we may assume for the remainder of the proof that $\ell\not\in \{n, 2n\}$. 

Suppose first that $\ell\ne n$, $\ell$ divides $n$ and write $n=\ell a$ where $a>1$. Then $\bar{W}$ has at most $2a$ non-trivial Jordan blocks and by \cite[Proposition 5.4.6(ii)(a)]{KleidmanLiebeck} we may write
\begin{eqnarray}
\bar{W} & = & W_0\otimes W_0^{(\ell)} \otimes \dots \otimes W_0^{((a-1)\ell)}\label{ldivn}
\end{eqnarray}
where $W_0$ is an irreducible $\mathbb{K}X$-module with $W_0\cong W_0^{\tau}$, where $\tau$ is the restriction of the graph automorphism of $\SL_3(p^{2n})$ to $X$. We observe that $W_0$ is not a $3$-dimensional $\FF_{p^{2n}}X$-module by \cite[Proposition 5.4.8]{KleidmanLiebeck}.

If $W_0$ has at least two non-trivial Jordan blocks upon restriction to $H$, then we see by Lemma \ref{234 tensor}\ref{2tensor} that $\bar{W}$ has at least $2^a$ non-trivial Jordan blocks and we conclude that $p$ is odd and $a=2$. Indeed, we infer that $W_0|_H=J_2(\mathbb{K})\oplus J_2(\mathbb{K})$ and $\dim_{\mathbb{K}} W_0=4$. Since $W_0$ is the restriction to $X$ of an irreducible $\mathbb{K}\SL_3(p^{2n})$-module, we have a contradiction. Hence, $W_0$ has a unique non-trivial Jordan block upon restriction to $H$ and we conclude by Proposition \ref{SU3Split} that $p \geq 5$, $\dim_{\mathbb{K}} W_0=6$ and $W_0|_H=J_5(\mathbb{K})\oplus J_1(\mathbb{K})$. Since $J_5(\KK)\otimes J_5(\KK)$ certainly contains a non-trivial block, applying the binomial theorem the only possibility is that $a=2$. But then \[\bar{W}|_H=(J_5(\mathbb{K})\otimes J_5(\mathbb{K}))\oplus J_5(\mathbb{K})\oplus J_5(\mathbb{K})\oplus J_1(\mathbb{K})\] and Lemma \ref{234 tensor}\ref{4tensor} yields a contradiction.

Hence, we assume that $\ell$ does not divide $n$ and write $2n=\ell a$ where $a>1$ is odd. Then $\bar{W}$ has at most $a$ non-trivial Jordan blocks and by \cite[Proposition 5.4.6(ii)(b)]{KleidmanLiebeck} we may write
\begin{eqnarray}
\bar{W} & = & W_0\otimes (W_0^\tau)^{\left(\frac{\ell}{2}\right)} \otimes W_0^{(\ell)}\otimes (W_0^\tau)^{\left(\frac{3\ell}{2}\right)}\otimes \dots  \otimes (W_0)^{((a-1)\ell)}\otimes (W_0^\tau)^{\left(2n-\frac{\ell}{2}\right)}\label{ldivn2}
\end{eqnarray}
where $W_0$ is an irreducible $\mathbb{K}X$-module.

There are $2a$ twists of $W_0$ in (\ref{ldivn2}). Lemma \ref{tensor powers} then immediately reveals that $a=1$, another contradiction as $a>1$. Thus, we have shown that $\ell\in \{n, 2n\}$, which completes the proof.
\end{proof}

\begin{proposition}\label[proposition]{SU3mods}
Suppose that $X/Z(X)\cong \PSU_3(p^n)$, $S\in \syl_p(X)$ and $V$ is a faithful $\FF_pX$-module. Let $M$ be a natural $\FF_{p^{2n}}\SU_3(p^n)$-module, $m:=m_p(X)$, $x \in S$ have order $p$ and $H=\langle x\rangle$. If $x$ is $k$-active on $V$ for some $k\leq m$, then $p$ is odd, $W:=[V,X]/C_{[V, X]}(X)$ is irreducible and one of the following holds:
\begin{enumerate}
   \item $x\in Z(S)$, $\dim_{\FF_p}W=6n$, $W=M|_{\FF_p}$ is a natural $\SU_3(p^n)$-module and
    \[W|_H= 2nJ_1\oplus 2nJ_2;\]
    \item $x\not\in Z(S)$, $\dim_{\FF_p}W=6n$, $W=M|_{\FF_p}$ is a natural $\SU_3(p^n)$-module and
    \[W|_H=2nJ_3;\]
    \item $p=3$, $x\not\in Z(S)$, $\dim_{\FF_p}W=7n$, $W$ is any non-trivial, irreducible composition factor of $(M\otimes M^*)|_{\FF_3}$, and
    \[W|_H=nJ_1\oplus 2nJ_3;\]
    \item $p\geq 5$, $x\not\in Z(S)$, $\dim_{\FF_p}W=8n$, $W$ is any non-trivial, irreducible composition factor of $(M\otimes M^*)|_{\FF_p}$, and
    \[W|_H=nJ_3\oplus nJ_5;\,\,\text{or}\]
    \item $p\geq 5$, $x\not\in Z(S)$, $\dim_{\FF_p}W=12n$, $W=\Sym^2(M)|_{\FF_p}$, and
    \[W|_H=2nJ_1\oplus 2nJ_5.\]
\end{enumerate}
\end{proposition}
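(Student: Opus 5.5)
Throughout, $m=2n$ when $p$ is odd and $m=n$ when $p=2$ (Table \ref{tab:data}). The plan is to reduce to irreducible composition factors, classify these with Lemma \ref{fielddefn}, Proposition \ref{SU3Split} and Proposition \ref{SL3Split}, and then control how the factors are glued together.

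\textbf{Step 1: $p$ is odd.} If $p=2$, then $m=n$. Proposition \ref{generation} and Lemma \ref{dimbound} give $\dim V/C_V(X)\le 3n$, while $\rdim_2(\PSU_3(2^n))=6n$. This contradicts faithfulness, exactly as in the proof of Proposition \ref{blocks lemma2}. So $p$ is odd and $m=2n$.

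\textbf{Step 2: classify a non-trivial irreducible factor $U$ of $V$.} Any non-trivial composition factor $U$ of $V$ inherits that $x$ has at most $2n$ non-trivial Jordan blocks on $U$. Apply Lemma \ref{fielddefn}.
\begin{itemize}
\item[(a)] Suppose $\End_{\FF_pX}(U)=\FF_{p^{2n}}$. Then $x$ has a unique non-trivial block over $\FF_{p^{2n}}$, and Proposition \ref{SU3Split} gives the natural module (cases (i), (ii)) or $\Sym^2(M)$ (case (v)). The $\FF_p$-Jordan forms follow by replacing each $J_i(p^{2n})$ with $2nJ_i$.
\item[(b)] Suppose $\End_{\FF_pX}(U)=\FF_{p^n}$. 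Then $\bar U=U\otimes\FF_{p^{2n}}$ is an irreducible $\FF_{p^{2n}}X$-module, and $x$ has at most two non-trivial blocks on it. By \cite[Theorem 5.4.1]{KleidmanLiebeck}, $\bar U$ is the restriction of an $\SL_3(p^{2n})$-module. Here $x$ is a root element if $x\in Z(S)$ and regular unipotent otherwise, so Proposition \ref{SL3Split} applies. Being realised over $\FF_{p^n}$ forces $\bar U\cong \bar U^{\sigma^n}$, and on $X$ the twist $\sigma^n$ acts as the graph (duality) automorphism. This rules out $\Sym^a(M)$ with $a\ge1$, and also the twisted products $M\otimes M^{\sigma^i}$ and $M\otimes(M^*)^{\sigma^i}$ unless $i=n$. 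When $i=n$ the module is $M\otimes M^{*}$-like with more blocks: by Lemma \ref{234 tensor} it gives $J_3\otimes J_3=J_1\oplus J_3\oplus J_5$, and its irreducibility and the field counting should fail; I expect this to need some care. The survivor is the non-trivial factor of $M\otimes M^*$, which is self-dual. It is $7$-dimensional for $p=3$ and $8$-dimensional for $p\ge5$, with regular $x$, giving cases (iii) and (iv). Root elements give nothing self-dual in (i) or (ii) of Proposition \ref{SL3Split}, since $M\not\cong M^*$ and $\Sym^2(M)\not\cong\Sym^2(M^*)$.
\end{itemize}

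\textbf{Step 3: exactly one non-trivial factor.} Each module found in Step 2 already uses at least $2n$ non-trivial blocks over $\FF_p$: for instance $2nJ_2$, $2nJ_3$, $2nJ_3$, $nJ_3\oplus nJ_5$, or $2nJ_5$. A second non-trivial factor would push the count above $2n=m$. The count is monotone for a submodule and quotient pair, because the rank of $(x-1)^j$ is at least the sum of the ranks on a submodule and the corresponding quotient. Hence $V$ has a unique non-trivial composition factor, so $W=[V,X]/C_{[V,X]}(X)$ is that factor and is irreducible. Faithfulness then also pins down $X/Z(X)$ acting on $W$.

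\textbf{Main obstacle.} The delicate point is Step 2(b) in the case $p=3$. There $W_{1,1}$ is reducible, and the relevant factor has dimension $7$ over $\FF_{3^{2n}}$. One must check that $\bar U$ descends to $\FF_{3^n}$, and that the Jordan form is $J_1\oplus 2J_3$ and not something from the reducible $W_{1,1}$. I would handle this with Proposition \ref{SL3Split}(v) together with the self-duality and twist argument above. The exclusion of root elements in the $\FF_{p^n}$-split case also needs the explicit twist identification.
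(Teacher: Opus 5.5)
\textbf{Step 3 is the gap.} Your Step 3 rests on a false claim, and it is the step that carries most of the weight in the paper.

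The number of non-trivial Jordan blocks of $x$ on a module $Y$ equals $\operatorname{rank}(x-1)-\operatorname{rank}(x-1)^2$, or equivalently $\dim(C_Y(x)\cap[Y,x])$. The second description shows that this number cannot increase on passing to a submodule, and dually to a quotient, so your use of it in Step 2 is fine. It is \emph{not} superadditive over a submodule/quotient pair, however. Superadditivity of each $\operatorname{rank}(x-1)^j$ says nothing about their difference. Already $J_4$, with submodule $J_2$ and quotient $J_2$, has one non-trivial block against two.

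Here this is a real possibility, not a pathology. Take $p=3$ and $x\in Z(S)$. Each natural factor gives $2nJ_1\oplus 2nJ_2$, yet linear algebra alone allows a non-split extension of one natural module by another on which $x$ acts as $2nJ_3\oplus 6nJ_1$. That module has only $2n=m$ non-trivial blocks. The quantities that do behave well are $\dim[Y,x]\le k(p-1)$ and subadditivity of $\dim C_Y(x)$. Together they give only $\dim W\le 2n(p-1)+\dim C_{U_1}(x)+\dim C_{U_2}(x)$, which does not exclude two non-trivial factors for any odd $p$.

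\textbf{What the paper does instead.} Irreducibility of $W$ needs module-theoretic input that your plan lacks. The paper first reduces to $W$ indecomposable with exactly two non-trivial factors $U_1\le W$ and $W/U_1\cong U_2$.
\begin{itemize}
\item For $p=3$, the bound above forces exactly the configuration just described. The paper then uses $[W/U_1,x]=C_{W/U_1}(S)$ and $[W,x]\cap U_1=[U_1,x]=C_{U_1}(S)$ to get $[W,x,S,S]=0$. The Three Subgroups Lemma then gives $[W,x,x]=0$, since $x\in Z(S)=[S,S]$; this contradicts the $J_3$ blocks.
\item For $p\ge 5$, the paper restricts to $D=\langle x,x^g\rangle$, which is $\SL_2(p^n)$ if $x\in Z(S)$ and $\PSL_2(p^n)$ otherwise. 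It controls extensions between the resulting factors using Proposition~\ref{extension} and \cite{andersenjorgsenlandrock}, and settles the residual $n=1$ cases with \textsc{Magma}.
\end{itemize}

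\textbf{Steps 1 and 2.} These are essentially the paper's argument. Your Step 2(b), in the form ``realisable over $\FF_{p^n}$ forces invariance under $\sigma^n$, which on $\SU_3(p^n)$ is the duality/graph twist'', is a clean version of the paper's final paragraph.

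Your worry about $M\otimes M^{\sigma^i}$ is misplaced, though:
\begin{itemize}
\item For $0<i<n$, both $M\otimes M^{\sigma^i}$ and $M\otimes (M^*)^{\sigma^i}$ are simply not self-dual.
\item For $i=n$, the restriction to $\SU_3(p^n)$ is $M\otimes M^*$. This is reducible, so it never arises as $\bar U$.
\end{itemize}
The real obstacle is the gluing in Step 3, not the $p=3$ descent issue you single out.
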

\begin{proof}
By Proposition \ref{blocks lemma2}, we have that $p$ is odd. We will show first that $W$ is irreducible. We may as well assume that $W$ has exactly two non-trivial composition factors, $U_1$ and $U_2$, both of which are non-trivial and, by Lemma \ref{fielddefn} and Proposition \ref{SU3Split}, belong to the list appearing in the statement. Clearly, we may assume that $W$ is indecomposable and without loss of generality, we suppose that $U_1$ is a non-trivial $\FF_pX$-submodule of $W$ with the non-trivial composition factor in $W/U_1$ isomorphic to $U_2$. We observe that, as $x$ has at most $2n$ non-trivial Jordan blocks on $W$, we have that $\dim_{\FF_p} W\leq 2n(p-1)+\dim_{\FF_p} C_W(x)$. Moreover, $\dim_{\FF_p} C_W(x)\leq \dim_{\FF_p} C_{U_1}(x)+\dim_{\FF_p}C_{U_2}(x)$ so that \[\dim_{\FF_p} W\leq 2n(p-1)+\dim_{\FF_p} C_{U_1}(x)+\dim_{\FF_p} C_{U_2}(x).\]

If $p=3$ then $\dim_{\FF_3} W\geq 12n$ and since $x$ has at most $2n$ non-trivial Jordan blocks, we deduce that $\dim_{\FF_3} C_W(x)\geq \dim_{\FF_3} W-4n\geq 8n$. But $\dim_{\FF_3} C_{U_1}(x)+\dim_{\FF_3} C_{U_2}(x)\leq 4n+4n=8n$, and we conclude that both $U_1$ and $W/U_1\cong U_2$ are natural $\SU_3(3^n)$-modules, $\dim_{\FF_3} W=12n$, $x\in Z(S)$ and $W|_{\langle x\rangle}=2nJ_3\oplus 6nJ_1$. In particular, $\dim_{\FF_3} C_W(x)=8n$ and $x\in Z(S)$.

Now, $\dim_{\FF_3} [W, x]=4n$ and $\dim_{\FF_3} [U_1, x]=2n$. Hence, $[W/U_1, x]=[W,x]U_1/U_1=C_{W/U_1}(S)$ has $\FF_3$-dimension $2n$. Since $C_{U_1}(S)=[U_1, x]\le [W, x]$, we have $[W,x]\cap U_1=[U_1,x]=C_{U_1}(S)$. Since $x\in Z(S)$, we have $[W, x]$ is $S$-invariant. Thus, $[W, x, S]\le [W,x]\cap C_{U_1}(S)=[U_1, x]$ and so $[W, x, S, S]=1$. The Three Subgroups Lemma reveals that $[S, S, [W, x]]=1$ and as $x\in [S, S]=Z(S)$, we conclude that $[W, x, x]=1$, a contradiction as $x$ has Jordan block of length $3$ on $W$.

Hence, if $W$ is not irreducible, then $p\geq 5$. Suppose first that $x\in Z(S)$ so that both $U_1$ and $U_2$ are natural $\SU_3(p^n)$-modules, and set $D:=\langle x, x^g\rangle\cong \SL_2(p^n)$ for some $g\in X$. Then $U_i|_D$ is a direct sum of two natural $\FF_p\SL_2(p^n)$-modules and $2n$ trivial modules. Hence, by coprime action applied using the center of $\SL_2(p^n)$, we see that $[W, D]$ contains four composition factors, all of which are natural $\SL_2(p^n)$-modules. Since each composition factor has $n$ non-trivial Jordan blocks, and $W$ has at most $2n$ non-trivial Jordan blocks, we must witness some non-split extension $T$ of a natural $\SL_2(p^n)$-module by a natural $\SL_2(p^n)$-module. We apply Proposition \ref{extension}, using that $p>3$, to see that $n=1$. We then appeal to the result preceding \cite[Corollary 4.5]{andersenjorgsenlandrock}, using that $p>3$, to see that the only possibility is that $p=5$. We calculate in {\sc Magma} \cite{Magma} that there are no indecomposable $\FF_5\SU_3(5)$-modules with the required composition factors.

Suppose now that $x\not\in Z(S)$ and set $D:=\langle x, x^g\rangle\cong \PSL_2(p^n)$ for some $g\in X$. Since the maximum size of a non-trivial Jordan block in $U_i$ is $5$, we deduce that every composition factor of $W|_D$ is either trivial, or isomorphic to $V_2(p^n)|_{\FF_p}$ or $V_4(p^n)|_{\FF_p}$. Assume that $n>1$. To understand the possible indecomposable submodules of $W|_D$ with no trivial composition factors, we appeal to Proposition \ref{extension}. Otherwise, to understand $W|_D$ we appeal to \cite[Corollary 4.5]{andersenjorgsenlandrock}. We deduce that $W|_D$ is completely reducible. It follows that $U_i|_D$ is completely reducible, both $U_1|_H$ and $U_2|_H$ contain at least $n$ non-trivial Jordan blocks and $W|_H$ has at most $2n$ non-trivial Jordan blocks. Hence, $U_i|_H$ has exactly $n$ non-trivial Jordan blocks, a contradiction as $U_i$ is described by the proposition. Thus $n=1$. If $W$ is completely reducible, then argue as in the $n>1$ case. To understand the possible indecomposable submodules of $W|_D$, we appeal to the result preceding \cite[Corollary 4.5]{andersenjorgsenlandrock} to deduce that $p\leq 11$. We then appeal to {\sc Magma} \cite{Magma} to show that there are no indecomposable $\FF_p\SU_3(p)$-modules with the required composition factors. Hence, $W$ is irreducible.

By Proposition \ref{SU3Split} and Lemma \ref{fielddefn}, we may assume that $\mathrm{End}_{\FF_pX}(W)\cong \FF_{p^n}$ and $x$ has at most two non-trivial Jordan blocks on $W$, viewed as an $\FF_{p^n}H$-module. Set $\bar{W}=W\otimes_{\FF_{p^n}} \FF_{p^{2n}}$ so that $\bar{W}$ is an irreducible $\FF_{p^{2n}}\SU_3(p^n)$-module on which $x$ has at most two non-trivial Jordan blocks. Applying \cite[Theorem 5.4.1]{KleidmanLiebeck} and \cite[Proposition 5.4.6(ii)(a)]{KleidmanLiebeck}, and using similar arguments as in Proposition \ref{SU3Split} and Lemma \ref{fielddefn}, we see that $\bar{W}\cong U\cong U^\tau$, where $\tau$ is the restriction of the graph automorphism of $\SL_3(p^{2n})$ to $\SU_3(p^n)$ and $U$ is the restriction to $\FF_{p^{2n}}\SU_3(p^n)$ of an irreducible $\FF_{p^{2n}}\SL_3(p^{2n})$-module.

From now on, we view $\bar{W}$ as an irreducible $\FF_{p^{2n}}\SL_3(p^{2n})$-module which is invariant under the graph automorphism of $\SL_3(p^{2n})$. We survey the modules from Proposition \ref{SL3Split} which are invariant under the graph automorphism of $\SL_3(p^{2n})$. In particular, for $M$ the natural $\FF_{p^{2n}}\SL_3(p^{2n})$-module we observe that there is $P$, a parabolic subgroup of $\SL_3(p^{2n})$, with the property that $\dim_{\FF_{p^{2n}}} C_{M|_{O_p(P)}}(O_p(P))=1$ so that $\dim_{\FF_{p^{2n}}} C_{\Sym^a(M)|_{O_p(P)}}(O_p(P))=1$ for all $a\geq 1$. On the other hand, \[1<\dim_{\FF_{p^{2n}}} C_{M^*|_{O_p(P)}}(O_p(P))\leq \dim_{\FF_{p^{2n}}} C_{\Sym^a(M^*)|_{O_p(P)}}(O_p(P)).\] Hence, $\Sym^a(M)\not\cong \Sym^a(M^*)$ for any relevant $a\geq 1$ so that $\Sym^a(M)$ is never invariant under the graph automorphism of $\SL_3(p^{2n})$. This gives the modules provided in the proposition.
\end{proof}

\section{The $\Ree(3^n)$ case}\label{sec: Ree}
In this section, we describe the faithful $\FF_3\Ree(3^n)$-modules $V$ and the non-trivial $3$-elements $x\in \Ree(3^n)$ for which $x$ is $k$-active on $V$ for $k\leq m_3(\Ree(3^n))=2n$. We note that $n$ is necessarily of the form $2a+1$ for some $a\in \N\cup \{0\}$ throughout this section.

Letting $X=\Ree(3^n)$, we have that $\FF_{3^n}$ is a splitting field for $X$ by \cite[Proposition 5.4.4]{KleidmanLiebeck}, and $X$ has three $3$-restricted basic modules over $\FF_{3^n}$: the trivial module, a $7$-dimensional module and a $27$-dimensional module \cite[Corollary 2.8.6]{GLS3}. The \emph{natural module} for $X$ is the restriction to $\FF_3X$ of any irreducible $7$-dimensional $\FF_{3^n}X$-module.

We recall that if $n=1$ then a Sylow $3$-subgroup of $X$ is extraspecial of order $27$ and exponent $9$. If $n>1$ then for $S\in\syl_3(X)$ we have that $m_3(S)=2n$, $|S|=3^{3n}$, $Z(S)=\mho^1(S)$ has order $3^n$, and $\Omega_1(S)=\Phi(S)=[S, S]$ has order $3^{2n}$.

\begin{lemma}\label[lemma]{RegRee}
Suppose that $X=\Ree(3^n)$, $S\in\syl_3(X)$ and let $K$ be a complement to $S$ in $N_X(S)$ so that $|K|=3^n-1$. Then $K$ acts regularly on the non-trivial elements of both $S/\Omega_1(S)$ and $Z(S)$.
\end{lemma}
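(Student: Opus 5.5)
Write $q=3^n$. I would work directly with the standard model of the Sylow $3$-subgroup of the Ree group, namely $S=\{(\alpha,\beta,\gamma)\mid \alpha,\beta,\gamma\in\FF_q\}$ with the multiplication coming from the Ree construction (as in Ward's description). I will also use the fact that a complement $K\cong \FF_q^\times$ is given by the diagonal torus $h(t)$, $t\in\FF_q^\times$. In these coordinates the chain $S>\Omega_1(S)>Z(S)>1$ reads as follows. $\Omega_1(S)=\Phi(S)=[S,S]$ is the set of elements $(0,\beta,\gamma)$, and $Z(S)$ is the set of elements $(0,0,\gamma)$. Consequently $S/\Omega_1(S)\cong (\FF_q,+)$ through the first coordinate, and $Z(S)\cong(\FF_q,+)$ through the third coordinate.

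The second ingredient is the conjugation action of the torus on these coordinates. Write $\theta$ for the field automorphism with $\theta^2=3$, that is, $x^\theta=x^{3^{a+1}}$ where $n=2a+1$. Then $h(t)$ acts on the three coordinates by multiplication by $t^{\theta-2}$, $t^{1-\theta}$ and $t^{2\theta-1}$ respectively (up to the normalisation chosen). I would check these exponents against the commutator relations, or simply take them from Ward or Kleidman. The content of the lemma is then that the maps $t\mapsto t^{\theta-2}$ and $t\mapsto t^{2\theta-1}$ are bijections of $\FF_q^\times$. Indeed, $K$ acts semiregularly on the nonzero elements of an $\FF_q$-line precisely when the character $t\mapsto t^e$ is injective. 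Since $|K|=q-1=|\FF_q^\#|$, semiregularity is the same as regularity.

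Injectivity reduces to a gcd computation. Put $r=3^{a+1}$, so that $r^2=3q$. I need $\gcd(r-2,q-1)=1$ and $\gcd(2r-1,q-1)=1$. Suppose $d$ divides both $r-2$ and $q-1$. Then $r\equiv 2 \pmod d$, so $3q=r^2\equiv 4\pmod d$. Also $q\equiv 1\pmod d$, so $3\equiv 4\pmod d$, which forces $d=1$. Now suppose $d$ divides both $2r-1$ and $q-1$. Then $d$ is odd, so $2$ is invertible modulo $d$ and $r\equiv 2^{-1}\pmod d$. Hence $3\equiv 3q=r^2\equiv 4^{-1}\pmod d$, giving $12\equiv1\pmod d$, so $d$ divides $11$. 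It remains to exclude $d=11$, and here I must be careful. Working modulo $11$, $3$ has order $5$, so $q\equiv 1\pmod{11}$ requires $5\mid n$. Likewise $2r\equiv1\pmod{11}$ means $3^{a+1}\equiv 6\pmod{11}$, but $6$ is not a power of $3$ modulo $11$ (the powers of $3$ are $1,3,9,5,4$). So $d=1$ in this case too. The case $n=1$ would follow from the same formulas, but the lemma concerns $n>1$ in any event.

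The main obstacle is pinning down the correct torus exponents for each layer: the actual action is given by one of $t^{\theta-2}$, $t^{2-\theta}$, $t^{2\theta-1}$ or their inverses, depending on the convention. Since inversion does not change the gcd, it is enough to know these exponents up to sign. As a consistency check, the exponent on $S/\Omega_1(S)$ plus the exponent on $\Omega_1(S)/Z(S)$ should be compatible with the commutator map $S/\Omega_1\times\Omega_1/Z\to Z$. With the exponents $(\theta-2)+(1-\theta)=-1$ this does not match $2\theta-1$ directly, so before running the gcd argument I would fix the signs by computing explicitly with the formula for $h(t)$ acting on root elements $x(\alpha,0,0)$ and $x(0,0,\gamma)$.
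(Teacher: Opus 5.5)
Your strategy is sound, and both gcd computations are arithmetically correct. The problem is the input they are applied to. The characters you assign to the torus are not the actual ones; in particular the one on $Z(S)$ is wrong, so your second gcd computation checks the wrong exponent. You noticed the inconsistency yourself and deferred it. Identifying these characters is the whole content of the lemma, so the proof is not complete as it stands.

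Write $\chi_1,\chi_2,\chi_3$ for the characters through which $K$ acts on $S/\Omega_1(S)$, $\Omega_1(S)/Z(S)$ and $Z(S)$. Put $r=3^{a+1}$, so $x^\theta=x^r$ and $r^2=3q\equiv 3\pmod{q-1}$.
\begin{itemize}
\item The commutator pairing $S/\Omega_1(S)\times\Omega_1(S)/Z(S)\to Z(S)$ has no field twist; in the usual coordinates it is $(t,u)\mapsto\pm tu$. Hence $\chi_3=\chi_1\chi_2$.
\item The term $\pm t_1t_2^{r}$ in the middle coordinate of the group law forces $\chi_2=\chi_1^{1+r}$.
\end{itemize}
Together these give $\chi_3=\chi_1^{r+2}$. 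With your normalisation $\chi_1(t)=t^{r-2}$, this yields $\chi_2(t)=t^{(r-2)(r+1)}=t^{1-r}$, matching your second exponent. But it also yields $\chi_3(t)=t^{(r-2)(r+2)}=t^{-1}$, not $t^{2r-1}$. This is not a matter of signs or Galois twists: for $q=3^5$ one has $r=27$, and $2r-1=53$ is not congruent to $\pm 3^j$ modulo $242$.

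With the correct weights the proof closes at once. Normalise so that $K$ acts on the first coordinate by $x\mapsto kx$ for $k\in\FF_q^\times$. Then regularity on $(S/\Omega_1(S))^\#$ is immediate. Regularity on $Z(S)^\#$ amounts to $\gcd(r+2,q-1)=1$, which your own trick proves: if $d$ divides both, then $r\equiv-2$, so $3\equiv 3q=r^2\equiv 4\pmod d$ and $d=1$. Your normalisation is also admissible, since you showed $\gcd(r-2,q-1)=1$; in it, $\chi_3(t)=t^{-1}$ is trivially bijective.

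This is the paper's argument written in coordinates. Cubing is constant on cosets of $\Omega_1(S)$ and is $K$-equivariant, so an element with first coordinate $t$ cubes to $(0,0,c\,t^{r+2})$ for a fixed $c\neq 0$. The paper's assertion that distinct cosets of $\Omega_1(S)$ cube to distinct elements of $Z(S)$ is therefore exactly $\gcd(r+2,q-1)=1$. The paper then transports the action of $K$ from $S/\Omega_1(S)$ to $Z(S)$ along this equivariant bijection, rather than computing $\chi_3$ separately.

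Finally, the lemma does include $n=1$; the paper checks it directly. Your formulas cover that case, but there $\Phi(S)=[S,S]=Z(S)$ has order $3$. So your identification $\Omega_1(S)=\Phi(S)=[S,S]$ is valid only for $n>1$.
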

\begin{proof}
If $n=1$, then this is easy to calculate. Let $n>1$ so that $\Omega_1(S)=\Phi(S)$ and every element in $S\setminus \Omega_1(S)$ has order $9$ and cubes to an element of $Z(S)$. Indeed, representatives from distinct cosets in $S/\Omega_1(S)$ cube to distinct elements of $Z(S)$. Now, $K$ acts faithfully on $S/\Phi(S)$, and respects the cubing map, and so acts faithfully on $Z(S)$. It follows that $K$ acts regularly on $S/\Omega_1(S)$ and $Z(S)$.
\end{proof}

\begin{lemma}\label[lemma]{JBRee}
Suppose that $D=\Ree(3)$, $S\in\syl_3(D)$ and $M$ is an irreducible $7$-dimensional $\FF_3D$-module. Then
\begin{enumerate}
    \item if $x\in Z(S)$, we have that $M|_{\langle x\rangle}=2J_2\oplus J_3$;
    \item if $x\in \Omega_1(S)\setminus Z(S)$, we have that $M|_{\langle x\rangle}=J_1\oplus 2J_3$; and
    \item if $x\in S$ has order $9$, then $M|_{\langle x\rangle}$ is indecomposable.
\end{enumerate}
\end{lemma}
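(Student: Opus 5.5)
The plan is to use the exceptional isomorphism $D=\Ree(3)\cong \PSL_2(8):3\cong \mathrm{P\Gamma L}_2(8)$ and compute the Jordan forms by restricting to suitable subgroups. We first identify the classes of $3$-elements. Here $S$ is extraspecial of order $27$ and exponent $9$. Since $\PSL_2(8)$ has order $504=2^3\cdot 3^2\cdot 7$, its Sylow $3$-subgroup is cyclic of order $9$, inside a dihedral torus normalizer $D_{18}$. Thus $S=\langle y\rangle:\langle \phi\rangle$ with $y$ of order $9$ in $\PSL_2(8)$ and $\phi$ a field automorphism of order $3$. From this:
\begin{itemize}
\item $Z(S)=\langle y^3\rangle$, and these elements lie in $O^3(D)=\PSL_2(8)$;
\item the elements of $\Omega_1(S)\setminus Z(S)$ are outer, conjugate to $\phi$ or $\phi^{-1}$, and centralize $\PSL_2(2)\cong \Sym(3)$;
\item the elements of order $9$ are of the form $y^{\pm1}$ times an element of $Z(S)$, possibly after conjugation.
\end{itemize}
Next we identify $M|_{\PSL_2(8)}$. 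The $7$-dimensional $\FF_3\PSL_2(8)$-module comes from a character of degree $7$: either the Steinberg character (which is $3$-projective of defect zero, since $|G|_3=9\nmid 7$, so that is impossible) or one of the three Galois-conjugate degree-$7$ characters that are permuted by $\phi$. Over $\FF_3$ these three are permuted by the field automorphism, so $M|_{\PSL_2(8)}$ stays irreducible, and the restriction is the one coming from $\FF_{27}$-conjugates. In practice the cleanest route is to take the reduction mod $3$ of the degree-$7$ character of $\PSL_2(8)$ on which $y$ has eigenvalues all $9$th roots of unity except $1$, together with two more copies of $1$.

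With this identification, the three cases go as follows.
\begin{enumerate}
\item For $x\in Z(S)$, $x=y^3$ has order $3$. Brauer character values determine $\dim C_M(x)$: $\chi(x)=-2$ for the degree-$7$ characters at order-$3$ elements. The element $x$ lies in a cyclic group of order $9$ acting on $M$, and counting fixed points via the character of $\langle y\rangle$ gives $\dim C_M(x)=3$, so there are three blocks of total dimension $7$. To decide between $2J_2\oplus J_3$ and $J_1\oplus 2J_3$, we use $\dim C_M(y)$ together with the fact that $M|_{\langle y\rangle}$ (order $9$) is a sum of $J_k$ for $\FF_3C_9$. Then $M|_{\langle y^3\rangle}$ is read off from the restriction rule that sends $J_k$ for $C_9$ to blocks of $C_3$ of sizes $\lceil k/3\rceil$ and $\lfloor k/3\rfloor$. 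If $M|_{\langle y\rangle}=J_7$, this gives $J_3\oplus J_2\oplus J_2$, which is (i), and it simultaneously proves (iii) for $y$.
\item Indecomposability of $M|_{\langle y\rangle}$, i.e.\ $\dim C_M(y)=1$, follows because $y$ acts on $C_M(Z(S))$ and all order-$9$ elements of $S$ are $\Aut$-conjugate into $y Z(S)$. We would check this directly: $C_M(S)$ is $1$-dimensional (the module is irreducible and $S$-fixed points in a Lie-type $7$-dimensional module for $\Ree(3)\le G_2(3)$ form the highest weight line), and $C_M(y)\supseteq C_M(S)$ with equality, from a computation in the $G_2(3)$ model.
\item For $x$ outer of order $3$ (case (ii)), we use $C_D(x)\cong 3\times \Sym(3)$: the involution $t$ in it commutes with $x$, so $x$ preserves $C_M(t)$ and $[M,t]$. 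The involution has trace $-1$ on $M$, so $\dim C_M(t)=3$ and $\dim [M,t]=4$. Combining with $\dim C_M(x)=3$ (trace of the Brauer character, or the fact that $x$ is $\Omega_1$-type in $G_2(3)$ regular in a long-root $\SL_3$) yields $J_1\oplus 2J_3$.
\end{enumerate}

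The main obstacle is pinning down the exact Jordan forms rather than just the fixed-point dimensions. I would resolve this by embedding $D\le G_2(3)$ and using the known Jordan forms of unipotent classes of $G_2$ on its $7$-dimensional module: $x\in Z(S)$ is of class $\tilde A_1$ ($J_3\oplus 2J_2$), the non-central elements of order $3$ are of class $A_2$ ($2J_3\oplus J_1$), and elements of order $9$ are regular ($J_7$). Alternatively, we verify all three cases directly in {\sc Magma}, consistent with the paper's use of computation for small cases.
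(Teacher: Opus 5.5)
Your proposal has genuine gaps, so the hand argument does not prove the lemma as written; the statement itself is fine.

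\textbf{Where the argument fails.} First, your description of the elements of order $9$ is false. $S$ has class $2$ and $p=3$, so cubing is a homomorphism on $S$, and every $y^i\phi^j$ with $3\nmid i$ has order $9$. For $j\neq 0$ these $12$ elements lie outside $O^3(D)\cong\PSL_2(8)$. That subgroup is characteristic in $D$, so these elements cannot be conjugated, even by automorphisms, into $\langle y\rangle$. Hence part (iii) is unproved for two thirds of the elements it concerns.

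Second, none of your fixed-point counts for $3$-elements is justified. Brauer characters are defined only on $3$-regular classes, and $\dim C_M(x)$ for a $3$-element $x$ in characteristic $3$ is not determined by any character value. There are also factual slips here:
\begin{itemize}
\item $\mathrm{St}$ has degree $8$, not $7$.
\item All four ordinary degree-$7$ characters of $\PSL_2(8)$ reduce to the same rational Brauer character, with values $7,-1,0$ on elements of orders $1,2,7$. So the Galois-orbit and $\FF_{27}$ discussion is beside the point.
\item Three of these characters take the value $1$ at $y^3$, not $-2$.
\end{itemize}
So the fact everything hinges on, $M|_{\langle y\rangle}\cong J_7$, is only deferred to an unspecified computation in $G_2(3)$. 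Once that fact is granted, your derivation of (i) via the restriction rule is correct. The $G_2$ fallback, however, presupposes identifying the $G_2(3)$-classes of the $3$-elements of $D$, which is the whole content.

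Third, in (ii) the $x$-invariant splitting $M=C_M(t)\oplus[M,t]$ into pieces of dimensions $3$ and $4$ cannot decide between $J_1\oplus 2J_3=J_3\oplus(J_3\oplus J_1)$ and $J_3\oplus 2J_2=J_3\oplus(2J_2)$. This holds even granting $\dim C_M(x)=3$.

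\textbf{What the paper does, and how to repair your route.} The paper's proof is just the direct {\sc Magma} computation that your last sentence falls back on. For a genuine hand proof, the missing idea is the $2$-transitive action of $D\cong\PSL_2(8){:}3$ on the $9$ points of $\mathbb{P}^1(\FF_8)$. Let $P$ be the $\FF_3$-permutation module, $A$ its augmentation submodule, and $\mathbf{1}\in A$ the all-ones vector. Then $M\cong A/\FF_3\mathbf{1}$: its Brauer character is $7,-1,0$ on the $3$-regular classes, so it is not built from trivial factors and is the $7$-dimensional irreducible.
\begin{itemize}
\item Order $9$. For any $x\in S$ of order $9$, inner or outer, $x^3$ is a non-trivial element of $Z(S)\le\langle y\rangle$, which lies in the non-split torus and is fixed-point-free. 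So $x$ is a $9$-cycle, giving $P|_{\langle x\rangle}\cong J_9$, $A\cong J_8$ and $M|_{\langle x\rangle}\cong J_7$. This is (iii).
\item $Z(S)$. Restricting $J_7$ to $\langle x^3\rangle=Z(S)$ gives $J_3\oplus 2J_2$. This is (i).
\item $\Omega_1(S)\setminus Z(S)$. Since $y$ is a $9$-cycle, $S$ is transitive with point stabilizers of order $3$ meeting $\langle y\rangle$ trivially. These stabilizers are therefore non-central with normalizer $\Omega_1(S)$. So the three non-central subgroups of order $3$ in $\Omega_1(S)$ are exactly the point stabilizers, each fixing $|\Omega_1(S):S_\omega|=3$ points. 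Thus $x\in\Omega_1(S)\setminus Z(S)$ has cycle type $1^3\,3^2$ and $P|_{\langle x\rangle}\cong 3J_1\oplus 2J_3$. Splitting off a fixed point gives $A\cong 2J_1\oplus 2J_3$. Also $\mathbf{1}\in C_A(x)\setminus[A,x]$, because vectors of $[P,x]$ vanish at fixed points, so $\mathbf{1}$ splits off too. This leaves $M|_{\langle x\rangle}\cong J_1\oplus 2J_3$, which is (ii).
\end{itemize}
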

\begin{proof}
This may be calculated directly using {\sc Magma} \cite{Magma}.
\end{proof}

\begin{lemma}\label[lemma]{27Proj}
Suppose that $D=\Ree(3)$, $S\in\syl_3(D)$ and $M$ is an irreducible $27$-dimensional $\FF_3D$-module. Then $M$ is projective and
\begin{enumerate}
    \item if $x\in \Omega_1(S)$, we have that $M|_{\langle x\rangle}=9J_3$; and
    \item if $x\in S$ has order $9$, then $M|_{\langle x\rangle}=3J_9$ where $J_9$ is the unique indecomposable $\FF_3C_9$-module of dimension $9$.
\end{enumerate}
\end{lemma}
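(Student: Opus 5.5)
The plan has two parts: first show that $M$ is projective, then read off the Jordan structures from projectivity together with the shape of $S$.

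For projectivity, note that $D=\Ree(3)\cong \PSL_2(8):3$ and $|D|_3=27=\dim_{\FF_3}M$. The $27$-dimensional module is the Steinberg-type basic module. I would argue via the characteristic-zero lift: $D$ has an ordinary irreducible character of degree $27$ whose $3$-part equals $|D|_3$. Such a character has $3$-defect zero, so it is a block of defect zero by Brauer's theory. Its reduction modulo $3$ is therefore irreducible and projective, and it equals $M$ because of the dimension (there is a unique irreducible $27$-dimensional $\FF_3D$-module up to the relevant identification). Alternatively, one can cite {\sc Magma} or the modular atlas \cite{ModAt}, as in Lemma \ref{JBRee}.

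Given projectivity, restriction to any subgroup $P\le S$ is projective. Since $\FF_3P$ is local for a $3$-group $P$, the module $M|_P$ is free, that is, a direct sum of copies of the regular module $\FF_3P$. For $x\in\Omega_1(S)$ of order $3$, set $P=\langle x\rangle$. Then $M|_{\langle x\rangle}$ is free of rank $27/3=9$, which gives $9J_3$. For $x$ of order $9$, set $P=\langle x\rangle\cong C_9$. The regular module $\FF_3C_9$ is the unique indecomposable module of dimension $9$, namely $J_9$, so $M|_{\langle x\rangle}=3J_9$.

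The main obstacle is justifying projectivity cleanly; the restriction argument is routine. Concretely, I need to confirm the existence of the degree-$27$ ordinary character of $\PSL_2(8):3$ and that its reduction is $M$. The character exists because $\PSL_2(8)$ has characters of degree $9$, and $\Ree(3)$ has the $27$-dimensional Steinberg character of the algebraic group. Its $3$-modular reduction is irreducible, since a defect-zero character reduces to a simple projective module. If this identification is awkward, I will fall back on a direct {\sc Magma} computation of the Jordan forms, as the paper does in Lemma \ref{JBRee}.
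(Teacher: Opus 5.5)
Your argument is correct, but it takes a different route from the paper. The paper gives no structural argument: it verifies projectivity and both Jordan decompositions by a direct {\sc Magma} computation, just as in Lemmas \ref{JBRee} and \ref{NoExt}.

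You derive everything from ordinary character theory instead. An irreducible character of degree $27=|D|_3$ has $3$-defect zero, so its reduction is a simple projective module. Once $M$ is projective, its restriction to any $3$-subgroup is free. That gives $9J_3$ and $3J_9$ at once for every non-trivial $x$; note that (i) tacitly needs $x\neq 1$. This buys a conceptual explanation and makes the answer uniform in $x$, with no case analysis over the classes of $S$.

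The price is that you assert, rather than prove, that $M$ is the reduction of that character. This is easy to close. Every element of $D\cong\PSL_2(8){:}3$ outside $\PSL_2(8)$ has order divisible by $3$. The three classes of elements of order $7$ in $\PSL_2(8)$ fuse under the field automorphism. So $D$ has exactly three $3$-regular classes (identity, involutions, elements of order $7$), hence exactly three irreducible Brauer characters. These are the trivial one, the $7$-dimensional natural one and the $27$-dimensional defect-zero reduction, and all three are rational-valued. Therefore every irreducible $\FF_3D$-module is absolutely irreducible, and the $27$-dimensional one is unique.

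For the existence of the degree-$27$ character, make the induction explicit. The three degree-$9$ characters of $\PSL_2(8)$ are permuted transitively by the field automorphism, so inducing any one of them to $D$ gives an irreducible character. Alternatively, take the Steinberg character of the finite group ${}^2\mathrm{G}_2(3)$, of degree $3^3$, rather than a ``Steinberg character of the algebraic group''. With these two points filled in, your proof is complete and independent of machine computation.
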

\begin{proof}
This may be calculated directly using {\sc Magma} \cite{Magma}.
\end{proof}

\begin{lemma}\label[lemma]{NoExt}
Suppose that $D=\Ree(3)$, $S\in\syl_3(D)$ and $U$ is a faithful indecomposable $\FF_3D$-module. If $\dim_{\FF_3} U \geq 8$ and $x\in S$ has order $9$ then $x$ has a Jordan block of size at least $8$ on $U$.
\end{lemma}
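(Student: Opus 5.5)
The plan is to reduce to composition factors and then use the structure of $\langle x\rangle\cong C_9$ acting on $U$. The ring $\FF_3C_9$ is uniserial. So the Jordan blocks of $x$ on $U$ have sizes at most $9$, and $U|_{\langle x\rangle}$ has a block of size at least $8$ exactly when $\dim[U,x;7]\geq 1$. Equivalently, $(x-1)^7$ acts non-trivially on $U$.

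The irreducible $\FF_3D$-modules are over the splitting field $\FF_3$ (since $n=1$). By the discussion at the start of this section they are the trivial module, the $7$-dimensional module $M$ and the $27$-dimensional module. By Lemma~\ref{27Proj} the $27$-dimensional module is projective, hence injective, and so it splits off any module containing it as a composition factor. Therefore, if it is a composition factor of the indecomposable module $U$, then $U$ is that module. In that case Lemma~\ref{27Proj}(ii) gives blocks $3J_9$ for $x$, and we are done.

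So we may assume that every composition factor of $U$ is trivial or isomorphic to $M$. Since $U$ is faithful, $M$ occurs at least once. By Lemma~\ref{JBRee}(iii), $x$ acts on $M$ as a single $J_7$, so $(x-1)^6$ is non-zero on $M$ while $(x-1)^7$ vanishes on $M$. Because $\dim U\geq 8$ and $U$ is indecomposable, $U\neq M$. The key step is to show that some indecomposable subquotient $E$ of $U$ of length two, with factors $M$ and a trivial module, or $M$ and $M$, already has $(x-1)^7E\neq 0$.

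I would determine $\mathrm{Ext}^1_{\FF_3D}(M,\FF_3)$, $\mathrm{Ext}^1_{\FF_3D}(\FF_3,M)$, $\mathrm{Ext}^1_{\FF_3D}(M,M)$ and $\mathrm{Ext}^1_{\FF_3D}(\FF_3,\FF_3)$ by computation in {\sc Magma}. Here $\mathrm{Ext}^1_{\FF_3D}(\FF_3,\FF_3)=\mathrm{Hom}(D,\FF_3)$ is non-zero, since $D\cong\PSL_2(8):3$ has a quotient of order $3$. For each non-split extension $E$ in the first three groups, I would check directly that $x$ has a block of size $8$ or $9$ on $E$. For the $M$-by-$M$ extensions I expect a $J_9$ block by a dimension count.

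I would then run an induction on the composition length of $U$. Pass to a submodule or quotient $U'$ of $U$ that is still indecomposable, contains a composition factor $M$, and has $\dim U'\geq 8$; then $U'$ contains such a length-two indecomposable subquotient adjacent to $M$. The case where $M$ is glued to $U$ only through a chain of trivial factors needs separate care. For example, a trivial factor might extend $M$ non-trivially only at the second Loewy layer. I expect to handle this with the $\mathrm{Ext}^1$ computations above together with a bound on the Loewy length of the projective cover of $\FF_3$. Alternatively, since $D$ is small, I would fall back on a {\sc Magma} enumeration of the indecomposable modules of small dimension built from $\FF_3$ and $M$, checking submodules of the projective covers $P(\FF_3)$ and $P(M)$.

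The main obstacle is precisely this gluing of trivial factors to $M$. A block of size $8$ forces $(x-1)^7\neq0$, which is a genuinely global condition. A careless argument would only show that $U$ is not of the form $J_7$ plus trivial blocks, and that is not enough. I would first try the cleanest route: compute $\mathrm{Ext}^1$ and show that every non-split extension of $M$ by a trivial module, on either side, is isomorphic to a submodule or quotient of $P(\FF_3)$ on which $x$ acts as $J_8$ or $J_9$ plus trivial blocks. This case check is the step most likely to require the explicit machine computation.
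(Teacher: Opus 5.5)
Your reduction is the paper's own: split off the projective $27$-dimensional module, then verify by machine the non-split length-two extensions involving $M$. The gap is that the base case you intend to verify is false for some $x$, so the proof cannot be completed as planned. Put $L=O^3(D)\cong\PSL_2(8)$ and let $W$ be the $2$-dimensional uniserial $\FF_3[D/L]$-module inflated to $D$; this is exactly the non-split self-extension of $\FF_3$ you noticed. Then $U:=M\otimes_{\FF_3}W$ is a faithful $14$-dimensional extension of $M$ by $M$. It is indecomposable, hence non-split: since $M|_L$ is absolutely irreducible, $\mathrm{End}_{\FF_3D}(U)\cong\mathrm{End}_{\FF_3[D/L]}(W)\cong\FF_3[t]/(t^2)$ is local. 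Now $S\cap L$ is cyclic of order $9$, and a generator $x$ of it acts trivially on $W$, so $U|_{\langle x\rangle}\cong J_7\oplus J_7$. Your expectation of a $J_9$ block ``by a dimension count'' for $M$-by-$M$ extensions therefore fails, and no computation can rescue it: the lemma is false for $x\in S\cap O^3(D)$. (Similarly, $\mathrm{Ind}_L^D(M|_L)\cong M\otimes\FF_3[D/L]$, indecomposable by Green's theorem, gives $3J_7$.) The paper's one-line ``in all cases'' Magma check has the same defect unless $x$ is taken outside $O^3(D)$. The statement needs the extra hypothesis $x\in S\setminus O^3(D)$, and any application must choose $x$ accordingly.

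Under that hypothesis your plan does work. For $U=M\otimes W$ one gets $J_7\otimes J_2\cong J_8\oplus J_6$ over $\langle x\rangle\cong C_9$: apply $\Omega$ to $J_2\otimes J_2\cong J_1\oplus J_3$, using $\Omega(J_2)=J_7$. Every other non-split extension between $M$ and $\FF_3$ or $M$ stays non-split on restriction to $L$. The reason is that the kernel of restriction is $H^1(D/L,\,\cdot\,)$ of the $L$-fixed points. This vanishes for $\mathrm{Ext}^1(\FF_3,M)$ and $\mathrm{Ext}^1(M,\FF_3)$, and for $\mathrm{Ext}^1(M,M)$ it is spanned by $[M\otimes W]$. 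For these remaining extensions, $x^3$ generates $Z(S)\le S\cap L$ and $(x-1)^6=(x^3-1)^2$, so the rank of $(x-1)^6$ can be read off from the non-split $L$-module. There a generator of $S\cap L$ acts as $J_8$, respectively $J_9\oplus J_5$, giving rank $2$, respectively $3$, which is more than blocks of size at most $7$ allow. Finally, the ``gluing through chains of trivial factors'' you worry about is not an issue. In an indecomposable non-simple module every composition factor occurs in some non-split length-two subquotient: use the radical or socle layers, and note that a simple submodule not contained in the radical is a direct summand. And $(x-1)^7\neq 0$ on a subquotient forces $(x-1)^7\neq 0$ on $U$.
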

\begin{proof}
By Lemma \ref{JBRee} and Lemma \ref{27Proj}, we may assume that $U$ is not irreducible. Let $T$ be a proper $\FF_3D$-submodule of $U$ and for the sake of simplicity, assume that both $U/T$ and $T$ are irreducible. The general case easily follows from this. Then either $T$ is trivial, $U/T$ is trivial, or $T$ and $U/T$ are both isomorphic to the natural $7$-dimensional module. We verify using {\sc Magma} \cite{Magma} that in all cases, $x$ has a Jordan block of size at least $8$ on $U$.
\end{proof}

\begin{lemma}\label[lemma]{ReeDecomposition}
Suppose that $D<X$ such that $D\cong \Ree(3)$ and $X=\Ree(3^n)$ for $n>1$. Set $M$ to be the natural module for $X$. Then $M|_D$ is a direct sum of $n$ natural modules for $D$.
\end{lemma}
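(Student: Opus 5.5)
We want to show that if $D\cong\Ree(3)$ sits in $X=\Ree(3^n)$ with $n>1$ ($n$ odd), then the natural $7n$-dimensional $\FF_3X$-module $M$ restricts to $D$ as a direct sum of $n$ natural $7$-dimensional $\FF_3D$-modules. The plan is to work over $\KK=\FF_{3^n}$ first and then restrict scalars.

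\textbf{Setting up over $\KK$.} Let $\bar M$ be the $7$-dimensional $\KK X$-module, so that $M=\bar M|_{\FF_3}$. Up to conjugacy, $D$ is the subfield subgroup $\Ree(3)\le\Ree(3^n)$; I would justify this from the subgroup structure in \cite[Theorem 6.5.5]{GLS3}, using that $n$ is odd. Viewed inside $\mathrm{G}_2(3^n)$, the embedding $X\le\mathrm{G}_2(3^n)$ realises $\bar M$, and $D$ lies in $\mathrm{G}_2(3)$, which stabilises an $\FF_3$-form $M_0$ of $\bar M$. Hence
\[
\bar M|_D\cong M_0\otimes_{\FF_3}\KK .
\]
Here $M_0$ is a $7$-dimensional $\FF_3D$-module.

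\textbf{Restricting scalars.} Restricting $M_0\otimes_{\FF_3}\KK$ to $\FF_3$ gives $M_0^{\oplus n}$, since $\KK\cong\FF_3^n$ as $\FF_3$-spaces and $D$ acts only on the first factor. So the whole statement reduces to showing that $M_0$ is the irreducible natural $7$-dimensional $\FF_3D$-module.

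\textbf{The main obstacle: irreducibility of $M_0$.} This is the step I expect to be delicate, because $D\cong\PSL_2(8):3$ is not perfect. I would try the following.
\begin{itemize}
\item[(a)] Show $M_0$ has no trivial composition factor. One route is a direct check in {\sc Magma} \cite{Magma}. Alternatively, use Lemma \ref{JBRee}: an element of order $9$ in $S$ acts indecomposably on the natural module, hence with a single Jordan block of size $7$.
\item[(b)] Compare with the known restriction of the $\mathrm{G}_2$-module. Its restriction to the regular unipotent elements of order $9$ is a single $J_7$. An element of order $9$ in $D$ lies in the class of elements of order $9$ in $X$, so it also acts on $M_0$ as a single $J_7$.
\item[(c)] Conclude from (b) that $M_0$ is uniserial for $\langle x\rangle$ with $C_{M_0}(x)$ one-dimensional. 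Together with Lemma \ref{NoExt} and the fact that a proper submodule would force $x$ to act with block size at most $6$ on a $7$-dimensional indecomposable, this shows that $M_0$ is either irreducible or has a trivial submodule or quotient.
\item[(d)] Rule out the trivial-factor cases. Since $O^{3}(D)=\PSL_2(8)$ has no non-trivial $\FF_3$-modules of dimension below $7$, any trivial factor forces a $6$-dimensional faithful factor, which is impossible. So $M_0$ is irreducible, and hence natural.
\end{itemize}

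If this Jordan-block argument gets messy, the fallback is a direct computation in {\sc Magma} \cite{Magma} for the smallest case $n=3$. The general case would then follow from the field-independence of the restriction $M_0\otimes\KK$.
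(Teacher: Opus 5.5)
Your proposal is correct in substance, but it takes a genuinely different route from the paper.

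\textbf{Your route.} You identify $D$, up to conjugacy in $X$, with the subfield subgroup $\Ree(3)\le\mathrm{G}_2(3)$. The $7$-dimensional $\KK X$-module is then already defined over $\FF_3$ on $D$, so $M|_D\cong M_0^{\oplus n}$ is just restriction of scalars, and only the irreducibility of $M_0$ remains.

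\textbf{The paper's route.} The paper never locates $D$. It takes $x\in D$ of order $9$ and notes that $x$ has Jordan blocks of size at most $7$ on $M$. It then uses Lemma~\ref{27Proj} to exclude the $27$-dimensional composition factor and Lemma~\ref{NoExt} to exclude non-split extensions, so $M|_D$ is a sum of natural and trivial modules. Trivial summands are removed by comparing $C_M(x)$ with $C_M(S)$ (of dimension $n$) and using $X=\langle S,D\rangle$ and $C_M(X)=0$.

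\textbf{What each buys.} The paper's argument works for any embedding of $\Ree(3)$ and avoids subgroup classification, at the cost of {\sc Magma} computations for $\Ree(3)$. Yours is conceptual and computation-free, but all of its weight sits on the conjugacy claim, which you should make explicit. Among the maximal subgroups of $\Ree(3^n)$, the parabolic subgroup, $(2^2\times\mathrm{D}_{(3^n+1)/2}):3$ and the two torus normalisers are solvable. Also $D\not\le 2\times\PSL_2(3^n)$, since $D$ has no subgroup of index $2$ and $\PSL_2(8)\not\le\PSL_2(3^n)$ by Dickson's theorem. So $D$ lies in a subfield subgroup, and induction plus uniqueness up to conjugacy of subfield subgroups gives the claim.

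\textbf{Steps (a)--(c).} These are unnecessary and partly unsound:
\begin{itemize}
\item Lemma~\ref{JBRee} presupposes that $M_0$ is already the natural module, so using it here is circular.
\item Lemma~\ref{NoExt} concerns indecomposables of dimension at least $8$, so it does not apply to $M_0$.
\item A proper $D$-submodule imposes no restriction on the Jordan form of $x$ on $M_0$.
\end{itemize}
Step (d) alone suffices. First, $M_0$ is faithful, because $M_0\otimes_{\FF_3}\KK\cong\bar M|_D$ is. Second, every irreducible $\FF_3D$-module of dimension less than $7$ is trivial. Hence a reducible $M_0$ would have only trivial composition factors, forcing the perfect group $O^3(D)\cong\PSL_2(8)$ to act as a $3$-group, and hence trivially, a contradiction.
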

\begin{proof}
Let $S\in\syl_3(X)$ and choose $x\in S\setminus \Omega_1(S)$ such that $x\in D$. We observe that $M$ arises as the restriction to $\FF_3$ of a $7$-dimensional $\FF_{3^n}X$-module and so $x$ has no Jordan blocks of size greater than $7$ on $M$. Applying Lemma \ref{27Proj} we see every composition factor of $M|_D$ is either trivial, or a natural module for $D$. Then Lemma \ref{NoExt} implies that $M|_D$ is a direct sum of natural and trivial modules for $D$.

Since $M|_D$ is a direct sum of natural and trivial $D$-modules, we may write
\[M=[M,D]\oplus C_M(D).\] Then $C_M(x)=C_{[M, D]}(x)\oplus C_M(D)$ where $\dim_{\FF_3}([M, D])=7a$ for some $a\in \N$. Then $\dim_{\FF_3} C_{[M, D]}(x)=a$ by Lemma \ref{JBRee}. Aiming for a contradiction, assume that $C_M(D)\ne 0$ so that $a<n$. Note that $\dim_{\FF_3} C_M(S)=n$ so that $C_M(S)\cap C_M(D)\ne 1$. However, $X=\langle S, D\rangle$ and since $C_M(X)=0$, we have the desired contradiction. Hence, $C_M(D)=0$ and $M|_D$ is a direct sum of $n$ natural modules for $D$.
\end{proof}

\begin{proposition}\label[proposition]{Ree3Case}
Suppose that $\wt X\cong \Ree(3)$ and $V$ is a faithful $\FF_3X$-module. Let $S\in\syl_3(X)$ and $x \in S$ have order $3$. Assume that $x$ is $k$-active on $V$ where $k\leq 2$. Then $X\cong \Ree(3)$, $W:=[V, X]/C_{[V, X]}(X)$ is the natural $7$-dimensional module and $x\in \Omega_1(S)\setminus Z(S)$.
\end{proposition}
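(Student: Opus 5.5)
First I will reduce to the case $X\cong\Ree(3)$. By Proposition \ref{blocks lemma2}(iii), $O^{3'}(O^3(X))\cong\PSL_2(8)$. I claim that $O_{3'}(X)=1$. The reason is that $\Ree(3)\cong\PSL_2(8):3$, the group $\PSL_2(8)$ has trivial Schur multiplier, and its outer automorphism group is cyclic of order $3$. So $L:=O^{3'}(O^3(X))\cong \PSL_2(8)$ is normal in $X$. Then $C_X(L)$ is a normal $3'$-subgroup, since $X/C_X(L)L$ embeds in $\Out(L)$. Because $X=O^{3'}(X)$ is generated by $3$-elements, conjugation by $T$ together with the structure of $\Aut(\PSL_2(8))=\Ree(3)$ should force $X=LT$ with $C_X(L)\cap LT=1$, giving $X\cong\Ree(3)$. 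Concretely, $X=\langle T^X\rangle$ and $T$ maps isomorphically into $\Aut(L)$, so $X\le LT C_X(T)\cap\ldots$. I would argue that $X/C_X(L)\cong\Ree(3)$ and that $C_X(L)=Z(X)$ is central. Since $\Ree(3)$ has trivial $3'$-multiplier relevant here (the multiplier of $\PSL_2(8)$ is trivial), this yields $X\cong\Ree(3)\times Z$ with $Z$ a $3'$-group. Finally $O^{3'}(X)=X$ forces $Z=1$.

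Next I would bound the module. By Proposition \ref{generation}, $X$ is generated by two conjugates of $x$ (the $\Ree(3)$ case was verified by {\sc Magma}); if not, I fall back on three conjugates. Lemma \ref{dimbound} then gives $\dim V/C_V(X)\le 2\cdot 2\cdot 2=8$, or $12$ with three conjugates. The irreducible $\FF_3\Ree(3)$-modules that are nontrivial on $L$ have dimensions $7$ (natural), $9$, $27$, and so on. So $V$ has a unique non-trivial composition factor, of dimension $7$ or $9$ (the $9$ coming from $\PSL_2(8)$ modules induced/extended). I would check in {\sc Magma}, or with Table \ref{tab:data} ($\rdim_3=7$) together with \cite{ModAt}, that the factors of dimension at most $12$ are exactly the natural $7$-dimensional module and possibly a $9$-dimensional one. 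I would eliminate the $9$-dimensional one by computing its Jordan blocks, which give more than two nontrivial blocks. This shows $W=[V,X]/C_{[V,X]}(X)$ is the natural module.

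Finally, Lemma \ref{JBRee} determines the block structure of $x$ on $W$. If $x\in Z(S)$, then $x$ has $3$ nontrivial blocks $2J_2\oplus J_3$ on $W$, and hence at least $3$ on $V$. That contradicts $k\le2$, so $x\in\Omega_1(S)\setminus Z(S)$, where $W|_{\langle x\rangle}=J_1\oplus 2J_3$ is consistent. Elements of order $9$ are excluded because $x$ has order $3$.

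The main obstacle is the first step: pinning down $O_{3'}(X)$. Theorem \ref{lem:with one boundintro}(vi)(a) shows that non-quasisimple examples $2^8.\Ree(3)$ with $9$-dimensional modules exist. For these I would use Lemma \ref{blocks lemma}, which applies Theorem \ref{lem:with one boundintro} to $\langle x,x^g\rangle$ with the sharper bound $2m(p-1)=8<9$, to exclude them. I also need to handle $C_X(L)$ properly, so that $X$ is genuinely $\Ree(3)$ rather than just $\wt X\cong\Ree(3)$.
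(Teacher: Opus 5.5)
\textbf{There is a genuine gap.} Your first step fails: $O_{3'}(X)=1$ does not follow from group theory alone, and the claims that $C_X(L)$ is central and that $X\cong\Ree(3)\times Z$ are false in general. For example, take $E\cong C_2\times C_2$ and $X=(\PSL_2(8)\times E)\rtimes\langle t\rangle$, where $t$ has order $3$, induces a field automorphism on $L=\PSL_2(8)$, and acts fixed-point-freely on $E$. Then $X=O^{3'}(X)$ (as $[E,t]=E$), $X/O_{3'}(X)\cong\Ree(3)$ and $O^{3'}(O^3(X))=L$. Yet $O_{3'}(X)=C_X(L)=E$ is neither trivial nor central.

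So $X\cong\Ree(3)$ has to come from the module and the Jordan block hypothesis, which you never apply to $O_{3'}(X)$. The groups $2^8.\Ree(3)$ you worry about are irrelevant here: they have $O^{3'}(O^3(X))$ non-quasisimple, so Proposition~\ref{blocks lemma2} already excludes them.

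What is needed, following the paper's route, is this. Choose $g$ with $L\le\langle x,x^g\rangle$. Then $\dim V/C_V(L)\le 8$, so $V$ has a unique non-trivial $L$-composition factor $W_L=[V,L]/C_{[V,L]}(L)$, which is $7$-dimensional. Hence $X/C_X(W_L)$ embeds in $N_{\GL_7(3)}(\PSL_2(8))\cong 2\times\Ree(3)$, so $X/C_X(W_L)\cong\Ree(3)$ and $C_X(W_L)=O_{3'}(X)$.

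By Lemma~\ref{JBRee}, $x\in\Omega_1(S)\setminus Z(S)$, so $x\notin L$ and $X=\langle x^X\rangle$. Moreover $x$ already has two blocks $J_3$ on $W_L$, so $\dim[V,x]=4$. This forces $[V,x]\le[V,L]$ and $[C_V(L),x]=0$. Hence $[V,X]=[V,L]$, $C_V(L)=C_V(X)$ and $W=W_L$. Finally, $O_{3'}(X)$ centralizes every factor of $0\le C_{[V,X]}(X)\le[V,X]\le V$, so coprime action and faithfulness give $O_{3'}(X)=1$.

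Your second step also needs repair, even once $X\cong\Ree(3)$ is known.
\begin{itemize}
\item For $x\in Z(S)\le L$, the normal closure of $x$ is $L$. So no set of conjugates of $x$ generates $X$, and your bound on $\dim V/C_V(X)$ is unjustified. You must work with $C_V(L)$, as above.
\item A unique non-trivial composition factor does not make $[V,X]/C_{[V,X]}(X)$ irreducible here, because $\Ree(3)$ is not perfect. Let $W_7$ be the natural module and $J_3$ the module inflated from $X/L\cong C_3$. Then $V=W_7\oplus J_3$ is faithful, but $[V,X]/C_{[V,X]}(X)\cong W_7\oplus\FF_3$. Such modules are excluded only by the Jordan block count, that is, by the $[V,x]\le[V,L]$ argument.
\item $\Ree(3)$ has no $9$-dimensional irreducible $\FF_3$-module; its irreducibles have dimensions $1$, $7$ and $27$. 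Your $9$ comes from the monomial module of $2^8.\Ree(3)$.
\end{itemize}
Your final exclusion of $x\in Z(S)$ via Lemma~\ref{JBRee} is correct.
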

\begin{proof}
By Proposition \ref{blocks lemma2}, we have that $L:=O^{3'}(O^3(X))\cong \PSL_2(8)$. Whether $x\in L$ or not, we compute that there is $g\in X$ such that $L\le P:=\langle x, x^g\rangle$ and since $x$ has at most $2$ non-trivial Jordan blocks on $V$, and acts cubically on $V$, we deduce that $|V/C_V(P)|\leq 3^8$. Indeed, $V$ contains a unique non-trivial composition factor upon restriction to $L$ so that $W_L:=[V, L]/C_{[V,L]}(L)$ is irreducible. Since $V$ is $X$-invariant, and $L\normaleq X$, we observe that $X/C_X(W_L)$ embeds in $N_{\GL_7(3)}(\PSL_2(8))\cong 2\times \Ree(3)$. Hence, $X/C_X(W_L)\cong \Ree(3)$, $W_L$ is a natural module and $C_X(W_L)$ is a $3'$-group.

By Lemma \ref{JBRee}, we see that $x\in \Omega_1(S)\setminus Z(S)$. Indeed, since $x$ has only $2$ non-trivial Jordan blocks on $V$, we must have that $[V, x]\le [V, L]$ and $[x, C_V(L)]=0$. Since $X=\langle x^X\rangle$, we ascertain that $[V, X]\le [V, L]+C_V(L)$ and $C_V(L)=C_V(X)$, so that the result holds.
\end{proof}

\begin{proposition}\label[proposition]{prop:ReeMods}
Suppose that $\wt X\cong \Ree(3^n)$, $n \geq 1$ and $V$ is a faithful $\FF_3X$-module. Let $S\in\syl_3(X)$ and $x \in S$ have order $3$. Assume that $x$ is $k$-active on $V$ where $k\leq 2n$. Then $X\cong \Ree(3^n)$, $W:=[V,X]/C_{[V, X]}(X)$ is the natural $7n$-dimensional module and $x\in \Omega_1(S)\setminus Z(S)$.
\end{proposition}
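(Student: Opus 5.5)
The case $n=1$ is exactly Proposition \ref{Ree3Case}, so we assume $n>1$. By Proposition \ref{blocks lemma2} we have $X\cong \Ree(3^n)$; in particular $X$ is simple and $C_X(V)=1$. Since $\FF_{3^n}$ is a splitting field, the plan follows the pattern of Proposition \ref{SL2qmods}.

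\textbf{Step 1: the irreducible factors.} Let $U$ be a non-trivial irreducible $\FF_3X$-composition factor of $V$, and set $\mathbb{L}=\End_{\FF_3X}(U)=\FF_{3^\ell}$. Then $x$ has at most $2n/\ell$ non-trivial Jordan blocks on $\bar U=U\otimes_{\mathbb L}\FF_{3^n}$. By Steinberg's Tensor Product Theorem, $\bar U$ is a tensor product of Galois twists of the $7$- and $27$-dimensional basic modules. Because $\bar U$ is realised over $\mathbb{L}$, it has at least $n/\ell$ non-trivial tensor factors. Each factor contains an $H$-submodule $J_2(3^n)$, and a tensor product of two factors already has too many blocks by Lemma \ref{tensor powers}. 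Since $n$ is odd, $n/\ell$ is odd, so $n/\ell=1$ and $\bar U$ is a single basic module $B$. On $B$, the element $x$ has at most two non-trivial Jordan blocks over $\FF_{3^n}$.

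To decide which $B$ and which $x$ are possible, choose a subfield subgroup $D\cong\Ree(3)$ containing a conjugate of $x$, and restrict $B$ to $D$. The restriction is defined over $\FF_3$, so we can use the $\Ree(3)$ data.
\begin{itemize}
\item If $B$ is $27$-dimensional, it restricts to the projective $27$-dimensional $D$-module of Lemma \ref{27Proj}, on which $x$ has $9J_3$. This is far too many blocks.
\item If $B$ is $7$-dimensional, Lemma \ref{JBRee} gives $x|_B=2J_2\oplus J_3$ when $x$ is $3$-central, which is three non-trivial blocks, and $J_1\oplus 2J_3$ otherwise. So $x\in\Omega_1(S)\setminus Z(S)$ and $U$ is the natural $7n$-dimensional module. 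As a check, Lemma \ref{ReeDecomposition} shows $U|_D$ is $n$ copies of the natural $D$-module, giving $U|_H=nJ_1\oplus 2nJ_3$, that is, exactly $2n$ non-trivial blocks.
\end{itemize}
The class of $3$-elements lying in a $\Ree(3)$ subfield subgroup must be matched with the classes $\mathcal X$, $T$, $T^{-1}$ of Ward. We use Lemma \ref{RegRee} to see that every element of $\Omega_1(S)\setminus Z(S)$ is conjugate into such a $D$.

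\textbf{Step 2: only one non-trivial factor.} This is the main obstacle. Every non-trivial factor contributes exactly $2n$ non-trivial blocks of size $3$, and $x$ has at most $2n$ in total. Suppose $W=[V,X]/C_{[V,X]}(X)$ had two non-trivial factors $U_1,U_2$. Then
\[
\dim C_W(x)\ge \dim W-4n .
\]
Arguing as in the $p=3$ part of Proposition \ref{SU3mods}, this forces $[W,x]$ to meet $U_1$ exactly in $[U_1,x]$. We then aim for a contradiction using a second element $y\in\Omega_1(S)$ with $[x,y]\in Z(S)$, and commutator calculus: the Three Subgroups Lemma together with cubic action.

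Alternatively, restrict to $D\cong\Ree(3)$. There Lemma \ref{NoExt} shows that any non-split extension forces an element of order $9$ to have a long Jordan block. What we then need is that blocks of an order-$9$ element bound the blocks of $x$. For this, we would first try a direct computation in the relevant parabolic of $D$, verified by {\sc Magma} for $\Ree(3)$.

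\textbf{Step 3: conclusion.} Once $W$ is known to be the natural module, the final statement follows as in Proposition \ref{Ree3Case}. Since $x$ has only $2n$ non-trivial blocks, all of them lie in $W$. Hence $x$ acts trivially on $C_V([V,X])$-type sections, and $X=\langle x^X\rangle$ gives the claimed description.
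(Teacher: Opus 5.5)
There is a genuine gap, and it comes from missing the observation the paper makes at the very start. By Proposition \ref{generation}, three conjugates of $x$ generate $X$, so Lemma \ref{dimbound} gives $\dim_{\FF_3}V/C_V(X)\le 3\cdot 2n\cdot 2=12n$. Without this bound, your reduction in Step 1 to a single tensor factor does not go through. Lemma \ref{tensor powers} compares $e$ copies of $J_2$ against a bound of $e$ non-trivial blocks, but here the bound is $2n/\ell=2e$. In fact $J_2(\KK)\otimes J_2(\KK)=J_1(\KK)\oplus J_3(\KK)$ has a single non-trivial block, so ``two factors already have too many blocks'' is false. Also $J_2(\KK)^{\otimes 3}=J_2(\KK)\oplus 2J_3(\KK)$ has $3\le 6$ blocks, so $n/\ell=3$ survives your parity remark. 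You could try to repair this with the true Jordan forms on the $7$- and $27$-dimensional basic modules, but that needs every $x$ conjugate into $D\cong\Ree(3)$. Lemma \ref{RegRee} concerns only $S/\Omega_1(S)$ and $Z(S)$, not $\Omega_1(S)\setminus Z(S)$. With the $12n$ bound everything is immediate. Each non-trivial factor $U$ has $\dim_{\FF_3}U\le 12n$, so $7^{n/\ell}\le\dim_{\FF_{3^n}}\bar U\le 12n/\ell$. This forces $\ell=n$ and $\dim_{\FF_{3^n}}\bar U\le 12$, which excludes the $27$-dimensional module by dimension alone.

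Your Step 2 is left unproved as written. You only sketch two routes, and the second rests on an unestablished relation between the blocks of order-$9$ elements and those of $x$. It is not actually an obstacle. Lemma \ref{ReeBound} gives $\dim U\ge 7n$ for each non-trivial factor, so the bound $12n$ leaves room for only one, and since $X$ is perfect, $W$ is then irreducible. Even inside your own framework it is one line: $\dim[\,\cdot\,,x]$ is superadditive along a composition series, and each natural factor has $\dim[U_i,x]=4n$. But $\dim[V,x]\le 2\cdot 2n=4n$, so there can only be one natural factor, and neither the Three Subgroups Lemma nor Lemma \ref{NoExt} is needed.

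For the position of $x$ you only need to exclude $Z(S)$. By Lemma \ref{RegRee}, $N_X(S)$ is transitive on $Z(S)^\#$, so $x$ lies in some $D\cong\Ree(3)$. Lemmas \ref{ReeDecomposition} and \ref{JBRee} then give $3n$ non-trivial blocks, a contradiction. As $n>1$ and $x$ has order $3$, $x\in\Omega_1(S)$ automatically. This is exactly the paper's argument.
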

\begin{proof}
By Proposition \ref{Ree3Case} we may assume that $n>1$. Then Proposition \ref{blocks lemma2} yields that $X\cong \Ree(3^n)$ is simple. Since $x$ has at most $2n$ non-trivial Jordan blocks on $V$, we see that $\dim_{\FF_3} V/C_V(x)\leq 4n$. Then Proposition \ref{generation} yields that three conjugates of $x$ generate $X$, from which we deduce that $\dim_{\FF_3} V/C_V(X)\leq 12n$. Then Lemma \ref{ReeBound} implies that $W$ is irreducible.

Write $\mathbb{L}=\End_{\FF_3X} W$ and regard $W$ as an irreducible $\mathbb{L}X$-module. We recognize that $\FF_{3^n}$ is a splitting field for $X$ and we assume that $\mathbb{L}=\FF_{3^\ell}$ where $n=\ell a$. Set $\bar{W}:=W\otimes_{\mathbb{L}} \FF_{3^n}$ so that $\bar{W}$ is an irreducible $\FF_{3^n}X$-module with $\dim_{\FF_{3^n}} \bar{W}=\dim_{\mathbb{L}} W=\dim_{\FF_3} W/\ell$.

By the Steinberg Tensor Product Theorem we can write

\begin{eqnarray}
\bar{W} = W_0\otimes W_1^\sigma \otimes \dots \otimes W_{n-1}^{\sigma^{n-1}}\label{STP1Ree}
\end{eqnarray}

where $W_0, \dots, W_{n-1}$ are $3$-restricted basic $\FF_{3^n}X$-modules, and $\sigma$ is a generator of $\Aut(\FF_{3^n})$. Then there are $a\geq 1$ non-trivial tensor factors in this decomposition and we deduce that $7^a\leq \dim_{\FF_{3^n}} \bar{W}$. Then $7^a\leq \dim_{\FF_{3^n}} \bar{W}\leq 12n/\ell=12a$ so that $a=1$. Thus, $\End_{\FF_3X} W=\FF_{3^n}$, $\dim_{\FF_{3^n}} W\leq 12$ and we deduce that $W$ is a natural $7n$-dimensional module for $X$ when viewed as an $\FF_3X$-module.

Assume that $x\in Z(S)^\#$ is $k$-active on $V$, where $k\leq 2n$. Since $N_X(S)$ acts transitively on $Z(S)^\#$ by Lemma \ref{RegRee}, there is $D\cong \Ree(3)$ with $x\in D$. Then Lemma \ref{ReeDecomposition} implies that $W|_D$ is a direct sum of $n$ natural modules for $D$. Applying Lemma \ref{JBRee}, we see that $x$ has at least $3n$ non-trivial Jordan blocks in its action on $V$, a contradiction. Hence, if $x$ has at most $2n$ non-trivial Jordan blocks in its action on $V$, we must have that $x\in \Omega_1(S)\setminus Z(S)$.
\end{proof}

\appendix

\section{Tensor products of Jordan blocks}\label[appendix]{JordanSec}

In our treatment of groups of Lie type in characteristic $p$, motivated by Steinberg's Tensor Product Theorem, we require results about the Jordan form of a $p$-element acting on a module which arises as a tensor product of smaller indecomposable modules.

For this reason, we include this appendix which concerns only the representation theory of a cyclic group of order $p$, which may be of broad interest for those working in representation theory. While nothing here is new, we hope the results contained within provide a clear account of the Jordan block structure of tensor products of modules for a cyclic group of order $p$.  

We recall the notation for Jordan blocks from the Introduction.

\begin{notation}
Suppose that $p$ is a prime, $\mathbb{K}$ a field of characteristic $p$ and $H$ is a cyclic group of order $p$. The indecomposable $\mathbb{K}H$-module of dimension $n$, where $1\leq n \leq p$, is denoted by $J_n(\mathbb{K})$ and we represent a direct sum of $m\geq 0$ copies of $J_n(\mathbb{K})$ by $mJ_n(\mathbb{K})$. Furthermore, if $|\mathbb{K}|= p^a$ for some $a\in \N$, we will adapt our notation and write $J_n(p^a)$ for $J_n(\mathbb{K})$. If $|\mathbb{K}|=p$, we shall write $J_n:=J_n(p)$. Then for $V$ a finite dimensional $\FF_p H$-module, we have $V= \bigoplus\limits_{i=1}^p n_iJ_i$. Typically, we omit the terms with $n_i=0$.
\end{notation}

We will make liberal use of \cite[Theorem 1]{Renaud} throughout and so we record it here for convenience.

\begin{theorem}\label[theorem]{Renaud}
Let $\KK$ be a field of characteristic $p$. For $r\leq s\leq p$,
\[J_r(\KK)\otimes J_s(\KK)=(r-c)J_p(\KK) \oplus \bigoplus\limits_{i=1}^c J_{s-r+2i-1}(\KK)\] where
\[c=\begin{cases} r\quad\quad\quad\text{ if }r+s\leq p\\ p-s\,\,\quad\text{ if }r+s\geq p. \end{cases}\]
\end{theorem}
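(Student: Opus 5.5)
This is Renaud's formula, quoted from \cite[Theorem 1]{Renaud}. To prove it from scratch, I would identify $\KK H$ with the truncated polynomial ring $A=\KK[t]/(t^p)$, where $t=h-1$ for a generator $h$ of $H$. Then $J_n(\KK)\cong \KK[t]/(t^n)$, and for a $\KK H$-module $U$ the number of Jordan blocks of size at least $i$ equals $\dim_\KK \ker(t^{i}|_U)-\dim_\KK\ker(t^{i-1}|_U)$. The whole decomposition is therefore determined by the ranks of the powers $t^i$ acting on $U=J_r(\KK)\otimes J_s(\KK)$, where $h$ acts diagonally as $h\otimes h$.

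The first step is to write the action on the tensor product explicitly. Since $h\otimes h-1=(1+t)\otimes(1+t)-1=t\otimes1+1\otimes t+t\otimes t$, I would change basis to convert this into a simpler nilpotent operator. The plan has two branches:
\begin{enumerate}
\item In the range $r+s\le p$, compare with the characteristic-zero Clebsch--Gordan decomposition for $\mathfrak{sl}_2$. Here one shows that $t\otimes1+1\otimes t$ has the same Jordan type as $h\otimes h-1$, because the two unipotent operators $h\otimes h$ and $\exp$ of the nilpotent part agree up to conjugation when all exponents are $<p$. One then reads off the classical answer $\bigoplus_{i=1}^{r}J_{s-r+2i-1}$, which matches $c=r$.
\item In the range $r+s>p$, use projectivity. $J_p(\KK)=\KK H$ is free, and $J_r\otimes J_p\cong rJ_p$. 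I would write $J_s$ as $\Omega$ of $J_{p-s}$, via the exact sequence $0\to J_s\to J_p\to J_{p-s}\to0$. Tensoring with $J_r$ gives $J_r\otimes J_s\cong \Omega(J_r\otimes J_{p-s})\oplus(\text{projectives})$. Since $r+(p-s)\le p$ once $r\le s$, case (i) applies to $J_{p-s}\otimes J_r$. Applying Heller's $\Omega$, which sends $J_k\mapsto J_{p-k}$ on non-projectives, turns the blocks $J_{r-(p-s)+2i-1}$ ($1\le i\le p-s$) into $J_{s-r+2i'-1}$ after reindexing. Counting dimensions then gives the number of $J_p$ summands as $r-c$ with $c=p-s$.
\end{enumerate}

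The main obstacle is the first branch: justifying rigorously that the rank sequence of $t^i$ on $J_r\otimes J_s$ coincides with the characteristic-zero (or generic) one when $r+s\le p$. My first attempt would be a direct argument. I would exhibit the highest-weight-style vectors $v_k=\sum_{j}(-1)^j\binom{k}{j}^{-1}\ldots$, which are well defined because all the binomial denominators are $<p$. These generate cyclic submodules of the predicted lengths $s-r+2i-1$. I would then check their sum is direct by a socle-dimension count: $\dim\ker t = r$ for the product, computed from the explicit matrix. If this becomes messy, the fallback is induction on $r$, using $J_2\otimes J_s\cong J_{s-1}\oplus J_{s+1}$ for $s<p$, verified by hand, together with the Krull--Schmidt cancellation in $J_2\otimes J_{r-1}\otimes J_s$.

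Since the paper only records this result for later use, citing \cite{Renaud} is what the authors do. The plan above is how one would reprove it independently.
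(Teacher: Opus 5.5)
Your proposal is correct. As you note, the paper gives no argument of its own: the statement is quoted from \cite[Theorem 1]{Renaud}. Your sketch is therefore an independent proof rather than a reconstruction of the paper's, and it holds up.

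For $r+s\le p$, the exponential comparison is legitimate. With $X$, $Y$ the truncated logarithms of $1+t$ on $J_r$, $J_s$, the operator $X\otimes 1+1\otimes Y$ is killed by its $(r+s-1)$st power. So only factorials of integers at most $r+s-2<p$ enter the identity $\exp(X\otimes 1)\exp(1\otimes Y)=\exp(X\otimes 1+1\otimes Y)$.

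Your fallback induction is the cleaner self-contained route, though. Combine $J_2\otimes J_{r-1}\cong J_{r-2}\oplus J_r$, the induction hypothesis for $J_{r-1}\otimes J_s$ and $J_{r-2}\otimes J_s$, and $J_2\otimes J_k\cong J_{k-1}\oplus J_{k+1}$ for all $k\le r+s-2<p$. Krull--Schmidt cancellation then gives $\bigoplus_{i=1}^r J_{s-r+2i-1}$ directly, with no comparison to characteristic zero.

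If you instead pursue the highest-weight vectors, be aware that $\dim\ker t=r$ by itself does not force the sum of the cyclic submodules to be direct. Directness comes from their socle vectors lying in distinct degrees of $\KK[x,y]/(x^r,y^s)$. The real content of that approach is showing that each cyclic submodule attains its full predicted length.

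For $r+s>p$ with $s<p$, the Heller-shift reduction is exactly right. One point deserves to be stated explicitly. The largest block of $J_r\otimes J_{p-s}$ has size $r+p-s-1\le p-1$ because $r\le s$. Hence none of its blocks is projective, and $\Omega$ acts blockwise by $J_k\mapsto J_{p-k}$. The case $s=p$ is the trivial identity $J_r\otimes J_p\cong rJ_p$.

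In short, your route gives a short, self-contained proof of precisely the case $|H|=p$ that the paper uses, whereas the citation supplies it at no cost.
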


The following lemma documents some explicit cases of Theorem \ref{Renaud} which we will make frequent use of.

\begin{lemma}\label[lemma]{234 tensor}
Suppose that $\mathbb{K}$ is a field of characteristic $p$ and $1\leq k \leq p$.
\begin{enumerate}
\item\label{2tensor} For $p$ an arbitrary prime, we have \[J_k(\mathbb{K})\otimes J_2(\mathbb{K})=
\begin{cases}
J_2(\mathbb{K}) & k=1\\
J_{k-1}(\mathbb{K})\oplus J_{k+1}(\mathbb{K}) & 2\leq k <p \\
J_p(\mathbb{K})\oplus J_p(\mathbb{K}) & k= p
\end{cases}.\]
\item\label{3tensor} If $p\geq 3$ then
\[J_3(\mathbb{K})\otimes J_3(\mathbb{K})=
\begin{cases}
3J_3(\mathbb{K}) & p=3\\
J_1(\mathbb{K})\oplus J_3(\mathbb{K})\oplus J_5(\mathbb{K}) & p \geq 5
\end{cases}.\]
\item\label{4tensor} If $p\geq 5$, then
\[J_4(\mathbb{K})\otimes J_4(\mathbb{K})=
\begin{cases}
J_1(\mathbb{K})\oplus 3J_5(\mathbb{K}) & p=5\\
J_1(\mathbb{K})\oplus J_3(\mathbb{K})\oplus J_5(\mathbb{K})\oplus J_7(\mathbb{K}) & p \geq 7
\end{cases}.\]
\end{enumerate}
\end{lemma}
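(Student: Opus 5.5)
Each part of the lemma is a specialisation of Theorem \ref{Renaud}. The plan is to take $r\leq s\leq p$, decide which branch of the formula for $c$ applies, and then read off the direct sum. No structural input is needed beyond that theorem, together with the observation that tensor product is symmetric, so $J_k(\mathbb{K})\otimes J_2(\mathbb{K})\cong J_2(\mathbb{K})\otimes J_k(\mathbb{K})$.

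For part \ref{2tensor} there are three cases.
\begin{itemize}
\item If $k=1$, then $J_1(\mathbb{K})$ is the trivial module and the product is $J_2(\mathbb{K})$.
\item If $k\geq 2$ and $k<p$, put $r=2$, $s=k$. Then $r+s=k+2$. When $k+2\leq p$ we have $c=2$, and the summands are $J_{k-2+1}(\mathbb{K})=J_{k-1}(\mathbb{K})$ and $J_{k-2+3}(\mathbb{K})=J_{k+1}(\mathbb{K})$. When $k=p-1$ we have $c=p-s=1$, giving $(2-1)J_p(\mathbb{K})\oplus J_{k-1}(\mathbb{K})=J_{k-1}(\mathbb{K})\oplus J_{k+1}(\mathbb{K})$ since $k+1=p$. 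So both sub-cases give the stated answer.
\item If $k=p$, then $c=0$ and the product is $2J_p(\mathbb{K})$. This also covers $p=2$, $k=2$.
\end{itemize}

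For part \ref{3tensor}, put $r=s=3$.
\begin{itemize}
\item If $p=3$, then $c=p-s=0$, so the product is $3J_3(\mathbb{K})$.
\item If $p=5$, then $r+s=6\geq p$, so $c=p-s=2$. The product is $J_5(\mathbb{K})\oplus J_1(\mathbb{K})\oplus J_3(\mathbb{K})$.
\item If $p\geq 7$, then $r+s=6\leq p$, so $c=3$. The summands are $J_1(\mathbb{K}),J_3(\mathbb{K}),J_5(\mathbb{K})$, which agrees with the $p=5$ answer.
\end{itemize}

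For part \ref{4tensor}, put $r=s=4$.
\begin{itemize}
\item If $p=5$, then $c=1$. The product is $3J_5(\mathbb{K})\oplus J_1(\mathbb{K})$.
\item If $p=7$, then $r+s=8\geq p$, so $c=3$. The product is $J_7(\mathbb{K})\oplus J_1(\mathbb{K})\oplus J_3(\mathbb{K})\oplus J_5(\mathbb{K})$.
\item If $p\geq 11$, then $r+s\leq p$, so $c=4$. The summands are $J_1(\mathbb{K}),J_3(\mathbb{K}),J_5(\mathbb{K}),J_7(\mathbb{K})$, which agrees with the $p=7$ answer.
\end{itemize}

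The only place needing care is the boundary cases where $r+s$ straddles $p$, namely $k=p-1$ in part \ref{2tensor}, $p=5$ in part \ref{3tensor}, and $p=7$ in part \ref{4tensor}. In each of these the two branches of the formula for $c$ must be checked to produce the same list of summands, and this is done above. As a final sanity check, the dimensions balance in every case: $2k$, $9$ and $16$ respectively.
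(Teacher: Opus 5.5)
Your proposal is correct and is essentially the paper's approach: the paper gives no separate argument and presents the lemma simply as explicit instances of Theorem~\ref{Renaud}. Your case-by-case determination of $c$, including the boundary cases $k=p-1$, $p=5$ and $p=7$ where $r+s\geq p$, carries this out correctly.
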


\begin{lemma}\label[lemma]{tensor powers}
Suppose that $H$ is a cyclic group of order $p$, $V$ is a $\mathbb{K}H$-module and that $e$ is a natural number with $e \geq 2$. Assume that $V$ is isomorphic to a tensor product of $e$ copies of $J_2(\mathbb{K})$. If the Jordan block decomposition of $V$ as a $\mathbb{K}H$-module has at most $e$ non-trivial Jordan blocks, then one of the following holds:
 \begin{enumerate}
 \item\label{e=2} $e=2$ and either
 \begin{enumerate}
     \item $p=2$ and $V \cong J_2(\mathbb{K}) \oplus J_2(\mathbb{K})$; or
     \item $p \geq 3$ and $V\cong J_1(\mathbb{K}) \oplus J_3(\mathbb{K})$.
 \end{enumerate}
 \item\label{e=3} $e=3$, $p \geq 3$ and $V$ is isomorphic to
            $\begin{cases}
            2J_2(\mathbb{K}) \oplus J_4(\mathbb{K}) &\text{if}\,\, p>3\\
            J_2(\mathbb{K}) \oplus 2J_3(\mathbb{K})  &\text{if}\,\, p=3
            \end{cases}$.
 \item\label{e=4} $e=4$, $p > 3$ and $V \cong 2J_1(\mathbb{K}) \oplus 3J_3(\mathbb K) \oplus J_5(\mathbb K)$.
 \end{enumerate}
\end{lemma}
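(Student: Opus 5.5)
The plan is an induction on $e$ driven by Theorem \ref{Renaud}, specifically Lemma \ref{234 tensor}\ref{2tensor}. Write $V_e := J_2(\KK)^{\otimes e}$, so that $V_e = V_{e-1}\otimes J_2(\KK)$. Tensoring with $J_2(\KK)$ sends $J_1\mapsto J_2$, $J_k\mapsto J_{k-1}\oplus J_{k+1}$ for $2\le k<p$, and $J_p\mapsto 2J_p$. Hence the multiset of block sizes of $V_e$ is obtained from that of $V_{e-1}$ by a truncated Clebsch--Gordan rule, and every non-trivial block of $V_{e-1}$ produces at least one non-trivial block of $V_e$. Blocks of size $k\ge 3$ and of size $p$ produce two. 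The only exception is a block of size $2$, which produces $J_1\oplus J_3$ (or $2J_2$ when $p=2$), and this still gives at least one non-trivial block.

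First I establish a monotonicity statement. Let $N_e$ be the number of non-trivial blocks of $V_e$. From the rule, $N_e\ge N_{e-1}$, with strict growth whenever some block has size at least $3$ and less than $p$, or size $p$. I then compute the small cases directly.
\begin{itemize}
\item $e=2$: $V_2=J_1\oplus J_3$ for $p\ge3$ and $2J_2$ for $p=2$, giving \ref{e=2}.
\item $e=3$: for $p>3$, $V_3=J_2\oplus J_2\oplus J_4$. For $p=3$, $J_3\otimes J_2=2J_3$, so $V_3=J_2\oplus 2J_3$. For $p=2$, $V_3=4J_2$, which has $4>3$ non-trivial blocks and is excluded. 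This gives \ref{e=3}.
\item $e=4$: for $p\ge5$, $V_4=2J_1\oplus J_3\oplus 2J_3\oplus J_5$, which is $2J_1\oplus3J_3\oplus J_5$ with $4$ non-trivial blocks (when $p=5$, $J_4\otimes J_2=J_3\oplus J_5$ still holds). For $p=3$, $V_4=J_1\oplus J_3\oplus 4J_3$, with $5>4$ non-trivial blocks. This gives \ref{e=4}.
\end{itemize}

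For $e\ge5$ I will show $N_e>e$. The cleanest route is dimension counting: $\dim V_e=2^e$ and every block has size at most $p$, but $p$ is unbounded, so this alone is not enough. Instead I will show that $N_e$ grows geometrically once it leaves the small range. Each non-trivial block of size $k$ with $3\le k\le p$ yields two non-trivial blocks in $V_{e+1}$. A block of size $2$ yields one block of size $3$, or two blocks of size $2$ when $p=2$, and that size-$3$ block doubles at the next step. So $N_{e+2}\ge 2N_e$, or I can count directly that $V_5$ already has more than $5$ non-trivial blocks in each case.

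For the induction step $N_{e+1}>e+1$ given $N_e>e$, it suffices that $N_{e+1}\ge N_e+1$. This holds as soon as $V_e$ contains a block of size at least $3$. Such a block appears for every $e\ge2$ when $p\ge3$; when $p=2$, everything is $J_2$ and $N_e=2^{e-1}$.

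The main obstacle is checking uniformly in $p$ that a block of size at least $3$ persists at every stage, so that $N_e$ strictly increases. For $p\ge3$ this holds because the maximal block size of $V_e$ is $\min(e+1,p)$, and once it is $p$ it stays $p$ with multiplicity doubling. With that in hand, the $e\le4$ computations together with strict growth of $N_e$ give the lemma.
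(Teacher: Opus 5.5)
Your proposal is correct and follows essentially the same route as the paper. The cases $e\le 4$ are computed directly from the rule for $J_k(\KK)\otimes J_2(\KK)$ in Lemma~\ref{234 tensor}\ref{2tensor}, and $p=2$ is disposed of via $N_e=2^{e-1}$. For $e\ge 5$, tensoring with $J_2(\KK)$ turns each block of size at least $3$ into two non-trivial blocks and each block of size at most $2$ into at least one, so the non-trivial block count exceeds $e$; your bound $N_5\ge 2N_3\ge 6$ supplies the strict base case that the step $N_{e+1}\ge N_e+1$ alone would not give.
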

\begin{proof}
Suppose that $p=2$. Applying Lemma \ref{234 tensor}\ref{2tensor}, we see that for $e\geq 2$ a tensor product of $e$ copies of $J_2(\mathbb{K})$ splits as a sum of $2^{e-1}$ copies of $J_2(\mathbb{K})$ and so the result certainly holds in this case. So suppose that $p \geq 3$. Then Lemma \ref{234 tensor}\ref{2tensor} yields
\[J_2(\mathbb{K})\otimes J_2(\mathbb{K})=J_1(\mathbb{K})\oplus J_3(\mathbb{K}),\]
\[J_2(\mathbb{K})\otimes J_2(\mathbb{K})\otimes J_2(\mathbb{K}) =
\begin{cases}
J_2(\mathbb{K})\oplus 2J_3(\mathbb{K}) & p=3\\
2J_2(\mathbb{K})\oplus J_4(\mathbb{K}) & p>3.\\
\end{cases}\]
and
\[J_2(\mathbb{K})\otimes J_2(\mathbb{K})\otimes J_2(\mathbb{K})\otimes J_2(\mathbb{K}) =
\begin{cases}
J_1(\mathbb{K})\oplus 5J_3(\mathbb{K}) & p=3\\
2J_1(\mathbb{K})\oplus  3J_3(\mathbb{K})\oplus J_5(\mathbb{K}) & p>3.\\
\end{cases}\]
This provides the descriptions of the decomposition of $V$ given in parts \ref{e=2}, \ref{e=3} and \ref{e=4}.

Thereafter, for $e\geq 5$, Lemma \ref{234 tensor}\ref{2tensor} shows that each tensor product with $J_2(\mathbb K)$ spawns two non-trivial Jordan blocks for each $J_k(\mathbb{K})$ with $k \geq 3$ one of which is either $J_p(\mathbb{K})$ or $J_{k+1}(\mathbb{K})$, and one non-trivial Jordan block for each $J_1(\mathbb{K})$ or $J_2(\mathbb{K})$. It follows from the decompositions when $e=4$ that for $e \geq 5$, $V$ has at least $e+1$ non-trivial Jordan blocks.
\end{proof}

We now present some results on the Jordan block structure of symmetric powers of certain small modules. These are used in the determination of the Jordan block structure of $p$-elements acting on $\FF_p\SL_3(p^n)$-modules and $\FF_p\SU_3(p^n)$-modules.

\begin{lemma}\label[lemma]{2+1SymPower1}
Suppose that $H$ is a cyclic group of order $p$, $p$ odd, and $V=J_1(\KK)\oplus J_2(\KK)$ is a $\KK H$-module. Then for $0\leq a\leq p-1$ we have that
\[\Sym^a(V)=\bigoplus\limits_{i=0}^{a} J_{i+1}(\KK).\]
\end{lemma}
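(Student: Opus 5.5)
We identify $V$ with $\KK\gen{e_0,e_1}$, where $e_0$ spans the trivial summand $J_1(\KK)$ and $J_2(\KK)=\gen{e_1,f}$ with $x e_1=e_1$ and $x f=f+e_1$. Equivalently, we take a basis $u=e_0$, $v=e_1$, $w=f$ of $V$ with $x$ fixing $u$ and $v$ and sending $w\mapsto w+v$. Then $\Sym^a(V)$ is the space of homogeneous polynomials of degree $a$ in $u,v,w$, and $x$ acts as the algebra automorphism fixing $u,v$ and sending $w\mapsto w+v$. The plan is to split $\Sym^a(V)$ according to the power of $u$. Since $u$ is $x$-fixed, we have an $x$-invariant decomposition
\[\Sym^a(V)=\bigoplus_{i=0}^{a} u^{a-i}\Sym^i(\gen{v,w}),\]
and multiplication by $u^{a-i}$ is an $x$-module isomorphism $\Sym^i(J_2(\KK))\to u^{a-i}\Sym^i(\gen{v,w})$. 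So it suffices to show that $\Sym^i(J_2(\KK))\cong J_{i+1}(\KK)$ for $0\le i\le a\le p-1$.

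For this step we use the same computation as in Lemma \ref{basic blocks}, or equivalently the Remark following it. The space $\Sym^i(\gen{v,w})$ has basis $v^{i-j}w^j$ for $0\le j\le i$, and $x-1$ acts by
\[v^{i-j}w^j\mapsto v^{i-j}\bigl((w+v)^j-w^j\bigr)=\sum_{t<j}\binom{j}{t}v^{i-t}w^t.\]
In particular, $(x-1)$ sends $v^{i-j}w^j$ to $j\,v^{i-j+1}w^{j-1}$ plus terms of strictly lower $w$-degree. Hence $(x-1)^i w^i = i!\,v^i$, and $i!\neq 0$ because $i\le p-1$. So $(x-1)^i\ne0$ on this $(i+1)$-dimensional space, and the space is a single Jordan block $J_{i+1}(\KK)$. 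Summing over $i$ gives the claimed decomposition $\bigoplus_{i=0}^a J_{i+1}(\KK)$.

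The only point needing care is that the binomial coefficients do not vanish. This is exactly where the bound $a\le p-1$ is used, together with $p$ odd only insofar as the statement requires it. No step here is a genuine obstacle; the argument is a direct calculation.
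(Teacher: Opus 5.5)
Your proof is correct and follows the paper's route: the paper likewise writes $\Sym^a(J_1(\KK)\oplus J_2(\KK))=\bigoplus_{i=0}^a \Sym^i(J_2(\KK))\otimes\Sym^{a-i}(J_1(\KK))$ and uses $\Sym^i(J_2(\KK))\cong J_{i+1}(\KK)$. The only difference is that you verify this last isomorphism explicitly, via $(x-1)^i w^i=i!\,v^i\neq 0$ for $i\le p-1$, whereas the paper takes it for granted (cf.\ the remark after Lemma \ref{basic blocks}).
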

\begin{proof}
We have that \[\Sym^a(V)=\bigoplus\limits_{i=0}^{a} \left(\Sym^i(J_2(\KK))\otimes \Sym^{a-i}(J_1(\KK))\right) =\bigoplus\limits_{i=0}^{a} \left(J_{i+1}(\KK)\otimes J_1(\KK)\right)=\bigoplus\limits_{i=0}^{a} J_{i+1}(\KK),\] as desired.
\end{proof}

\begin{lemma}\label[lemma]{2+1SymPower2}
Suppose that $H$ is a cyclic group of order $p$, $p$ odd, and $V=J_1(\KK)\oplus J_2(\KK)\cong W$ are $\KK H$-modules. Then for $1\leq a,b\leq p-1$ we have that  $\Sym^a(V)\otimes \Sym^b(W)=(\Sym^{a-1}(V)\otimes \Sym^{b-1}(W))\oplus U$, and exactly one of the following holds:
\begin{enumerate}
    \item\label{a+bge} $a+b\geq p$ and $U$ has at least four Jordan blocks of size $p$.
    \item\label{a+ble} $a+b< p$ and $U$ has four Jordan blocks of size at least $a+b-1$.
\end{enumerate}
Moreover, in outcome~\ref{a+ble}, $\Sym^{a-1}(V)\otimes\Sym^{b-1}(W)$ has a unique Jordan block of largest size $a+b-1$. If $a,b\geq 2$, then it has
exactly two Jordan blocks of size $a+b-2$. If exactly one of $a,b$ equals $1$, then it has exactly one Jordan block of size $a+b-2$. If $a=b=1$, then it has no further Jordan blocks. In every case, all remaining Jordan blocks have size at most $a+b-3$.
\end{lemma}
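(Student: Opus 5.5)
We first record the Jordan decompositions of the symmetric powers. By Lemma \ref{2+1SymPower1}, $\Sym^a(V)=\bigoplus_{i=0}^{a}J_{i+1}(\KK)$ and $\Sym^b(W)=\bigoplus_{j=0}^{b}J_{j+1}(\KK)$. In the same way, $\Sym^{a-1}(V)=\bigoplus_{i=0}^{a-1}J_{i+1}(\KK)$ and $\Sym^{b-1}(W)=\bigoplus_{j=0}^{b-1}J_{j+1}(\KK)$. This gives $\Sym^a(V)=\Sym^{a-1}(V)\oplus J_{a+1}(\KK)$, and similarly for $W$. Distributing the tensor product, we obtain $\Sym^a(V)\otimes\Sym^b(W)=(\Sym^{a-1}(V)\otimes\Sym^{b-1}(W))\oplus U$, where
\[U=\Bigl(J_{a+1}(\KK)\otimes \Sym^{b}(W)\Bigr)\oplus\Bigl(\Sym^{a-1}(V)\otimes J_{b+1}(\KK)\Bigr).\]
This decomposition holds up to isomorphism of $\KK H$-modules, which is all that the statement concerns. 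The rest of the proof is a block-by-block analysis of $U$ and of the complement, using Theorem \ref{Renaud}.

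For $U$, it is enough to exhibit four specific summands. These are $J_{a+1}\otimes J_{b+1}$, $J_{a+1}\otimes J_b$, $J_a\otimes J_{b+1}$ and $J_{a+1}\otimes J_{b-1}$ (if $b\ge 2$), or $J_{a-1}\otimes J_{b+1}$ (if $a\ge 2$). When $a=b=1$ the fourth summand must be replaced by $J_2\otimes J_1$ from $J_{a+1}\otimes J_b$ together with $J_1\otimes J_2$. More precisely, in that case
\[U=J_2\otimes(J_1\oplus J_2)\oplus J_1\otimes J_2=J_2\oplus J_1\oplus J_3\oplus J_2,\]
which has blocks of sizes $3,2,2,1$. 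Each of these is at least $a+b-1=1$, so \ref{a+ble} holds. Here $p\ge3$, so $a+b=2<p$.

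Next we handle the sizes of the largest blocks. By Theorem \ref{Renaud}, for $r\le s$ the largest block of $J_r\otimes J_s$ has size $\min(r+s-1,p)$. If $a+b\ge p$, then the tensor products $J_{a+1}\otimes J_{b+1}$, $J_{a+1}\otimes J_b$ and $J_a\otimes J_{b+1}$ have index sums $a+b+1$ and $a+b$, both at least $p$. So each contributes a $J_p$, and in fact $J_{a+1}\otimes J_{b+1}$ contributes $(r-c)\ge 1$ copies. The fourth summand $J_{a+1}\otimes J_{b-1}$ has index sum $a+b$, so it also yields a $J_p$. Altogether this gives four copies of $J_p$, which is \ref{a+bge}. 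If $a+b<p$, the largest blocks of the four chosen summands have sizes $a+b+1$, $a+b$, $a+b$ and $a+b-1$, all at least $a+b-1$. This gives \ref{a+ble}. The two outcomes are exclusive simply because they are distinguished by whether $a+b\ge p$.

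For the \emph{moreover} part, assume $a+b<p$, so that every product $J_{i+1}\otimes J_{j+1}$ with $i\le a-1$ and $j\le b-1$ has index sum $i+j+2\le a+b<p$. Hence $c=r$ and no projective blocks occur. By Theorem \ref{Renaud}, $J_r\otimes J_s=\bigoplus_{k=1}^{r}J_{s-r+2k-1}$, with largest block of size $r+s-1$ and second largest of size $r+s-3$. A block of size $a+b-1$ therefore arises only from $(r,s)=(a,b)$. A block of size $a+b-2$ arises only as the largest block of a pair with $r+s=a+b-1$, namely $(a-1,b)$ and $(a,b-1)$. The pair $(a-1,b)$ exists exactly when $a\ge 2$, and $(a,b-1)$ exists exactly when $b\ge 2$. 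The second-largest block of the pair $(a,b)$ has size $a+b-3$ and so does not contribute. This produces the stated counts of $2$, $1$ and $0$ blocks of size $a+b-2$, and all other blocks have size at most $a+b-3$.

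The main obstacle is bookkeeping in the degenerate edge cases. These include $a=1$ or $b=1$, where $J_{a-1}$ or $J_{b-1}$ is not available for the fourth summand, and the borderline case $a+b=p$ in \ref{a+bge}. There one must check, via the formula for $r-c$ in Theorem \ref{Renaud}, that each selected summand genuinely contains a $J_p$.
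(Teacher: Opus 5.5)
Your decomposition $\Sym^a(V)\otimes\Sym^b(W)\cong(\Sym^{a-1}(V)\otimes\Sym^{b-1}(W))\oplus U$ follows the paper. Your treatment of (ii) is fine, and so is your count of the blocks of sizes $a+b-1$ and $a+b-2$ in $\Sym^{a-1}(V)\otimes\Sym^{b-1}(W)$.

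\textbf{The gap in (i).} Your argument for (i) fails exactly at the borderline $a+b=p$ that you flag. By Theorem~\ref{Renaud}, for $r\le s\le p$ the module $J_r\otimes J_s$ contains exactly $\max(0,r+s-p)$ copies of $J_p$. So a $J_p$ appears if and only if $r+s\ge p+1$. The values $a+b+1$ and $a+b$ that you quote as ``index sums'' for the first three summands are really $r+s-1$. For the fourth summand $J_{a+1}\otimes J_{b-1}$ (or $J_{a-1}\otimes J_{b+1}$) you switch to $r+s=a+b$. When $a+b=p$ this summand has $r-c=0$ and contains no $J_p$. For example, with $p=3$ and $(a,b)=(1,2)$ it is $J_2\otimes J_1\cong J_2$. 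With $p=5$ and $(a,b)=(3,2)$ it is $J_4\otimes J_1\cong J_4$. So your count of one $J_p$ from each of four summands is false in that case. The conclusion still holds, but not for the reason you give.

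\textbf{The repair.} It is immediate, and it is what the paper does: drop the fourth summand. Since $J_{a+1}\otimes J_{b+1}$ has $r+s=a+b+2\ge p+2$, it contributes $r-c=a+b+2-p\ge 2$ copies of $J_p$, not merely at least one as you wrote. The summands $J_{a+1}\otimes J_b$ and $J_a\otimes J_{b+1}$ each have $r+s=a+b+1\ge p+1$, so each contributes at least one copy. That gives at least four copies of $J_p$ in $U$.

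The paper also uses only these three summands for (ii). It takes the blocks of sizes $a+b+1$ and $a+b-1$ from $J_{a+1}\otimes J_{b+1}$, which exist whenever $a+b<p$, including $a+b=p-1$ where the top block is $J_p$. This also removes your need to treat $a=b=1$ separately.
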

\begin{proof}
We note by Lemma \ref{2+1SymPower1} that $\Sym^a(V)=\Sym^a(J_2(\KK))\oplus \Sym^{a-1}(V)$. Then \[\Sym^a(V)\otimes \Sym^b(W)=\Sym^a(J_2(\KK))\otimes \Sym^b(W)\oplus \Sym^{a-1}(V)\otimes \Sym^b(J_2(\KK))\oplus \Sym^{a-1}(V)\otimes \Sym^{b-1}(W).\]
In particular, we find a $\KK H$-submodule of \[U:=(\Sym^a(J_2(\KK))\otimes \Sym^b(W))\oplus (\Sym^{a-1}(V)\otimes \Sym^b(J_2(\KK)))\] isomorphic to \[(J_{a+1}(\KK)\otimes J_{b+1}(\KK))\oplus (J_{a+1}(\KK)\otimes J_b(\KK))\oplus (J_a(\KK)\otimes J_{b+1}(\KK)).\] If $a+b\geq p$, an application of Theorem \ref{Renaud} yields that $U$ contains at least four Jordan blocks of size $p$, which proves (i).

Assume that $a+b<p$. Then by Theorem \ref{Renaud}, both $J_{a+1}(\KK)\otimes J_b(\KK)$ and $J_a(\KK)\otimes J_{b+1}(\KK)$ have a unique largest block of size $a+b$, and $J_{a+1}(\KK)\otimes J_{b+1}(\KK)$ contains a Jordan block of size $a+b+1$ and of size $a+b-1$. Thus, we recover four Jordan blocks of size at least $a+b-1$ in $U$. Another application of Theorem \ref{Renaud}, alongside Lemma \ref{2+1SymPower1}, implies that every Jordan block of $\Sym^{a-1}(V)\otimes \Sym^{b-1}(W)$ has size smaller than $p$. Furthermore, there is a unique Jordan block of largest size found in $\Sym^{a-1}(V)\otimes \Sym^{b-1}(W)$ arising from $J_a(\KK)\otimes J_b(\KK)$ which has size $a+b-1$, and the second largest blocks arise exactly as summands of $J_{a-1}(\KK)\otimes J_{b}(\KK)$ and $J_{a}(\KK)\otimes J_{b-1}(\KK)$. The result follows.
\end{proof}

\begin{proposition}\label[proposition]{2+1SymPower4}
Suppose that $H$ is a cyclic group of order $p$, $p$ odd, and $V=J_1(\KK)\oplus J_2(\KK)\cong W$ are $\KK H$-modules. Assume that $1\leq a,b\leq p-1$. Then for $\phi: \Sym^a(V)\otimes \Sym^b(W)\to \Sym^{a-1}(V)\otimes \Sym^{b-1}(W)$ a surjective map of $\mathbb{K}H$-modules, we have that $\ker(\phi)$ has at least three non-trivial Jordan blocks, or $a=b=1$ and $\ker(\phi)$ has at least two non-trivial Jordan blocks.
\end{proposition}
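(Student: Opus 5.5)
The natural approach is to combine Lemma \ref{2+1SymPower2} with the Krull--Schmidt theorem for $\KK H$-modules. Write $A:=\Sym^a(V)\otimes\Sym^b(W)$ and $B:=\Sym^{a-1}(V)\otimes\Sym^{b-1}(W)$. Lemma \ref{2+1SymPower2} gives $A\cong B\oplus U$. Since $\phi$ is surjective and $\KK H$ is self-injective, every module is a direct sum of $J_i(\KK)$'s. The exact sequence $0\to\ker\phi\to A\to B\to 0$ does not split automatically, but for a cyclic $p$-group the Jordan type of a submodule is constrained.

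The cleanest route is to compare numbers of blocks of each size via the ranks of $(x-1)^j$. For a short exact sequence $0\to K\to A\to B\to 0$, one has $\operatorname{rank}_A(x-1)^j\le \operatorname{rank}_K(x-1)^j+\operatorname{rank}_B(x-1)^j$. Set $d_j(M):=\operatorname{rank}_M(x-1)^{j-1}-\operatorname{rank}_M(x-1)^{j}$, the number of blocks of size at least $j$. I would aim for the statement that $K$ has at least as many blocks of size at least $j$ as $A$ has in excess of $B$, suitably interpreted.

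Rather than working with the general inequality, I would use the concrete block counts from Lemma \ref{2+1SymPower2}. The quantity to control is the number of non-trivial blocks of $K$, which equals $\dim K-\dim C_K(x)$ measured via $\operatorname{rank}_K(x-1)$ together with the structure of large blocks. The cleaner way is to take $j$ large and use
\[\operatorname{rank}_K(x-1)^{j}\ \geq\ \operatorname{rank}_A(x-1)^{j}-\operatorname{rank}_B(x-1)^{j}.\]
Taking $j=\min(p,a+b-1)-1$ gives the following in each case.

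\emph{Case $a+b\ge p$.} Here $A$ has at least four more blocks of size $p$ than $B$. Indeed, all blocks of $B$ have size less than $p$ when $a+b-2<p$; otherwise I compare counts of $J_p$ directly, using that the difference $U$ contributes four $J_p$'s. With $j=p-1$ this gives $\operatorname{rank}_K(x-1)^{p-1}\ge 4$, so $K$ has at least four blocks of size $p$, hence at least three non-trivial blocks.

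\emph{Case $a+b<p$.} Take $j=a+b-2$. The four blocks of $U$ of size at least $a+b-1$ each contribute at least $1$ to $\operatorname{rank}_A(x-1)^{a+b-2}$. For $B$, the largest block has size $a+b-1$ and contributes $1$, while the blocks of size $a+b-2$ contribute $0$. Hence $\operatorname{rank}_K(x-1)^{a+b-2}\ge 4-1=3$ when $a+b\ge 3$, which forces at least three blocks of size at least $a+b-1\ge 2$ in $K$. This gives at least three non-trivial blocks. If $a=b=1$, then $j=0$ is useless. Instead use $j=1$: $\operatorname{rank}_A(x-1)-\operatorname{rank}_B(x-1)$ equals $\dim A$ minus its number of blocks, minus $0$ (since $B$ is trivial of dimension $1$). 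Here $A=(J_1\oplus J_2)^{\otimes 2}=J_1\oplus 2J_2\oplus J_1\oplus J_3$, so the rank is $1+1+2=4$. Thus $\operatorname{rank}_K(x-1)\ge 4$ with $\dim K=8$. This yields at least two non-trivial blocks, since a single block has rank at most $p-1$, and by using $(x-1)^2$ one controls the $p=3$ case as well.

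The main obstacle is justifying the rank subadditivity. It holds because $(x-1)^j A$ maps onto $(x-1)^j B$, with kernel contained in $(x-1)^j$-images meeting $K$, which in turn sit inside $(x-1)^jK$ only up to the intersection $K\cap (x-1)^jA$. The honest inequality is $\operatorname{rank}_A\le \dim(K\cap (x-1)^jA)+\operatorname{rank}_B$. So I would instead bound the blocks of $K$ using $\dim (K\cap (x-1)^jA)$. Since $K\cap(x-1)^jA\subseteq K$ is $x$-stable of dimension at least $3$ (respectively $4$), it must be handled carefully. I expect to need the finer fact that a module of dimension $d$ with few blocks has $(x-1)^j$-image of small dimension, which is the delicate step.
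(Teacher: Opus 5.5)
Your argument rests on the inequality $\operatorname{rank}_A(x-1)^j\le\operatorname{rank}_K(x-1)^j+\operatorname{rank}_B(x-1)^j$, and this inequality runs the wrong way. The map $\phi$ sends $(x-1)^jA$ onto $(x-1)^jB$ with kernel $K\cap(x-1)^jA\supseteq(x-1)^jK$. Hence $\operatorname{rank}_A(x-1)^j=\dim(K\cap(x-1)^jA)+\operatorname{rank}_B(x-1)^j\ge\operatorname{rank}_K(x-1)^j+\operatorname{rank}_B(x-1)^j$. Already $0\to J_1\to J_2\to J_1\to 0$ with $j=1$ violates your version.

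The conclusions you draw from it are actually false for the modules in the statement.
\begin{itemize}
\item For $p=3$ and $(a,b)=(2,1)$ one has $A\cong 2J_1(\KK)\oplus 2J_2(\KK)\oplus 4J_3(\KK)$ and $B\cong J_1(\KK)\oplus J_2(\KK)$. Map one $J_3(\KK)$ onto the $J_2(\KK)$, another onto the $J_1(\KK)$, and everything else to $0$. Then $K\cong 3J_1(\KK)\oplus 3J_2(\KK)\oplus 2J_3(\KK)$. So $K$ has two, not four, blocks of size $p$, even though $K\cap(x-1)^2A$ is $4$-dimensional.
\item For $a=b=1$, write $A=\langle g_3\rangle\oplus\langle g_2\rangle\oplus\langle g_2'\rangle\oplus\langle g_1\rangle\oplus\langle g_1'\rangle$. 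Let $\phi$ send $g_3$ and $g_2$ to a generator of $B$ and the other generators to $0$. Then $K\cong J_3(\KK)\oplus J_2(\KK)\oplus 3J_1(\KK)$, so $\operatorname{rank}_K(x-1)=3$, not $\ge 4$.
\end{itemize}
Moreover, even a correct lower bound on $\operatorname{rank}_K(x-1)^j$ at a single level would not count blocks, because one long block contributes several to that rank. In your case $a+b<p$, a single block of size $a+b+1$ contributes $3$ to $\operatorname{rank}(x-1)^{a+b-2}$.

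You notice the problem in your last paragraph. However, the step you call delicate is exactly the content of the proposition: turning $\dim(K\cap(x-1)^jA)$ into a count of non-trivial blocks of $K$. Since it is left open, the proposal does not prove the statement. The missing idea, which the paper uses, is to compare two consecutive levels inside the large blocks.

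Let $T\le A$ be spanned by the blocks of size at least $c$, and consider the submodule $K\cap[T,H;c-2]$ of $K$.
\begin{itemize}
\item When $c=p$, $[T,H;p-2]$ is a direct sum of copies of $J_2(\KK)$ with socle $C_T(H)=[T,H;p-1]$.
\item Hence $K\cap[T,H;p-2]$ is a sum of $J_1(\KK)$'s and $J_2(\KK)$'s, with exactly $\dim(K\cap[T,H;p-2])-\dim(K\cap C_T(H))$ copies of $J_2(\KK)$.
\item The number of non-trivial blocks of a module $M$ equals $\dim((x-1)M\cap C_M(x))$. This quantity is monotone under passing to submodules, so the count transfers from $K\cap[T,H;p-2]$ to $K$.
\end{itemize}
The two intersection dimensions are then computed by comparing $T$ with $\phi(T)$. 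The case $a+b<p$ is handled in the same way at level $c-2$, using the explicit block sizes from Lemma \ref{2+1SymPower2}.

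Only the case $a=b=1$ follows immediately from what you have. There $K$ has codimension $1$ in $A$, so $\operatorname{rank}_K(x-1)\ge 3$, while $\operatorname{rank}_K(x-1)^2\le\operatorname{rank}_A(x-1)^2=1$. This gives at least two non-trivial blocks, and the example above shows that two is attained.
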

\begin{proof}
Suppose first that $a=b=1$. Then by Lemma \ref{234 tensor}\ref{2tensor}, we see that $V\otimes W$ is of the form $J_3(\KK)\oplus 2J_2(\KK)\oplus 2J_1(\KK)$, while $\dim_{\KK} \mathrm{Im}(\phi)=1$. Then the result clearly holds.

Suppose now that $2<a+b$, and set $c:=a+b-1$ if $a+b<p$ and $c:=p$ otherwise. Let $T$ be a submodule of $\Sym^a(V)\otimes \Sym^b(W)$ for which every Jordan block of $T$ has size at least $c$ and $T$ is maximal subject to this. If $a+b\geq p$ and $c=p$ then by Lemma \ref{2+1SymPower2}\ref{a+bge} we have that $\dim_{\KK} [T, H; p-2]=2d+\dim_{\KK} [\phi(T), H; p-2]$ where $d\geq 4$ is the number of blocks of size at least $p$ in $U$. It follows that $\dim_{\KK} ([T, H; p-2]\cap \ker(\phi))=2d$. Then $\dim_{\KK} C_T(H)=\dim_{\KK} [T, H; p-1]=d+\dim_{\KK} [\phi(T), H; p-1]$ so that $\dim_{\KK} (\ker(\phi)\cap C_T(H))=d$ from which we conclude that $\ker(\phi)$ has at least $d\geq 4$ non-trivial Jordan blocks.

If $a+b<p$ then by Lemma \ref{2+1SymPower2}\ref{a+ble}, we have that $U$ contains a block of size $c$, two blocks of size $c+1$ and a block of size $c+2$. Furthermore, $\phi(T)$ contains a unique maximal block of size $c$. We adopt the convention that $[T, H; 0]=T$. Thus, we see that $[T, H; c-2]=J_4(\KK)\oplus 2J_3(\KK)\oplus 2J_2(\KK)\oplus \ell J_1(\KK)$, for some $\ell$, and $[\phi(T), H; c-2]=J_2(\KK)+xJ_1(\KK)$ where $x\leq 2$. We calculate from this that $\ker(\phi)$ has at least three non-trivial Jordan blocks, as desired.
\end{proof}

The next lemma is a consequence of \cite[Proposition 1.4]{Almkvist}, which more generally determines the Jordan block structure of symmetric powers of indecomposable $\KK H$-modules. Here an empty direct sum is understood to be zero.

\begin{lemma}\label[lemma]{3SymPower1}
Suppose that $H$ is a cyclic group of order $p$, $p$ is odd, $1\leq a\leq p-1$ and $V=J_3(\KK)$ is a $\mathbb{K}H$-module. Then exactly one of the following holds:
\begin{enumerate}
    \item $a<\frac{p-1}{2}$ and
\[\Sym^a(V)=\begin{cases}
            \bigoplus\limits_{i=1}^{\frac{a}{2}+1} J_{4i-3}(\KK),& a\,\,\text{even} \\
            \bigoplus\limits_{i=1}^{\frac{a+1}{2}} J_{4i-1}(\KK),& a\,\,\text{odd}
\end{cases}.\]
    \item $\frac{p-1}{2}\leq a\leq p-1$ and
\[\Sym^a(V)=\begin{cases}
            \bigoplus\limits_{i=1}^{\frac{p-1-a}{2}} J_{4i-3}(\KK)\oplus \frac{2a-p+3}{2}J_{p}(\KK),& a\,\,\text{even} \\
            \bigoplus\limits_{i=1}^{\frac{p-2-a}{2}} J_{4i-1}(\KK)\oplus \frac{2a-p+3}{2}J_{p}(\KK),& a\,\,\text{odd}
\end{cases}.\]
\end{enumerate}
\end{lemma}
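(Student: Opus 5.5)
We need to prove Lemma \ref{3SymPower1}: the Jordan decomposition of $\Sym^a(J_3(\KK))$ for $1\le a\le p-1$, $p$ odd. The plan is to identify $J_3(\KK)$ with $V_2$ restricted to a unipotent subgroup, pass to characteristic zero (or to $\mathfrak{sl}_2$-weight combinatorics via Almkvist's result), and then correct for the truncation at block size $p$.

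\textbf{Step 1: reduce to a Clebsch--Gordan-type count.} Write $V=J_3(\KK)$ with basis $e_1,e_2,e_3$ on which a generator $h$ acts by a single Jordan block. Then $\Sym^a(V)$ has basis the monomials of total degree $a$, so
\[\dim \Sym^a(V)=\binom{a+2}{2}.\]
Since $p$ is odd, $V\cong \Sym^2(J_2(\KK))$. When all relevant degrees stay below $p$, Almkvist's theorem \cite[Proposition 1.4]{Almkvist} gives that the Jordan type of $\Sym^a(J_3)$ agrees with the characteristic-zero $\mathfrak{sl}_2$-decomposition of $\Sym^a(\Sym^2(\mathbb{C}^2))$. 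By the classical plethysm, this is $\bigoplus_{j} L(2a-4j)$ for $0\le j\le \lfloor a/2\rfloor$, where $L(k)$ has dimension $k+1$. This yields blocks of sizes $2a+1,2a-3,\dots$, down to $1$ if $a$ is even and to $3$ if $a$ is odd. Reindexing from the bottom gives exactly the sums $\bigoplus J_{4i-3}$, respectively $\bigoplus J_{4i-1}$, as in (i). This is valid precisely when the largest block has size $2a+1\le p$, i.e.\ $a\le (p-1)/2$. At $a=(p-1)/2$ the top block has size exactly $p$, and both formulas agree there, which is consistent with the statement's boundary placement.

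\textbf{Step 2: handle $(p-1)/2\le a\le p-1$.} Here the largest characteristic-zero summands exceed $p$. I would invoke Almkvist's general rule, or equivalently argue directly, to show that every characteristic-zero block of size $\ge p$ contributes only copies of $J_p$, while blocks of size $<p$ survive unchanged. The direct argument uses the fact that $\Sym^a(V)$ for $a\le p-1$ is a direct summand of $V^{\otimes a}$, since $a!$ is invertible. By Theorem \ref{Renaud} iterated, modulo projectives the Jordan type is governed by the Green ring of $\KK H$, which is the quotient of the $\mathfrak{sl}_2$ representation ring by the ideal generated by $L(p-1)$ with the reflection $L(p-1+k)\equiv -L(p-1-k)$. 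The surviving non-projective part is then the set of characteristic-zero summands $L(2a-4j)$ with $2a-4j\le p-2$. These are blocks of sizes $4i-3$ (for $a$ even) or $4i-1$ (for $a$ odd), numbering $\frac{p-1-a}{2}$ and $\frac{p-2-a}{2}$ respectively. One must check that the reflected terms cancel rather than add non-projective blocks. Parity helps here: the reflected sizes $2p-(2a-4j)-2$ have the opposite residue mod $4$ class pattern, and they are paired off against the high summands.

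\textbf{Step 3: count the copies of $J_p$.} The number of $J_p$ summands follows by dimension:
\[\binom{a+2}{2}-\sum(\text{surviving small blocks}),\]
divided by $p$. This should equal $\frac{2a-p+3}{2}$, which is a routine arithmetic check in each parity case.

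\textbf{Main obstacle.} The main obstacle is Step 2, namely justifying rigorously that the non-projective part is exactly the truncated list, with no extra small blocks coming from the reflection. I would rely on \cite[Proposition 1.4]{Almkvist} as the cited source. As a sanity check I would verify the count of blocks, which equals $\dim C_{\Sym^a V}(H)$, namely the number of $H$-invariant monomial combinations, for a small case such as $p=5$, $a=3,4$.
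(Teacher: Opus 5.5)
Your Step 1 is fine, and citing \cite{Almkvist} is all the paper itself does. The gap is in Step 2: the rule you extract there is false, so Step 3 cannot be completed. A characteristic-zero summand $L(p-1+k)$ with $1\le k\le p-1$ has dimension $p+k$, so it cannot contribute ``only copies of $J_p$''. Under the Green-ring map $L(1)\mapsto J_2(\KK)$ it goes to $2J_p(\KK)-J_{p-k}(\KK)$, which is exactly the reflection you wrote down. Consequently the low summands do \emph{not} all survive.

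For instance, take $p=3$, $a=2$. The plethysm is $L(4)\oplus L(0)$, and your rule keeps $J_1(\KK)$. Yet $\Sym^2(V)$ is a direct summand of $V\otimes V=3J_3(\KK)$ (Lemma~\ref{234 tensor}), hence equals $2J_3(\KK)$; your Step 3 would ask for $5/3$ copies of $J_3(\KK)$. In general, for $a\ge\frac{p-1}{2}$ your rule keeps every weight $w\equiv 2a\pmod 4$ in $[0,p-2]$. That number depends only on the parity of $a$, so it overcounts for every $a>\frac{p-1}{2}$, and the counts $\frac{p-1-a}{2}$, $\frac{p-2-a}{2}$ you assert do not follow from it.

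What is missing is the cancellation. A high weight $w=2a-4j>p-1$ contributes $2J_p(\KK)-J_{2p-1-w}(\KK)$. Because $2p-2\equiv 0\pmod 4$, the reflected weight $2p-2-w$ is congruent to $2a$ modulo $4$. That is the \emph{same} class as the low summands, not the opposite one. It also lies in $[2p-2-2a,\,p-2]$ with $2p-2-2a\ge 0$. So each high summand cancels exactly one low summand; for $p=3$, $a=2$, the term $L(4)\mapsto 2J_3(\KK)-J_1(\KK)$ kills the $J_1(\KK)$ coming from $L(0)$.

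The survivors are therefore the weights $w\equiv 2a\pmod 4$ with $w<2(p-1-a)$. These give $J_{4i-3}(\KK)$ for $i\le\frac{p-1-a}{2}$ (resp.\ $J_{4i-1}(\KK)$ for $i\le\frac{p-2-a}{2}$). The number of $J_p(\KK)$ is twice the number of high weights, plus one if $p-1$ is itself a weight, and this equals $\frac{2a-p+3}{2}$ in both parity cases.

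Note also that your ``direct argument'' cannot replace the citation. Knowing that $\Sym^a(V)$ is a summand of $V^{\otimes a}$, and computing $V^{\otimes a}$ via Theorem~\ref{Renaud}, does not identify which summand $\Sym^a(V)$ is. The compatibility of $\Sym^a$ (for $a<p$) with the Green-ring map is precisely the input taken from \cite{Almkvist}.
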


\begin{lemma}\label[lemma]{NoEvenBlocks}
Suppose that $H$ is a cyclic group of order $p$, $p$ is odd, and $V=J_3(\KK)\cong W$ are $\mathbb{K}H$-modules. Then for $1\leq a,b\leq p-1$ we have that $\Sym^a(V)\otimes \Sym^b(W)$ has only Jordan blocks of odd size.
\end{lemma}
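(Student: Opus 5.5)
The plan is a parity argument via the Clebsch--Gordan-type formula of Theorem \ref{Renaud}, once Lemma \ref{3SymPower1} tells us that the symmetric powers have only odd blocks. By Lemma \ref{3SymPower1}, for every $1\le a\le p-1$ the module $\Sym^a(J_3(\KK))$ is a direct sum of blocks of the following sizes:
\begin{itemize}
\item[] $4i-3$ or $4i-1$ (both odd), together with
\item[] some copies of $J_p(\KK)$, which is odd since $p$ is odd.
\end{itemize}
Hence both $\Sym^a(V)$ and $\Sym^b(W)$ are direct sums of odd-dimensional indecomposables. Since tensor product distributes over direct sums, it suffices to show that $J_r(\KK)\otimes J_s(\KK)$ has only blocks of odd size whenever $r$ and $s$ are odd with $r\le s\le p$.

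For this, apply Theorem \ref{Renaud}:
\[J_r(\KK)\otimes J_s(\KK)=(r-c)J_p(\KK)\oplus\bigoplus_{i=1}^c J_{s-r+2i-1}(\KK).\]
The blocks $J_p(\KK)$ have odd size $p$. Each remaining block has size $s-r+2i-1$. Here $s-r$ is even, because $r$ and $s$ are both odd, and $2i-1$ is odd, so every such size is odd. This holds in both ranges, $r+s\le p$ and $r+s\ge p$, since the only difference between them is the value of $c$. Therefore every summand of the tensor product is odd-dimensional, which proves the lemma.

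There is no real obstacle here. The only points to check are the following.
\begin{itemize}
\item[] The case $a=p-1$ (and the other boundary cases) is covered by part (ii) of Lemma \ref{3SymPower1}, and in every case the block sizes listed there are odd.
\item[] In the edge case $c=0$ of Theorem \ref{Renaud}, namely $s=p$, the product is just $rJ_p(\KK)$, which is again odd.
\end{itemize}
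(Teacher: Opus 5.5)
Your proof is correct and follows the paper's argument: Lemma \ref{3SymPower1} shows that every block of $\Sym^a(V)$ and $\Sym^b(W)$ has odd size, including the copies of $J_p(\KK)$ since $p$ is odd, and Theorem \ref{Renaud} then gives that $J_r(\KK)\otimes J_s(\KK)$ has only odd blocks when $r$ and $s$ are odd. Your explicit parity check of $s-r+2i-1$ and your treatment of the case $c=0$ spell out what the paper leaves implicit.
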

\begin{proof}
By Lemma \ref{3SymPower1} and as $p$ is odd,  every Jordan block in $\Sym^a(V)$ and $\Sym^b(W)$ is odd. Since $p$ is odd, Theorem \ref{Renaud} gives every block of $\Sym^a(V)\otimes \Sym^b(W)$ is of odd size.
\end{proof}

\begin{lemma}\label[lemma]{3SymPower3}
Suppose that $H$ is a cyclic group of order $p$, $p$ odd, and $V=J_3(\KK)\cong W$ are $\mathbb{K}H$-modules. Then for $1\leq a,b\leq p-1$ with at least one of $a$ or $b$ strictly larger than $\frac{p-1}{2}$, we have that $\Sym^a(V)\otimes \Sym^b(W)$ has at least four more Jordan blocks of size $p$ than $\Sym^{a-1}(V)\otimes \Sym^{b-1}(W)$.
\end{lemma}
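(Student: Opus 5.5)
Write $A_a:=\Sym^a(V)$, $B_b:=\Sym^b(W)$ and, for a $\KK H$-module $Y$, let $N_p(Y)$ be the number of Jordan blocks of size $p$ in $Y$. The plan is to mimic the decomposition trick of Lemma \ref{2+1SymPower2}, with the explicit block structure of Lemma \ref{3SymPower1} in place of Lemma \ref{2+1SymPower1}. Both $A_a\otimes B_b$ and $A_{a-1}\otimes B_{b-1}$ are direct sums of tensor products $J_r(\KK)\otimes J_s(\KK)$ with $r,s$ odd. By Theorem \ref{Renaud}, each $J_r(\KK)\otimes J_s(\KK)$ with $r\le s$ contributes exactly $\max(0,r+s-p)$ blocks of size $p$. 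Therefore
\[N_p(A_a\otimes B_b)=\sum_{r\in A_a,\,s\in B_b}\max(0,\min(r,s)+\max(r,s)-p)+(\text{terms with } r=p \text{ or } s=p),\]
and a block of size $p$ tensored with $J_s(\KK)$ gives $s$ blocks of size $p$. So the whole statement reduces to comparing two explicit finite sums. Set $f(r,s):=$ the number of $J_p$ summands in $J_r\otimes J_s$, which equals $\min(r,s)$ when $\max(r,s)=p$ and $\max(0,r+s-p)$ otherwise.

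By symmetry, assume $a>\frac{p-1}{2}$, so that $a$ lies in range (ii) of Lemma \ref{3SymPower1}. Passing from $a$ to $a-1$ changes $A_a$ in one of two ways.
\begin{itemize}
\item If $a-1\ge \frac{p-1}{2}$, then $A_{a-1}$ is still in range (ii). Comparing the two formulas, $A_a$ has exactly one more block $J_p$ than $A_{a-1}$, since $\frac{2a-p+3}{2}$ increases by $1$, and the small blocks $J_{4i\mp 3}$ are shifted.
\item If $a-1<\frac{p-1}{2}$, then $a=\frac{p+1}{2}$ or $a=\frac{p-1}{2}+\frac12$ parity-wise, and $A_{a-1}$ has no $J_p$ summand.
\end{itemize}
In either case the key observation is a \emph{dominance} statement: there is an injection of the blocks of $A_{a-1}$ into the blocks of $A_a$, sending each block to one of at least the same size, with a block $J_p$ left over. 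The same holds for $B_{b-1}$ and $B_b$ whenever $b\ge1$, except that the leftover block may be smaller; in range (i), $B_b$ has one more block than $B_{b-1}$. Since $f$ is monotone in each variable, dominance gives
\[N_p(A_a\otimes B_b)-N_p(A_{a-1}\otimes B_{b-1})\ \ge\ \sum_{s\in B_b} f(p,s)\ =\ \sum_{s\in B_b} s\ =\ \dim B_b,\]
where the sum comes from the extra $J_p$ in $A_a$ tensored with all of $B_b$. We have $\dim B_b=\binom{b+2}{2}\ge 3$ for $b\ge1$. One more block of size $p$ is still needed.

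The main obstacle is this last block, which matters when $b=1$, where $\dim B_1=3$. For $b=1$ I would argue directly. By dominance, $B_1=J_3$ and $B_0=J_1$, and the pairing of the $J_p$ blocks of $A_{a-1}$ into $A_a$ already gives $f(p,3)-f(p,1)=2\ge1$ extra blocks for every $J_p$ in $A_{a-1}$. This settles the case $a-1\ge\frac{p-1}{2}$, since then $A_{a-1}$ contains some $J_p$. In the remaining boundary case $a=\frac{p+1}{2}$, I would instead use the largest non-$p$ block of $A_{a-1}$, of size $p-2$, which lies in $A_a$ under the pairing. Tensoring it with $J_3$ gives $f(p-2,3)=1$ block of size $p$, whereas tensoring with $J_1$ gives $0$. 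This supplies the fourth block.

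For $b\ge2$ we have $\dim B_b\ge 6>4$, so no extra work is needed. The only remaining check is the parity bookkeeping in the dominance claim at the transition $a=\frac{p+1}{2}$ (and $a=\frac{p-1}{2}+1$), which I would verify directly from the two cases of Lemma \ref{3SymPower1}.
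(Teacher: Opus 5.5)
There is a genuine gap. Your $A$-side dominance claim is false for every $\frac{p+1}{2}\le a\le p-2$, and it is the only source of your bound $N_p(A_a\otimes B_b)-N_p(A_{a-1}\otimes B_{b-1})\ge\dim B_b$.

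First, $a>\frac{p-1}{2}$ forces $a-1\ge\frac{p-1}{2}$. So your second bullet never occurs, and both $A_{a-1}$ and $A_a$ always lie in range (ii) of Lemma \ref{3SymPower1}. Reading off that lemma, the blocks of size less than $p$ are as follows.
\begin{itemize}
\item For $a$ even: $J_3,J_7,\dots,J_{2p-2a-3}$ in $A_{a-1}$, against $J_1,J_5,\dots,J_{2p-2a-5}$ in $A_a$.
\item For $a$ odd: $J_1,J_5,\dots,J_{2p-2a-3}$ in $A_{a-1}$, against $J_3,J_7,\dots,J_{2p-2a-5}$ in $A_a$.
\end{itemize}
In both cases the block $J_{2p-2a-3}$ of $A_{a-1}$ has no partner of at least its size except a $J_p$. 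So the extra $J_p$ of $A_a$ is consumed, and the leftover is $J_1$ (for $a$ even) or nothing at all (for $a$ odd). For instance, $p=7$, $a=4$ gives $A_3=J_3\oplus J_7$ and $A_4=J_1\oplus 2J_7$. And $p=5$, $a=3$ gives $A_2=J_1\oplus J_5$ and $A_3=2J_5$. Only $a=p-1$ genuinely leaves a $J_p$ over.

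The same misreading appears in your $b=1$ paragraph. At $a=\frac{p+1}{2}$ the largest block of $A_{a-1}$ of size below $p$ is $J_{p-4}$, not $J_{p-2}$. Also, $A_{(p-1)/2}$ does contain a $J_p$.

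What is missing is the estimate the paper's proof rests on. Pair the new $J_p$ of $A_a$ with $J_{2p-2a-3}$, and note that $2p-2a-3\le p-4$ because $a\ge\frac{p+1}{2}$. (For $a=p-1$ there is no such block, and that $J_p$ is simply unpaired.) Since $f(r,s)=\max(0,r+s-p)$, take $J_{s'}$ in $B_{b-1}$ paired with $J_s$ in $B_b$, where $s\ge s'$. Then $f(p,s)-f(2p-2a-3,s')\ge s-\max(0,s'-4)\ge\min(s,4)$. An unpaired $J_s$ of $B_b$ contributes $f(p,s)=s$. All other pairs contribute non-negatively by monotonicity, so the difference is at least $\sum_{s\in B_b}\min(s,4)$. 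This is at least $4$ once $b\ge 2$, because $B_b$ then has a block of size at least $5$ (or $B_2=2J_3$ when $p=3$).

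For $b=1$ this gives only $3$. The fourth block comes, as you say, from $f(p,3)-f(p,1)=2$ for each $J_p$ of $A_{a-1}$, and there is always at least one.

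With this correction your pairing argument goes through. It then essentially coincides with the paper's proof, which treats $a\ge p-2$ directly (there $A_a$ is a sum of copies of $J_p$). In the range $\frac{p-1}{2}<a\le p-3$ it makes exactly this comparison of $J_p$ with $J_{2p-2a-3}$.
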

\begin{proof}
Without loss of generality, assume that $b\leq a>\frac{p-1}{2}$. If $a\geq p-2$ then $\Sym^a(V)=\frac{2a-p+3}{2}J_{p}(\KK)$ while $\Sym^{a-1}(V)=cJ_1(\KK)\oplus \frac{2a-p+1}{2}J_{p}(\KK)$ where $c=0$ if $a=p-1$ and $c=1$ if $a=p-2$. Applying Theorem \ref{Renaud} and linearity, we see that $J_p(\KK)\otimes J_i(\KK)=iJ_p(\KK)$ while $J_1(\KK)\otimes J_i(\KK)=J_i(\KK)$. Using that $b\geq 1$, we have the result unless perhaps $b=1$. But then $\Sym^{a-1}(V)\otimes \Sym^{b-1}(W)$ has exactly $\frac{2a-p+1}{2}J_{p}(\KK)$ blocks of size $p$, whereas $\Sym^a(V)\otimes \Sym^b(W)$ has $3\frac{2a-p+3}{2}$ blocks of size $p$. Since $p$ is odd, we have that $3\frac{2a-p+3}{2}-\frac{2a-p+1}{2}=2\frac{2a-p+3}{2}+1\geq 4$, as $a>\frac{p-1}{2}$.

Hence, we may assume that $\frac{p-1}{2}<a\leq p-3$ so that $p\geq 7$. Then
\begin{align*}
    \Sym^a(V)&=\dots\oplus J_{2p-2a-5}(\KK)\oplus J_p(\KK)&&\oplus \frac{2a-p+1}{2}J_p(\KK)\\
    \Sym^{a-1}(V)&= \dots\oplus J_{2p-2a-7}(\KK)\oplus J_{2p-2a-3}(\KK)\hspace{-6mm}&&\oplus \frac{2a-p+1}{2}J_p(\KK)
\end{align*}
Since $J_i(\KK)\otimes J_j(\KK)$ always produces at least as many Jordan blocks of size $p$ as $J_{i-2}(\KK)\otimes J_j(\KK)$, we may focus on the difference between $J_p(\KK)$ and $J_{2p-2a-3}(\KK)$.

Now, as $\frac{p-1}{2}<a\leq p-3$, we see that $2p-2a-3\leq p-4$. It then follows from Theorem \ref{Renaud} that $\Sym^a(V)\otimes \Sym^b(W)$ has at least four more Jordan blocks of size $p$ than $\Sym^{a-1}(V)\otimes \Sym^{b-1}(W)$, unless perhaps $b=1$. In this case we have that $\Sym^{a-1}(V)\otimes \Sym^{b-1}(W)$ has exactly $\frac{2a-p+1}{2}$ blocks of size $p$, whereas $\Sym^a(V)\otimes \Sym^b(W)$ has at least $3\frac{2a-p+3}{2}$ blocks of size $p$. Then, as $a\geq \frac{p+1}{2}$, we see that $3\frac{2a-p+3}{2}-\frac{2a-p+1}{2}\geq 5$, as desired. This completes the proof.
\end{proof}

\begin{lemma}\label[lemma]{3SymPower4}
Suppose that $H$ is a cyclic group of order $p$, $p$ odd, and $V=J_3(\KK)\cong W$ are $\mathbb{K}H$-modules. Then for $1\leq a,b\leq \frac{p-1}{2}$ with $2a+2b-3\geq p$, we have that $\Sym^a(V)\otimes \Sym^b(W)$ has at least four more Jordan blocks of size $p$ than $\Sym^{a-1}(V)\otimes \Sym^{b-1}(W)$.
\end{lemma}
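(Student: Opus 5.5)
We prove Lemma \ref{3SymPower4} in the same way as Lemma \ref{3SymPower3}: we compare the two tensor products block by block, using the explicit decompositions of Lemma \ref{3SymPower1}. Since $a,b\leq \frac{p-1}{2}$, case (ii) of Lemma \ref{3SymPower1} applies only when $a=\frac{p-1}{2}$ (respectively $b=\frac{p-1}{2}$). In every case $\Sym^a(V)$ is a direct sum of blocks of odd size whose sizes increase in steps of $4$. The largest block has size $2a+1$ (capped at $p$), and passing from $\Sym^{a-1}(V)$ to $\Sym^a(V)$ changes the list of sizes by adding exactly one new block, of size $2a+1$, while shifting the remaining sizes by parity. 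More precisely, the sizes appearing in $\Sym^a(V)$ are $\{2a+1, 2a-3, 2a-7,\dots\}$, and those in $\Sym^{a-1}(V)$ are $\{2a-1,2a-5,\dots\}$. We can therefore pair the blocks $J_{2a+1-4i}(\KK)$ of $\Sym^a(V)$ with the blocks $J_{2a-1-4i}(\KK)$ of $\Sym^{a-1}(V)$. Each block of $\Sym^a(V)$ is then at least two larger than its partner, and $\Sym^a(V)$ has one extra block ($J_1$ or $J_3$) left over.

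By linearity, $\Sym^a(V)\otimes\Sym^b(W)=\bigoplus J_i(\KK)\otimes J_j(\KK)$, and similarly for the smaller product. We count blocks of size $p$ using Theorem \ref{Renaud}: for $i\leq j$ the product $J_i(\KK)\otimes J_j(\KK)$ contains exactly $\max(0,i+j-p)$ blocks of size $p$. This count is monotone in both $i$ and $j$. Consequently each paired product $J_{i}\otimes J_{j}$ from the larger module contributes at least as many $J_p$ blocks as its partner $J_{i-2}\otimes J_{j-2}$. It therefore suffices to find four extra $J_p$ blocks among the products of the top blocks.

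The key term is $J_{2a+1}(\KK)\otimes J_{2b+1}(\KK)$ versus $J_{2a-1}(\KK)\otimes J_{2b-1}(\KK)$. The hypothesis $2a+2b-3\geq p$ gives $2a-1+2b-1\geq p+1$. Both products are therefore already in the range $r+s\geq p$, and the number of $J_p$ blocks increases by $(2a+2b+2-p)-(2a+2b-2-p)=4$. This uses the fact that $2a+1,2b+1\leq p$, so no capping issue arises.

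If $a$ or $b$ equals $\frac{p-1}{2}$, the top block is $J_p$. Here I would handle the count directly from case (ii) of Lemma \ref{3SymPower1}, where the multiplicity of $J_p$ is $\frac{2a-p+3}{2}=1$. The same difference computation still yields $+4$, since $J_p\otimes J_j=jJ_p$ and $2b+1-(2b-1)\geq 2$ applied twice gives the required gain. The remaining pairs are nonnegative by monotonicity, as are the cross terms and the extra leftover blocks, since those only add summands.

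The main obstacle is the bookkeeping needed to make the pairing between the summands of $\Sym^a(V)$ and $\Sym^{a-1}(V)$ rigorous when the parity of $a$ changes. This affects whether the smallest block is $J_1$ or $J_3$, and which formula in Lemma \ref{3SymPower1} applies. I would treat the four parity cases of $(a,b)$ uniformly by indexing the blocks from the top, so that the $k$-th largest blocks have sizes $2a+1-4(k-1)$ and $2a-1-4(k-1)$ respectively. With this indexing the injection from the blocks of the smaller module to those of the larger module is clear in every case.
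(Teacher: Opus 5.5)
Your proof is correct and is essentially the paper's argument. Both read the decompositions off Lemma \ref{3SymPower1} and count the $J_p$ summands of $J_r(\KK)\otimes J_s(\KK)$ as $\max(0,r+s-p)$ via Theorem \ref{Renaud}. The only difference is the matching: the paper writes $\Sym^a(V)=J_{2a+1}(\KK)\oplus V_1$, matches $V_1\otimes\Sym^b(W)$ against $\Sym^{a-1}(V)\otimes\Sym^{b-1}(W)$ with equal size-sums (so the $J_p$ counts agree exactly), and takes the surplus $2a+2b+2-p\geq 5$ from $J_{2a+1}(\KK)\otimes J_{2b+1}(\KK)$; you instead shift both factors up by $2$ and extract exactly $+4$ from the top pair.

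There are two harmless slips. For $a$ odd, your matching of the blocks of $\Sym^{a-1}(V)$ with those of $\Sym^a(V)$ is a bijection with no leftover block. The case $a=\frac{p-1}{2}$ needs no separate treatment, since Renaud's count only requires $2a+1,2b+1\leq p$.
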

\begin{proof}
Since $2a+2b-3\geq p$ and $a,b\leq \frac{p-1}{2}$, we may assume that $p\geq 5$ and one of $a$ or $b$ is larger than $1$. Without loss of generality, assume that $a>1$ and write $\Sym^a(V)=J_{2a+1}(\KK)\oplus V_1$ where $V_1=J_{2a-3}(\KK)\oplus J_{2a-7}(\KK)\oplus\dots$ Applying Theorem \ref{Renaud}, we see that $V_1\otimes \Sym^b(W)$ has the same number of Jordan blocks of size $p$ as $\Sym^{a-1}(V)\otimes \Sym^{b-1}(W)$. Hence, it remains to count the number of Jordan blocks of size $p$ in $J_{2a+1}(\KK)\otimes \Sym^b(W)$. However, we obtain at least $2a+2b+2-p\geq 5$ Jordan blocks of size $p$ from the tensor $J_{2a+1}(\KK)\otimes J_{2b+1}(\KK)$, which gives the result.
\end{proof}

\begin{lemma}\label[lemma]{3SymPower5}
Suppose that $H$ is a cyclic group of order $p$, $p$ odd, and $V=J_3(\KK)\cong W$ are $\mathbb{K}H$-modules. Then for $1\leq a,b\leq \frac{p-1}{2}$ with $2a+2b-3< p$ and $(a,b)\ne (1,1)$, we have that $\Sym^{a-1}(V)\otimes \Sym^{b-1}(W)$ has a unique Jordan block of size at least $2a+2b-3$, a unique block of size $2a+2b-5$ and every other Jordan block, if any, has size at most $2a+2b-7$. Meanwhile, $\Sym^{a}(V)\otimes \Sym^{b}(W)$ has at least four Jordan blocks of size at least $2a+2b-3$.
\end{lemma}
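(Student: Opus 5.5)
The plan is to read off both tensor products from the explicit form of symmetric powers of $J_3(\KK)$ in Lemma \ref{3SymPower1}, and then apply Theorem \ref{Renaud} term by term. Since $a,b\leq \frac{p-1}{2}$, we have $a-1,b-1<\frac{p-1}{2}$, so part (i) of Lemma \ref{3SymPower1} applies to $\Sym^{a-1}(V)$ and $\Sym^{b-1}(W)$. The case $a=\frac{p-1}{2}$ or $b=\frac{p-1}{2}$ falls into part (ii) of that lemma, but with $\frac{2a-p+3}{2}=1$. This gives the same shape as part (i), namely a single top block, which is now $J_p(\KK)$ in place of $J_{2a+1}(\KK)$.

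In all cases the blocks of $\Sym^c(J_3(\KK))$ have sizes $2c+1, 2c-3, 2c-7,\dots$, each appearing once and possibly truncated at $p$ at the top.

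For the first assertion, the top block of $\Sym^{a-1}(V)\otimes\Sym^{b-1}(W)$ comes from $J_{2a-1}(\KK)\otimes J_{2b-1}(\KK)$. Because $(2a-1)+(2b-1)=2a+2b-2\leq p$ (using $2a+2b-3<p$ and parity), Theorem \ref{Renaud} applies with $c=\min(2a-1,2b-1)$. It yields blocks of sizes $2a+2b-3, 2a+2b-5, 2a+2b-7,\dots$, each once. Any other pairing, for instance $J_{2a-1}(\KK)\otimes J_{2b-5}(\KK)$, has largest block of size at most $(2a-1)+(2b-5)-1=2a+2b-7$. Hence there is a unique block of size $2a+2b-3$ (no block is larger), a unique block of size $2a+2b-5$, and everything else is at most $2a+2b-7$.

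Two degenerate cases need separate handling. If $\min(a,b)=1$, say $b=1$, then $\Sym^0(W)=J_1(\KK)$ and the product is just $\Sym^{a-1}(V)$, with blocks $2a-1, 2a-5,\dots$. The claim about the block of size $2a+2b-5=2a-3$ then needs care; I would check it directly and note that the statement's ``a unique block of size $2a+2b-5$'' may hold only when both factors have at least one block of size $>1$. This is the point to verify most carefully, likely against the intended reading that there is at most one such block. The other degenerate case is when the top factor is truncated at $p$; this cannot happen in $\Sym^{a-1}(V)$ under our constraints.

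For the second assertion, the top term $J_{2a+1}(\KK)\otimes J_{2b+1}(\KK)$ of $\Sym^a(V)\otimes\Sym^b(W)$ is analysed with Theorem \ref{Renaud}. If $2a+2b+2\leq p$, it gives blocks of sizes $2a+2b+1, 2a+2b-1, 2a+2b-3,\dots$, that is, at least three blocks of size at least $2a+2b-3$, using $\min(2a+1,2b+1)\geq 3$. The fourth block comes from a cross term. When $a\geq 2$, $J_{2a-3}(\KK)\otimes J_{2b+1}(\KK)$ contributes a block of size $2a+2b-3$, and symmetrically when $b\geq 2$. Since $(a,b)\neq(1,1)$, one of these cross terms is available. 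If instead $2a+2b+2>p$, Renaud's theorem gives $J_p(\KK)$ summands together with blocks $J_{2b-2a+2i-1}(\KK)$, and counting them shows there are at least four blocks of size at least $2a+2b-3$ as well; here the hypothesis $2a+2b-3<p$ bounds the relevant sizes.

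The main obstacle I expect is the bookkeeping at the boundary cases: $b=1$, $a=\frac{p-1}{2}$, and $2a+2b$ close to $p$. There the top blocks are truncated at $p$ and the number of blocks of size $\geq 2a+2b-3$ must be counted exactly rather than by the generic pattern.
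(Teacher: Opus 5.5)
Your proposal is correct and is essentially the paper's proof. Like the paper, you read off $\Sym^c(J_3(\KK))$ from Lemma~\ref{3SymPower1}, apply Theorem~\ref{Renaud} to $J_{2a-1}(\KK)\otimes J_{2b-1}(\KK)$ for the first assertion, and apply it to $J_{2a+1}(\KK)\otimes J_{2b+1}(\KK)$ together with the cross terms $J_{2a-3}(\KK)\otimes J_{2b+1}(\KK)$ and $J_{2a+1}(\KK)\otimes J_{2b-3}(\KK)$ for the second.

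Your suspicion about $\min(a,b)=1$ is right. In that case $\Sym^{a-1}(V)$ has no block of size $2a-3$ at all, and the paper's own proof records exactly this, so ``a unique block of size $2a+2b-5$'' has to be read as ``at most one''.

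One point to watch when you write out the boundary regime $2a+2b\in\{p-1,p+1\}$. There the top product $J_{2a+1}(\KK)\otimes J_{2b+1}(\KK)$ alone gives only three blocks of size at least $2a+2b-3$: namely $J_p\oplus J_{p-2}\oplus J_{p-4}$ when $2a+2b=p-1$, and $3J_p$ when $2a+2b=p+1$, in each case plus strictly smaller blocks. So ``counting them'' inside Renaud's decomposition is not enough. The fourth block must again come from a cross term, exactly as in your generic case; this works because $(2a-3)+(2b+1)\le p$.
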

\begin{proof}
We have that $\Sym^a(V)=J_{2a+1}(\KK)\oplus J_{2a-3}(\KK)\oplus \dots$ and $\Sym^{a-1}(V)=J_{2a-1}(\KK)\oplus J_{2a-5}(\KK)\oplus \dots$ Since $2a+2b-3< p$, an application of Theorem \ref{Renaud} implies that every Jordan block of $\Sym^{a-1}(V)\otimes \Sym^{b-1}(W)$ has size smaller than $p$. We have that $\Sym^a(V)\otimes \Sym^b(W)$ contains the tensors $J_{2a+1}(\KK)\otimes J_{2b+1}(\KK)$, $J_{2a-3}(\KK)\otimes J_{2b+1}(\KK)$ and $J_{2a+1}(\KK)\otimes J_{2b-3}(\KK)$. Since $p$ is odd and $2a+2b-3<p$, we observe that $2a+2b-1\leq p$.

We first consider the case where one of $a$ or $b$ is equal to $1$. Without loss of generality, assume that $b=1$. If $a=2$ then $\Sym^{a-1}(V)\otimes \Sym^{b-1}(W)\cong V$ and so there is no second largest Jordan block. If $a>2$ then $\Sym^{a-1}(V)\otimes \Sym^{b-1}(W)$ contains a unique largest block of size $2a-1=2a+2b-3$ and the next largest block is of size at most $2a-5=2a+2b-7$. Meanwhile, $\Sym^a(V)\otimes \Sym^b(W)$ contains the tensors $J_{2a+1}(\KK)\otimes J_3(\KK)$ and $J_{2a-3}(\KK)\otimes J_3(\KK)$ which give rise to blocks of size $2a+3=2a+2b+1$ and $2a+1=2a+2b-1$, and two blocks of size $2a-1=2a+2b-3$.

If $2a+2b-1=p$ and $1<a,b$, then applying Theorem \ref{Renaud} we obtain two blocks of size $p$ from $J_{2a+1}(\KK)\otimes J_{2b+1}(\KK)$ and at least two blocks of size $2a+2b-3$, one from $J_{2a-3}(\KK)\otimes J_{2b+1}(\KK)$ and the other from $J_{2a+1}(\KK)\otimes J_{2b-3}(\KK)$. Moreover, $\Sym^{a-1}(V)\otimes \Sym^{b-1}(W)$ contains a unique largest block of size $2a+2b-3$ arising from $J_{2a-1}(\KK)\otimes J_{2b-1}(\KK)$, and a second largest block of size $2a+2b-7$.

Finally, if $2a+2b-1<p$, then as $p$ is odd, we have that $2a+2b+1\leq p$. Furthermore, if $1<a,b$ then applying Theorem \ref{Renaud} we obtain a block of size $2a+2b+1$, a block of size $2a+2b-1$ and a block of size $2a+2b-3$ from the tensor $J_{2a+1}(\KK)\otimes J_{2b+1}(\KK)$, and further blocks of size $2a+2b-3$ from $J_{2a-3}(\KK)\otimes J_{2b+1}(\KK)$ and $J_{2a+1}(\KK)\otimes J_{2b-3}(\KK)$. Moreover, $\Sym^{a-1}(V)\otimes \Sym^{b-1}(W)$ contains a unique largest block of size $2a+2b-3$ arising from $J_{2a-1}(\KK)\otimes J_{2b-1}(\KK)$, and a second largest block of size $2a+2b-5$. Every other block has size at most $2a+2b-7$.
\end{proof}

\begin{proposition}\label[proposition]{3SymPower6}
Suppose that $H$ is a cyclic group of order $p$, $p$ odd, and  $V=J_3(\KK)\cong W$ are $\mathbb{K}H$-modules. Assume that $1\leq a,b\leq p-1$. Then for $\phi: \Sym^a(V)\otimes \Sym^b(W)\to \Sym^{a-1}(V)\otimes \Sym^{b-1}(W)$ a $\mathbb{K}H$-module epimorphism, we have that $\ker(\phi)$ has at least three non-trivial Jordan blocks, or $a=b=1$.
\end{proposition}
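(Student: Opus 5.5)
We argue exactly as in Proposition \ref{2+1SymPower4}, now feeding in the three preceding lemmas on $J_3(\KK)$. Since $\phi$ is surjective, $\ker(\phi)$ has dimension $\dim\Sym^a(V)\otimes\Sym^b(W)-\dim\Sym^{a-1}(V)\otimes\Sym^{b-1}(W)$. The tool for counting blocks of the kernel is the one used in Proposition \ref{2+1SymPower4}. Fix a size $c$ and let $T$ be the sum of the Jordan blocks of $\Sym^a(V)\otimes\Sym^b(W)$ of size at least $c$. Compare $\dim_{\KK}[T,H;j]$ with $\dim_{\KK}[\phi(T),H;j]$ for $j=c-1$ (and $j=c-2$ when needed). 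Since $\phi([T,H;j])=[\phi(T),H;j]$, the difference $\dim_{\KK}([T,H;j]\cap\ker\phi)$ bounds from below the number of Jordan blocks of $\ker(\phi)$ of size at least $j+1$. So it suffices, in each case, to show that the source has at least three more blocks of size at least $c$ than the target, for suitable $c\geq 2$. We assume $(a,b)\neq(1,1)$ throughout.

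We split into three cases according to $a,b$.

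\emph{Case 1: $\max(a,b)>\frac{p-1}{2}$.} Lemma \ref{3SymPower3} gives at least four extra blocks of size $p$ in the source. Take $c=p$. Then $\dim_{\KK}C_T(H)=\dim_{\KK}[T,H;p-1]$ exceeds $\dim_{\KK}[\phi(T),H;p-1]$ by at least $4$, so $\ker(\phi)$ contains at least four blocks of size $p$.

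\emph{Case 2: $a,b\leq\frac{p-1}{2}$ and $2a+2b-3\geq p$.} Lemma \ref{3SymPower4} gives the same conclusion, and the argument of Case 1 applies verbatim.

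\emph{Case 3: $a,b\leq\frac{p-1}{2}$ and $2a+2b-3<p$.} This is handled by Lemma \ref{3SymPower5}. Put $c=2a+2b-3\geq 3$. The source has at least four blocks of size at least $c$. The target has a unique block of size at least $c$, which has size exactly $c$ because all target blocks are smaller than $p$ and have odd size by Lemma \ref{NoEvenBlocks}. Its next block has size $2a+2b-5=c-2$, and all remaining blocks have size at most $c-4$. Apply $[\,\cdot\,,H;c-2]$. On the source we get at least four nonzero pieces, each of dimension at least $2$. On the target we get a $J_2$ from the top block, a $J_1$ from the block of size $c-2$, and nothing else. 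Intersecting with $\ker\phi$ and passing to $[\,\cdot\,,H;c-1]$ should then yield at least three independent vectors of $C_{\ker\phi}(H)$ lying in $[\ker\phi,H]$, and hence at least three non-trivial blocks of $\ker(\phi)$.

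The main obstacle is this final rank count in Case 3, especially when $c=3$, i.e.\ $a+b=3$. There the target's second block $J_1$ lies in $C(H)$ and could absorb some image, so the count must be done carefully, using $[\,\cdot\,,H;c-1]$ rather than $[\,\cdot\,,H;c-2]$. If this is too tight, we fall back on a direct computation for $(a,b)\in\{(1,2),(2,1)\}$. In that case the source is $V\otimes\Sym^2(W)=J_3\otimes(J_1\oplus J_5)$ for $p\geq 5$, and the target is $J_3$. Theorem \ref{Renaud} gives the source as $J_3\oplus J_3\oplus J_5\oplus J_7$ (with $J_5,J_7$ truncated to $p$-blocks appropriately). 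Removing one $J_3$ in image then visibly leaves at least three non-trivial blocks.
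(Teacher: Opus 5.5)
Your case division matches the paper's: Lemmas \ref{3SymPower3} and \ref{3SymPower4} supply a surplus of blocks of size $p$, and Lemma \ref{3SymPower5} covers the rest. The idea of comparing $[T,H;j]$ with $[\phi(T),H;j]$ is also the paper's. The gap is that the counting principle you build every case on is false.

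Knowing $\dim_{\KK}([T,H;j]\cap\ker\phi)\geq d$ does not give $d$ blocks of $\ker\phi$ of size at least $j+1$, nor even $d$ non-trivial blocks. A vector of $[T,H;j]\cap\ker\phi$ has the form $w(x-1)^j$ with $w\in T$, and $w$ need not lie in $\ker\phi$. For example, the quotient maps $J_p\to J_{p-2}$ and $J_p\to J_{p-1}$ have kernels $J_2$ and $J_1$, although in both cases all of $[J_p,H;p-1]$ lies in the kernel. So in Case 1, and hence Case 2, the conclusion ``$\ker\phi$ contains at least four blocks of size $p$'' does not follow. The second example also shows that some information about the target is indispensable. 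That missing input is exactly where the paper's proof begins: by Lemma \ref{NoEvenBlocks}, $\Sym^{a-1}(V)\otimes\Sym^{b-1}(W)$ has no Jordan blocks of size $p-1$.

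The valid mechanism, implicit in Proposition \ref{2+1SymPower4}, compares two consecutive layers. Suppose $Z\cong rJ_2(\KK)$ is a submodule of the source, and let $e=\dim_{\KK}C_{\phi(Z)}(H)$ be the number of Jordan blocks of $\phi(Z)$. Then $[\ker\phi\cap Z,H]$ lies in $C_{\ker\phi}(H)\cap[\ker\phi,H]$ and has dimension $r-e$. Hence $\ker\phi$ has at least $r-e$ non-trivial blocks. In Cases 1 and 2, take $Z=[T,H;p-2]$ with $T$ the sum of the blocks of size $p$. Then $\phi(Z)\le[\Sym^{a-1}(V)\otimes\Sym^{b-1}(W),H;p-2]$. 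So $e$ is at most the number of target blocks of size at least $p-1$, which equals the number of size $p$ because there are none of size $p-1$. This gives $r-e\geq 4$.

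In Case 3, your idea of intersecting with $\ker\phi$ and commutating once more is the right mechanism. However, you never carry out the count, and its input is miscomputed. A block of size $c-2$ is annihilated by $(x-1)^{c-2}$, so it contributes nothing to $[\,\cdot\,,H;c-2]$. In fact $[\Sym^{a-1}(V)\otimes\Sym^{b-1}(W),H;c-2]\cong J_2(\KK)$, coming only from the unique block of size $c$. With your spurious extra $J_1$ the count would give only $4-2=2$, which is why it looked tight to you.

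The correct count, which is the paper's argument, goes as follows. Let $Z\cong 4J_2(\KK)$ be the bottom two layers of four source blocks of size at least $c$. Then $Z$ lies in $[\,\cdot\,,H;c-2]$ of the source, and $\phi(Z)$ lies in a single $J_2(\KK)$. So $e\leq 1$, and $\ker\phi$ has at least three non-trivial blocks.

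Your concern about $c=3$ is unfounded. For $(a,b)\in\{(1,2),(2,1)\}$ the target is $\Sym^1(V)\otimes\Sym^0(W)\cong J_3(\KK)$, with no second block, and the same count applies. Your fallback of ``removing one $J_3$'' tacitly treats $\ker\phi$ as a direct complement of a copy of the target. That is precisely what this counting argument is needed to replace.
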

\begin{proof}
Assume throughout that $(a,b)\ne (1,1)$. By Lemma \ref{NoEvenBlocks}, $\Sym^{a-1}(V)\otimes \Sym^{b-1}(W)$ has no even blocks, and so no blocks of size $p-1$. If $\Sym^a(V)\otimes \Sym^b(W)$ has at least four more Jordan blocks of size $p$ than $\Sym^{a-1}(V)\otimes \Sym^{b-1}(W)$, then arguing as in Proposition \ref{2+1SymPower4}, we obtain the result.

Hence, by Lemma \ref{3SymPower3} and Lemma \ref{3SymPower4}, we may assume that $a,b\leq \frac{p-1}{2}$ and $2a+2b-3< p$. Then $\Sym^{a}(V)\otimes \Sym^{b}(W)$ has at least four Jordan blocks of size at least $c:=2a+2b-3$ while $\Sym^{a-1}(V)\otimes \Sym^{b-1}(W)$ has a unique Jordan block of size $c$, a unique block of size $c-2=2a+2b-5$ and every other blocks of size at most $c-4$. Set $T$ to be the submodule of $\Sym^a(V)\otimes \Sym^b(W)$ for which every Jordan block of $T$ has size at least $2a+2b-5$ and $T$ is maximal subject to this. Then $[T, H; c-2]$ contains a submodule of the form $4J_2(\KK)$ while $[\phi(T), H; c-2]=J_2(\KK)$. From this, it follows that $\ker(\phi)$ has at least three non-trivial Jordan blocks.
\end{proof}

\newpage
\section{The table accompanying \cref{ModResult}}\label{AppB}

\begin{table}[htbp]
\centering\small
\renewcommand{\arraystretch}{1.25}
\setlength{\tabcolsep}{5pt}
\begin{tabular}{cllccl}
\hline

\hline\hline
\multicolumn{6}{c}{$X\cong\SL_2(p^m)$ or $\PSL_2(p^m)$}\\
\hline
\hline
Case & Conditions & $W$ & $\dim_{\FF_p}W$ & $x$ & JB of $x$ on $W$\\
\hline
(i)\ref{modbasic}      & $1\le i\le p{-}1$   & $V_i(p^m)|_{\FF_p}$                       & $(i{+}1)m$ & any & $mJ_{i+1} $\\
(i)(b)                & $p$ odd, $j\ne \frac{m}{2}$      & $(V_1\otimes V_1^{\sigma^j})|_{\FF_p}$    & $4m$  & any &$mJ_1\oplus mJ_3$\\
(i)\ref{Omega4}        & $p=2$, $m$ even     & $\Omega_4^-(2^{m/2})$-module              & $2m$  & any & $mJ_2$\\
(i)\ref{Omega4}        & $p$ odd, $m$ even   & $\Omega_4^-(p^{m/2})$-module              & $2m$  & any & $\frac m2 J_1\oplus \frac m 2J_3$\\
(i)\ref{trialityintro} & $p=3$, $3\mid m$    & triality module                           & $8m/3$ & any & $\frac m3J_2\oplus \frac{2m}{3}J_3$\\
(i)\ref{trialityintro} & $p\ge5$, $3\mid m$  & triality module                           & $8m/3$ & any & $ \frac{2m}3 J_2\oplus \frac m3 J_4$\\
(i)(e)                & $p\ge5$, $4\mid m$  & quadruple twist                           & $4m$  & any & $ \frac m2J_1 \oplus \frac{3m}4J_3\oplus \frac m 4 J_5$\\
(i)(f)                & $p\ge5$, $m$ even   & $(V_2\otimes V_2^{\sigma^{m/2}})$-summand & $9m/2$ & any & $\frac m2J_1\oplus \frac m2J_3\oplus \frac m2J_5$\\
\hline\hline

\multicolumn{6}{c}{$X\cong\SU_3(p^n)$;\quad $p$ odd, $m=2n$, $M$ a natural module, $W=[V,X]/C_{[V,X]}(X)$}\\
\hline
\hline
Case & Conditions & $W$ & $\dim_{\FF_p}W$ & $x$ & JB of $x$ on $W$\\
\hline
(ii)(a) &    $p$ odd      & natural module              & $6n$  & $p$-central     & $2nJ_1 \oplus 2nJ_2$\\
        &    \quad$''$         &     \quad\quad\quad$''$              &  $''$ & not $p$-central & $2nJ_3$\\
(ii)(b) & $p=3$    & $(M\otimes M^*)$-factor     & $7n$  & not $3$-central & $nJ_1 \oplus 2nJ_3$\\
(ii)(c) & $p\ge5$  & $(M\otimes M^*)$-factor     & $8n$  & not $p$-central & $nJ_3 \oplus nJ_5$\\
(ii)(d) & $p\ge5$  & $\Sym^2(M)|_{\FF_p}$        & $12n$ & not $p$-central & $2nJ_1 \oplus 2nJ_5$\\
\hline\hline
\multicolumn{6}{c}{$X\cong\Ree(3^n)$;\quad $p=3$, $m=2n$, $W=[V,X]/C_{[V,X]}(X)$}\\
\hline
\hline
Case & Conditions & $W$ & $\dim_{\FF_p}W$ & $x$ & JB of $x$ on $W$\\
\hline
 (iii) &  & natural module & $7n$ & not $3$-central & $nJ_1 \oplus 2nJ_3$\\
\hline\hline
\multicolumn{6}{c}{$X\cong\mathrm{M}_{11}$;\quad $p=3$, $m=2$, $W=[V,X]/C_{[V,X]}(X)$}\\
\hline
\hline
Case & Conditions & $W$ & $\dim_{\FF_p}W$ & $x$ & JB of $x$ on $W$\\
\hline
\ref{M11Main} &  & code / cocode module & $5$ & any & $J_2 \oplus J_3$\\
\hline\hline
\multicolumn{6}{c}{$X/Z(X)\cong \PSL_3(4)$;\quad $p=3$, $m=2$, $W=[V,X]$}\\
\hline
\hline
Case & Conditions & $W$ & $\dim_{\FF_p}W$ & $x$ & JB of $x$ on $W$\\
\hline
\ref{L34Main1} & $X\cong2{\cdot}\PSL_3(4)$ & $[V,X]$ & $6$ & any & $2J_3 $\\
\ref{L34Main2} & $X\cong4{\cdot}\PSL_3(4)$ & $[V,X]$ & $8$ & any & $2J_1 \oplus 2J_3$\\
\hline\hline
\multicolumn{6}{c}{$X/O_{p'}(X)\cong\Alt(2p)$;\quad $p$ odd, $m=2$, $x\in S\in\syl_p(L)\subseteq\syl_p(X)$}\\
\hline
\hline
Case & Conditions & $W$ & $\dim_{\FF_p}W$ & $x$ & JB of $x$ on $W$\\
\hline
\ref{altmainintro} & $L\cong\Alt(2p)$ & heart of perm.\ module & $2p{-}2$ & $p$-cycle & $(p-2)J_1 \oplus J_p$\\
                   &       \quad\quad$''$  &        \quad\quad\quad\quad\quad$''$   & $''$  & $(p,p)$-element & $J_{p-2}\oplus J_p$\\
                   & $p=3$, $L\cong\SL_2(9)$ & \multicolumn{4}{l}{$\SL_2(p^m)$-block with $m=2$}\\
                   & $p=5$, $L\cong2{\cdot}\Alt(10)$ & spin constituent & $8$ & $5$-cycle & $J_4 \oplus J_4$\\
                   &       \quad\quad$''$  &        \quad\quad\quad\quad\quad$''$   & $''$  &$(5,5)$-element & $J_3 \oplus J_5$
                   \\
\hline
\end{tabular}
\caption{The pairs $(W,x)$ in Theorem~\ref{ModResult}, grouped by $X$. The conditions in a block header apply to every row of that block. $bJ_a $ denotes $b$ Jordan blocks of size $a$, over $\FF_p$.}\label{Thm1 Table}
\end{table}

\bibliographystyle{alpha}
\bibliography{my.books}

@book {AschbacherFG,
    AUTHOR = {Aschbacher, Michael},
     TITLE = {Finite group theory},
    SERIES = {Cambridge Studies in Advanced Mathematics},
    VOLUME = {10},
 PUBLISHER = {Cambridge University Press, Cambridge},
      YEAR = {1986},
     PAGES = {x+274},
      ISBN = {0-521-30341-9},
   MRCLASS = {20-01 (20B05 20C05 20D05)},
  MRNUMBER = {895134},
MRREVIEWER = {Stephen\ D.\ Smith},
}

@article {parkersemerarocomputing,
    	AUTHOR = {Parker, Chris and Semeraro, Jason},
	TITLE = {Algorithms for fusion systems with applications to $p$-groups of small order},
 	VOLUME = {90},
    	YEAR = {2021},
	JOURNAL = {Mathematics of Computation},
	PAGES = {2415--2461},
 	DOI = {10.1090/mcom/3634},
 	URL ={https://doi.org/10.1090/mcom/3634}
}

@book {GOR,
    AUTHOR = {Gorenstein, Daniel},
     TITLE = {Finite groups},
   EDITION = {Second},
 PUBLISHER = {Chelsea Publishing Co., New York},
      YEAR = {1980},
     PAGES = {xvii+519}
}

@article {andersenjorgsenlandrock,
    AUTHOR = {Andersen, Henning Haahr and J{\o}rgensen, Jens and Landrock,
              Peter},
     TITLE = {The projective indecomposable modules of {${\rm
              SL}(2,\,p\sp{n})$}},
   JOURNAL = {Proc. London Math. Soc. (3)},
  FJOURNAL = {Proceedings of the London Mathematical Society. Third Series},
    VOLUME = {46},
      YEAR = {1983},
    NUMBER = {1},
     PAGES = {38--52},
      ISSN = {0024-6115},
   MRCLASS = {20G05 (16A64 20C20)},
  MRNUMBER = {684821},
MRREVIEWER = {Bhama Srinivasan},
       DOI = {10.1112/plms/s3-46.1.38},
       URL = {https://doi.org/10.1112/plms/s3-46.1.38},
}

@book {GLS3,
    AUTHOR = {Gorenstein, Daniel and Lyons, Richard and Solomon, Ronald},
     TITLE = {The classification of the finite simple groups. {N}umber 3.
              {P}art {I}. {C}hapter {A}},
    SERIES = {Mathematical Surveys and Monographs},
    VOLUME = {40},
      NOTE = {Almost simple $K$-groups},
 PUBLISHER = {American Mathematical Society, Providence, RI},
      YEAR = {1998},
     PAGES = {xvi+419}
}

@article {J,
    AUTHOR = {Juh\'{a}sz, Arye},
     TITLE = {On finite groups with a {S}ylow {$p$}-subgroup of maximal
              class},
   JOURNAL = {J. Algebra},
  FJOURNAL = {Journal of Algebra},
    VOLUME = {73},
      YEAR = {1981},
    NUMBER = {1},
     PAGES = {199--237},
      ISSN = {0021-8693},
   MRCLASS = {20D20},
  MRNUMBER = {641641},
MRREVIEWER = {\`E. M. Pal\cprime chik},
       DOI = {10.1016/0021-8693(81)90355-0},
       URL = {https://doi.org/10.1016/0021-8693(81)90355-0}
}

@article {Renaud,
    AUTHOR = {Renaud, J.-C.},
     TITLE = {The decomposition of products in the modular representation
              ring of a cyclic group of prime power order},
   JOURNAL = {J. Algebra},
  FJOURNAL = {Journal of Algebra},
    VOLUME = {58},
      YEAR = {1979},
    NUMBER = {1},
     PAGES = {1--11},
      ISSN = {0021-8693},
   MRCLASS = {20C20},
  MRNUMBER = {535838},
MRREVIEWER = {Michael Klemm},
       DOI = {10.1016/0021-8693(79)90184-4},
       URL = {https://doi.org/10.1016/0021-8693(79)90184-4},
}

@book {James,
    AUTHOR = {James, Gordon D.},
     TITLE = {The representation theory of the symmetric groups},
    SERIES = {Lecture Notes in Mathematics},
    VOLUME = {682},
 PUBLISHER = {Springer, Berlin},
      YEAR = {1978},
     PAGES = {v+156},
      ISBN = {3-540-08948-9},
   MRCLASS = {20C30 (20-02)},
  MRNUMBER = {513828},
MRREVIEWER = {Dragomir \v{Z}. \Dbar okovi\'{c}}
}

@article {Magma,
    AUTHOR = {Bosma, Wieb and Cannon, John and Playoust, Catherine},
     TITLE = {The {M}agma algebra system. {I}. {T}he user language},
      NOTE = {Computational algebra and number theory (London, 1993)},
   JOURNAL = {J. Symbolic Comput.},
  FJOURNAL = {Journal of Symbolic Computation},
    VOLUME = {24},
      YEAR = {1997},
    NUMBER = {3-4},
     PAGES = {235--265},
      ISSN = {0747-7171},
   MRCLASS = {68Q40},
  MRNUMBER = {MR1484478},
       DOI = {10.1006/jsco.1996.0125},
       URL = {http://dx.doi.org/10.1006/jsco.1996.0125},
}

@book {ModAt,
    AUTHOR = {Jansen, Christoph and Lux, Klaus and Parker, Richard and
              Wilson, Robert},
     TITLE = {An atlas of {B}rauer characters},
    SERIES = {London Mathematical Society Monographs. New Series},
    VOLUME = {11},
      NOTE = {Appendix 2 by T. Breuer and S. Norton,
              Oxford Science Publications},
 PUBLISHER = {The Clarendon Press, Oxford University Press, New York},
      YEAR = {1995},
     PAGES = {xviii+327},
      ISBN = {0-19-851481-6},
   MRCLASS = {20C20 (20-00 20D06 20D08)},
  MRNUMBER = {1367961},
MRREVIEWER = {J.\ L.\ Alperin},
}

@article {nonrealizability,
    AUTHOR = {Oliver, Bob},
     TITLE = {Nonrealizability of certain representations in fusion systems},
   JOURNAL = {J. Aust. Math. Soc.},
  FJOURNAL = {Journal of the Australian Mathematical Society},
    VOLUME = {116},
      YEAR = {2024},
    NUMBER = {2},
     PAGES = {257--288},
      ISSN = {1446-7887,1446-8107},
   MRCLASS = {20D20 (20C20 20D05 20E45)},
  MRNUMBER = {4715166},
       DOI = {10.1017/S1446788723000022},
       URL = {https://doi.org/10.1017/S1446788723000022},
}

@book {KleidmanLiebeck,
    AUTHOR = {Kleidman, Peter and Liebeck, Martin},
     TITLE = {The subgroup structure of the finite classical groups},
    SERIES = {London Mathematical Society Lecture Note Series},
    VOLUME = {129},
 PUBLISHER = {Cambridge University Press, Cambridge},
      YEAR = {1990},
     PAGES = {x+303},
      ISBN = {0-521-35949-X},
   MRCLASS = {20-02 (20D06 20G40)},
  MRNUMBER = {1057341},
MRREVIEWER = {R. W. Carter},
       DOI = {10.1017/CBO9780511629235},
       URL = {https://doi.org/10.1017/CBO9780511629235},
}

@book {dixon-mortimer,
    AUTHOR = {Dixon, John D. and Mortimer, Brian},
     TITLE = {Permutation groups},
    SERIES = {Graduate Texts in Mathematics},
    VOLUME = {163},
 PUBLISHER = {Springer-Verlag, New York},
      YEAR = {1996},
     PAGES = {xii+346},
      ISBN = {0-387-94599-7},
   MRCLASS = {20B05 (20-01 20B07)},
  MRNUMBER = {1409812},
MRREVIEWER = {Martin\ W.\ Liebeck},
       DOI = {10.1007/978-1-4612-0731-3},
       URL = {https://doi.org/10.1007/978-1-4612-0731-3},
}

@book {wielandt,
    AUTHOR = {Wielandt, Helmut},
     TITLE = {Finite permutation groups.},
      NOTE = {Translated from the German by R. Bercov},
 PUBLISHER = {Academic Press, New York-London, },
      YEAR = {1964},
     PAGES = {x+114},
   MRCLASS = {20.20},
  MRNUMBER = {183775},
MRREVIEWER = {N.\ Ito},
}

@article {MinPermRank1,
    AUTHOR = {Mazurov, V. D.},
     TITLE = {Minimal permutation representations of finite simple classical
              groups. {S}pecial linear, symplectic and unitary groups},
   JOURNAL = {Algebra i Logika},
  FJOURNAL = {Sibirski\u i\ Fond Algebry i Logiki. Rossi\u iskaya Akademiya
              Nauk. Sibirskoe Otdelenie. Institut Matematiki. Algebra i
              Logika},
    VOLUME = {32},
      YEAR = {1993},
    NUMBER = {3},
     PAGES = {267--287, 343},
      ISSN = {0373-9252},
   MRCLASS = {20G05 (20D06 20G40)},
  MRNUMBER = {1286555},
MRREVIEWER = {Irina\ Suprunenko},
       DOI = {10.1007/BF02261693},
       URL = {https://doi.org/10.1007/BF02261693},
}

@article {MinPermTwisted,
    AUTHOR = {Vasilyev, A. V.},
     TITLE = {Minimal permutation representations of finite simple
              exceptional groups of twisted type},
   JOURNAL = {Algebra i Logika},
  FJOURNAL = {Sibirski\u i\ Fond Algebry i Logiki. Algebra i Logika},
    VOLUME = {37},
      YEAR = {1998},
    NUMBER = {1},
     PAGES = {17--35, 122},
      ISSN = {0373-9252},
   MRCLASS = {20G40 (20B15 20D06)},
  MRNUMBER = {1672901},
MRREVIEWER = {A.\ S.\ Kondrat\cprime ev},
       DOI = {10.1007/BF02684081},
       URL = {https://doi.org/10.1007/BF02684081},
}

@article {Jansen,
    AUTHOR = {Jansen, Christoph},
     TITLE = {The minimal degrees of faithful representations of the
              sporadic simple groups and their covering groups},
   JOURNAL = {LMS J. Comput. Math.},
  FJOURNAL = {LMS Journal of Computation and Mathematics},
    VOLUME = {8},
      YEAR = {2005},
     PAGES = {122--144},
      ISSN = {1461-1570},
   MRCLASS = {20C34},
  MRNUMBER = {2153793},
MRREVIEWER = {Robert\ A.\ Wilson},
       DOI = {10.1112/S1461157000000930},
       URL = {https://doi.org/10.1112/S1461157000000930},
}

@article {Seitz2,
    AUTHOR = {Seitz, Gary M. and Zalesskii, Alexander E.},
     TITLE = {On the minimal degrees of projective representations of the
              finite {C}hevalley groups. {II}},
   JOURNAL = {J. Algebra},
  FJOURNAL = {Journal of Algebra},
    VOLUME = {158},
      YEAR = {1993},
    NUMBER = {1},
     PAGES = {233--243},
      ISSN = {0021-8693,1090-266X},
   MRCLASS = {20C25},
  MRNUMBER = {1223676},
MRREVIEWER = {Larry\ C.\ Grove},
       DOI = {10.1006/jabr.1993.1132},
       URL = {https://doi.org/10.1006/jabr.1993.1132},
}

@article {Zsigmondy,
    AUTHOR = {Zsigmondy, K.},
     TITLE = {Zur {T}heorie der {P}otenzreste},
   JOURNAL = {Monatsh. Math. Phys.},
  FJOURNAL = {Monatshefte f\"ur Mathematik und Physik},
    VOLUME = {3},
      YEAR = {1892},
    NUMBER = {1},
     PAGES = {265--284},
      ISSN = {1812-8076},
   MRCLASS = {99-04},
  MRNUMBER = {1546236},
       DOI = {10.1007/BF01692444},
       URL = {https://doi.org/10.1007/BF01692444},
}

@article {OV,
    AUTHOR = {Oliver, Bob and Ventura, Joana},
     TITLE = {Saturated fusion systems over 2-groups},
   JOURNAL = {Trans. Amer. Math. Soc.},
  FJOURNAL = {Transactions of the American Mathematical Society},
    VOLUME = {361},
      YEAR = {2009},
    NUMBER = {12},
     PAGES = {6661--6728},
      ISSN = {0002-9947,1088-6850},
   MRCLASS = {20D20 (20D08 20D45)},
  MRNUMBER = {2538610},
MRREVIEWER = {Adam\ Glesser},
       DOI = {10.1090/S0002-9947-09-04881-8},
       URL = {https://doi.org/10.1090/S0002-9947-09-04881-8},
}

@book {vbbook,
    AUTHOR = {van Beek, Martin},
     TITLE = {Rank 2 amalgams and fusion systems},
    SERIES = {Lecture Notes in Mathematics},
    VOLUME = {2343},
 PUBLISHER = {Springer, Cham},
      YEAR = {2024},
     PAGES = {vii+203},
      ISBN = {978-3-031-54460-6; 978-3-031-54461-3},
   MRCLASS = {20-02 (05E18 20D05 20D20 20E06 20E42 55R35)},
  MRNUMBER = {4769827},
       DOI = {10.1007/978-3-031-54461-3},
       URL = {https://doi.org/10.1007/978-3-031-54461-3},
}

@article {parkerSE,
    AUTHOR = {Parker, Chris and Stroth, Gernot},
     TITLE = {Strongly {$p$}-embedded subgroups},
   JOURNAL = {Pure Appl. Math. Q.},
  FJOURNAL = {Pure and Applied Mathematics Quarterly},
    VOLUME = {7},
      YEAR = {2011},
    NUMBER = {3},
     PAGES = {797--858},
      ISSN = {1558-8599,1558-8602},
   MRCLASS = {20D30 (20D05 20E07)},
  MRNUMBER = {2848592},
MRREVIEWER = {Ben\ Fairbairn},
       DOI = {10.4310/PAMQ.2011.v7.n3.a9},
       URL = {https://doi.org/10.4310/PAMQ.2011.v7.n3.a9},
}

@incollection {Sporadics,
    AUTHOR = {Wilson, Robert A.},
     TITLE = {Maximal subgroups of sporadic groups},
 BOOKTITLE = {Finite simple groups: thirty years of the atlas and beyond},
    SERIES = {Contemp. Math.},
    VOLUME = {694},
     PAGES = {57--72},
 PUBLISHER = {Amer. Math. Soc., Providence, RI},
      YEAR = {2017},
      ISBN = {978-1-4704-3678-0; 978-1-4704-4168-5},
   MRCLASS = {20-02},
  MRNUMBER = {3682590},
       DOI = {10.1090/conm/694},
       URL = {https://doi.org/10.1090/conm/694},
}

@article {WagnerInvolution,
    AUTHOR = {Wagner, Ascher},
     TITLE = {The minimal number of involutions generating some finite
              three-dimensional groups},
   JOURNAL = {Boll. Un. Mat. Ital. A (5)},
  FJOURNAL = {Bollettino della Unione Matematica Italiana. A. Serie 5},
    VOLUME = {15},
      YEAR = {1978},
    NUMBER = {2},
     PAGES = {431--439},
      ISSN = {0392-4033},
   MRCLASS = {20G40},
  MRNUMBER = {508070},
MRREVIEWER = {Ulrich\ Dempwolff},
}

@book {BHRD,
    AUTHOR = {Bray, John N. and Holt, Derek F. and Roney-Dougal, Colva M.},
     TITLE = {The maximal subgroups of the low-dimensional finite classical
              groups},
    SERIES = {London Mathematical Society Lecture Note Series},
    VOLUME = {407},
      NOTE = {With a foreword by Martin Liebeck},
 PUBLISHER = {Cambridge University Press, Cambridge},
      YEAR = {2013},
     PAGES = {xiv+438},
      ISBN = {978-0-521-13860-4},
   MRCLASS = {20G40 (20E28)},
  MRNUMBER = {3098485},
MRREVIEWER = {Nadia\ P.\ Mazza},
       DOI = {10.1017/CBO9781139192576},
       URL = {https://doi.org/10.1017/CBO9781139192576},
}

@article {poly,
    AUTHOR = {Grazian, Valentina and Parker, Chris and Semeraro, Jason and
              van Beek, Martin},
     TITLE = {Fusion systems related to polynomial representations of
              {SL}2(q)},
   JOURNAL = {J. Lond. Math. Soc. (2)},
  FJOURNAL = {Journal of the London Mathematical Society. Second Series},
    VOLUME = {113},
      YEAR = {2026},
    NUMBER = {3},
     PAGES = {Paper No. e70481},
      ISSN = {0024-6107,1469-7750},
   MRCLASS = {20},
  MRNUMBER = {5041305},
       DOI = {10.1112/jlms.70481},
       URL = {https://doi.org/10.1112/jlms.70481},
}

@article {Ward,
    AUTHOR = {Ward, Harold N.},
     TITLE = {On {R}ee's series of simple groups},
   JOURNAL = {Trans. Amer. Math. Soc.},
  FJOURNAL = {Transactions of the American Mathematical Society},
    VOLUME = {121},
      YEAR = {1966},
     PAGES = {62--89},
      ISSN = {0002-9947,1088-6850},
   MRCLASS = {20.80 (20.29)},
  MRNUMBER = {197587},
MRREVIEWER = {Rimhak\ Ree},
       DOI = {10.2307/1994333},
       URL = {https://doi.org/10.2307/1994333},
}

@article {premet,
    AUTHOR = {Kleshchev, Alexander S. and Premet, Alexander A.},
     TITLE = {On second degree cohomology of symmetric and alternating
              groups},
   JOURNAL = {Comm. Algebra},
  FJOURNAL = {Communications in Algebra},
    VOLUME = {21},
      YEAR = {1993},
    NUMBER = {2},
     PAGES = {583--600},
      ISSN = {0092-7872,1532-4125},
   MRCLASS = {20J06 (20B30 20B35)},
  MRNUMBER = {1199691},
MRREVIEWER = {U.\ Stammbach},
       DOI = {10.1080/00927879308824581},
       URL = {https://doi.org/10.1080/00927879308824581},
}

@article {KleschevTiep,
    AUTHOR = {Kleshchev, Alexander S. and Tiep, Pham Huu},
     TITLE = {Small-dimensional projective representations of symmetric and
              alternating groups},
   JOURNAL = {Algebra Number Theory},
  FJOURNAL = {Algebra \& Number Theory},
    VOLUME = {6},
      YEAR = {2012},
    NUMBER = {8},
     PAGES = {1773--1816},
      ISSN = {1937-0652,1944-7833},
   MRCLASS = {20C25 (20C20 20C30)},
  MRNUMBER = {3033527},
MRREVIEWER = {Burkhard\ K\"ulshammer},
       DOI = {10.2140/ant.2012.6.1773},
       URL = {https://doi.org/10.2140/ant.2012.6.1773},
}

@article {BrauerNesbitt,
    AUTHOR = {Brauer, R. and Nesbitt, C.},
     TITLE = {On the modular characters of groups},
   JOURNAL = {Ann. of Math. (2)},
  FJOURNAL = {Annals of Mathematics. Second Series},
    VOLUME = {42},
      YEAR = {1941},
     PAGES = {556--590},
      ISSN = {0003-486X},
   MRCLASS = {20.0X},
  MRNUMBER = {4042},
MRREVIEWER = {G.\ de B. Robinson},
       DOI = {10.2307/1968918},
       URL = {https://doi.org/10.2307/1968918},
}

@article {MinActive,
    AUTHOR = {Craven, David A.},
     TITLE = {Groups with a {$p$}-element acting with a single non-trivial
              {J}ordan block on a simple module in characteristic {$p$}},
   JOURNAL = {J. Group Theory},
  FJOURNAL = {Journal of Group Theory},
    VOLUME = {21},
      YEAR = {2018},
    NUMBER = {5},
     PAGES = {719--787},
      ISSN = {1433-5883,1435-4446},
   MRCLASS = {20C20 (20G40)},
  MRNUMBER = {3849670},
MRREVIEWER = {A.\ S.\ Kondrat\cprime ev},
       DOI = {10.1515/jgth-2018-0014},
       URL = {https://doi.org/10.1515/jgth-2018-0014},
}

@article {Almkvist,
    AUTHOR = {Almkvist, Gert},
     TITLE = {Representations of {${\bf Z}/p{\bf Z}$}\ in characteristic
              {$p$}\ and reciprocity theorems},
   JOURNAL = {J. Algebra},
  FJOURNAL = {Journal of Algebra},
    VOLUME = {68},
      YEAR = {1981},
    NUMBER = {1},
     PAGES = {1--27},
      ISSN = {0021-8693},
   MRCLASS = {14L30 (12F10 20C20)},
  MRNUMBER = {604290},
MRREVIEWER = {R.\ M.\ Fossum},
       DOI = {10.1016/0021-8693(81)90281-7},
       URL = {https://doi.org/10.1016/0021-8693(81)90281-7},
}

@article {Zavarn,
    AUTHOR = {Zavarnitsine, A. V.},
     TITLE = {Weights of irreducible {${\rm SL}_3(q)$}-modules in the
              defining characteristic},
   JOURNAL = {Sibirsk. Mat. Zh.},
  FJOURNAL = {Rossi\u iskaya Akademiya Nauk. Sibirskoe Otdelenie. Institut
              Matematiki im. S. L. Soboleva. Sibirski\u i\ Matematicheski\u
              i\ Zhurnal},
    VOLUME = {45},
      YEAR = {2004},
    NUMBER = {2},
     PAGES = {319--328},
      ISSN = {0037-4474},
   MRCLASS = {20C33 (20D20 20G40)},
  MRNUMBER = {2061413},
MRREVIEWER = {A.\ S.\ Kondrat\cprime ev},
       DOI = {10.1023/B:SIMJ.0000021282.92576.f8},
       URL = {https://doi.org/10.1023/B:SIMJ.0000021282.92576.f8},
}

@article {Suprunenko,
    AUTHOR = {Suprunenko, I. D.},
     TITLE = {Unipotent elements of nonprime order in representations of the
              classical algebraic groups: two big {J}ordan blocks},
   JOURNAL = {Zap. Nauchn. Sem. S.-Peterburg. Otdel. Mat. Inst. Steklov.
              (POMI)},
  FJOURNAL = {Rossi\u iskaya Akademiya Nauk. Sankt-Peterburgskoe Otdelenie.
              Matematicheski\u i\ Institut im. V. A. Steklova. Zapiski
              Nauchnykh Seminarov (POMI)},
    VOLUME = {414},
      YEAR = {2013},
     PAGES = {193--241},
      ISSN = {0373-2703},
   MRCLASS = {20G05 (20G07)},
  MRNUMBER = {3470603},
MRREVIEWER = {Christopher\ P.\ Bendel},
       DOI = {10.1007/s10958-014-1863-6},
       URL = {https://doi.org/10.1007/s10958-014-1863-6},
}

@article {TestZale,
    AUTHOR = {Testerman, Donna M. and Zalesski, Alexandre E.},
     TITLE = {Irreducible representations of simple algebraic groups in
              which a unipotent element is represented by a matrix with a
              single non-trivial {J}ordan block},
   JOURNAL = {J. Group Theory},
  FJOURNAL = {Journal of Group Theory},
    VOLUME = {21},
      YEAR = {2018},
    NUMBER = {1},
     PAGES = {1--20},
      ISSN = {1433-5883,1435-4446},
   MRCLASS = {20G05 (20G41)},
  MRNUMBER = {3739341},
MRREVIEWER = {Adam\ R.\ Thomas},
       DOI = {10.1515/jgth-2017-0019},
       URL = {https://doi.org/10.1515/jgth-2017-0019},
}

@article {Mikko,
    AUTHOR = {Korhonen, Mikko},
     TITLE = {Jordan blocks of unipotent elements in some irreducible
              representations of classical groups in good characteristic},
   JOURNAL = {Proc. Amer. Math. Soc.},
  FJOURNAL = {Proceedings of the American Mathematical Society},
    VOLUME = {147},
      YEAR = {2019},
    NUMBER = {10},
     PAGES = {4205--4219},
      ISSN = {0002-9939,1088-6826},
   MRCLASS = {20G05 (20G40)},
  MRNUMBER = {4002536},
MRREVIEWER = {Paul\ D.\ Levy},
       DOI = {10.1090/proc/14570},
       URL = {https://doi.org/10.1090/proc/14570},
}

@article {GeneralFF,
    AUTHOR = {Meierfrankenfeld, U. and Stellmacher, B.},
     TITLE = {The general {FF}-module theorem},
   JOURNAL = {J. Algebra},
  FJOURNAL = {Journal of Algebra},
    VOLUME = {351},
      YEAR = {2012},
     PAGES = {1--63},
      ISSN = {0021-8693,1090-266X},
   MRCLASS = {20C20 (20D99)},
  MRNUMBER = {2862198},
MRREVIEWER = {Gernot\ Stroth},
       DOI = {10.1016/j.jalgebra.2011.10.029},
       URL = {https://doi.org/10.1016/j.jalgebra.2011.10.029},
}

@article{PellegriniZalesski,
    AUTHOR = {Pellegrini, Marco A. and Zalesski, Alexandre E.},
     TITLE = {Minimal generation of finite simple groups of {L}ie type by
              regular unipotent elements},
   JOURNAL = {Eur. J. Math.},
  FJOURNAL = {European Journal of Mathematics},
    VOLUME = {12},
      YEAR = {2026},
    NUMBER = {2},
     PAGES = {Paper No. 23, 33},
      ISSN = {2199-675X,2199-6768},
   MRCLASS = {20F05 (20D06)},
  MRNUMBER = {5076690},
       DOI = {10.1007/s40879-026-00895-4},
       URL = {https://doi.org/10.1007/s40879-026-00895-4},
}

@article{NormalAbelianI,
AUTHOR = {Grazian, Valentina and Lynd, Justin and Oliver, Bob and Parker, Chris and Semeraro, Jason and van Beek, Martin},
TITLE = {Fusion systems with a weakly closed abelian essential subgroup {I}, {II}},
JOURNAL =  {In preparation}
}

@article {AlperinFusion,
    AUTHOR = {Alperin, J. L.},
     TITLE = {Sylow intersections and fusion},
   JOURNAL = {J. Algebra},
  FJOURNAL = {Journal of Algebra},
    VOLUME = {6},
      YEAR = {1967},
     PAGES = {222--241},
      ISSN = {0021-8693},
   MRCLASS = {20.25},
  MRNUMBER = {215913},
MRREVIEWER = {G.\ E.\ Wall},
       DOI = {10.1016/0021-8693(67)90005-1},
       URL = {https://doi.org/10.1016/0021-8693(67)90005-1},
}

@article {GoldschmidtFusion,
    AUTHOR = {Goldschmidt, David M.},
     TITLE = {A conjugation family for finite groups},
   JOURNAL = {J. Algebra},
  FJOURNAL = {Journal of Algebra},
    VOLUME = {16},
      YEAR = {1970},
     PAGES = {138--142},
      ISSN = {0021-8693},
   MRCLASS = {20.40},
  MRNUMBER = {260869},
MRREVIEWER = {J.\ L.\ Alperin},
       DOI = {10.1016/0021-8693(70)90046-3},
       URL = {https://doi.org/10.1016/0021-8693(70)90046-3},
}

\end{document}